\documentclass[reqno]{amsart}

\usepackage{fullpage,amssymb,dsfont}

\usepackage[all]{xy}

\usepackage{hyperref} % this should come before amsrefs

\usepackage[nobysame,alphabetic,initials,msc-links,lite]{amsrefs}

\DefineSimpleKey{bib}{how}
\renewcommand{\PrintDOI}[1]{%
  \href{http://dx.doi.org/#1}{{\tt DOI:#1}}%
%  \IfEmptyBibField{volume}{, (to appear in print)}{}%
}
\renewcommand{\eprint}[1]{#1}
\BibSpec{book}{%
    +{}  {\PrintPrimary}                {transition}
    +{.} { \PrintDate}                  {date}
    +{.} { \textit}                     {title}
    +{.} { }                            {part}
    +{:} { \textit}                     {subtitle}
    +{,} { \PrintEdition}               {edition}
    +{}  { \PrintEditorsB}              {editor}
    +{,} { \PrintTranslatorsC}          {translator}
    +{,} { \PrintContributions}         {contribution}
    +{,} { }                            {series}
    +{,} { \voltext}                    {volume}
    +{,} { }                            {publisher}
    +{,} { }                            {organization}
    +{,} { }                            {address}
    +{,} { }                            {status}
    +{,} { \PrintDOI}                   {doi}
    +{,} { \PrintISBNs}                 {isbn}
    +{}  { \parenthesize}               {language}
    +{}  { \PrintTranslation}           {translation}
    +{;} { \PrintReprint}               {reprint}
    +{.} { }                            {note}
    +{.} {}                             {transition}
    +{}  {\SentenceSpace \PrintReviews} {review}
}
\BibSpec{article}{%
    +{}  {\PrintAuthors}                {author}
    +{,} { \textit}                     {title}
    +{.} { }                            {part}
    +{:} { \textit}                     {subtitle}
    +{,} { \PrintContributions}         {contribution}
    +{.} { \PrintPartials}              {partial}
    +{,} { }                            {journal}
    +{}  { \textbf}                     {volume}
    +{}  { \PrintDatePV}                {date}
    +{,} { \issuetext}                  {number}
    +{,} { \eprintpages}                {pages}
    +{,} { }                            {status}
    +{,} { \PrintDOI}                   {doi}
    +{,} { \eprint}        {eprint}
    +{}  { \parenthesize}               {language}
    +{}  { \PrintTranslation}           {translation}
    +{;} { \PrintReprint}               {reprint}
    +{.} { }                            {note}
    +{.} {}                             {transition}
    +{}  {\SentenceSpace \PrintReviews} {review}
}
\BibSpec{collection.article}{%
    +{}  {\PrintAuthors}                {author}
    +{,} { \textit}                     {title}
    +{.} { }                            {part}
    +{:} { \textit}                     {subtitle}
    +{,} { \PrintContributions}         {contribution}
    +{,} { \PrintConference}            {conference}
    +{}  {\PrintBook}                   {book}
    +{,} { }                            {booktitle}
    +{,} { \PrintDateB}                 {date}
    +{,} { pp.~}                        {pages}
    +{,} { }                            {publisher}
    +{,} { }                            {organization}
    +{,} { }                            {address}
    +{,} { }                            {status}
    +{,} { \PrintDOI}                   {doi}
    +{,} { \eprint}        {eprint}
    +{}  { \parenthesize}               {language}
    +{}  { \PrintTranslation}           {translation}
    +{;} { \PrintReprint}               {reprint}
    +{.} { }                            {note}
    +{.} {}                             {transition}
    +{}  {\SentenceSpace \PrintReviews} {review}
}
\BibSpec{misc}{%
  +{}{\PrintAuthors}  {author}
  +{,}{ \textit}      {title}
  +{.}{ }             {how}
  +{}{ \parenthesize} {date}
  +{,} { available at \eprint}        {eprint}
  +{,}{ available at \url}{url}
  +{,}{ }             {note}
    +{,} { \PrintDOI}                   {doi}
  +{.}{}              {transition}
}

\numberwithin{equation}{section}

\newtheorem{theorem}{Theorem}[section]
\newtheorem{corollary}[theorem]{Corollary}
\newtheorem{lemma}[theorem]{Lemma}
\newtheorem{proposition}[theorem]{Proposition}
\theoremstyle{remark}
\newtheorem{remark}[theorem]{Remark}
\newtheorem{remarks}[theorem]{Remarks}
\newtheorem{example}[theorem]{Example}
\theoremstyle{definition}
\newtheorem{definition}[theorem]{Definition}

\newcommand{\bp}{\begin{proof}}
\newcommand{\ep}{\end{proof}}

\newcommand{\C}{\mathbb{C}}
\newcommand{\Z}{\mathbb{Z}}
\newcommand{\eps}{\varepsilon}

\newcommand{\CC}{\mathcal{C}}

\newcommand{\NN}{\mathcal{N}}
\newcommand{\PP}{\mathcal{P}}
\newcommand{\un}{\mathds{1}}

\mathchardef\mhyph="2D
\newcommand{\indcat}[1]{{\mathrm{ind}\mhyph#1}}
\newcommand{\indC}{\indcat{\CC}}
\newcommand{\Dcen}{\mathcal{Z}}
\newcommand{\ZC}{{\Dcen(\indC)}}
\newcommand{\ZsC}{{\Dcen_s(\indC)}}
\newcommand{\ZCC}{{\Dcen(\CC)}}
\newcommand\ZHC{{\mathcal Z}_{\mathcal C}}
\newcommand{\tB}{ B}
\newcommand{\Zreg}{Z_{\mathrm{reg}}}
\newcommand{\opos}{\mathrm{op}}
\newcommand{\norm}[1]{\left\|#1\right\|}

\newcommand{\Pol}{\mathcal{O}}
\newcommand{\coD}{\hat{D}}
\newcommand{\hlf}{{1/2}}
\renewcommand\Re{\operatorname{Re}}
\DeclareMathOperator{\Mat}{Mat}
\DeclareMathOperator{\Mor}{Mor}
\DeclareMathOperator{\End}{End}
\DeclareMathOperator{\Hom}{Hom}
\DeclareMathOperator{\Tr}{Tr}
\DeclareMathOperator{\tr}{tr}
\DeclareMathOperator{\Irr}{Irr}
\DeclareMathOperator{\Hilb}{Hilb}
\DeclareMathOperator{\Rep}{Rep}
\DeclareMathOperator*{\colim}{colim}
\DeclareMathOperator{\Id}{Id}

\newcommand{\Qmod}{Q\mhyph\mathrm{mod}}
\newcommand{\modQ}{\mathrm{mod}\mhyph Q}
\newcommand{\QmodQ}{Q\mhyph\mathrm{mod}\mhyph Q}
\newcommand{\indQmodQ}{\indcat{\QmodQ}}
\newcommand{\hatQmod}{\hat{Q}\mhyph\mathrm{mod}}
\newcommand{\hatQbimod}{\hat{Q}\mhyph\mathrm{mod}\mhyph\hat{Q}}

\begin{document}

\title[Representation theory for monoidal categories]{Drinfeld center and representation theory for monoidal categories}

\author[S. Neshveyev]{Sergey Neshveyev}

\email{sergeyn@math.uio.no}

\address{Department of Mathematics, University of Oslo, P.O. Box 1053
Blindern, NO-0316 Oslo, Norway}

\thanks{The research leading to these results has received funding
from the European Research Council under the European Union's Seventh
Framework Programme (FP/2007-2013)
/ ERC Grant Agreement no. 307663%--NCGQG
}

\author[M. Yamashita]{Makoto Yamashita}

\email{yamashita.makoto@ocha.ac.jp}

\address{Department of Mathematics, Ochanomizu University, Otsuka
2-1-1, Bunkyo, 112-8610 Tokyo, Japan}

\thanks{Supported by JSPS KAKENHI Grant Number 25800058}

\date{January 27, 2015; new version: March 24, 2015; minor corrections: May 25, 2016}

\begin{abstract}
  Motivated by the relation between the Drinfeld double and
  central property (T) for quantum groups, given a rigid C$^*$-tensor category $\CC$ and a unitary half-braiding on
  an ind-object, we construct a $*$-representation of the fusion
  algebra of $\CC$.
  This allows us to present an alternative approach to recent results of Popa and Vaes, who defined C$^*$-algebras of monoidal categories and introduced property (T) for them.  As an
  example we analyze categories $\CC$ of Hilbert bimodules over a
  II$_1$-factor.  We show that in this case the Drinfeld center is
  monoidally equivalent to a category of Hilbert bimodules over
  another II$_1$-factor obtained by the Longo--Rehren
  construction.  As an application, we obtain an alternative proof of the result of Popa and Vaes stating that property (T) for the category defined by an extremal finite index subfactor $N\subset M$ is
  equivalent to Popa's property~(T) for the corresponding SE-inclusion of II$_1$-factors.

 In the last part of the paper we study M\"uger's notion of weakly monoidally Morita equivalent categories and analyze the behavior of our constructions under the equivalence of the corresponding Drinfeld centers established by Schauenburg. In particular, we prove that   property (T) is invariant under weak monoidal Morita equivalence.
\end{abstract}

\maketitle

\section{Introduction}
\label{sec:introduction}

In this paper, we explore the relation between positive definite
functions on rigid C$^*$-tensor categories and their Drinfeld
centers. Our inspiration for seeking such a relation comes from recent
developments in the so called central approximation properties of
discrete quantum groups. In particular, we give a categorification of
the correspondence between the completely positive central
functions on discrete quantum groups and the positive linear functionals on the character
algebra constructed from the spherical representations of the Drinfeld
double, as observed in~\cite{MR3238527}.
Such a categorification has been already obtained in the recent work of Popa and
Vaes~\cite{MR3406647}, and similarly to their work our main motivation is to understand approximation properties of monoidal categories. In this respect the paper can be seen as a natural continuation of our previous
work~\cite{arXiv:1405.6572}, where we studied amenability of monoidal
categories.

In fact, a connection between approximation properties of monoidal
categories and their Drinfeld centers has already appeared in
subfactor theory, although in a disguised form. Ocneanu introduced the
notion of asymptotic inclusion based on iterated basic extensions and
studied the associated $3$-dimensional topological quantum field
theory for finite depth
subfactors~\citelist{\cite{MR996454}}.  Subsequently its relation to
the Drinfeld center was clarified through the work of
Evans--Kawahigashi~\cite{MR1316301}, Longo--Rehren~\cite{MR1332979},
Izumi~\cite{MR1782145}, and
M\"{u}ger~\citelist{\cite{MR1966524}\cite{MR1966525}}, to name a
few.  In a related direction, Popa introduced the notion of symmetric
enveloping algebra $M \boxtimes_{e_N} M^\opos$ associated with a
subfactor $N \subset M$ as a byproduct of his celebrated
classification program~\cite{MR1302385}. This notion specializes to
asymptotic inclusion in the finite depth case but has a better
universality property for infinite depth and non-irreducible
subfactors.

What arises from Popa's work is the principle that
approximation properties of the combinatorial data encoding the
original subfactor correspond to approximation properties of the
SE-inclusion $M\bar \otimes M^\opos \subset M \boxtimes_{e_N} M^\opos$
formulated in the language of Hilbert bimodules (correspondences). In
particular, based on the general theory of correspondences and
rigidity developed in~\cite{popa-corr-preprint}, Popa introduced the
notion of property (T) for subfactors~\cite{MR1729488} as an
antithesis of amenability, which played a central role in the
classification. Since any finitely generated rigid C$^*$-tensor category can
be realized as a part of the standard invariant~\cite{MR1334479}, it
is natural to try to borrow from this theory to formulate various
notions for C$^*$-tensor categories. Popa and Vaes~\cite{MR3406647} achieved this by axiomatizing the notions
of completely positive multipliers and completely bounded multipliers on rigid
C$^*$-tensor categories, and by relating them to properties of
asymptotic inclusions. They defined C$^*$-completions of the fusion algebras by considering what they called admissible representations, which are characterized by the property that their matrix coefficients are completely positive multipliers, and using these completions defined analogues of various approximation properties for such categories.

Aiming for a more direct connection with the Drinfeld center, we start
our work by considering unitary half-braidings on ind-objects of a
rigid C$^*$-tensor category~$\CC$. For every such half-braiding, we
construct a $*$-representation of the fusion algebra. This
construction is an analogue of the restriction of spherical
representations of the Drinfeld double of a quantum group to the
subspace of spherical vectors. As an example if we consider the category
of $\Gamma$-graded Hilbert spaces for a discrete group $\Gamma$, then
we recover all unitary representations of $\Gamma$. In this scheme
there is a distinguished half-braiding, which we call the regular
half-braiding, giving rise to the regular representation of the fusion
algebra.  Morally, it corresponds to the algebra object representing
the forgetful functor on the Drinfeld center which appeared in the
work of
Brugui\'{e}res--Virelizier~\citelist{\cite{MR2355605}\cite{MR3079759}}
and Brugui\'{e}res--Natale~\cite{MR2863377} in the framework of fusion
categories.  The representations defined by the half-braidings lead to
a completion $C^*(\CC)$ of the fusion algebra to a C$^*$-algebra. As in~\cite{MR3406647}, the algebra $C^*(\CC)$ can then be
used to formulate analogues of various approximation properties of
groups for C$^*$-tensor categories. In particular, by considering the
isolation property of the representation defined by the unit object,
we get a natural definition of property (T).

In order to illustrate the general theory we consider categories $\CC$
of Hilbert bimodules over a II$_1$-factor~$M$. Expanding on Izumi's
work~\cite{MR1782145} on the Longo--Rehren inclusion, we show that in
this case the Drinfeld center is monoidally equivalent to a category
of Hilbert $\tB$-bimodules, where $A\subset\tB$ is the Longo--Rehren
inclusion associated with $\CC$. As a particular case, this gives an equivalence between the category of representations of $C^*(\CC)$ and the category of Hilbert $B$-bimodules generated by $A$-central vectors, which has been already established by Popa and Vaes in their approach for categories arising from extremal finite index subfactors.
This equivalence can be used to connect the categorical notion of property (T)
to Popa's property (T) of the Longo--Rehren inclusion. In the case when $\CC$ is the category associated with a finite index
extremal subfactor $N\subset M$, this means that property (T) for~$\CC$ is equivalent to Popa's property~(T) for $N\subset M$. As has been observed in~\cite{MR3406647}, combined
with the results of~\cite{arXiv:1410.6238} this, in turn, can be used to
construct subfactors with property~(T) that do not come from discrete
groups.

\smallskip

The results described above had been obtained when we received preprint~\cite{MR3406647} by Popa and Vaes. A natural task was then to compare the two approaches.
It turned out that they are equivalent, and even the classes of representations of the fusion algebras are the same, so that a representation is admissible in the sense of Popa and Vaes if and only if it is defined by a unitary half-braiding. In light of this, our definition of property (T)
is a reformulation of theirs. The approaches naturally complement each other and
have their own advantages. For example, in our setting it is almost
immediate that for representation categories of compact quantum groups
the completion of the fusion algebra coincides with the one obtained
by embedding it in the Drinfeld double~\cite{MR3238527}. On the other
hand, for general categories it is more difficult to see in our
approach that the regular representation of the fusion algebra extends
to its C$^*$-algebra completion. We also mention that soon after both papers were posted, yet another alternative approach to representation theory of monoidal categories was suggested by Ghosh and C.~Jones~\cite{MR3447719}.

\smallskip

In the last section we study weakly monoidally Morita equivalent categories. This notion was
introduced by M\"{u}ger~\cite{MR1966524}. A prototypical example is the categories of Hilbert bimodules
over a factor and its finite index subfactor.  As was pointed out by M\"{u}ger, a result of Schauenburg~\cite{MR1822847} implies that weakly Morita equivalent categories have monoidally equivalent Drinfeld
centers.  This does \emph{not} imply that the corresponding fusion algebras are Morita equivalent, and the precise relation between these algebras will be discussed elsewhere. What we prove in the present paper, is that the property of weak containment of the unit object is preserved under Schauenburg's equivalence, which allows us to compare approximation properties of the original categories. In particular, we show that property (T) is invariant under weak monoidal Morita equivalence.

\medskip\noindent
{\bf Acknowledgement.} We are grateful to Sorin Popa and Stefaan Vaes for fruitful correspondence and in particular for informing us about their work and for their interest in ours.

\bigskip

\section{Preliminaries}
\label{sec:preliminaries}

\subsection{C\texorpdfstring{$^*$}{*}-tensor categories}
\label{sec:c-tensor-categories}

The main object of our study is rigid C$^*$-tensor categories, and in general we keep the conventions of~\cite{arXiv:1405.6572}, see also~\cite{MR3204665} for the proofs. For the convenience of the reader let us summarize the basic terminology.

A \emph{C$^*$-category} $\CC$ is a linear category over the complex number field $\C$, endowed with Banach space norms on the morphism sets $\CC(X, Y)$ and a conjugate linear anti-multiplicative involution $\CC(X, Y) \to \CC(Y, X)$, $T \mapsto T^*$ satisfying the C$^*$-identity $\norm{T^* T} = \norm{T}^2 = \norm{T T^*}$. We always assume that a C$^*$-category is closed under taking subobjects, so that any projection in the C$^*$-algebra $\CC(X) = \CC(X, X)$ corresponds to a subobject of $X$. We also assume that $\CC$ is closed under finite direct sums. A C$^*$-category is called \emph{semisimple} if its morphism sets are finite dimensional. In such categories one can always take a decomposition of an object~$X$ into a direct sum of simple objects using minimal projections in the finite-dimensional C$^*$-algebra $\CC(X)$. A \emph{unitary functor}, or a \emph{C$^*$-functor}, between C$^*$-categories is a linear functor $F$ compatible with involutions: $F(T^*) = F(T)^*$.

A \emph{C$^*$-tensor category} is a C$^*$-category $\CC$ endowed with a bifunctor $\otimes \colon \CC \times \CC \to \CC$, a distinguished object~$\un$, and natural \emph{unitary} isomorphisms
\begin{align*}
  \un \otimes X &\to X \leftarrow X \otimes \un, &
  \Phi\colon (X \otimes Y) \otimes Z &\to X \otimes (Y \otimes Z)
\end{align*}
satisfying the standard set of axioms for monoidal categories. In this paper the unit $\un$ is always assumed to be simple, namely $\CC(\un) \cong \C$. A \emph{unitary tensor functor}, or a \emph{C$^*$-tensor functor}, between C$^*$-tensor categories is a C$^*$-functor $F$ together with a unitary isomorphism $F_0\colon \un \to F(\un)$ and natural unitary isomorphisms $F_2\colon F(X)\otimes F(Y) \to F(X \otimes Y)$ satisfying the standard compatibility conditions. If there is no fear of confusion we use $F$ instead of $(F, F_0, F_2)$ to denote C$^*$-tensor functors. The natural transformation of C$^*$-tensor functors $F \to G$ is defined in the same way as in the case of monoidal functors, but with the additional requirement that the structure morphisms are all unitary. If two C$^*$-tensor categories $\CC$, $\CC'$ are related by C$^*$-tensor functors $F\colon \CC \to \CC'$ and $G\colon \CC' \to \CC$ such that there exist natural isomorphisms of C$^*$-tensor functors $\Id_\CC \to G F$ and $\Id_{\CC'} \to F G$, we say that $\CC$ and $\CC'$ are \emph{unitarily monoidally equivalent}.

A C$^*$-tensor category $\CC$ is said to be \emph{rigid} if every object $X$ in $\CC$ has a dual. Assuming for simplicity that~$\CC$ is strict, this means that there is an object $\bar{X}$ in $\CC$ and morphisms $R \in \CC(\un, \bar{X} \otimes X)$ and $\bar{R} \in \CC(\un, X \otimes \bar{X})$ satisfying the \emph{conjugate equations}
\begin{align*}
  (\iota_{\bar{X}} \otimes \bar{R}^*) (R \otimes \iota_{\bar{X}}) &= \iota_{\bar{X}},&
  (\iota_X \otimes R^*) (\bar{R} \otimes \iota_X) &= \iota_X.&
\end{align*}
Rigid C$^*$-tensor categories (with simple units) are always semisimple.

A rigid C$^*$-tensor category has a good notion of dimension, defined by
$$
d^\CC(X) = \min_{(R, \bar{R})} \norm{R} \norm{\bar{R}},
$$
where $(R, \bar{R})$ runs over the solutions of the conjugate equations for $X$. If there is no fear of confusion we simply write $d(X)$ instead of $d^\CC(X)$. A solution $(R, \bar{R})$ satisfying $\norm{R} = \norm{\bar{R}} = d(X)^{\hlf}$ is called \emph{standard}, and such solutions are unique up to transformations of the form $(R,\bar R)\mapsto((T \otimes \iota)R, (\iota \otimes T)\bar{R})$ for unitary morphisms~$T$.  We often denote a choice of standard solution for the conjugate equations for $X$ as $(R_X, \bar{R}_X)$. As a convenient shorthand, when $(X_i)_{i \in I}$ is a parametrized family of objects in $\CC$, we write $(R_i, \bar{R}_i)$ instead of $(R_{X_i}, \bar{R}_{X_i})$. Similarly, for many other constructions we use index $i$ instead of $X_i$, so for example we write $d_i$ for $d(X_i)$. If the family is self-dual, we also write $\bar i$ for the index corresponding to the dual of $X_i$.

There are several constructions based on standard solutions. For example, if $X, Y \in \CC$, then $(R_X + R_Y, \bar{R}_X + \bar{R}_Y)$ is a standard solution for $X \oplus Y$. Similarly, $((\iota_{\bar{Y}} \otimes R_X \otimes \iota_Y) R_Y, (\iota_{\bar{Y}} \otimes \bar{R}_Y \otimes \iota_Y) \bar{R}_X)$ is a standard solution for $X \otimes Y$. The \emph{categorical trace} is the trace on $\CC(X)$ given by
$$
\Tr_X(T) = R^*_X(\iota \otimes T)R_X = \bar{R}^*_X(T \otimes \iota)\bar{R}_X,
$$
which is independent of the choice of standard solutions $(R_X, \bar{R}_X)$. The second equality above characterizes the standard solutions. The normalized categorical traces are defined by $\tr_X=d(X)^{-1}\Tr_X$.

More generally, we can define \emph{partial categorical traces}
$$
\Tr_X\otimes\iota\colon \CC(X\otimes Y,X\otimes Z)\to\CC(Y,Z)\ \ \text{by}\ \ (\Tr_X\otimes\iota)(T)=(R^*_X\otimes\iota_Z)(\iota_{\bar X} \otimes T)(R_X\otimes\iota_Y),
$$
and similarly define $\iota\otimes\Tr_X$.

For $X, Y \in \CC$ and a choice of standard solutions $(R_X, \bar{R}_X)$ and $(R_Y, \bar{R}_Y)$, we can define a linear anti-multiplicative map $\CC(X, Y) \to \CC(\bar{Y}, \bar{X})$, denoted by $T \mapsto T^\vee$, which is characterized by $(T \otimes \iota) \bar{R}_X = (\iota \otimes T^\vee) \bar{R}_Y$. This map can be also characterized by $(\iota \otimes T) R_X = (T^\vee \otimes \iota) R_Y$ and satisfies $T^{\vee *} = T^{* \vee}$ for the choice of standard solutions $(\bar{R}_X, R_X)$, $(\bar{R}_Y, R_Y)$ for $\bar{X}$ and $\bar{Y}$.

\subsection{Ind-objects in \texorpdfstring{C$^*$}{C*}-categories}
\label{subsec:ind-obj-C*-cat}

Let $\CC$ be a semisimple C$^*$-category. By an \emph{ind-object} of $\CC$ we will mean an inductive system $\{u_{ji}\colon X_i\to X_j\}_{i\prec j}$ in~$\CC$, where $u_{ji}$ are isometries. We define a morphism between two such objects $\{u_{ji}\colon X_i\to X_j\}_{i\prec j}$ and $\{v_{lk}\colon Y_k\to Y_l\}_{k\prec l}$ as a collection $T$ of morphisms $T_{ki}\colon X_i\to Y_k$ in~$\CC$ such that
$$
v^*_{lk}T_{li}=T_{ki}\ \ \text{if}\ k\prec l,\ \
T_{kj}u_{ji}=T_{ki}\ \ \text{if}\ i\prec j,
\ \ \text{and}\ \ \|T\|:=\sup_{k,i}\|T_{ki}\|<\infty.
$$
For ind-objects $X_* = \{u_{ji}\colon X_i\to X_j\}_{i\prec j}$, $Y_* = \{v_{lk}\colon Y_k\to Y_l\}_{k\prec l}$, and $Z_* = \{w_{nm}\colon Z_m\to Z_n\}_{m\prec n}$, the composition of morphisms $T\colon X_* \to Y_*$ and $S\colon Y_* \to Z_*$ is defined by
$$
(ST)_{ni}=\lim_k S_{nk}T_{ki}.
$$
In order to see that this is well-defined we need the following.

\begin{lemma}
For any morphism $T\colon\{u_{ji}\colon X_i\to X_j\}_{i\prec j}\to\{v_{lk}\colon Y_k\to Y_l\}_{k\prec l}$, index $k$ and $\eps>0$ there exists an index $i_0$ such that for all $j\succ i\succ i_0$ we have
$$
\|T_{kj}-T_{ki}u_{ji}^*\|<\eps.
$$
\end{lemma}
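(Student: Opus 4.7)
The plan is to first rewrite the difference $T_{kj}-T_{ki}u_{ji}^{*}$ using the compatibility relation, and then reduce to showing that a certain monotone net of positive operators in the finite-dimensional C$^{*}$-algebra $\CC(Y_k)$ is Cauchy.

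Since $u_{ji}$ is an isometry, $p_{ji}:=u_{ji}u_{ji}^{*}\in\CC(X_j)$ is a projection. From $T_{kj}u_{ji}=T_{ki}$ we immediately get
$$
T_{kj}-T_{ki}u_{ji}^{*}=T_{kj}(\iota_{X_j}-p_{ji}),
$$
and the C$^{*}$-identity together with the identity $T_{kj}p_{ji}T_{kj}^{*}=(T_{kj}u_{ji})(T_{kj}u_{ji})^{*}=T_{ki}T_{ki}^{*}$ yields
$$
\bigl\|T_{kj}-T_{ki}u_{ji}^{*}\bigr\|^{2}=\bigl\|T_{kj}(\iota-p_{ji})T_{kj}^{*}\bigr\|=\bigl\|T_{kj}T_{kj}^{*}-T_{ki}T_{ki}^{*}\bigr\|.
$$
So it is enough to prove that the net $A_i:=T_{ki}T_{ki}^{*}\in\CC(Y_k)$ is Cauchy in norm.

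Now $i\prec j$ implies $A_j-A_i=T_{kj}(\iota-p_{ji})T_{kj}^{*}\ge 0$, so the net is monotone, and it is uniformly bounded by $\|T\|^{2}$. Because $\CC$ is semisimple, $\CC(Y_k)$ is a finite-dimensional C$^{*}$-algebra, in which every bounded monotone net of self-adjoint elements converges in norm and is therefore Cauchy. Thus for $\eps>0$ one can choose $i_0$ with $\|A_j-A_i\|<\eps^{2}$ for all $j\succ i\succ i_0$, which gives the claim. There is no real obstacle; the pivotal move is passing to $T_{kj}T_{kj}^{*}$ in order to leverage finite-dimensionality of a fixed morphism space.
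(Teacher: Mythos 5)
Your proof is correct and follows essentially the same route as the paper: both reduce the estimate to $\|T_{kj}-T_{ki}u_{ji}^*\|^2=\|T_{kj}T_{kj}^*-T_{ki}T_{ki}^*\|$ and then use that the bounded increasing net $\{T_{ki}T_{ki}^*\}_i$ in the finite-dimensional C$^*$-algebra $\CC(Y_k)$ converges in norm. The only cosmetic difference is that you phrase the key identity via the projection $p_{ji}=u_{ji}u_{ji}^*$ rather than by expanding the product directly.
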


\bp Since $T_{kj}u_{ji}=T_{ki}$, the net $\{T_{ki}T^*_{ki}\}_i$ in the finite dimensional C$^*$-algebra $\End_\CC(Y_k)$ is increasing. Since it is also bounded, it converges in norm. Hence we can find $i_0$ such that for all $j\succ i\succ i_0$ we have
$$
\|T_{kj}T^*_{kj}-T_{ki}T^*_{ki}\|<\eps^2.
$$
It remains to observe that
$$
\|T_{kj}-T_{ki}u_{ji}^*\|^2=\|T_{kj}T^*_{kj}-T_{kj}u_{ji}T_{ki}^*-T_{ki}u_{ji}^*T_{kj}^*+T_{ki}u_{ji}^*u_{ji}T_{ki}^*\|= \|T_{kj}T^*_{kj}-T_{ki}T^*_{ki}\|,
$$
which proves the assertion.
\ep

\begin{lemma}
The composition of morphisms of ind-objects is well-defined and is associative.
\end{lemma}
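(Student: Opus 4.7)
The plan is to first establish convergence of the defining limit, verify that $ST$ respects the ind-object structure, and then prove associativity by approximating both iterated composites by a common mixed expression.

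For convergence, I would apply the preceding lemma to the morphism $S$: for fixed $n$ and $\eps>0$ there is $k_0$ such that $\|S_{nl}-S_{nk}v_{lk}^*\|<\eps$ for all $l\succ k\succ k_0$. Composing on the right with $T_{li}$ and using $v_{lk}^*T_{li}=T_{ki}$ gives $\|S_{nl}T_{li}-S_{nk}T_{ki}\|\leq\eps\|T\|$; passing through a common upper bound (provided by directedness) handles incomparable pairs, so $\{S_{nk}T_{ki}\}_k$ is Cauchy in the finite-dimensional space $\CC(X_i,Z_n)$. The limit $(ST)_{ni}$ thus exists and inherits the bound $\|(ST)_{ni}\|\leq\|S\|\|T\|$, and the identities $w_{nm}^*(ST)_{ni}=(ST)_{mi}$ and $(ST)_{nj}u_{ji}=(ST)_{ni}$ follow by pulling the limit through composition with the fixed morphisms $w_{nm}^*$ and $u_{ji}$, using $w_{nm}^*S_{nk}=S_{mk}$ and $T_{kj}u_{ji}=T_{ki}$. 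Hence $ST$ is a morphism of ind-objects.

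For associativity, let $R\colon Z_*\to W_*$ be a third morphism with $W_*=\{p_{qp}\colon W_p\to W_q\}$. Fixing $p$ and $i$, the strategy is to show that both $((RS)T)_{pi}$ and $(R(ST))_{pi}$ are well approximated by the mixed expression $R_{pn}S_{nk}T_{ki}$ for appropriately chosen $(n,k)$. Given $\eps>0$, I would pick $k$ with $\|((RS)T)_{pi}-(RS)_{pk}T_{ki}\|<\eps$; then $n$, using the definitions of $(RS)_{pk}$ and of $(R(ST))_{pi}$, so that both $\|(RS)_{pk}T_{ki}-R_{pn}S_{nk}T_{ki}\|<\eps$ and $\|R_{pn}(ST)_{ni}-(R(ST))_{pi}\|<\eps$ hold; and finally $k'\succ k$ with $\|R_{pn}(ST)_{ni}-R_{pn}S_{nk'}T_{k'i}\|<\eps$. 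What remains is to control $\|R_{pn}S_{nk'}T_{k'i}-R_{pn}S_{nk}T_{ki}\|$, and using $S_{nk'}v_{k'k}=S_{nk}$ this reduces to bounding $\|T_{k'i}-v_{k'k}T_{ki}\|$.

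To estimate this last quantity I would prove a dual form of the preceding lemma: for fixed $i$, the net $\{T_{ki}^*T_{ki}\}_k$ is increasing in $\End_\CC(X_i)$ (since $v_{k'k}^*T_{k'i}=T_{ki}$ implies $T_{ki}^*T_{ki}\leq T_{k'i}^*T_{k'i}$) and bounded, hence norm convergent; and the algebraic identity $\|T_{k'i}-v_{k'k}T_{ki}\|^2=\|T_{k'i}^*T_{k'i}-T_{ki}^*T_{ki}\|$, proved by the same expansion as in the preceding lemma, makes this norm arbitrarily small once $k,k'$ are past a threshold depending on $i$. Insisting that the original $k$ also lie past this threshold delivers a bound by a constant multiple of $\eps$ (with constants depending on $\|R\|,\|S\|,\|T\|$) on $\|((RS)T)_{pi}-(R(ST))_{pi}\|$, yielding associativity. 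The principal obstacle is precisely the entanglement of quantifiers in this argument: the index $n$ witnessing $(RS)_{pk}\approx R_{pn}S_{nk}$ depends on $k$, while the index $k'$ witnessing $(ST)_{ni}\approx S_{nk'}T_{k'i}$ depends in turn on $n$, so no single pair $(n,k)$ serves both approximations simultaneously. The introduction of the auxiliary $k'\succ k$, together with the dual form of the preceding lemma, is the device needed to bridge this gap.
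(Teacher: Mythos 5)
Your argument is correct and essentially the paper's: the convergence part is identical, and your ``dual form'' of the preceding lemma (obtained from the increasing bounded net $T_{ki}^*T_{ki}$ and the C$^*$-identity) is precisely what the paper gets by applying that lemma to the adjoint morphism $T^*=(T_{ki}^*)_{i,k}$. Your $\varepsilon$-chain through the mixed term $R_{pn}S_{nk}T_{ki}$, with the auxiliary index $k'$, is an explicit rendering of the paper's observation that both iterated limits coincide with the joint limit $\lim_{n,k}R_{pn}S_{nk}T_{ki}$.
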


\bp
With $X_*, Y_*, Z_*$ as above, consider morphisms $T\colon X_* \to Y_*$ and $S\colon Y_* \to Z_*$. By the previous lemma, for fixed $i$ and $n$ we can find $k_0$ such that for all $l\succ k\succ k_0$ the morphism $S_{nl}$ is close to $S_{nk}v_{lk}^*$. But then $S_{nl}T_{li}$
is close to
$$
S_{nk}v_{lk}^*T_{li}=S_{nk}T_{ki}.
$$
It follows that the net $\{S_{nk}T_{ki}\}_k$ is convergent. Therefore the composition $ST$ is well-defined.

Assume now we are given one more morphism $R\colon Z_* \to \{t_{qp}\colon W_p\to\ W_q\}_{p\prec q}$. By definition  we have
$$
[R(ST)]_{pi}=\lim_n\lim_k R_{pn}S_{nk}T_{ki}.
$$
As above, by the previous lemma we can find $n_0$ such that $R_{pm}$ is close to $R_{pn}w_{mn}^*$ for $m\succ n\succ n_0$. Similarly, applying the lemma to the morphism $T^*=(T^*_{ki})_{i,k}$, we can find $k_0$ such that $T_{li}$ is close to $v_{lk}T_{ki}$ for $l\succ k\succ k_0$. Then $R_{pm}S_{ml}T_{li}$ is close to $R_{pn}S_{nk}T_{ki}$. It follows that
$$
[R(ST)]_{pi}=\lim_{n,k}R_{pn}S_{nk}T_{ki}.
$$
In a similar way we get the same expression for $[(RS)T]_{pi}$.
\ep

We denote by $\indC$ the category of ind-objects of $\CC$. It is easy to see that this is a C$^*$-category. Moreover, the simple objects of $\CC$ remain simple in $\indC$. In particular, if $X \in \CC$ is irreducible and $Y_*$ is any ind-object, the morphism set $\Mor_\indC(X, Y_*)$ is a Hilbert space, with the inner product such that $(S, T)\iota_X = T^* S$.

\begin{remark}
The morphisms between ind-objects $\{u_{ji}\colon X_i\to X_j\}_{i\prec j}$ and $\{v_{lk}\colon Y_k\to Y_l\}_{k\prec l}$ can be described similarly to the purely algebraic case as
$$
\lim_i\colim_k\CC(X_i,Y_k),
$$
where limit and colimit are understood in the topological (Banach space theoretic) sense. One disadvantage of this picture is that  one has to check not only that the compositions but also that the adjoints are well-defined.
\end{remark}

In the following we assume that $\CC$ is essentially small. We will mainly be interested in ind-objects defined by inductive systems of objects of the form $\oplus_{i\in F}X_i$ for finite $F\subset I$ with obvious inclusion maps between them. We denote such ind-objects by $\oplus_{i\in I}X_i$. A morphisms between two such ind-objects $\oplus_{i\in I}X_i$ and $\oplus_{k\in K}Y_k$ is a collection of morphisms $T_{ki}\colon X_i\to Y_k$ such that the morphisms
$$
(T_{ki})_{k\in G, i\in F}\colon \oplus_{i\in F}X_i\to\oplus_{k\in G}Y_k
$$
are uniformly bounded when $F$ and $G$ run over all finite subsets of $I$ and $K$, respectively. In fact, there is no loss of generality in considering only such ind-objects.

\begin{proposition}
Any ind-object of $\CC$ is isomorphic to an object of the form $\oplus_{i\in I}X_i$.
\end{proposition}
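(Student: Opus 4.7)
For each irreducible object $V$ of $\CC$, the plan is to realize $\Mor_\indC(V,X_*)$ as a Hilbert space and use it to parametrize the simple content of $X_*$. The maps $\CC(V,X_i)\to\CC(V,X_j)$, $T\mapsto u_{ji}T$, are isometric (since $u_{ji}^*u_{ji}=\iota_{X_i}$), so the union $\bigcup_i\CC(V,X_i)$ carries a pre-Hilbert structure; let $H_V$ be its Hilbert completion, and denote by $P_k\colon H_V\to\CC(V,X_k)$ the orthogonal projection. The relation $u_{lk}^*P_l=P_k$ shows that for any $h\in H_V$ the family $(P_k(h))_k$ is a morphism $V\to X_*$ of norm $\|h\|$, giving an isometric identification $\Mor_\indC(V,X_*)\cong H_V$.

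Next, I will choose an orthonormal basis $\{h_\alpha^V\}_{\alpha\in A_V}$ of $H_V$ for each irreducible $V$, and set $I=\bigsqcup_V A_V$. Define a candidate ind-object $\tilde X:=\bigoplus_{(V,\alpha)\in I}V$ (one copy of $V$ per basis vector) together with a morphism $\Phi\colon\tilde X\to X_*$ whose matrix coefficients are
\[
\Phi_{k,F}=\sum_{(V,\alpha)\in F}P_k(h_\alpha^V)\,\iota_\alpha^*,
\]
where $F\subset I$ is finite and $\iota_\alpha\colon V\to\bigoplus_{F}V$ is the $\alpha$-summand inclusion. Compatibility with the structure maps of $X_*$ is immediate from $u_{lk}^*P_l=P_k$, and the uniform norm bound $\|\Phi_{k,F}\|\le 1$ follows from viewing each $V$-block of $\Phi_{k,F}^*\Phi_{k,F}$ as a Gram matrix dominated by the projection $P_k$.

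The core of the argument is to verify that $\Phi$ is a unitary isomorphism in $\indC$. For $\Phi^*\Phi=\Id_{\tilde X}$ I will compute $(\Phi^*\Phi)_{F,F'}=\lim_k\Phi_{k,F}^*\Phi_{k,F'}$ directly: the entries $(P_kh_\alpha^V,P_kh_\beta^V)$ converge to $\delta_{\alpha\beta}$ as $P_k\to\iota$ strongly on $H_V$ (by density of $\bigcup_k\CC(V,X_k)$), yielding the matrix coefficients of $\Id_{\tilde X}$. For $\Phi\Phi^*=\Id_{X_*}$ the key observation is that by construction $\Phi$ induces on $\Mor_\indC(V,-)$ for each irreducible $V$ the unitary map $\ell^2(A_V)\to H_V$ sending $\delta_\alpha$ to $h_\alpha^V$. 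Hence the projection $Q:=\Id_{X_*}-\Phi\Phi^*$ acts as zero on $\Mor_\indC(V,X_*)$ for every irreducible $V$; applied to the morphisms $V\hookrightarrow X_i\to X_*$ coming from simple subobjects of $X_i$, this forces each matrix coefficient $Q_{ki}$ to vanish on every irreducible summand of $X_i$, whence $Q=0$. The main obstacle I anticipate is setting up the identification $\Mor_\indC(V,X_*)\cong H_V$ and confirming the compatibility of the orthogonal projections $P_k$ with the formal matrix-element description of ind-morphisms; once this groundwork is in place, the remainder is largely bookkeeping.
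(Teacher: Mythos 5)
Your proof is correct and follows essentially the same route as the paper: you realize $\Mor_\indC(V,X_*)$ as a Hilbert space for each simple $V$, pick orthonormal bases to build an isometry from a formal direct sum of simples, and then prove unitarity by checking that the complementary projection kills all morphisms from simples factoring through the $X_i$ — which is exactly the paper's argument that $u_{k,F}u_{k,F}^*T\to T$ for $T\colon X\to Y_k$. The only (cosmetic) difference is that you treat all isomorphism classes of simples simultaneously, whereas the paper first reduces to the isotypic case and then invokes the isotypic decomposition of the $Y_k$.
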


\bp
Consider an ind-object $Y_* = \{v_{lk}\colon Y_k\to Y_l\}_{k\prec l}$. Fix a simple object $X$ and assume first for simplicity that every $Y_k$ is isotypic to $X$. Consider $H=\Mor_\indC(X,Y_*)$. As remarked before, this is a Hilbert space with inner product such that $T^*S=(S,T)\iota_U$. Choose an orthonormal basis $\{\xi_i\}_{i\in I}$ in $H$. By definition, every basis vector $\xi_i$ is a collection of morphisms $\xi_{ki}\colon X\to Y_k$. For every finite subset $F\subset I$ these morphisms define a morphism $u_{k,F}=(\xi_{ki})_{i\in F}\colon\oplus_{i\in F}U\to Y_k$. The morphisms $u_{k,F}$ define, in turn, a morphism of ind-objects $u\colon\oplus_{i\in I}X\to Y_*$. Orthonormality of the vectors $\xi_i$ implies that $u$ is well-defined and isometric. We claim that $u$ is unitary. This means that for every $k$ the morphisms $u_{k,F}u_{k,F}^*$ converge to the identity morphism of $Y_k$ as $F\to I$. In order to show this, it suffices to check that for every $T\colon U\to Y_k$ we have
$$
\lim_F u_{k,F}u_{k,F}^*T=T.
$$
The morphisms $v_{lk}T\colon X\to Y_l$ define a morphism $\zeta\colon X\to Y_*$. Then, as long as $F$ is large enough, this morphism is close to $\sum_{i\in F}(\zeta,\xi_i)\xi_i=\sum_{i\in F}\xi_i\xi_i^*\zeta$, so that $T$ is close to
$$
\sum_{i\in F}(\xi_i\xi_i^*\zeta)_k=\lim_{l}\sum_{i\in F}\xi_{ki}\xi_{li}^*v_{lk}T=u_{k,F}u_{k,F}^*T,
$$
and our claim is proved. Thus $Y\cong\oplus_{i\in I}X$.

\smallskip

In the general case we can decompose the objects $Y_k$ into isotypic components and repeat the above arguments.
\ep

Let us choose representatives $(U_s)_{s\in\Irr(\CC)}$ of the isomorphism classes of simple objects of $\CC$. Then the proposition and its proof show that any ind-object can be represented by a formal direct sum $\bigoplus_s U_s\otimes H_s$, where~$H_s$ are Hilbert spaces, cf.~\citelist{\cite{MR654325}\cite{MR3121622}}. The morphism space between two such direct sums $\bigoplus_s U_s\otimes H_s$ and $\bigoplus_s U_s\otimes H'_s$ is defined as
$$
\ell^\infty\text{-}\bigoplus_s B(H_s,H'_s).
$$
While this gives a very clear picture of $\indC$, it is not always convenient, as we will see soon, to decompose ind-objects into direct sums of simple objects.

\bigskip

\section{Drinfeld center}
\label{sec:half-braidings}

From now on we assume that $\CC$ is an essentially small strict rigid C$^*$-tensor category satisfying our standard assumptions: $\CC$ is closed under finite direct sums and subobjects, and the unit of $\CC$ is simple.

\subsection{Half-braidings in rigid \texorpdfstring{C$^*$}{C*}-tensor categories}
\label{subsec:half-br}

The category $\indC$ is itself a C$^*$-tensor category: the tensor product of ind-objects defined by inductive systems  $\{u_{ji}\colon X_i\to X_j\}_{i\prec j}$ and $\{v_{lk}\colon Y_k\to Y_l\}_{k\prec l}$ is represented by the inductive system
 $\{u_{ji}\otimes v_{lk}\colon X_i\otimes Y_k\to X_j\otimes Y_l\}_{i\prec j, k\prec l}$. The category $\indC$ is again closed under direct sums and subobjects, and the unit of $\indC$ is simple, but $\indC$ is no longer rigid. More precisely, the only ind-objects that have conjugates are the ones lying in $\CC$.

Consider now the Drinfeld center, or the Drinfeld double, $\ZC$ of $\indC$ in the C$^*$-algebraic sense, meaning that it is constructed using unitary half-braidings. More precisely, recall that given an ind-object~$Z$, a half-braiding on $Z$ is a collection of natural in $X\in\indC$ isomorphisms $c_X\colon X\otimes Z\to Z\otimes X$ such that for all objects $X$ and $Y$ in $\indC$ we have
\begin{equation} \label{eq:halfbr}
c_{X\otimes Y}=(c_X\otimes\iota_Y)(\iota_X\otimes c_Y).
\end{equation}
We will only consider unitary half-braidings.

\begin{remark}\label{rhb}
A unitary half-braiding is completely determined by its values on objects of $\CC$. In other words, having a unitary half-braiding on $Z$ is the same thing as having a collection of natural in $X\in\CC$ unitary isomorphisms $c_X\colon X\otimes Z\to Z\otimes X$ such that for all objects $X$ and $Y$ in $\CC$ identity~\eqref{eq:halfbr} holds.
\end{remark}

By definition, the objects of $\ZC$ are pairs $(Z,c)$, where $Z$ is an ind-object of $\CC$ and $c$ is a unitary half-braiding on $Z$. The morphisms are defined as the morphisms of $\indC$ respecting the half-braidings. Then $\ZC$ is a C$^*$-tensor category with the tensor product
$$
(Z,c)\otimes(Z',c')=(Z\otimes Z', (\iota_Z\otimes c')(c\otimes\iota_{Z'})).
$$
Furthermore, $\ZC$ is braided, with the unitary braiding defined by
$$
\sigma_{(Z,c),(Z',c')}=c'_Z.
$$

The Drinfeld center $\ZCC$ of the category $\CC$ is a full C$^*$-tensor subcategory of $\ZC$. It consists exactly of the objects that have duals: it is not difficult to see that as a dual of $(Z,c)$, with $Z\in\CC$, we can take $(\bar Z,\bar c)$, where $\bar c_X=(c_{\bar X})^\vee$.

\subsection{Regular half-braidings}
\label{sec:regul-half-braid}

Our goal now is to construct a particular element of $\ZC$ playing the role of the regular representation. Fix representatives $(U_s)_{s \in \Irr(\CC)}$ of isomorphism classes of simple objects in~$\CC$. Denote the index corresponding to the class of $\un$ by $e$ and assume for convenience that $U_e=\un$. Consider the ind-object
$$
\Zreg = \Zreg(\CC) =\bigoplus_{s \in \Irr(\CC)} U_s\otimes\bar U_s.
$$

Recall that once standard solutions are fixed, we have anti-multiplicative maps $\CC(X,Y)\to \CC(\bar Y,\bar X)$, $T\mapsto T^\vee$, defined by either of the following identities:
$$
(\iota\otimes T)R_X=(T^\vee\otimes\iota)R_Y,\ \ (T\otimes\iota)\bar R_X=(\iota\otimes T^\vee)\bar R_Y.
$$
Let us now fix  an object $X$ and choose a standard solution $(R_X,\bar R_X)$ of the conjugate equations. Let us also fix once for all standard solutions $(R_s,\bar R_s)$ for $U_s$. For every $s$ and $t$ choose isometries $u^\alpha_{st}\colon U_t\to X\otimes U_s$ such that $\sum_\alpha u^\alpha_{st}u^{\alpha*}_{st}$ is the projection onto the isotypic component of $X\otimes U_s$ corresponding to $U_t$. We then define
$$
c_{X,ts}\colon X\otimes U_s\otimes\bar U_s\to U_t\otimes\bar U_t\otimes X
$$
by
$$
c_{X,ts}=\left(\frac{d_s}{d_t}\right)^{1/2}\sum_\alpha (u^{\alpha*}_{st}\otimes u^{\alpha\vee}_{st}\otimes\iota_X)(\iota_X\otimes\iota_s\otimes\iota_{\bar s}\otimes R_X).
$$
Here $d_s$ and $d_t$ denote the quantum dimensions of $U_s$ and $U_t$, while to define $u^{\alpha\vee}_{st}$ we take as the dual of $X\otimes U_s$ the tensor product $\bar U_s\otimes\bar X$, with the standard solutions defined in the usual way from our fixed standard solutions for $X$ and $U_s$:
$$
R_{X\otimes U_s}=(\iota\otimes R_X\otimes\iota)R_s,\ \ \bar R_{X\otimes U_s}=(\iota\otimes \bar R_s\otimes\iota)\bar R_X.
$$

\begin{lemma}
The morphisms $c_{X,ts}$ depend neither on the choice of isometries $u^\alpha_{st}$ nor on the choice of standard solutions for $X$ (assuming that $R_s$ are fixed). Furthermore, these morphisms are natural in $X$.
\end{lemma}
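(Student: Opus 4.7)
I must verify three things: (a) independence of the chosen isometries $u^\alpha_{st}$; (b) independence of the standard solution for $X$, with the $R_s$ held fixed; and (c) naturality of $X \mapsto c_{X,ts}$. In each case the plan is to isolate the dependence on the chosen data and show that it cancels out of the sum defining $c_{X,ts}$.

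For (a), any two families $\{u^\alpha_{st}\}$ and $\{\tilde u^\alpha_{st}\}$ with the same range projection are orthonormal bases of the Hilbert space $\CC(U_t, X \otimes U_s)$, so they are related by a unitary matrix via $\tilde u^\alpha_{st} = \sum_\beta M_{\alpha\beta} u^\beta_{st}$. Since $T \mapsto T^\vee$ is complex-linear, $\tilde u^{\alpha\vee}_{st} = \sum_\beta M_{\alpha\beta} u^{\beta\vee}_{st}$, while $\tilde u^{\alpha*}_{st} = \sum_\beta \overline{M_{\alpha\beta}} u^{\beta*}_{st}$. The unitarity relation $M^*M = I$ then forces $\sum_\alpha \tilde u^{\alpha*}_{st} \otimes \tilde u^{\alpha\vee}_{st} = \sum_\beta u^{\beta*}_{st} \otimes u^{\beta\vee}_{st}$, which is the only $u$-dependent ingredient.

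For (b), the other standard solutions have the form $R'_X = (T \otimes \iota) R_X$ for a unitary $T \in \End_\CC(\bar X)$, and this induces $R'_{X \otimes U_s} = (\iota_{\bar s} \otimes T \otimes \iota_{X \otimes s}) R_{X \otimes U_s}$. From the characterizing identity $(\iota \otimes u^\alpha_{st}) R_{U_t} = (u^{\alpha\vee}_{st} \otimes \iota) R_{X \otimes U_s}$ and unitarity of $T$, the new dual of $u^\alpha_{st}$ equals $u^{\alpha\vee}_{st}(\iota_{\bar s} \otimes T^*)$. Plugging back into the formula for $c_{X,ts}$, the factor of $T$ on the $\bar X$-slot coming from $R'_X$ and the factor of $T^*$ on the same slot coming from the new dual multiply to $\iota_{\bar X}$, so the value of $c_{X,ts}$ is unchanged.

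For (c), given $f \colon X \to Y$ and isometries $v^\alpha_{st} \colon U_t \to Y \otimes U_s$, simplicity of $U_t$ produces scalars $\lambda^\beta_\alpha$ with $(f \otimes \iota_s) u^\beta_{st} = \sum_\alpha \lambda^\beta_\alpha v^\alpha_{st}$. Since any morphism into the simple object $U_t$ equals its composition with the $U_t$-isotypic projection on the source, the same scalars give $v^{\alpha*}_{st}(f \otimes \iota_s) = \sum_\beta \lambda^\beta_\alpha u^{\beta*}_{st}$. Using the identities $(\iota_{\bar X} \otimes f) R_X = (f^\vee \otimes \iota_Y) R_Y$ and $(f \otimes \iota_s)^\vee = \iota_{\bar s} \otimes f^\vee$, the latter proved by pairing with $R_{X \otimes U_s} = (\iota_{\bar s} \otimes R_X \otimes \iota_s) R_s$, I rewrite $(\iota_t \otimes \iota_{\bar t} \otimes f) c_{X,ts}$ so that the first leg becomes $v^{\alpha*}_{st}(f \otimes \iota_s)$ and the middle leg becomes $v^{\alpha\vee}_{st}$. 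Commuting $f \otimes \iota_s \otimes \iota_{\bar s}$ past $\iota_{Y \otimes s \otimes \bar s} \otimes R_Y$ via bifunctoriality of $\otimes$ then identifies this with $c_{Y,ts}(f \otimes \iota_s \otimes \iota_{\bar s})$.

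The main obstacle is the bookkeeping in (c): the morphism $f$ enters in three distinct guises---directly, through its transpose $f^\vee$, and implicitly through the passage from the $u^\beta_{st}$ to the $v^\alpha_{st}$---and one must track tensor factors carefully to see that these three contributions coalesce into a single morphism on the opposite side. Establishing $(f \otimes \iota_s)^\vee = \iota_{\bar s} \otimes f^\vee$ once and for all is the cleanest preparatory step.
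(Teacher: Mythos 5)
Your argument is correct, and for the first two assertions it is essentially the paper's own proof: there too one notes that any other standard solution is $R_X'=(u\otimes\iota)R_X$ for a unitary $u$, that this turns $u^{\alpha\vee}_{st}$ into $u^{\alpha\vee}_{st}(\iota_{\bar s}\otimes u^*)$, and that consequently $(u^{\alpha\vee}_{st}\otimes\iota_X)(\iota_{\bar s}\otimes R_X)$ is unchanged, while independence of the choice of the $u^\alpha_{st}$ is exactly the unitary-matrix cancellation you spell out. Where you genuinely diverge is naturality. The paper deduces it from the first two statements: since $c_{X,ts}$ is independent of all choices, one may compute $c_{X\oplus Y,ts}$ with isometries adapted to the decomposition, so it suffices to check compatibility with the embeddings $X\to X\oplus Y$ and projections $X\oplus Y\to X$ (a general morphism is then handled by factoring it through $X\oplus Y$ and using linearity). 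You instead verify naturality for an arbitrary $f\colon X\to Y$ by a direct computation: expanding $(f\otimes\iota_s)u^\beta_{st}$ in the orthonormal basis $v^\alpha_{st}$ of $\CC(U_t,Y\otimes U_s)$, noting the adjoint relation $v^{\alpha*}_{st}(f\otimes\iota_s)=\sum_\beta\lambda^\beta_\alpha u^{\beta*}_{st}$, and moving $f$ across the duality leg via $(f\otimes\iota_s)^\vee=\iota_{\bar s}\otimes f^\vee$ together with $(\iota_{\bar X}\otimes f)R_X=(f^\vee\otimes\iota_Y)R_Y$; applying the anti-multiplicative, linear map $\vee$ to the expansion then makes the two sides of the naturality identity match term by term. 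Both routes are sound: yours is heavier on bookkeeping but entirely self-contained, whereas the paper's reduction is shorter but leaves to the reader both the direct-sum compatibility check and the passage from embeddings/projections to general morphisms.
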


\bp The claim that $c_{X,ts}$ does not depend on the choice of $u^\alpha_{st}$ is standard and easy to check. As for dependence on the standard solutions, recall that any other standard solution $(R_X',\bar R_X')$ of the conjugate equations for $X$ has the form $R_X'=(u\otimes\iota)R_X$ and $\bar R_X'=(\iota\otimes u)\bar R_X$ for a unitary $u$. This changes $u^{\alpha\vee}_{st}$ into $u^{\alpha\vee}_{st}(\iota_{\bar s}\otimes u^*)$. But then we see that $(u^{\alpha\vee}_{st}\otimes\iota_X)(\iota_{\bar s}\otimes R_X)$ remains unchanged. More explicitly, a direct computation shows that
$$
(u^{\alpha\vee}_{st}\otimes\iota)(\iota\otimes R_X)=(\iota_{\bar t}\otimes\iota_X\otimes \bar R_s^*)(\iota_{\bar t}\otimes u^\alpha_{st}\otimes\iota_{\bar s}) (R_t\otimes\iota_{\bar s}).
$$

Finally, the last statement of the lemma follows easily from the first two, since in order to prove it, it suffices to check that the morphisms $c_{X,ts}$ respect the embeddings $X\to X\oplus Y$ and projections $X\oplus Y\to X$.
\ep

Note for future reference that
\begin{equation} \label{eq:matrixte}
c_{s,te}=\delta_{st}d_s^{-1/2}(\iota_s\otimes R_s).
\end{equation}
Later, see identity~\eqref{eq:regbr}, we will also obtain the following expression for $c_{X,ts}$:
$$
c_{X,ts}=d_t^{1/2}d_s^{1/2}(\iota_t\otimes\iota_{\bar t}\otimes\iota_X\otimes \bar R^*_s)(\iota_t\otimes p^{\bar U_t\otimes X\otimes U_s}_e\otimes\iota_{\bar s})(\bar R_t\otimes\iota_X\otimes\iota_s\otimes\iota_{\bar s}),
$$
where $p^U_e$ is the projection onto the isotypic component of $U$ corresponding to the unit object.

Observe next that the matrix $(c_{X,ts})_{t,s}$ is row and column finite, so when taking compositions of such matrices we will not have to worry about convergence.

\begin{lemma}
\label{lem:reg-half-br-candid-unitary}
The morphisms $c_{X,ts}$ define a unitary $c_X\colon X\otimes \Zreg\to\Zreg\otimes X$.
\end{lemma}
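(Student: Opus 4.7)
First, I observe that since the matrix $(c_{X,ts})_{t,s}$ is row- and column-finite, for every finite $F\subset\Irr(\CC)$ the partial matrix $(c_{X,ts})_{s\in F}$ assembles into a bona fide morphism in $\CC$ from $X\otimes\bigoplus_{s\in F}U_s\otimes\bar U_s$ to $\bigoplus_{t\in G(F)}U_t\otimes\bar U_t\otimes X$, where $G(F)$ is the finite set of indices $t$ for which $U_t$ occurs as a subobject of some $X\otimes U_s$ with $s\in F$. The only point to check is that these partial morphisms are uniformly bounded in $F$; granted this, they define a morphism $c_X$ in $\indC$. The plan is to obtain the uniform boundedness and the unitarity simultaneously by verifying, at the level of blocks, the identities $c_X^*c_X=\iota$ and $c_Xc_X^*=\iota$.

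For the isometry half, the claim to prove is
$$
\sum_t c_{X,ts}^*\,c_{X,ts'} \;=\; \delta_{s,s'}\,\iota_{X\otimes U_s\otimes\bar U_s}.
$$
I will substitute the defining formula for $c_{X,ts}$ and its adjoint. The dual isometries $u^{\alpha\vee}_{st}$ that appear are the source of the difficulty, and my first move is to use the identity
$$
(u^{\alpha\vee}_{st}\otimes\iota_X)(\iota_{\bar s}\otimes R_X)\;=\;(\iota_{\bar t}\otimes\iota_X\otimes\bar R_s^*)(\iota_{\bar t}\otimes u^\alpha_{st}\otimes\iota_{\bar s})(R_t\otimes\iota_{\bar s})
$$
established in the proof of the previous lemma, together with its adjoint, to eliminate all occurrences of the $u^{\alpha\vee}_{st}$ in favour of the $u^\alpha_{st}$. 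What remains is an expression involving only the direct isometries and the structural morphisms $R_t,\bar R_s, R_t^*,\bar R_s^*$. Applying the conjugate equations to absorb the prefactor $d_s/d_t$, then the orthogonality $u^{\beta*}_{st}u^\alpha_{st}=\delta_{\alpha\beta}\iota_{U_t}$ and completeness $\sum_{t,\alpha}u^\alpha_{st}u^{\alpha*}_{st}=\iota_{X\otimes U_s}$ of these isometries, the sum collapses to the desired $\delta_{s,s'}\iota$: for $s\neq s'$ the mismatch of the $\bar U_s$ and $\bar U_{s'}$ legs forces the result to vanish, while for $s=s'$ the telescoping gives back $\iota_{X\otimes U_s\otimes\bar U_s}$. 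The coisometry identity $\sum_s c_{X,ts}\,c_{X,t's}^*=\delta_{t,t'}\iota$ is obtained by an entirely symmetric calculation, with the roles of the decompositions of $X\otimes U_s$ and of $\bar U_t\otimes X$ interchanged.

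Once the two block identities are established, each partial morphism $c_X^F$ is a partial isometry with $\|c_X^F\|\le 1$, so $c_X$ is well defined as a morphism in $\indC$, and the blockwise identities at once yield $c_X^*c_X=\iota$ and $c_Xc_X^*=\iota$, proving unitarity. The principal obstacle is bookkeeping: one must carefully propagate the substitution coming from the key identity through a composition of six or seven morphisms and keep track of which tensor leg each factor acts on. Once the dual isometries have all been replaced, the remaining manipulations are a mechanical application of the conjugate equations and of the isotypic decomposition of $X\otimes U_s$.
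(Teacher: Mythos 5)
Your strategy is in essence the paper's: write $c_{X,ts}=\sum_\alpha u^{\alpha*}_{st}\otimes w^\alpha_{ts}$ with $w^\alpha_{ts}=\bigl(\tfrac{d_s}{d_t}\bigr)^{1/2}(u^{\alpha\vee}_{st}\otimes\iota_X)(\iota_{\bar s}\otimes R_X)$ and verify the two block identities. But two steps are not covered by the toolkit you invoke. First, in the isometry half, after the substitution the sum does \emph{not} collapse from orthogonality and completeness of the $u^\alpha_{st}$ alone: for $s=s'$ you are left with $\sum_{t,\alpha,\beta}u^{\beta}_{st}u^{\alpha*}_{st}\otimes w^{\beta*}_{ts}w^{\alpha}_{ts}$, and you still have to prove $w^{\beta*}_{ts}w^{\alpha}_{ts}=\delta_{\alpha\beta}\iota_{\bar s}$. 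This does not follow from $u^{\beta*}_{st}u^{\alpha}_{st}=\delta_{\alpha\beta}\iota$ together with the conjugate equations, because a copy of $R_XR_X^*$ (equivalently a partial trace over $\bar X$) is sandwiched between the two isometries. One has to observe that the expression is scalar by simplicity of $\bar U_s$ and compute the scalar by a trace argument using standardness (e.g.\ $\Tr_{\bar t}(T^\vee)=\Tr_t(T)$); this is exactly identity~\eqref{eorth1}, the computational heart of the isometry half, and it is glossed over in your sketch.

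The more serious gap is the claim that the coisometry identity follows by an ``entirely symmetric calculation''. For $t=t'$ that identity reduces to
$$
\sum_{s,\alpha}w^{\alpha}_{ts}w^{\alpha*}_{ts}=\iota_{\bar U_t\otimes X},
$$
i.e.\ to the completeness of the family $\{w^\alpha_{ts}\}_{s,\alpha}$ as a decomposition of $\bar U_t\otimes X$. In the isometry direction the analogous completeness $\sum_{t,\alpha}u^\alpha_{st}u^{\alpha*}_{st}=\iota_{X\otimes U_s}$ holds by construction, since the $u^\alpha_{st}$ were \emph{chosen} to decompose $X\otimes U_s$. The $w^\alpha_{ts}$, however, were not chosen as a decomposition of $\bar U_t\otimes X$: they are produced from the $u$'s by the Frobenius-type map $u\mapsto(u^{\vee}\otimes\iota_X)(\iota\otimes R_X)$, and one must prove that for each $s$ they span all of $\CC(\bar U_s,\bar U_t\otimes X)$, so that $\sum_\alpha w^\alpha_{ts}w^{\alpha*}_{ts}$ is the full $\bar U_s$-isotypic projection. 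This requires Frobenius reciprocity (the map $\CC(U_t,X\otimes U_s)\to\CC(\bar U_s,\bar U_t\otimes X)$ is a linear isomorphism, hence carries a basis to a basis), combined with the orthogonality from the first point; it is not obtained by merely interchanging the roles of the two decompositions, and it is precisely the extra ingredient in the paper's proof of unitarity.
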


\bp Let us first check that the morphisms $c_{X,ts}$ define an isometry $c_X\colon X\otimes\Zreg\to \Zreg\otimes X$. It suffices to check that for all $r$ and $s$, we have
$$
\sum_t c_{X,tr}^*c_{X,ts}=\delta_{rs}\iota_{X\otimes U_s\otimes\bar U_s}.
$$
By definition this means that we have to check that
$$
\frac{(d_rd_s)^{1/2}}{d_t}\sum_{t,\alpha,\beta}(\iota\otimes\Tr_{\bar X})(u^\alpha_{rt}u^{\beta*}_{st}\otimes u^{\alpha\vee*}_{rt}u^{\beta\vee}_{st})
=\delta_{rs}\iota.
$$
For this, in turn, it suffices to check that if $U_t\prec X\otimes U_s$, then
\begin{equation} \label{eorth1}
\frac{(d_rd_s)^{1/2}}{d_t}(\iota\otimes\Tr_{\bar X})(u^{\alpha\vee*}_{rt}u^{\beta\vee}_{st})=\delta_{rs}\delta_{\alpha\beta}\iota_{\bar s}.
\end{equation}
Since $\bar U_r$ and $\bar U_s$ are simple, the left hand side is zero if $r\ne s$. If $r=s$, the left hand side is a scalar multiple of the identity morphism. Therefore in this case in order to check the identity we can take categorical traces of both sides. Then the right hand side gives $\delta_{\alpha\beta}d_s$, while the left hand side gives
$$
\frac{d_s}{d_t}\Tr_{\bar U_s\otimes\bar X}( u^{\alpha\vee*}_{st}u^{\beta\vee}_{st} )
=\frac{d_s}{d_t}\Tr_{\bar t}(u^{\beta\vee}_{st}u^{\alpha\vee*}_{st})
=\frac{d_s}{d_t}\Tr_{\bar t}((u^{\alpha*}_{st}u^{\beta}_{st})^\vee)=\delta_{\alpha\beta}d_s,
$$
which is what we need.

\medskip

We next check that $c_X$ is unitary. We have to show that for all $t$ and $\tau$ we have
$$
\sum_s c_{X,ts}c_{X,\tau s}^*=\delta_{t\tau}\iota_{U_t\otimes\bar U_t\otimes X}.
$$
Since $u^{\alpha*}_{st}u^\beta_{s\tau}=\delta_{t\tau}\delta_{\alpha\beta}\iota_{U_t}$, the above identity is immediate for $t\ne\tau$, while for $t=\tau$ the left hand side equals
$$
\sum_{s,\alpha}\frac{d_s}{d_t}(\iota_{t}\otimes u^{\alpha\vee}_{st}\otimes\iota_X)(\iota_{t}\otimes\iota_{\bar s}\otimes R_X R_X^*)(\iota_{t}\otimes u^{\alpha\vee*}_{st}\otimes\iota_X).
$$
Therefore in order to finish the proof it suffices to show that for every $s$, the morphism
$$
\sum_\alpha\frac{d_s}{d_t}(u^{\alpha\vee}_{st}\otimes\iota_X)(\iota_{\bar s}\otimes R_XR_X^*)(u^{\alpha\vee*}_{st}\otimes\iota_X)
$$
is the projection onto the isotypic component of $\bar U_t\otimes X$ corresponding to $\bar U_s$. For this, observe that by Frobenius reciprocity the morphisms
$$
w^\alpha_{ts}=\left(\frac{d_s}{d_t}\right)^{1/2}(u^{\alpha\vee}_{st}\otimes\iota_X)(\iota_{\bar s}\otimes R_X)\colon \bar U_s\to \bar U_t\otimes X
$$
form a basis in $\CC(\bar U_s,\bar U_t\otimes X)$. We claim that we also have the orthogonality $w^{\alpha*}_{ts}w^\beta_{ts}=\delta_{\alpha\beta}\iota$ with respect to this relation. By definition of the categorical trace this is equivalent to
$$
\frac{d_s}{d_t}(\iota\otimes\Tr_{\bar X})(u^{\alpha\vee*}_{st}u^{\beta\vee}_{st})=\delta_{\alpha\beta}\iota_{\bar U_s}.
$$
But this follows from \eqref{eorth1}, so our claim is proved. We conclude that $\sum_\alpha w^{\alpha}_{ts}w^{\alpha*}_{ts}$ is the projection onto the isotypic component of $\bar U_t\otimes X$ corresponding to $\bar U_s$.
\ep

\begin{theorem} \label{them:reg-half-braiding}
The unitaries $c_X\colon X\otimes\Zreg\to\Zreg\otimes X$ form a half-braiding on $\Zreg$.
\end{theorem}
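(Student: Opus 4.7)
By Remark \ref{rhb} it suffices to verify the half-braiding identity
$$
c_{X\otimes Y}=(c_X\otimes\iota_Y)(\iota_X\otimes c_Y)
$$
for $X,Y\in\CC$, and by additivity in $X$ and $Y$ we may further assume that $X$ and $Y$ are simple. I will compute the matrix coefficients of both sides between the components $U_s\otimes\bar U_s$ and $U_t\otimes\bar U_t$ of $\Zreg$ and show they agree.

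The plan is to build an explicit decomposition of $X\otimes Y\otimes U_s$ into simples out of successive decompositions and then exploit the independence of $c_{X\otimes Y,ts}$ from the choice of isometries established in the previous lemma. Concretely, for each $r$ with $U_r\prec Y\otimes U_s$ fix isometries $v^\beta_{sr}\colon U_r\to Y\otimes U_s$ exhausting the $U_r$-isotypic component, and for each $t$ with $U_t\prec X\otimes U_r$ fix isometries $u^\alpha_{rt}\colon U_t\to X\otimes U_r$ exhausting the $U_t$-isotypic component. Then the collection
$$
w^{r,\alpha,\beta}_{st}=(\iota_X\otimes v^\beta_{sr})u^\alpha_{rt}\colon U_t\to X\otimes Y\otimes U_s
$$
consists of isometries with mutually orthogonal ranges, and $\sum_{r,\alpha,\beta}w^{r,\alpha,\beta}_{st}w^{r,\alpha,\beta*}_{st}$ is the projection onto the $U_t$-isotypic component of $X\otimes Y\otimes U_s$. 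Using these isometries, together with the standard solutions $R_{X\otimes Y}=(\iota_{\bar Y}\otimes R_X\otimes\iota_Y)R_Y$ and the anti-multiplicativity of~$\vee$ (so that $w^{r,\alpha,\beta\vee}_{st}=u^{\alpha\vee}_{rt}(v^{\beta\vee}_{sr}\otimes\iota_{\bar X})$), the defining formula for $c_{X\otimes Y,ts}$ becomes
$$
c_{X\otimes Y,ts}=\Bigl(\frac{d_s}{d_t}\Bigr)^{1/2}\sum_{r,\alpha,\beta}\bigl(u^{\alpha*}_{rt}(\iota_X\otimes v^{\beta*}_{sr})\otimes u^{\alpha\vee}_{rt}(v^{\beta\vee}_{sr}\otimes\iota_{\bar X})\otimes\iota_{X\otimes Y}\bigr)(\iota_{X\otimes Y\otimes s\otimes\bar s}\otimes(\iota_{\bar Y}\otimes R_X\otimes\iota_Y)R_Y).
$$

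On the other hand, the composition $(c_X\otimes\iota_Y)(\iota_X\otimes c_Y)$ is a sum of matrix products, whose $(t,s)$-component is $\sum_r (c_{X,tr}\otimes\iota_Y)(\iota_X\otimes c_{Y,rs})$. Substituting the definitions of $c_{X,tr}$ and $c_{Y,rs}$, the prefactor telescopes as $(d_r/d_t)^{1/2}(d_s/d_r)^{1/2}=(d_s/d_t)^{1/2}$, and each summand over $r,\alpha,\beta$ rearranges, after inserting $R_X$ between the $U_r$- and $\bar U_r$-legs and using interchange of tensor legs, into exactly the same expression as above. The main obstacle, and really the only substantive step, is this rearrangement: one has to carefully track where the factors $\iota_X,\iota_Y,\iota_{\bar X},\iota_{\bar Y}$ live and use naturality to slide $v^{\beta*}_{sr}$ past $\iota_X$ and $v^{\beta\vee}_{sr}$ past $u^{\alpha\vee}_{rt}$ to match the two displays.

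Once the two expressions are identified termwise after summation over $r,\alpha,\beta$, the identity $c_{X\otimes Y,ts}=\sum_r(c_{X,tr}\otimes\iota_Y)(\iota_X\otimes c_{Y,rs})$ holds for all $t,s$, which is precisely the required half-braiding equation between $X\otimes Y\otimes\Zreg$ and $\Zreg\otimes X\otimes Y$.
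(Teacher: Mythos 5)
Your proposal is correct and follows essentially the same route as the paper: choosing the composed isometries $(\iota_X\otimes v^\beta_{sr})u^\alpha_{rt}$, invoking the independence of $c_{X\otimes Y,ts}$ from the choice of isometries, using $R_{X\otimes Y}=(\iota_{\bar Y}\otimes R_X\otimes\iota_Y)R_Y$ and $w^\vee=u^{\alpha\vee}_{rt}(v^{\beta\vee}_{sr}\otimes\iota_{\bar X})$, and matching the result termwise with $\sum_r(c_{X,tr}\otimes\iota_Y)(\iota_X\otimes c_{Y,rs})$. The rearrangement you flag as the substantive step is exactly the computation the paper carries out, and the reduction to simple $X,Y$ is harmless but not needed.
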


\bp
It remains only to check identity~\eqref{eq:halfbr}. In order to compute $c_{X\otimes Y}$, choose isometries $u^\alpha_{rt}\colon U_t\to X\otimes U_r$ as before, and similarly choose isometries $v^\beta_{sr}\colon U_r\to Y\otimes U_s$. Then using the isometries
$$
(\iota_X\otimes v^\beta_{sr})u^\alpha_{rt}\colon U_t\to X\otimes Y\otimes U_s
$$
in the definition of $c_{X\otimes Y,ts}$, we get
\begin{align*}
c_{X\otimes Y,ts}&=\left(\frac{d_s}{d_t}\right)^{1/2}\sum_{r,\alpha,\beta}(u^{\alpha*}_{rt}(\iota_X\otimes v^{\beta*}_{sr})\otimes u^{\alpha\vee}_{rt}(v^{\beta\vee}_{sr}\otimes\iota_{\bar X})\otimes\iota_{X\otimes Y})\\
&\qquad\qquad\qquad\qquad\qquad\qquad (\iota_{X\otimes Y\otimes U_s\otimes\bar U_s \otimes\bar Y}\otimes R_X\otimes\iota_Y)(\iota_{X\otimes Y\otimes U_s\otimes\bar U_s}\otimes R_Y)\\
&=\sum_r(c_{X,tr}\otimes\iota_Y)(\iota_X\otimes c_{Y,rs}).
\end{align*}
This means that $c_{X\otimes Y}=(c_X\otimes\iota_Y)(\iota_X\otimes c_Y)$.
\ep

We will often denote the object $(\Zreg, c)$ by just one symbol $\Zreg$ or $\Zreg(\CC)$.

\subsection{Unitary half-braidings and amenability}
We have shown that $\ZC$ is always rich. Expanding on ideas of Longo and Roberts~\cite{MR1444286}*{Section~5}, we will now show that generally this is not the case for~$\ZCC$, so we do need to consider ind-objects in order to construct nontrivial unitary half-braidings. These considerations are not going to be used in the subsequent sections, so we will be somewhat brief.

\smallskip

For every object $X$ in $\CC$ denote by $\Gamma_X=(a^X_{st})_{s,t}\in B(\ell^2(\Irr(\CC)))$ the matrix describing decompositions of $X\otimes Y$ into simple objects, so $a^X_{st}=\dim \CC(U_s,X\otimes U_t)$. Then $\norm{\Gamma_X}\le d(X)$, and the category $\CC$ is called amenable if $\|\Gamma_X\|=d(X)$ for all objects $X$ in $\CC$. Let us say that an object $X$ is amenable, if the full rigid C$^*$-tensor subcategory of $\CC$ generated by $X$ is amenable.
We remark that it is not difficult to show, see e.g.~the proof of~\cite{MR1644299}*{Proposition~4.8}, that the norm of the matrix $\Gamma_X$ remains the same if we replace $\CC$ by any full rigid C$^*$-tensor subcategory of $\CC$ containing $X$. Therefore $\CC$ is amenable if and only if every object of $\CC$ is amenable.

\begin{theorem}
Assume that for a rigid C$^*$-tensor category $\CC$ there exists a unitary half-braiding on an object $X\in\CC$. Then $X$ is amenable.
\end{theorem}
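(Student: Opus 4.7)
The plan is to reduce to the subcategory generated by $X$, and then exploit the unitary half-braiding to establish the lower bound $\|\Gamma_X\|\ge d(X)$ (the reverse bound being automatic), following the approach of~\cite{MR1444286}*{Section~5}.

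For the reduction, the half-braiding $c$ is a family of natural unitaries $c_Y\colon Y\otimes X\to X\otimes Y$ indexed by $Y\in\indC$. Restricting to $Y$ in the ind-category of the full rigid C$^*$-tensor subcategory $\CC_X\subset\CC$ generated by $X$ yields a unitary half-braiding on $X$ inside $\CC_X$. As noted immediately before the theorem, $\|\Gamma_X\|$ is unaffected by this replacement, so I may assume $\CC=\CC_X$.

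For the main argument, the formal vector $(d_s)_{s\in\Irr(\CC)}$ is a right eigenvector of $\Gamma_X$ with eigenvalue $d(X)$, coming from multiplicativity of the quantum dimension, but it typically lies outside $\ell^2(\Irr(\CC))$. The role of the unitary half-braiding is to upgrade the tautological abstract isomorphism $X\otimes U_s\cong U_s\otimes X$ (which holds by Frobenius reciprocity alone) to a natural unitary isomorphism compatible with the monoidal structure via the hexagon relation $c_{Y\otimes Z}=(c_Y\otimes\iota)(\iota\otimes c_Z)$. Following Longo--Roberts, one uses this coherence together with unitarity to construct finite subsets $F_n\subset\Irr(\CC)$ and unit vectors $\xi_n\in\ell^2(\Irr(\CC))$ supported on $F_n$ (essentially truncations of a square root of $(d_s)$) satisfying $\|\Gamma_X\xi_n-d(X)\xi_n\|_2\to 0$, hence $\|\Gamma_X\|\ge d(X)$.

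The main obstacle is the construction of the F\o lner-type truncations $\xi_n$. Here unitarity of $c$ is essential: in braided but not unitarily braided C$^*$-categories (such as $\Rep(\mathrm{SU}(2))_q$ for real $q>1$) the theorem genuinely fails, since a non-unitary braiding exists while amenability does not. The positivity supplied by unitarity of $c$, combined with the multiplicativity encoded in the hexagon, allows one to control the ``boundary'' contributions of the approximate-eigenvector equation by the ``interior'' ones and thus produce the desired estimate. Since the paper itself indicates brevity here, the detailed argument should be a modest adaptation of~\cite{MR1444286}*{Section~5} to the half-braided (as opposed to symmetric) setting.
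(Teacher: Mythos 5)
There is a genuine gap, and it sits exactly at the step you defer: the construction of the vectors $\xi_n$. Saying that ``the positivity supplied by unitarity of $c$, combined with the multiplicativity encoded in the hexagon, allows one to control the boundary contributions'' is not an argument — no mechanism is given for turning a unitary half-braiding on the single object $X$ into F\o lner-type truncations of the (typically non-$\ell^2$) eigenvector $(d_s)_s$. The Longo--Roberts argument you invoke uses a permutation symmetry on the whole category, not a unitary half-braiding on one object, and its adaptation here is not a ``modest'' one; indeed the paper does not attempt it. Instead, the paper's proof replaces $X$ by $X\oplus\bar X$, considers the Poisson boundary $\PP$ of $\CC$ with respect to the measure defined by the normalized trace on $X$, and uses the half-braiding $c$ (applied to the harmonic elements realized inside the finite von Neumann algebras $\NN_Z$) to show that every harmonic element is trivial, i.e.\ $\xi_{X\otimes Y}=\iota_X\otimes\xi_Y$; triviality of the Poisson boundary then gives amenability by \cite{arXiv:1405.6572}*{Theorem~5.7}. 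So the heart of your proposal — the approximate-eigenvector estimate — is precisely what is missing, and there is no indication it can be obtained directly.

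A secondary issue: even granting $\norm{\Gamma_X}\ge d(X)$, this alone does not prove the statement. Amenability of $X$ is defined as amenability of the full subcategory $\CC_X$ generated by $X$, i.e.\ $\norm{\Gamma_Y}=d(Y)$ for \emph{all} objects $Y$ of $\CC_X$. To pass from a single Kesten-type equality to amenability of the category you would need to work with a self-conjugate generating object (e.g.\ $X\oplus\bar X$, which does inherit a unitary half-braiding) and invoke the Hiai--Izumi characterization of amenable fusion algebras; your write-up addresses neither the self-conjugacy/generation hypothesis nor this reduction. As it stands the proposal is a plausible-sounding plan whose crucial analytic step and whose final reduction are both absent.
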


\bp
We may assume that $\CC$ is generated by $X$ as a rigid C$^*$-tensor category. Replacing, if necessary, $X$ by~$X\oplus \bar X$, we may also assume that every simple object embeds into $X^{\otimes n}$ for some $n\ge1$. Consider the Poisson boundary $\PP$ of $\CC$ with respect to the probability measure on $\Irr(\CC)$ defined by the normalized categorical trace on $X$~\cite{arXiv:1405.6572}. We will prove that the Poisson boundary is trivial, which by~\cite{arXiv:1405.6572}*{Theorem~5.7} implies amenability of $\CC$.

We view $\CC$ as a C$^*$-tensor subcategory of $\PP$. By definition, the elements of $\PP(Z)$ are bounded collections~$\xi=(\xi_Y)_Y$ of natural in $Y$ morphisms $Y\otimes Z\to Y\otimes Z$ that are harmonic, meaning that
$$
(\tr_X\otimes\iota)(\xi_{X\otimes Y})=\xi_Y\ \ \text{for all objects}\ \ Y\in\CC.
$$
They can be realized as follows~\cite{arXiv:1405.6572}*{Proposition~3.3}. The algebras $\NN_Z^{(n)}=\CC(X^{\otimes n}\otimes Z)$, equipped with the normalized categorical traces and the embeddings $T\mapsto\iota_X\otimes T$,  form an inductive system. In the limit we get a finite von Neumann algebra $\NN_Z$. For any $\xi\in\PP(Z)$, the elements $\xi^{[n]}=\xi_{X^{\otimes n}}\in\NN^{(n)}_Z$ converge in the strong$^*$ operator topology to an element $\xi^{[\infty]}\in\NN_Z$, and the map $\xi\mapsto\xi^{[\infty]}$ gives an algebra embedding of $\PP(Z)$ into~$\NN_Z$.

Take $\xi\in\PP(Z)$. Then $(\iota_X\otimes\xi)^{[n]}=\xi_{X^{\otimes(n+1)}}$. On the other hand, if $c$ is a unitary half-braiding on $X$, then
$$
(c_{Z}(\xi\otimes\iota_X)c_{Z}^*)^{[n]}=(\iota\otimes c_{Z})(\xi_{X^{\otimes n}}\otimes\iota)(\iota\otimes c_{Z}^*)
=(c^*_{X^{\otimes n}}\otimes\iota)
(\iota\otimes\xi_{X^{\otimes n}})(c_{X^{\otimes n}}\otimes\iota).
$$
Since $\iota_X\otimes\xi_{X^{\otimes n}}$ is the image of $\xi^{[n]}$ under the embedding $\NN^{(n)}_Z\hookrightarrow \NN^{(n+1)}_Z$, as $n$ grows, the last expression becomes close in the trace-norm to
$$
(c^*_{X^{\otimes n}}\otimes\iota)\xi^{[n+1]}(c_{X^{\otimes n}}\otimes\iota)
=(c^*_{X^{\otimes n}}\otimes\iota)\xi_{X^{\otimes(n+1)}}(c_{X^{\otimes n}}\otimes\iota)=\xi_{X^{\otimes(n+1)}}=(\iota_X\otimes\xi)^{[n]}.
$$
It follows that $c_{Z}(\xi\otimes\iota_X)c_{Z}^*=\iota_X\otimes\xi$, that is,
$$
(\iota_Y\otimes c_{Z})(\xi_Y\otimes\iota_X)(\iota_Y\otimes c_{Z}^*)=\xi_{Y\otimes X}\ \ \text{for all}\ \ Y.
$$
The left hand side equals $(c^*_{Y}\otimes\iota_Z)(\iota_X\otimes\xi_Y)(c_{Y}\otimes\iota_Z)$. Therefore by conjugating by $c_{Y}\otimes\iota_Z$ we get
$$
\iota_X\otimes\xi_Y=\xi_{X\otimes Y}\ \ \text{for all}\ \ Y.
$$
A simple induction shows then that the same identity holds with $X$ replaced by $X^{\otimes n}$, hence it holds for any simple object $U$ in place of $X$. Letting $Y=\un$ we then get $\iota_U\otimes\xi_\un=\xi_U$. Thus, under our embedding of~$\CC(Z)$ into~$\PP(Z)$, we have $\xi=\xi_\un\in\CC(Z)$.
\ep

In particular, if $\CC$ admits a unitary braiding, or even weaker, if $\CC$ is generated as a rigid C$^*$-tensor category by objects admitting unitary half-braidings, then $\CC$ is amenable. This is a categorical analogue of the fact that abelian groups are amenable.

\begin{example}
If $\CC=\Rep G$ is the representation category of a compact quantum group $G$, then a necessary condition for amenability of $U\in\Rep G$ is the equality $\dim U=\dim_q U$. Therefore if $\dim U<\dim_q U$, there exists no unitary half-braiding on $U$.
\end{example}

\bigskip

\section{Representations of the character algebra}
\label{sec:character-algebra}

We continue to assume that $\CC$ is a rigid C$^*$-tensor category as in the previous section.

\subsection{From half-braidings to representations}

Recall that there is a semiring structure on the semigroup $\Z_+[\Irr(\CC)]$, with the product defined by
$$
[U] \cdot [V] = \sum_{s\in\Irr(\CC)} \dim \CC(U_s, U \otimes V) [U_s].
$$
The operation $[U] \mapsto [\bar{U}]$ extends to an anti-multiplicative involution of this semiring. We embed the involutive semiring $\Z_+[\Irr(\CC)]$  into the involutive $\C$-algebra $\C[\Irr(\CC)]$.

Suppose that $(c_X\colon X \otimes Z \to Z \otimes X)_{X \in \CC}$ is a unitary half-braiding on an ind-object $Z$. We want to define a $*$-representation of $\C[\Irr(\CC)]$ on the Hilbert space $\Mor_{\indC}(\un, Z)$ with scalar product defined by $(\xi,\zeta)\iota=\zeta^*\xi$. Let $X$ be an object in $\CC$ and $(R_X, \bar{R}_X)$ be a standard solution of the conjugate equations for~$X$. If $\xi \in \Mor_\indC(\un, Z)$, we obtain a new element in the same morphism set by
$$
\pi_{(Z,c)}([X])\xi = (\iota_Z \otimes \bar{R}_X^*)(c_X \otimes \iota_{\bar{X}})(\iota_X \otimes \xi \otimes \iota_{\bar{X}})\bar{R}_X\colon \un \to X \otimes \bar{X} \to X \otimes Z \otimes \bar{X} \to Z \otimes X \otimes \bar{X} \to Z.
$$
Since any other choice of $(R_X, \bar{R}_X)$ is of the form $((T \otimes \iota) R_X, (\iota \otimes T) \bar{R}_X)$ for some unitary $T$, the above definition does not depend on the choice of a standard solution. In order to simplify the notation we write~$\pi_Z$ instead of $\pi_{(Z,c)}$ when there is no danger of confusion.

It is clear that $\|\pi_Z([X])\|\le \|\bar R_X\|^2=d(X)$. It is also easy to see that $\pi_Z([X])$ is additive in $X$. The half-braiding axiom~\eqref{eq:halfbr} implies that $\pi_Z([X])$ is multiplicative in $X$. Thus we obtain a representation $\pi_Z$ of the algebra $\C[\Irr(\CC)]$ on $\Mor_\indC(\un, Z)$.

Next we want to check the compatibility with the involution. For this we need the following lemma, which we will also repeatedly use later.

\begin{lemma} \label{lem:braidinv}
We have $(\iota_{\bar X}\otimes c_X)(R_X\otimes\iota_Z)=(c^*_{\bar X}\otimes\iota_X)(\iota_Z\otimes R_X)$.
\end{lemma}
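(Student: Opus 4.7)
The plan is to derive the identity from two basic properties of the unitary half-braiding: naturality in the first argument and the tensor product axiom \eqref{eq:halfbr}, together with the fact that $c_\un$ is the identity on $Z$ (under the identification $\un \otimes Z = Z = Z \otimes \un$, which is available since we work in a strict category).

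First I would apply naturality of $c$ to the morphism $R_X \colon \un \to \bar X \otimes X$. This yields
\begin{equation*}
c_{\bar X \otimes X}(R_X \otimes \iota_Z) = (\iota_Z \otimes R_X)\, c_\un = \iota_Z \otimes R_X,
\end{equation*}
where the second equality uses $c_\un = \iota_Z$. Next I would expand the left-hand side using the half-braiding axiom~\eqref{eq:halfbr}:
\begin{equation*}
c_{\bar X \otimes X} = (c_{\bar X}\otimes\iota_X)(\iota_{\bar X}\otimes c_X).
\end{equation*}
Combining these two displays gives
\begin{equation*}
(c_{\bar X}\otimes\iota_X)(\iota_{\bar X}\otimes c_X)(R_X\otimes\iota_Z) = \iota_Z \otimes R_X,
\end{equation*}
and multiplying both sides on the left by the unitary $c^*_{\bar X}\otimes\iota_X$ produces exactly the asserted identity. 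Strictly speaking one must invoke Remark~\ref{rhb} to ensure $c_{\bar X\otimes X}$ is defined with the expected formula even though it is $c$ evaluated on a tensor product in $\CC$, but this is immediate.

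I do not foresee any real obstacle: the whole statement is a one-line consequence of naturality plus the hexagon-like identity~\eqref{eq:halfbr}. The only thing to be careful about is the appearance of $c_\un$, which must be verified to be $\iota_Z$; this follows from applying the half-braiding axiom with $X = Y = \un$ and using that $\un$ is a unit so that $c_\un$ is invertible and satisfies $c_\un = c_\un\, c_\un$, forcing $c_\un = \iota_Z$.
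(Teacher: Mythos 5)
Your proposal is correct and follows essentially the same route as the paper: apply naturality of $c$ to $R_X$ together with $c_\un=\iota$ to get $c_{\bar X\otimes X}(R_X\otimes\iota_Z)=\iota_Z\otimes R_X$, expand $c_{\bar X\otimes X}$ via the axiom~\eqref{eq:halfbr}, and cancel the unitary $c_{\bar X}$. The extra verification that $c_\un=\iota_Z$ (from $c_\un=c_\un^2$ and invertibility) is a fine, if implicit in the paper, addition.
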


\bp Since $c_\un=\iota$, we have $c_{\bar X\otimes X}(R_X\otimes\iota_Z)=\iota_Z\otimes R_X$. Using then that $c_{\bar X\otimes X}=(c_{\bar X}\otimes\iota_X)(\iota_{\bar X}\otimes c_X)$, we get the result.
\ep

\begin{lemma}
\label{lem:char-alg-act-compat-adj}
  For any $\xi, \eta \in \Mor_\indC(\un, Z)$ and $X \in \CC$, we have $(\pi_Z([X]) \xi, \eta) = (\xi, \pi_Z([\bar{X}]) \eta)$.
\end{lemma}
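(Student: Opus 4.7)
The natural strategy is to encode the inner product $(\pi_Z([X])\xi,\eta)$ as a scalar in $\CC(\un)$ and rewrite it as a categorical trace. Unfolding $(\pi_Z([X])\xi,\eta)\iota=\eta^*\pi_Z([X])\xi$ and using the interchange law, one sees that
\[
(\pi_Z([X])\xi,\eta) \;=\; \bar R_X^*\,(A\otimes\iota_{\bar X})\,\bar R_X \;=\; \Tr_X(A),
\]
where $A:=(\eta^*\otimes\iota_X)\,c_X\,(\iota_X\otimes\xi)\in\CC(X)$. This is just the statement that the scalar defining the inner product is the partial trace on $X$ of the morphism obtained by sandwiching $c_X$ between $\xi$ and $\eta^*$.

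The key idea is then to use the \emph{other} formula for the categorical trace, namely $\Tr_X(A)=R_X^*(\iota_{\bar X}\otimes A)R_X$, and exploit Lemma~\ref{lem:braidinv}. Expanding,
\[
\Tr_X(A) \;=\; R_X^*\,(\iota_{\bar X}\otimes\eta^*\otimes\iota_X)\,(\iota_{\bar X}\otimes c_X)\,(\iota_{\bar X}\otimes\iota_X\otimes\xi)\,R_X,
\]
and then rewriting $(\iota_{\bar X}\otimes\iota_X\otimes\xi)R_X=(R_X\otimes\iota_Z)\xi$ by interchange, we can apply Lemma~\ref{lem:braidinv} to substitute $(\iota_{\bar X}\otimes c_X)(R_X\otimes\iota_Z)=(c_{\bar X}^*\otimes\iota_X)(\iota_Z\otimes R_X)$. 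This gives
\[
(\pi_Z([X])\xi,\eta) \;=\; R_X^*\,(\iota_{\bar X}\otimes\eta^*\otimes\iota_X)\,(c_{\bar X}^*\otimes\iota_X)\,(\iota_Z\otimes R_X)\,\xi.
\]

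Finally, one recognizes the right-hand side as $(\pi_Z([\bar X])\eta)^*\xi$: since the standard solution for the conjugate equations of $\bar X$ is $(R_{\bar X},\bar R_{\bar X})=(\bar R_X,R_X)$, the definition of $\pi_Z([\bar X])\eta$ is $(\iota_Z\otimes R_X^*)(c_{\bar X}\otimes\iota_X)(\iota_{\bar X}\otimes\eta\otimes\iota_X)R_X$, whose adjoint is precisely the operator appearing above. Thus $(\pi_Z([X])\xi,\eta)=(\xi,\pi_Z([\bar X])\eta)$.

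The only mildly nontrivial step is recognizing that Lemma~\ref{lem:braidinv} is the right tool and, correspondingly, that one should compute the trace using the $R_X$-formula rather than the $\bar R_X$-formula that arises most directly from the definition of $\pi_Z$. Once this is seen, everything else is a manipulation using the interchange law and the fact that taking adjoints turns the formula for $\pi_Z([\bar X])$ into exactly what the lemma produces.
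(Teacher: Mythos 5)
Your proposal is correct and follows essentially the same route as the paper: identify $(\pi_Z([X])\xi,\eta)$ with $\Tr_X\bigl((\eta^*\otimes\iota_X)c_X(\iota_X\otimes\xi)\bigr)$ via the $\bar R_X$-formula, use the $R_X$-formula for the trace together with Lemma~\ref{lem:braidinv}, and recognize the result as the adjoint of the defining expression for $\pi_Z([\bar X])\eta$ with the standard solution $(\bar R_X,R_X)$. The only difference is presentational (you argue in a single chain from the left-hand side, while the paper shows both sides equal the same trace).
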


\begin{proof}
We have to show the equality
\begin{equation} \label{eq:starrep}
\eta^*(\iota_Z \otimes \bar{R}_X^*)(c_X \otimes \iota_{\bar{X}}) (\iota_X \otimes \xi \otimes \iota_{\bar{X}}) \bar{R}_X = R_X^* (\iota_{\bar{X}} \otimes \eta^* \otimes \iota_X) (c_{\bar{X}}^* \otimes \iota_X) (\iota_Z \otimes R_X) \xi.
\end{equation}
The left hand side can be written as
$$
\bar{R}_X^*(\eta^*\otimes\iota_X\otimes\iota_{\bar X})(c_X \otimes \iota_{\bar{X}}) (\iota_X \otimes \xi \otimes \iota_{\bar{X}}) \bar{R}_X.
$$
Consider the morphism $T=(\eta^*\otimes\iota_X)c_X (\iota_X \otimes \xi)\in\CC(X)$. Then the left hand side of \eqref{eq:starrep} equals
$$
\bar{R}_X^*(T\otimes\iota_{\bar X})\bar R_X=\Tr_X(T).
$$
On the other hand, since $(c^*_{\bar X}\otimes\iota_X)(\iota_Z\otimes R_X)=(\iota_{\bar X}\otimes c_X)(R_X\otimes\iota_Z)$ by Lemma~\ref{lem:braidinv}, the right hand side of~\eqref{eq:starrep} equals
$$
R_X^*(\iota\otimes T)R_X=\Tr_X(T),
$$
so we get the desired equality.
\end{proof}

From now on by a representation of $\C[\Irr(\CC)]$ we mean a $*$-representation.

\begin{definition}
  We define the \emph{C$^*$-character algebra} $C^*(\CC)$ of $\CC$ to be the C$^*$-completion of the $*$-algebra $\C[\Irr(\CC)]$ with respect to the representations $\pi_Z$ for all objects $(Z, c)\in\ZC$.
\end{definition}

As we already observed, $\|[X]\|\le d(X)$ in $C^*(\CC)$. The next example shows that this is actually equality.

\begin{example}
  Consider the trivial half-braiding $(X \otimes \un \to \un \otimes X)_X$ for $\un$. Then we obtain a representation of $\C[\Irr(\CC)]$ on $\C$, that is, a character. Expanding the relevant definitions we see that $\pi_\un([X]) = d(X)$. We call $\pi_\un$ the \emph{trivial representation} of $\C[\Irr(\CC)]$.
\end{example}

\begin{example} \label{ex:regrep}
  Consider the half-braiding $(X \otimes \Zreg \to \Zreg \otimes X)_X$ constructed in Section~\ref{subsec:half-br}.  The Hilbert space $\Mor_\indC(\un, \Zreg)$
  has an orthonormal basis consisting of the vectors $\xi_s=d_s^{-1/2}\bar{R}_s$, $s\in\Irr(\CC)$. It follows from \eqref{eq:matrixte} that $\pi_{\Zreg}([U_s])\xi_e=\xi_s$. Therefore $\pi_{\Zreg}$ can be identified with the regular representation of $\C[\Irr(\CC)]$ on $\ell^2(\Irr(\CC))$.
\end{example}

\begin{remark}\label{rmk:q-grp-char-corr}
If $\CC = \Rep G$ for some compact quantum group $G$, the half-braidings correspond to the $*$-representations of the Drinfeld double $\Pol_c(\coD(G)) = \Pol(G) \bowtie c_c(\hat{G})$ via the standard argument (cf.~\cite{MR1321145}*{Section~IX.5}). The C$^*$-algebra $C^*(\CC)$ coincides with the C$^*$-completion of the character algebra of $G$ with respect to the embedding $\chi_U \mapsto \sigma_{-i/2}(\chi_U) h$ and the norm on $\Pol_c(\coD(G))$ induced by the ``spherical unitary representations'', where $h$ is the Haar state and $\sigma_z$ is its modular automorphism group, see~\cite{MR3238527}*{Remark~31}.
\end{remark}

\subsection{Positive definite functions}\label{sec:positivedefinite}

Given an object $(Z,c)\in\ZC$ and a vector $\xi\in\Mor_{\indC}(\un,Z)$, the cyclic representation of $\C[\Irr\CC]$ on $\overline{\pi_Z(\C[\Irr\CC])\xi}$ is completely determined by the function $\phi(s)=d_s^{-1}(\pi([U_s])\xi,\xi)$ on $\Irr(\CC)$. It is natural to call such functions positive definite. While this definition would be sufficient for the theory we develop in the subsequent sections, it is clearly unsatisfactory. A correct intrinsic definition has been given by Popa and Vaes~\cite{MR3406647}. We will present it in a way convenient for our applications.

For a function $\phi$ on $\Irr(\CC)$ denote by $M^\phi$ the endomorphism of the identity functor on $\CC$ such that $M^\phi_s\colon U_s\to U_s$ is the scalar morphism $\phi(s)$ for every $s\in\Irr(\CC)$. For $s,t\in\Irr(\CC)$ define a morphism
$$
A^\phi_{st}=d_s^{1/2}d_t^{1/2}(\iota_s\otimes\iota_{\bar s}\otimes \bar R^*_t)(\iota_s\otimes M^\phi_{\bar U_s\otimes U_t}\otimes\iota_{\bar t})(\bar R_s\otimes\iota_t\otimes\iota_{\bar t})\colon U_t\otimes\bar U_t\to U_s\otimes\bar U_s.
$$

\begin{definition}
A function $\phi$ on $\Irr(\CC)$ is called \emph{positive definite}, or a \emph{cp-multiplier}, if for any $s_1,\dots,s_n\in\Irr(\CC)$ the morphism
$$
(A^\phi_{s_i,s_j})^n_{i,j=1}\colon\bigoplus^n_{k=1}U_{s_k}\otimes\bar U_{s_k}\to \bigoplus^n_{k=1}U_{s_k}\otimes\bar U_{s_k}
$$
is positive.
\end{definition}

In the original definition of Popa and Vaes a cp-multiplier is defined by requiring certain maps $\theta^\phi_{U,V}$ on $\CC(U\otimes V)$ to be completely positive for all $U,V\in\CC$. But it is shown in~\cite{MR3406647}*{Lemma~3.7} that it suffices to check positivity of $\theta^\phi_{U,\bar U}(\bar R_U\bar R_U^*)$ for all $U$. Expanding the definitions one can check that
$$
\theta^\phi_{U,\bar U}(\bar R_U\bar R^*_U)=( \iota_U\otimes\iota_{\bar U}\otimes \bar R_U^*)(\iota_U\otimes M^\phi_{\bar U\otimes U} \otimes \iota_{\bar U})(\bar R_U\otimes\iota_U\otimes\iota_{\bar U}).
$$
For $U=\oplus^n_{i=1}U_{s_i}$ positivity of the above expression means exactly positivity of $(A^\phi_{s_i,s_j})^n_{i,j=1}$. Thus the above definition is equivalent to
 the one in~\cite{MR3406647}.

\begin{example}
Let $\Gamma$ be a discrete group and $\CC=\Hilb_{\Gamma,f}$ be the category of $\Gamma$-graded finite dimensional Hilbert spaces, or in other words, the representation category of the dual compact quantum group $\hat\Gamma$. Thus $\Irr(\CC)=\Gamma$ and we can choose representatives $U_s$, $s\in\Gamma$, of isomorphism classes of simple objects such that $U_s\otimes U_t=U_{st}$. Then $A^\phi_{st}$ is the scalar endomorphism $\phi(s^{-1}t)$ of $U_e$. Therefore a function $\phi$ on $\Gamma$ is positive definite in the above sense if and only if the matrix $(\phi(s_i^{-1}s_j))^n_{i,j=1}$ is positive for any $s_1,\dots,s_n\in\Gamma$, which is the standard definition of positive definite functions on groups.
\end{example}

\begin{example}\label{ex:deltae}
Consider an arbitrary rigid C$^*$-tensor category $\CC$ and the function $\phi=\delta_e$. In this case $A^\phi_{st}=\delta_{st}\iota$, since $M^\phi_{\bar U_s\otimes U_t}=\delta_{st}d_s^{-1}R_sR_s^*$. Therefore the function $\phi=\delta_e$ is positive definite.
\end{example}

\begin{theorem} \label{thm:positivedef}
For any function $\phi$ on $\Irr(\CC)$ the following conditions are equivalent:
\begin{itemize}
\item[(i)] $\phi$ is positive definite;
\item[(ii)] $\phi(s)=d_s^{-1}(\pi_Z([U_s])\xi,\xi)$ for some $(Z,c)\in\ZC$ and $\xi\in\Mor_{\indC}(\un,Z)$;
\item[(iii)] $\phi(s)=d_s^{-1}\omega([U_s])$ for a positive linear functional $\omega$ on $C^*(\CC)$.
\end{itemize}
\end{theorem}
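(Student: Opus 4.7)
The plan is to close the triangle via (ii)$\Leftrightarrow$(iii) using standard $C^*$-algebraic arguments and (ii)$\Leftrightarrow$(i) using an algebraic identification. The implication (ii)$\Rightarrow$(iii) is essentially tautological: by the very definition of $C^*(\CC)$, the representation $\pi_{(Z,c)}$ extends to a $*$-representation of $C^*(\CC)$, so the vector state $\omega(a) = (\pi_{(Z,c)}(a)\xi,\xi)$ is a positive linear functional on $C^*(\CC)$ with $\omega([U_s]) = d_s\phi(s)$.

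For (iii)$\Rightarrow$(ii), I would apply the GNS construction to $\omega$ to obtain a cyclic $*$-representation $(\pi, H_\omega, \xi)$ of $C^*(\CC)$. The category $\ZC$ is closed under countable direct sums of ind-objects with componentwise half-braidings, so a countable direct sum of objects of $\ZC$ is again an object of $\ZC$. By the definition of the norm on $C^*(\CC)$ as the supremum over the $\pi_{(Z,c)}$, the cyclic representation $\pi$ is weakly contained in the family $\{\pi_{(Z,c)}\}_{(Z,c)\in\ZC}$, and hence equivalent to a subrepresentation of $\pi_{(Z',c')}$ for a suitable direct sum $(Z',c')\in\ZC$. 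The cyclic vector $\xi$ then corresponds to an element of $\Mor_\indC(\un, Z')$ with the required property.

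For (ii)$\Rightarrow$(i), I would write $(A^\phi_{s_i,s_j})_{i,j}$ explicitly as a Gram matrix. Given $(Z,c)$ and $\xi \in \Mor_\indC(\un, Z)$, define for each $s \in \Irr(\CC)$ a morphism $\xi^{(s)}\in\Mor_\indC(U_s\otimes\bar U_s, Z)$ by
\[
\xi^{(s)} = d_s^{1/2}(\iota_Z\otimes\bar R_s^*)(c_s\otimes\iota_{\bar s})(\iota_s\otimes\xi\otimes\iota_{\bar s}).
\]
The block matrix $(\xi^{(s_i)*}\xi^{(s_j)})_{i,j}$, being of the form $B^*B$ with $B=(\xi^{(s_1)},\ldots,\xi^{(s_n)})\colon \bigoplus_k U_{s_k}\otimes\bar U_{s_k} \to Z$, is automatically positive. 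The remaining task is to verify $\xi^{(s)*}\xi^{(t)}=A^\phi_{s,t}$: using naturality of the half-braiding, Lemma~\ref{lem:braidinv}, and the axiom~\eqref{eq:halfbr}, the product $c_s^*(\iota_Z\otimes\bar R_s\bar R_t^*)c_t$ is recast in terms of $c_{\bar U_s\otimes U_t}$, so that on each simple summand $U_{s'}\subset \bar U_s\otimes U_t$ Schur's lemma together with the definition $\phi(s')=d_{s'}^{-1}(\pi_Z([U_{s'}])\xi,\xi)$ collapses the contribution to the scalar $\phi(s')$, which is precisely what $M^\phi_{\bar U_s\otimes U_t}$ contributes in the definition of $A^\phi_{s,t}$.

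The converse (i)$\Rightarrow$(ii) is the main obstacle. I would pursue a GNS-type construction: the positivity of all matrices $(A^\phi_{s_i,s_j})$ equips a suitable space of formal vectors (indexed by simple objects and morphism-space data) with a pre-Hilbert space structure, whose completion (after quotienting by the null space) decomposes as $\bigoplus_s U_s\otimes H_s$ and so defines an ind-object $Z$. A candidate half-braiding $c_X$ on $Z$ must then be constructed with structure constants dictated by the positivity data, and the cyclic vector $\xi$ chosen as the generator corresponding to the unit object. The hardest step will be to verify that this candidate is unitary and satisfies the hexagon identity~\eqref{eq:halfbr}; both reductions come down to combinatorial identities forced by the matricial positivity of $(A^\phi_{s_i,s_j})$ together with the compatibility between the tensor product of $\CC$ and the categorical trace.
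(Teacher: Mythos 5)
Your (ii)$\Rightarrow$(iii) step and your (ii)$\Rightarrow$(i) computation are fine and essentially coincide with the paper's argument (your $\xi^{(s)}$ is the paper's $T_s$, and the identity $\xi^{(s)*}\xi^{(t)}=A^\phi_{st}$ is exactly what is checked there, via Lemma~\ref{lem:Mphi} and Lemma~\ref{lem:braidinv}). The problems lie in the other two arrows. For (iii)$\Rightarrow$(ii) you claim that, since the cyclic GNS representation of $\omega$ is weakly contained in the family $\{\pi_{(Z,c)}\}$, it must be unitarily equivalent to a subrepresentation of $\pi_{(Z',c')}$ for a suitable direct sum $(Z',c')$. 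That inference is false for general C$^*$-algebras: weak containment in a faithful family does not make a state a vector state of (a multiple of) that family --- think of a point evaluation on $C[0,1]$ versus the multiplication representation on $L^2[0,1]$. In the present setting the statement ``every cyclic representation of $C^*(\CC)$ is of the form $\pi_{(Z,c)}$'' is precisely Corollary~\ref{cor:tworepclasses}, which is deduced \emph{from} the hard implication (i)$\Rightarrow$(ii) (via Proposition~\ref{prop:Zphi-universal}), so invoking it here is circular. The paper sidesteps this by proving (iii)$\Rightarrow$(i) instead of (iii)$\Rightarrow$(ii): positive definiteness, unlike condition (ii), is manifestly closed under convex combinations and pointwise limits, so weak containment in the $\pi_{(Z,c)}$'s suffices to reduce (iii)$\Rightarrow$(i) to (ii)$\Rightarrow$(i). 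You should reroute your argument accordingly.

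The second, and larger, gap is (i)$\Rightarrow$(ii), which you leave as a plan rather than a proof. Your proposal --- a GNS-type pre-Hilbert space of formal vectors, followed by constructing ``a candidate half-braiding with structure constants dictated by the positivity data'' whose unitarity and hexagon identity are ``forced by the matricial positivity'' --- misses the actual mechanism. Positivity of the matrices $(A^\phi_{s_i,s_j})$ only supplies the Hilbert-module-type completion; it does not by itself produce, let alone force, a half-braiding. The paper's construction is structurally different: one forms the deformed ind-object $Z_\phi=A^\phi\text{-}\oplus_s U_s\otimes\bar U_s$ and shows that the \emph{regular} half-braiding $c$ on $\Zreg$ descends to a unitary half-braiding on $Z_\phi$. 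The descent is granted by Lemma~\ref{lem:indunitary} once one verifies the intertwining relation \eqref{eq:intertwin}, i.e.\ that $c_X$ commutes with the matrix $A^\phi$; that verification is a genuine computation using \eqref{eq:regbr} and the \emph{naturality} of $M^\phi$ (not its positivity), and it is what makes unitarity and the half-braiding axioms come for free from the corresponding properties of $c$ on $\Zreg$. One must then also check that the canonical vector $\xi_\phi$ reproduces $\phi$, using \eqref{eq:matrixte} and \eqref{eq:newscalarpr1}. Without identifying this intertwining property of the regular half-braiding, your sketch of the key implication does not go through.
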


Popa and Vaes defined a C$^*$-algebra $C_u(\CC)$ for a rigid C$^*$-tensor category $\CC$ as the C$^*$-envelope of $\C[\Irr(\CC)]$ with respect to the representations $\pi\colon \C[\Irr(\CC)]\to B(H)$ such that $s \mapsto d_s^{-1}(\pi([U_s]) \xi, \xi)$ is a cp-multiplier for any $\xi \in H$. As an immediate consequence of the above theorem we get the following.

\begin{corollary} \label{cor:twodef}
The identity map on $\C[\Irr(\CC)]$ extends to an isomorphism of $C^*(\CC)$ onto $C_u(\CC)$.
\end{corollary}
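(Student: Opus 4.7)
The plan is to show that the C$^*$-seminorms on $\C[\Irr(\CC)]$ defining $C^*(\CC)$ and $C_u(\CC)$ coincide, which is essentially immediate from the three-way equivalence in Theorem~\ref{thm:positivedef}. In both cases one is taking the supremum of $\|\pi(\cdot)\|$ over a specified class of $*$-representations, so the task is simply to match up these two classes.

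For the inequality $\|\cdot\|_{C^*(\CC)} \le \|\cdot\|_{C_u(\CC)}$, I would take any half-braiding $(Z, c) \in \ZC$ and any $\xi \in \Mor_\indC(\un, Z)$. The implication (ii) $\Rightarrow$ (i) of Theorem~\ref{thm:positivedef} tells us that the diagonal matrix coefficient $s \mapsto d_s^{-1}(\pi_Z([U_s])\xi, \xi)$ is a cp-multiplier. Hence $\pi_Z$ lies in the class of representations admitted in the definition of $C_u(\CC)$, so its operator norm is dominated by $\|\cdot\|_{C_u(\CC)}$; taking the supremum over all $(Z, c)$ gives the inequality.

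For the reverse $\|\cdot\|_{C_u(\CC)} \le \|\cdot\|_{C^*(\CC)}$, I would take any $*$-representation $\pi\colon \C[\Irr(\CC)] \to B(H)$ whose diagonal matrix coefficients are cp-multipliers, and fix a unit vector $\xi \in H$. Applying (i) $\Rightarrow$ (iii) of Theorem~\ref{thm:positivedef} to the cp-multiplier $\phi_\xi(s) = d_s^{-1}(\pi([U_s])\xi, \xi)$, I obtain a positive linear functional $\omega_\xi$ on $C^*(\CC)$ with $\omega_\xi([U_s]) = (\pi([U_s])\xi, \xi)$ for every $s$. By $\C$-linearity this identity extends to all of $\C[\Irr(\CC)]$, so for any $a \in \C[\Irr(\CC)]$,
\[
\|\pi(a)\xi\|^2 = (\pi(a^*a)\xi, \xi) = \omega_\xi(a^*a) \le \|\omega_\xi\|\,\|a^*a\|_{C^*(\CC)} = \|a\|_{C^*(\CC)}^2,
\]
where $\|\omega_\xi\| = \omega_\xi([U_e]) = (\pi([U_e])\xi, \xi) = 1$ since $[U_e]$ is the unit of $\C[\Irr(\CC)]$. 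Taking the supremum over unit vectors $\xi$ yields $\|\pi(a)\| \le \|a\|_{C^*(\CC)}$, and then taking the supremum over $\pi$ gives the required inequality.

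The main (and essentially only) obstacle is recognizing that no further work is needed beyond Theorem~\ref{thm:positivedef}: no additional manipulation of half-braidings or cp-multipliers is required. The only small bookkeeping point is that the matching between cp-multipliers and positive functionals on $C^*(\CC)$ is set up only on the generators $[U_s]$, but it transports automatically to all of $\C[\Irr(\CC)]$ by $\C$-linearity, which is what allows the substitution of $a^*a$ in the estimate above.
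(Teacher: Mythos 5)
Your argument is correct and is exactly the route the paper intends: the paper records this corollary as an immediate consequence of Theorem~\ref{thm:positivedef}, and your write-up just spells out the two norm inequalities (admissibility of every $\pi_Z$ via (ii)$\Rightarrow$(i), and domination of admissible representations via (i)$\Rightarrow$(iii) plus positivity/boundedness of the resulting functional on $C^*(\CC)$). The only implicit point is the standard convention that the representations considered are unital, so that $\omega_\xi([U_e])=1$; this is harmless.
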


Turning to the proof of Theorem~\ref{thm:positivedef}, (ii) obviously implies (iii). Let us prove that (iii) implies (i). Take a representation $\pi\colon C^*(\CC)\to B(H)$ and a vector~$\xi$. We want to show that the function $\phi(s)=d_s^{-1}(\pi([U_s])\xi,\xi)$ is positive definite. Since any representation of $C^*(\CC)$ is weakly contained in a direct sum of representations defined by objects of $\ZC$ and the set of positive definite functions is closed under convex combinations and pointwise limits, without loss of generality we may assume that $\pi$ is defined by an object $(Z,c)\in\ZC$. Then $\xi \in \Mor_\indC(\un, Z)$. In other words, it suffices to show that (ii) implies (i).

\begin{lemma} \label{lem:Mphi}
For every object $U$ of $\CC$ the endomorphism $M^\phi_U$ is defined by the composition
$$
U\xrightarrow{\iota\otimes\xi}U\otimes Z\xrightarrow{c_U}Z\otimes U\xrightarrow{\xi^*\otimes\iota}U.
$$
\end{lemma}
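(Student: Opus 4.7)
The plan is to define the candidate natural transformation $T_U = (\xi^*\otimes\iota_U) c_U(\iota_U\otimes\xi) \in \CC(U)$ and show that it coincides with $M^\phi_U$. Since $M^\phi$ is by construction the unique natural endomorphism of the identity functor which acts as the scalar $\phi(s)$ on each $U_s$, it suffices to verify two things: that $U\mapsto T_U$ is natural in $U$, and that on each simple $U_s$ we recover the scalar $\phi(s)\iota$.

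For naturality, given $f\colon U\to V$ in $\CC$, I would compute directly using naturality of the half-braiding $c$:
\begin{align*}
T_V\circ f &= (\xi^*\otimes\iota_V) c_V(f\otimes\xi) = (\xi^*\otimes\iota_V) c_V(f\otimes\iota_Z)(\iota_U\otimes\xi)\\
&= (\xi^*\otimes\iota_V)(\iota_Z\otimes f) c_U(\iota_U\otimes\xi) = f\circ T_U,
\end{align*}
so $T$ is indeed a natural endomorphism of the identity functor on $\CC$, and it is therefore determined by its scalar values on the simple objects.

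To evaluate $T_{U_s}$ on a simple $U_s$, I would compute the trace $\Tr_{U_s}(T_{U_s})$ and compare it with the expression $\xi^*\pi_Z([U_s])\xi$. Writing out the definition of $\pi_Z([U_s])\xi$, one gets
$$
\xi^*\pi_Z([U_s])\xi = \bar R_s^*\bigl(\,(\xi^*\otimes\iota_s)c_{U_s}(\iota_s\otimes\xi)\otimes\iota_{\bar s}\,\bigr)\bar R_s = \bar R_s^*(T_{U_s}\otimes\iota_{\bar s})\bar R_s = \Tr_{U_s}(T_{U_s}).
$$
Since $U_s$ is simple we have $T_{U_s}=\lambda_s\iota_{U_s}$ for a scalar $\lambda_s$, whence $\Tr_{U_s}(T_{U_s})=\lambda_s d_s$. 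Combining with $(\pi_Z([U_s])\xi,\xi)\iota_\un = \xi^*\pi_Z([U_s])\xi$ and the definition $\phi(s)=d_s^{-1}(\pi_Z([U_s])\xi,\xi)$, we conclude $\lambda_s=\phi(s)$, so $T_{U_s}=\phi(s)\iota_{U_s}=M^\phi_{U_s}$.

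Having established agreement on simples and naturality, the two natural endomorphisms $T$ and $M^\phi$ of the identity functor on $\CC$ must coincide on every object of $\CC$ (decompose an arbitrary $U$ as a direct sum of simples and use naturality with respect to the inclusion and projection morphisms of each summand). I do not anticipate a real obstacle here; the only slightly delicate point is being careful that the inner product convention $(\xi,\zeta)\iota=\zeta^*\xi$ is used consistently when identifying $\xi^*\pi_Z([U_s])\xi$ with the numerical inner product $(\pi_Z([U_s])\xi,\xi)$, and the rest is a straightforward unwinding of the definitions of $\pi_Z$ and $M^\phi$.
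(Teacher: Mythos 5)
Your proposal is correct and follows essentially the same route as the paper: both reduce to the simple objects by naturality of the candidate transformation (which the paper declares clear and you verify explicitly from naturality of $c$), and both then identify the scalar on $U_s$ via the computation $\xi^*\pi_Z([U_s])\xi=\bar R_s^*(T_{U_s}\otimes\iota_{\bar s})\bar R_s=\alpha_s\,\bar R_s^*\bar R_s=\alpha_s d_s$.
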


\bp
It is clear that the composition in the formulation is natural in $U$. Therefore it suffices to consider $U=U_s$. Then the above composition is a scalar endomorphism $\alpha_s$. It follows that
$$
d_s\phi(s)=(\pi([U_s]) \xi, \xi)= \xi^*(\iota_Z \otimes \bar{R}_s^*)(c_s \otimes \iota_{\bar{s}})(\iota_s \otimes \xi \otimes \iota_{\bar{s}})\bar{R}_s=\alpha_s\bar{R}_s^*\bar{R}_s,
$$
so $\phi(s)=\alpha_s$.
\ep

\bp[Proof of the implication {\rm(ii)}$\Rightarrow${\rm(i)} in Theorem~\ref{thm:positivedef}]
Consider the function $\phi(s)=d_s^{-1}(\pi([U_s])\xi,\xi)$ as above. By the previous lemma we have
\begin{align*}
A^\phi_{st}&=d_s^{1/2}d_t^{1/2}(\iota_s\otimes\iota_{\bar s}\otimes \bar R^*_t)(\iota_s\otimes\xi^*\otimes\iota_{\bar s}\otimes \iota_t\otimes\iota_{\bar t})
(\iota_s\otimes c_{\bar U_s\otimes U_t}\otimes\iota_{\bar t})(\iota_s\otimes\iota_{\bar s}\otimes\iota_t\otimes\xi\otimes\iota_{\bar t})
(\bar R_s\otimes\iota_t\otimes\iota_{\bar t})\\
&=d_s^{1/2}d_t^{1/2}(\iota_s\otimes\xi^*\otimes\iota_{\bar s})(\iota_s\otimes\iota_Z\otimes\iota_{\bar s}\otimes \bar R_t^*)
(\iota_s\otimes c_{\bar U_s\otimes U_t}\otimes\iota_{\bar t})(\bar R_s\otimes\iota_t\otimes\iota_Z\otimes\iota_{\bar t})
(\iota_t\otimes\xi\otimes\iota_{\bar t}).
\end{align*}
Using that $c_{\bar U_s\otimes U_t}=(c_{\bar s}\otimes\iota_t)(\iota_{\bar s}\otimes c_t)$ we get
$$
A^\phi_{st}=d_s^{1/2}d_t^{1/2}(\iota_s\otimes\xi^*\otimes\iota_{\bar s})(\iota_s\otimes c_{\bar s})(\bar R_s\otimes\iota_Z)(\iota_Z\otimes\bar R_t^*)(c_t\otimes\iota_{\bar t})(\iota_t\otimes\xi\otimes\iota_{\bar t}).
$$
Since $(\iota_s\otimes c_{\bar s})(\bar R_s\otimes\iota_Z)=(c^*_s\otimes\iota_Z)(\iota_Z\otimes\bar R_s)$ by Lemma~\ref{lem:braidinv}, we therefore see that $A^\phi_{st}=T_s^*T_t$, where
$$
T_t=d_t^{1/2}(\iota_Z\otimes\bar R_t^*)(c_t\otimes\iota_{\bar t})(\iota_t\otimes\xi\otimes\iota_{\bar t})\colon U_t\otimes\bar U_t\to Z.
$$
This obviously implies positive definiteness of $\phi$.
\ep

Next, starting from a positive definite function we want to construct a unitary half-braiding. The construction will be a modification of our construction of $\Zreg$. Let us first describe the framework within which we will define such a modification.

Consider an ind-object  $\{u_{ji}\colon X_i\to X_j\}_{i\prec j}$ and assume that for every $i$ we are given a  positive morphism $A_i\colon X_i\to X_i$ such that
\begin{equation} \label{eq:poscompatibl}
A_i=u_{ji}^*A_ju_{ji}\ \ \text{for}\ \ i\prec j.
\end{equation}
From this data we can construct a new ind-object as follows. For every $i$ choose an object $Y_i$ and a surjective morphism $v_i\colon X_i\to Y_i$ such that $v_i^*v_i=A_i$. It is easy to see that such a pair $(Y_i,v_i)$ exists and is unique up to a unitary isomorphism. For example, we can take $Y_i$ to be the subobject of $X_i$ corresponding to the complement of the kernel of $A_i$ and then take $v_i=A_i^{1/2}$. But it is more instructive to think of $Y_i$ as a quotient of $X_i$ with a new inner product on morphisms into $Y_i$: given morphisms $S,T\colon X\to X_i$ we have
\begin{equation} \label{eq:newscalarpr}
(v_iT)^*v_iS=T^*A_iS.
\end{equation}

The following lemma is immediate by definition.

\begin{lemma} \label{lem:indiso}
Assume $T\colon X_i\to U$ is a morphism such that $T^*T=A_i$. Then there exists a unique isometry $\tilde T\colon Y_i\to U$ such that $T=\tilde T v_i$.
\end{lemma}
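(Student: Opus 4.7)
The plan is to construct $\tilde T$ via a polar-decomposition style argument. The key observation is that surjectivity of $v_i\colon X_i\to Y_i$ in the semisimple C$^*$-category $\CC$ means precisely that $v_iv_i^*\in\CC(Y_i)$ is invertible in this finite dimensional C$^*$-algebra (otherwise its support projection would define a proper subobject of $Y_i$ containing the image of $v_i$). This lets me set $\tilde T:=Tv_i^*(v_iv_i^*)^{-1}$ as a candidate and verify the required properties by direct computation.

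For the factorization $\tilde T v_i=T$, I would introduce the auxiliary morphism $e:=v_i^*(v_iv_i^*)^{-1}v_i\in\CC(X_i)$. A short check shows $e^*=e=e^2$ and $A_i e = v_i^*v_iv_i^*(v_iv_i^*)^{-1}v_i = v_i^*v_i = A_i$. Consequently $(1-e)A_i(1-e)=0$, and combined with the hypothesis $T^*T=A_i$ this gives $(T(1-e))^*(T(1-e))=0$, whence $T=Te$. Therefore $\tilde T v_i = Tv_i^*(v_iv_i^*)^{-1}v_i = Te = T$, as required.

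The isometry property $\tilde T^*\tilde T=\iota_{Y_i}$ is then a one-line calculation: $\tilde T^*\tilde T = (v_iv_i^*)^{-1}v_iT^*Tv_i^*(v_iv_i^*)^{-1} = (v_iv_i^*)^{-1}v_iv_i^*v_iv_i^*(v_iv_i^*)^{-1} = \iota_{Y_i}$, again using $T^*T=v_i^*v_i$. Uniqueness is immediate by right-cancellation: if $\tilde T' v_i = \tilde T v_i$, then $(\tilde T-\tilde T')v_i=0$, and right-multiplying by $v_i^*(v_iv_i^*)^{-1}$ yields $\tilde T=\tilde T'$.

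I do not foresee any real obstacle; the only mildly nontrivial point is translating the categorical notion of surjectivity into invertibility of $v_iv_i^*$, after which everything reduces to the standard polar-decomposition manipulation. Note also that the construction is independent of the particular model of $(Y_i,v_i)$, as any two are unitarily equivalent and the defining equation $\tilde T v_i=T$ transports appropriately.
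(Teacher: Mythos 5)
Your proof is correct. The paper in fact gives no argument at all for this lemma (it is declared ``immediate by definition'', with the intended justification being either the inner-product description \eqref{eq:newscalarpr} or the concrete model $v_i=A_i^{1/2}$ mapping onto the support subobject of $A_i$), so your write-up simply makes the implicit verification explicit. Your reading of ``surjective'' as invertibility of $v_iv_i^*$ in the finite-dimensional C$^*$-algebra $\CC(Y_i)$ is the right one, and from there the formula $\tilde T=Tv_i^*(v_iv_i^*)^{-1}$, the projection $e=v_i^*(v_iv_i^*)^{-1}v_i$ with $T=Te$, the isometry computation, and the right-cancellation uniqueness argument are all standard and complete; this is the same polar-decomposition manipulation the authors have in mind, just phrased model-independently. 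One small point worth noting: your argument only uses finite dimensionality of $\CC(Y_i)$ and the C$^*$-identity in the morphism spaces, so it applies verbatim when the target $U$ is an ind-object, which is exactly the generality in which the lemma is later invoked (e.g.\ in Proposition~\ref{prop:Zphi-universal}).
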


In particular, applying this to $T=v_ju_{ji}$ we conclude that for $i\prec j$ there exists a unique isometric morphism $w_{ji}\colon Y_i\to Y_j$ such that $v_ju_{ji}=w_{ji}v_i$. We thus get a new ind-object $\{w_{ji}\colon Y_i\to Y_j\}_{i\prec j}$.

We will use this construction for ind-objects of the form $\oplus_{i\in I}X_i$. In this case to be given positive endomorphisms $A_F$ of $X_F=\oplus_{i\in F}X_i$ for all finite sets $F\subset I$ satisfying \eqref{eq:poscompatibl} is the same thing as to have morphisms $A_{ji}\colon X_i\to X_j$ such that $(A_{ij})_{i,j\in F}$ is positive for any finite $F$. In this case, by slightly abusing the terminology, we simply say that $A=(A_{ij})_{i,j\in I}$ is positive. Therefore, starting from an ind-object $\oplus_{i\in I}X_i$ and a positive matrix of morphisms $A=(A_{ij})_{i,j\in I}$ we get a new ind-object, which we denote by
$$
A\text{-}\oplus_{i\in I}X_i.
$$
Note that by definition for any finite set $F\subset I$ we have a canonical morphism $\oplus_{i\in F}X_i\to A\text{-}\oplus_{i\in I}X_i$ obtained by composing $v_F\colon \oplus_{i\in F}X_i\to A\text{-}\oplus_{i\in F}X_i$ with the canonical isometry $A\text{-}\oplus_{i\in F}X_i\to A\text{-}\oplus_{i\in I}X_i$. But in general these morphisms do not define a bounded morphism $\oplus_{i\in I} X_i\to A\text{-}\oplus_{i\in I}X_i$.

In some cases an endomorphism of the original ind-object defines an endomorphism of the new one. The following will be sufficient for our purposes.

\begin{lemma} \label{lem:indunitary}
Let $\oplus_{i\in I}X_i$ and $\oplus_{k\in K}X'_k$ be ind-objects, $A=(A_{ij})_{i,j\in I}$ and $B=(B_{kl})_{k,l\in K}$ be positive matrices of morphisms $A_{ij}\colon X_j\to X_i$ and $B_{kl}\colon X'_l\to X'_k$, and $U=(U_{ki})_{k\in K,i\in I}\colon\oplus_iX_i\to\oplus_kX'_k$ be a unitary such that the matrix $(U_{ki})_{k,i}$ is row and column finite and $UA=BU$, that is,
$$
\sum_j U_{kj}A_{ji}=\sum_l B_{kl}U_{li}
$$
for all $i\in I$ and $k\in K$. Then $U$ defines a unitary $V\colon A\text{-}\oplus_i X_i\to B\text{-}\oplus_k X'_k$, meaning that for any finite set $F\subset I$ and all sufficiently large finite sets $G\subset K$ the diagram
$$
\begin{xymatrix}{
\oplus_{i\in F}X_i\ar[rr]^{(U_{ki})_{k\in G,i\in F}}\ar[d] & & \ar[d]\oplus_{k\in G}X'_k\\
A\text{-}\oplus_{i\in I} X_i\ar[rr]_V& & B\text{-}\oplus_{k\in K} X'_k
}\end{xymatrix}
$$
commutes.
\end{lemma}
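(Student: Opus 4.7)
The plan is to build $V$ level by level using Lemma~\ref{lem:indiso}, and then derive unitarity from the symmetric construction applied to $U^*$.

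Fix a finite $F\subset I$ and set $G_0(F)=\bigcup_{i\in F}\{k\in K:U_{ki}\neq 0\}$, which is finite by column-finiteness of $U$. The key algebraic observation is that for any finite $G\supset G_0(F)$,
$$
U_{G,F}^{*}\,B_G\,U_{G,F}=A_F.
$$
Indeed, $U^*BU=U^*UA=A$ since $UA=BU$ and $U^*U=I$, and the choice of $G$ ensures that truncating the double sum to $G\times G$ drops no nonzero terms in the $(i,j)$-entry for $i,j\in F$ (if $k\notin G$ then $U_{ki}=0$ for $i\in F$, and similarly in the other index). Now let $v_F\colon\oplus_{i\in F}X_i\to A\text{-}\oplus_{i\in F}X_i$ and $v'_G\colon\oplus_{k\in G}X'_k\to B\text{-}\oplus_{k\in G}X'_k$ be the canonical surjections, and let $\kappa_F$ and $\kappa'_G$ denote the canonical isometries into the ind-objects $A\text{-}\oplus_{i\in I}X_i$ and $B\text{-}\oplus_{k\in K}X'_k$. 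Setting $T_{G,F}=\kappa'_G v'_G U_{G,F}$, one has $T_{G,F}^{*}T_{G,F}=U_{G,F}^{*}B_G U_{G,F}=A_F=v_F^{*}v_F$, so Lemma~\ref{lem:indiso} yields a unique isometry $V_F\colon A\text{-}\oplus_{i\in F}X_i\to B\text{-}\oplus_{k\in K}X'_k$ with $V_F v_F=T_{G,F}$. Enlarging $G$ only pads $U_{G,F}$ with zero rows, so $V_F$ is independent of $G\supset G_0(F)$.

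For compatibility under $F_1\subset F_2$, I would pick $G\supset G_0(F_2)$ and use $v_{F_2}\pi_{F_2F_1}=j_{F_2F_1}v_{F_1}$ (where $j_{F_2F_1}$ is the canonical isometry $A\text{-}\oplus_{F_1}X_i\to A\text{-}\oplus_{F_2}X_i$ and $\pi_{F_2F_1}$ is the obvious inclusion of direct summands) together with the trivial identity $U_{G,F_2}\pi_{F_2F_1}=U_{G,F_1}$ to get $V_{F_2}j_{F_2F_1}v_{F_1}=V_{F_1}v_{F_1}$; surjectivity of $v_{F_1}$ then gives $V_{F_2}j_{F_2F_1}=V_{F_1}$. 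Thus the $V_F$'s assemble into an isometry $V\colon A\text{-}\oplus_{i\in I}X_i\to B\text{-}\oplus_{k\in K}X'_k$ with $V\kappa_F=V_F$, and the diagram in the statement commutes by construction.

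For unitarity, note that $U^*B=AU^*$ (multiply $UA=BU$ by $U^*$ on the left and right, using $U^*U=UU^*=I$), so the same construction applied to $U^*$ with $A,B$ swapped produces an isometry $V'\colon B\text{-}\oplus_{k\in K}X'_k\to A\text{-}\oplus_{i\in I}X_i$. The most delicate step I expect to be verifying $VV'=\Id$ (and symmetrically $V'V=\Id$). The plan is to compute, for every finite $G\subset K$,
$$
VV'\kappa'_G v'_G=V_{F'}v_{F'}(U^*)_{F',G}=\kappa'_{G'}v'_{G'}\,U_{G',F'}(U^*)_{F',G},
$$
where $F'=\bigcup_{k\in G}\{i:U_{ki}\neq 0\}$ and $G'\supset G_0(F')$. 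The matrix product $U_{G',F'}(U^*)_{F',G}$ has $(l,k)$-entry $\sum_{i\in F'}U_{li}U_{ki}^{*}=(UU^*)_{lk}=\delta_{lk}\iota$, because $k\in G$ forces $U_{ki}=0$ for $i\notin F'$, and $UU^*=I$ further forces $G\subset G'$ (every $k\in G$ has $U_{ki}\ne 0$ for some $i$, necessarily in $F'$). Hence this matrix represents the canonical inclusion $\oplus_{k\in G}X'_k\hookrightarrow\oplus_{k\in G'}X'_k$, and composing with $\kappa'_{G'}v'_{G'}$ recovers $\kappa'_G v'_G$. Since the images of the $\kappa'_G v'_G$ jointly generate $B\text{-}\oplus_{k\in K}X'_k$, this gives $VV'=\Id$; the identity $V'V=\Id$ follows by the symmetric argument.
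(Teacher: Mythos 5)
Your proof is correct and follows essentially the same route as the paper: establish $A_F=U_{G,F}^*B_GU_{G,F}$ from $UA=BU$ and unitarity, apply Lemma~\ref{lem:indiso} to get compatible isometries at finite stages, and obtain the inverse by running the same construction for $U^*$. The only difference is that you spell out the consistency of the family and the verification that $V$ and $V'$ are mutually inverse, which the paper leaves as straightforward checks.
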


\bp Take a finite set $F\subset I$ and let $G\subset K$ be any finite set such that $U_{ki}=0$ if $i\in F$ and $k\notin G$. Consider the morphisms $A_F=(A_{ij})_{i,j\in F}$, $B_G=(B_{kl})_{k,l\in G}$ and $U_{G,F}=(U_{ki})_{k\in G,i\in F}$. By the choice of $G$ and the assumptions of the lemma, for any $i,j\in F$ we have
$$
A_{ij}=\sum_{k,l\in K}U^*_{ki}B_{kl}U_{lj}=\sum_{k,l\in G}U^*_{ki}B_{kl}U_{lj},
$$
so $A_F=U^*_{G,F}B_GU_{G,F}$. By Lemma~\ref{lem:indiso} this implies that $U_{G,F}\colon\oplus_{i\in F}X_i\to \oplus_{k\in G}X'_k$ induces an isometry $V_{G,F}\colon A\text{-}\oplus_{i\in F}X_i\to B\text{-}\oplus_{k\in G}X'_k$. It is easy to see that the family of isometries $V_{G,F}$ is consistent and hence defines an isometry $V\colon A\text{-}\oplus_i X_i\to B\text{-}\oplus_k X'_k$ satisfying the statement of the lemma. Using $U^*$ instead of~$U$ we can similarly construct an isometry $V'\colon B\text{-}\oplus_k X'_k\to A\text{-}\oplus_i X_i$. It is straightforward to check that the isometries $V$ and $V'$ are inverse to each other.
\ep

Note that, under the assumptions of the previous lemma, if we denote by $\pi_F\colon\oplus_{i\in F}X_i\to A\text{-}\oplus_{i\in I}X_i$ and $\pi'_G\colon\oplus_{k\in G}X'_k\to B\text{-}\oplus_{k\in K}X'_k$ the canonical morphisms, then this lemma together with identity~\eqref{eq:newscalarpr} imply that for any morphisms $S=(S_i)_{i\in F}\colon X\to \oplus_{i\in F}X_i$ and $T=(T_k)_{k\in G}\colon X\to \oplus_{k\in G}X'_k$ we have
\begin{equation} \label{eq:newscalarpr1}
(\pi'_G T)^*V\pi_FS=\sum_{k,l,i}T_k^*B_{kl}U_{li}S_i.
\end{equation}

\bp[Proof of the implication {\rm(i)}$\Rightarrow${\rm(ii)} in Theorem~\ref{thm:positivedef}]
Let $\phi$ be a positive definite function on $\Irr(\CC)$, so that we have a positive matrix $A^\phi$ of morphisms $A^\phi_{st}\colon U_t\otimes\bar U_t\to U_s\otimes\bar U_s$. We can then define an ind-object
$$
Z_\phi=A^\phi\text{-}\bigoplus_{s\in\Irr(\CC)} U_s\otimes\bar U_s.
$$
We claim that the half-braiding $c$ for $\Zreg=\oplus_s U_s\otimes\bar U_s$ constructed in Section~\ref{subsec:half-br} defines a unitary half-braiding $c_\phi$ for $Z_\phi$. For objects $X\in\CC$, we have natural unitary isomorphisms
$$
X\otimes Z_\phi\cong(\iota_X\otimes A^\phi)\text{-}\oplus_s X\otimes U_s\otimes\bar U_s,\ \
Z\otimes X_\phi\cong(A^\phi\otimes\iota_X)\text{-}\oplus_s U_s\otimes\bar U_s\otimes X.
$$
Therefore in order to show that $c_X$ defines a unitary $c_{\phi,X}\colon X\otimes Z_\phi\to Z_\phi\otimes X$, by Lemma~\ref{lem:indunitary} it suffices to show that
\begin{equation} \label{eq:intertwin}
\sum_s c_{X,ps}(\iota_X\otimes A^\phi_{st})=\sum_q (A^\phi_{pq}\otimes\iota_X)c_{X,qt}.
\end{equation}
As in Section~\ref{subsec:half-br}, for all $q$ and $t$ choose a maximal family of isometries $u^\alpha_{tq}\colon U_q\to X\otimes U_t$ with mutually orthogonal ranges. Then the right hand side of \eqref{eq:intertwin} equals
\begin{align*}
&\sum_{\alpha,q} d_p^{1/2}d_t^{1/2}(\iota_p\otimes\iota_{\bar p}\otimes \bar R^*_q\otimes\iota_X)(\iota_p\otimes M^\phi_{\bar U_p\otimes U_q}\otimes\iota_{\bar q}\otimes\iota_X)(\bar R_p\otimes\iota_q\otimes\iota_{\bar q}\otimes\iota_X) \\
&\qquad\qquad\qquad\qquad\qquad\qquad\qquad\qquad\qquad\qquad\qquad\qquad (u^{\alpha*}_{tq}\otimes u^{\alpha\vee}_{tq}\otimes\iota_X)(\iota_X\otimes\iota_t\otimes\iota_{\bar t}\otimes R_X)\\
&=\sum_{\alpha,q} d_p^{1/2}d_t^{1/2}(\iota_p\otimes\iota_{\bar p}\otimes \bar R^*_q\otimes\iota_X)
(\iota_p\otimes\iota_{\bar p}\otimes u^{\alpha*}_{tq}\otimes u^{\alpha\vee}_{tq}\otimes\iota_X)(\iota_p\otimes M^\phi_{\bar U_p\otimes X\otimes U_t}\otimes\iota_{\bar t}\otimes\iota_{\bar X}\otimes\iota_X)\\
&\qquad\qquad\qquad\qquad\qquad\qquad\qquad\qquad\qquad\qquad\qquad\qquad (\bar R_p\otimes\iota_X\otimes\iota_t\otimes\iota_{\bar t}\otimes R_X).
\end{align*}
Now observe that by definition of $u^{\alpha\vee}_{tq}$ we have
$$
\sum_\alpha \bar R^*_q(u^{\alpha*}_{tq}\otimes u^{\alpha\vee}_{tq})=\sum_\alpha\bar R^*_{X\otimes U_t}(u^{\alpha}_{tq}u^{\alpha*}_{tq}\otimes\iota_{\bar t}\otimes\iota_{\bar X})
=\bar R^*_{X\otimes U_t}(p^{X\otimes U_t}_q\otimes\iota_{\bar t}\otimes\iota_{\bar X}),
$$
where $p^U_q$ denotes the projection onto the isotypic component of $U$ corresponding to $U_q$. Taking the summation over $q$ we conclude that the right hand side of \eqref{eq:intertwin} equals
$$
d_p^{1/2}d_t^{1/2}(\iota_p\otimes\iota_{\bar p}\otimes \bar R^*_{X\otimes U_t}\otimes\iota_X)(\iota_p\otimes M^\phi_{\bar U_p\otimes X\otimes U_t}\otimes\iota_{\bar t}\otimes\iota_{\bar X}\otimes\iota_X)(\bar R_p\otimes\iota_X\otimes\iota_t\otimes\iota_{\bar t}\otimes R_X).
$$
Recalling that $\bar R_{X\otimes U_t}=(\iota_X\otimes \bar R_t\otimes\iota_{\bar X})\bar R_X$, we see that this expression equals
\begin{equation}\label{eq:intertwin1}
d_p^{1/2}d_t^{1/2}(\iota_p\otimes\iota_{\bar p}\otimes\iota_X\otimes \bar R^*_t)(\iota_p\otimes M^\phi_{\bar U_p\otimes X\otimes U_t}\otimes\iota_{\bar t})(\bar R_p\otimes\iota_X\otimes\iota_t\otimes\iota_{\bar t}).
\end{equation}

Note that in the particular case of $\phi=\delta_e$, when $A_{st}=\delta_{st}\iota$ by Example~\ref{ex:deltae}, the equality of \eqref{eq:intertwin1} to the right hand side of \eqref{eq:intertwin} gives the identity
\begin{equation}\label{eq:regbr}
c_{X,pt}=d_p^{1/2}d_t^{1/2}(\iota_p\otimes\iota_{\bar p}\otimes\iota_X\otimes \bar R^*_t)(\iota_p\otimes p^{\bar U_p\otimes X\otimes U_t}_e\otimes\iota_{\bar t})(\bar R_p\otimes\iota_X\otimes\iota_t\otimes\iota_{\bar t}).
\end{equation}

Now, using this identity we see that the left hand side of \eqref{eq:intertwin} equals
\begin{align*}
&\sum_{s} d_p^{1/2}d_sd_t^{1/2}(\iota_p\otimes\iota_{\bar p}\otimes\iota_X\otimes \bar R^*_s)(\iota_p\otimes p^{\bar U_p\otimes X\otimes U_s}_e\otimes\iota_{\bar s})(\bar R_p\otimes\iota_X\otimes\iota_s\otimes\iota_{\bar s})\\
&\qquad\qquad\qquad\qquad
(\iota_X\otimes\iota_s\otimes\iota_{\bar s}\otimes \bar R^*_t)(\iota_X\otimes\iota_s\otimes M^\phi_{\bar U_s\otimes U_t}\otimes\iota_{\bar t})(\iota_X\otimes\bar R_s\otimes\iota_t\otimes\iota_{\bar t})\\
&=\sum_s d_p^{1/2}d_sd_t^{1/2}(\iota_p\otimes\iota_{\bar p}\otimes\iota_X\otimes \bar R^*_t)
(\iota_p\otimes\iota_{\bar p}\otimes\iota_X\otimes\bar R^*_s\otimes\iota_t\otimes\iota_{\bar t})
(\iota_p\otimes p^{\bar U_p\otimes X\otimes U_s}_e\otimes\iota_{\bar s}\otimes\iota_t\otimes\iota_{\bar t})\\
&\qquad\qquad\qquad\qquad
(\iota_p\otimes\iota_{\bar p}\otimes\iota_X\otimes\iota_s\otimes M^\phi_{\bar U_s\otimes U_t}\otimes\iota_{\bar t})
(\iota_p\otimes\iota_{\bar p}\otimes\iota_X\otimes\bar R_s\otimes\iota_t\otimes\iota_{\bar t})
(\bar R_p\otimes\iota_X\otimes\iota_t\otimes\iota_{\bar t}).
\end{align*}
In order to show that this expression equals \eqref{eq:intertwin1} it suffices to show that
$$
M^\phi_{\bar U_p\otimes X\otimes U_t}=\sum_s d_s(\iota_{\bar p}\otimes\iota_X\otimes\bar R^*_s\otimes\iota_t)
(p^{\bar U_p\otimes X\otimes U_s}_e\otimes\iota_{\bar s}\otimes\iota_t)(\iota_{\bar p}\otimes\iota_X\otimes\iota_s\otimes M^\phi_{\bar U_s\otimes U_t})
(\iota_{\bar p}\otimes\iota_X\otimes\bar R_s\otimes\iota_t).
$$
By naturality of $M^\phi$ we can rewrite the right hand side as
\begin{multline*}
\sum_s d_s(\iota_{\bar p}\otimes\iota_X\otimes\bar R^*_s\otimes\iota_t)
(p^{\bar U_p\otimes X\otimes U_s}_e\otimes\iota_{\bar s}\otimes\iota_t)M^\phi_{\bar U_p\otimes X\otimes U_s\otimes\bar U_s\otimes U_t}
(\iota_{\bar p}\otimes\iota_X\otimes\bar R_s\otimes\iota_t)\\
=M^\phi_{\bar U_p\otimes X\otimes U_t}\sum_s d_s(\iota_{\bar p}\otimes\iota_X\otimes\bar R^*_s\otimes\iota_t)
(p^{\bar U_p\otimes X\otimes U_s}_e\otimes\iota_{\bar s}\otimes\iota_t)
(\iota_{\bar p}\otimes\iota_X\otimes\bar R_s\otimes\iota_t).
\end{multline*}
Therefore it remains to check that
$$
\sum_s d_s(\iota_{\bar p}\otimes\iota_X\otimes\Tr_s)(p^{\bar U_p\otimes X\otimes U_s}_e)=\iota_{\bar p}\otimes\iota_X.
$$
But this is clearly true, since for any $q$ we have
$$
\sum_s d_s(\iota_q\otimes\Tr_s)(p^{U_q\otimes U_s}_e)=d_q(\iota_q\otimes\Tr_{\bar q})(p^{U_q\otimes\bar U_q}_e)=\iota_q.
$$
Thus we have proved that both sides of \eqref{eq:intertwin} are equal to \eqref{eq:intertwin1}. This completes the construction of the unitary~$c_{\phi,X}$. Naturality of this construction and the half-braiding condition easily follow from the corresponding properties of $c$.

Consider the representation $\pi_\phi$ of $C^*(\CC)$ defined by $(Z_\phi,c_\phi)$. Denote by $\xi_\phi$ the canonical morphism $\un=U_e\otimes\bar U_e\to Z_\phi=A^\phi\text{-}\oplus_s U_s\otimes\bar U_s$. Then
$$
\phi(s)=d_s^{-1}(\pi_\phi([U_s])\xi_\phi,\xi_\phi).
$$
Indeed, by Lemma~\ref{lem:Mphi} we have
$$
d_s^{-1}(\pi_\phi([U_s])\xi_\phi,\xi_\phi)\iota_s=(\xi_\phi^*\otimes\iota_s)c_{\phi,s}(\iota_s\otimes\xi_\phi).
$$
By \eqref{eq:newscalarpr1} the last expression equals
$
\sum_t(A^\phi_{et}\otimes\iota_s)c_{s,te}.
$
Recalling that $c_{s,te}=\delta_{st}d_s^{-1/2}(\iota_s\otimes R_s)$ by \eqref{eq:matrixte} and using that $A^\phi_{et}=d_t^{1/2}\phi(t)\bar R^*_t$, we get
$$
\sum_t(A^\phi_{et}\otimes\iota_s)c_{s,te}=\phi(s)\iota_s,
$$
and the proof of the theorem is complete.
\ep

The triples $(Z_\phi,c_\phi,\xi_\phi)$ constructed in the proof of Theorem~\ref{thm:positivedef} have the following universal property.

\begin{proposition}\label{prop:Zphi-universal}
Let $(Z,c)$ be an object in $\ZC$ and $\xi\in\Mor_\indC(\un,Z)$. Consider the positive definite function $\phi(s)=d_s^{-1}(\pi_Z([U_s])\xi,\xi)$. Then there exists a unique isometric morphism $T\colon (Z_\phi,c_\phi)\to (Z,c)$ in $\ZC$ such that $T\xi_\phi=\xi$.
\end{proposition}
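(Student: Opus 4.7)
The strategy is to reuse the factorization $A^\phi_{st}=T_s^*T_t$ established in the proof of the implication (ii)$\Rightarrow$(i) of Theorem~\ref{thm:positivedef}, where
\[
T_t = d_t^{1/2}(\iota_Z\otimes\bar R^*_t)(c_t\otimes\iota_{\bar t})(\iota_t\otimes\xi\otimes\iota_{\bar t})\colon U_t\otimes\bar U_t\to Z.
\]
For every finite $F\subset\Irr(\CC)$ the morphism $\mathbf{T}_F=(T_s)_{s\in F}\colon\bigoplus_{s\in F}U_s\otimes\bar U_s\to Z$ satisfies $\mathbf{T}_F^*\mathbf{T}_F=(A^\phi_{st})_{s,t\in F}$, so Lemma~\ref{lem:indiso} yields compatible isometries $\tilde T_F\colon A^\phi|_F\text{-}\bigoplus_{s\in F}U_s\otimes\bar U_s\to Z$ that assemble into an isometric morphism $T\colon Z_\phi\to Z$ in $\indC$. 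For $s=e$ one has $R_e=\bar R_e=1$, $c_e=\iota$ and $d_e=1$, so $T_e=\xi$ and hence $T\xi_\phi=\xi$.

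The main step is to verify $c_X(\iota_X\otimes T)=(T\otimes\iota_X)c_{\phi,X}$ for every $X\in\CC$. In view of how $T$ and $c_{\phi,X}$ were built from $(T_s)_s$ and $(c_{X,pt})_{p,t}$ via Lemmas~\ref{lem:indiso} and~\ref{lem:indunitary}, this reduces to the matrix identity
\[
\sum_p(T_p\otimes\iota_X)c_{X,pt}=c_X(\iota_X\otimes T_t)\quad\text{for all }t\in\Irr(\CC).
\]
To prove it, expand $T_t$ in the right hand side, use bifunctoriality to push $\bar R^*_t$ past $c_X$, and then combine $c_X\otimes\iota_{U_t}$ with $\iota_X\otimes c_t$ into $c_{X\otimes U_t}$ by the half-braiding axiom, so that
\[
c_X(\iota_X\otimes T_t)=d_t^{1/2}(\iota_Z\otimes\iota_X\otimes\bar R^*_t)(c_{X\otimes U_t}\otimes\iota_{\bar t})(\iota_{X\otimes U_t}\otimes\xi\otimes\iota_{\bar t}).
\]
Inserting the resolution $\iota_{X\otimes U_t}=\sum_{p,\alpha}u^\alpha_{tp}u^{\alpha*}_{tp}$ and applying naturality of the half-braiding through $c_{X\otimes U_t}(u^\alpha_{tp}\otimes\iota_Z)=(\iota_Z\otimes u^\alpha_{tp})c_p$ reduces the computation to the simple pieces $U_p$; the defining relation $(u^\alpha_{tp}\otimes\iota_{\bar p})\bar R_p=(\iota_{X\otimes U_t}\otimes u^{\alpha\vee}_{tp})\bar R_{X\otimes U_t}$ for the duals $u^{\alpha\vee}_{tp}$ then matches each summand to $(T_p\otimes\iota_X)c_{X,pt}$ after substituting the definitions of $T_p$ and $c_{X,pt}$. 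The only real obstacle is the bookkeeping of tensor orderings; no input beyond naturality, the half-braiding axiom, and the conjugate equations is needed.

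For uniqueness, denote by $\zeta_s\colon U_s\otimes\bar U_s\to Z_\phi$ the canonical morphism, so that $\zeta_e=\xi_\phi$ and $\zeta_s^*\zeta_t=A^\phi_{st}$, and suppose $T'\colon(Z_\phi,c_\phi)\to(Z,c)$ is another morphism in $\ZC$ with $T'\xi_\phi=\xi$. Setting
\[
S_s=d_s^{1/2}(\iota_{Z_\phi}\otimes\bar R^*_s)(c_{\phi,s}\otimes\iota_{\bar s})(\iota_s\otimes\xi_\phi\otimes\iota_{\bar s})\colon U_s\otimes\bar U_s\to Z_\phi,
\]
a short calculation exploiting that both $T$ and $T'$ intertwine the half-braidings and send $\xi_\phi$ to $\xi$ yields $TS_s=T_s=T'S_s$. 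Applying the same factorization argument now inside $(Z_\phi,c_\phi,\xi_\phi)$ gives $S_s^*S_t=A^\phi_{st}=\zeta_s^*\zeta_t$, and since $T$ is isometric with $T\zeta_s=T_s=TS_s$ we conclude $S_s=\zeta_s$. Hence $T'\zeta_s=T\zeta_s$ for every $s$, which forces $T'=T$ because morphisms out of the ind-object $Z_\phi=A^\phi\text{-}\bigoplus_s U_s\otimes\bar U_s$ are determined by their restrictions to the canonical subobjects $\zeta_s(U_s\otimes\bar U_s)$.
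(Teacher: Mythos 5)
Your proposal is correct, and its skeleton coincides with the paper's: you define $T_t=d_t^{1/2}(\iota_Z\otimes\bar R^*_t)(c_t\otimes\iota_{\bar t})(\iota_t\otimes\xi\otimes\iota_{\bar t})$, get the isometry from $T_s^*T_t=A^\phi_{st}$ via Lemma~\ref{lem:indiso} exactly as in the paper, and reduce everything to the canonical components $\zeta_s$. Where you diverge is in the two remaining steps. For the intertwining, the paper avoids any matrix computation: it observes that by construction $c_X(\iota_X\otimes T)(\iota_X\otimes\xi_\phi)=(T\otimes\iota_X)c_{\phi,X}(\iota_X\otimes\xi_\phi)$ for all $X$, then applies this to $X\otimes U_s$ and uses multiplicativity of half-braidings together with the fact that $d_s^{1/2}(\iota\otimes\bar R_s^*)(B_s\otimes\iota_{\bar s})$ is the canonical morphism, so that the identity propagates from the single vector $\xi_\phi$ to all of $Z_\phi$. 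You instead verify the componentwise identity $\sum_p(T_p\otimes\iota_X)c_{X,pt}=c_X(\iota_X\otimes T_t)$ head-on; this does work — after inserting $\sum_{p,\alpha}u^\alpha_{tp}u^{\alpha*}_{tp}$ and using naturality, the matching of summands comes down to $(\iota_X\otimes\bar R_t^*)(u^\alpha_{tp}\otimes\iota_{\bar t})=(\bar R_p^*\otimes\iota_X)(\iota_p\otimes u^{\alpha\vee}_{tp}\otimes\iota_X)(\iota_p\otimes\iota_{\bar t}\otimes R_X)$, which is precisely the explicit formula for $u^{\vee}$ recorded in the lemma on well-definedness of $c_{X,ts}$ plus one conjugate equation — but note you only sketch this; in a final write-up the displayed identity (or a citation of that formula) should be spelled out, since it is the entire content of the step. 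For uniqueness, the paper argues first and independently of existence, deriving \eqref{eq:intertbr} from the commutative diagram given by Lemma~\ref{lem:indunitary} and \eqref{eq:matrixte}, i.e.\ from the relation $c_{\phi,s}(\iota_s\otimes\xi_\phi)=d_s^{-1/2}(\zeta_s\otimes\iota_s)(\iota_s\otimes R_s)$ built into $c_\phi$; you obtain the equivalent fact $S_s=\zeta_s$ a posteriori from the already-constructed isometry $T$ (indeed $T(\zeta_s-S_s)=0$ and $T^*T=\iota$ suffice, so your detour through $S_s^*S_t=A^\phi_{st}$ — which anyway needs the paper's observation that the positive definite function of $(Z_\phi,\xi_\phi)$ is $\phi$ — is redundant). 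Both routes are valid; the paper's is shorter and makes uniqueness independent of the construction, yours is more computational but self-contained at the level of components.
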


\bp Let us first prove the uniqueness. Assume $T\colon (Z_\phi,c_\phi)\to (Z,c)$ is a morphism. Denote by $T_s$ the composition of $T$ with the canonical morphism $U_s\otimes\bar U_s\to Z_\phi$, so in particular we have $T_e=T\xi_\phi$. Clearly, the morphism $T$ is completely determined by the morphisms $T_s$, so we just have to check that $T_s$ is determined by $T_e$. By Lemma~\ref{lem:indunitary} and formula~\eqref{eq:matrixte} for the half-braiding on $\Zreg$, we have the commutative diagram
$$
\begin{xymatrix}{
U_s\ar[d]_{\iota_s\otimes\xi_\phi}\ar[rr]^{d_s^{-1/2}(\iota_s\otimes R_s)\qquad\ \ } & & (U_s\otimes\bar U_s)\otimes U_s\ar[d]\\
U_s\otimes Z_\phi\ar[rr]_{c_{\phi,s}}& & Z_\phi\otimes U_s.
}\end{xymatrix}
$$
Applying $T$ and using that $c_s(\iota_s\otimes T)=(T\otimes\iota_s)c_{\phi,s}$, we get $c_s(\iota_s\otimes T_e)=d_s^{-1/2}(T_s\otimes\iota_s)(\iota_s\otimes R_s)$, that is,
\begin{equation} \label{eq:intertbr}
T_s = d_s^{1/2} (\iota_Z \otimes \bar{R}_s^*) ( c_s \otimes \iota_{\bar s}) (\iota_s \otimes T_e \otimes \iota_{\bar s}).
\end{equation}
Thus $T_s$ is indeed determined by $T_e$.

\smallskip

For the existence, we let $T_e=\xi$ and define $T_s\colon U_s\otimes\bar U_s\to Z$ by \eqref{eq:intertbr}. In order to show that the morphisms~$T_s$ define an isometry $T\colon Z_\phi\to Z$, by Lemma~\ref{lem:indiso} it suffices to check that for any finite set $F\subset\Irr(\CC)$ for the morphism $T_F=(T_s)_{s\in F}\colon\oplus_{s\in F}U_s\otimes\bar U_s\to Z$ we have $T_F^*T_F=(A^\phi_{s,t})_{s,t\in F}$, that is, $T_s^*T_t=A^\phi_{st}$. But this is exactly the computation we made in the proof of the implication (ii)$\Rightarrow$(i) in Theorem~\ref{thm:positivedef}.

\smallskip

It remains to check that $T$ intertwines the half-braidings. By the construction of $T$ we already have
$$
c_X(\iota_X\otimes T)(\iota_X\otimes\xi_\phi)=(T\otimes\iota_X)c_{\phi,X}(\iota_X\otimes\xi_\phi)
$$
for $X=U_s$, hence for all $X\in\CC$. Applying this to $X\otimes U_s$ in place of $X$ and using the multiplicativity property of half-braidings, we get
\begin{equation} \label{eq:intertwin2}
(c_X\otimes\iota_s)(\iota_X\otimes T\otimes\iota_s)(\iota_X\otimes B_s)
=(T\otimes\iota_X\otimes\iota_s)(c_{\phi,X}\otimes\iota_s)(\iota_X\otimes B_s),
\end{equation}
where $B_s=c_{\phi,s}(\iota_s\otimes\xi_\phi)\colon U_s\to Z_\phi\otimes U_s$. But by equation~\eqref{eq:intertbr} for the identity map on $Z_\phi$ we know that
$$
d_s^{1/2} (\iota_{Z_\phi} \otimes \bar{R}_s^*) ( B_s \otimes \iota_{\bar s})
$$
is the canonical morphism $U_s\otimes\bar U_s\to Z_\phi$. Since \eqref{eq:intertwin2} holds for all $s$, we can therefore conclude that $c_X(\iota_X\otimes T)=(T\otimes\iota_X)c_{\phi,X}$.
\ep

From the proof we also get the following.

\begin{corollary} \label{cor:cyclic}
The vector $\xi_\phi$ is cyclic for the representation $\pi_\phi$ of $C^*(\CC)$ defined by $(Z_\phi,c_\phi)$.
\end{corollary}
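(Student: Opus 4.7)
The plan is to identify the images $\pi_\phi([U_s])\xi_\phi$ as essentially the canonical ``basis'' morphisms into the ind-object $Z_\phi$, and then use density of finite-level morphisms. The main ingredients are already contained in the two preceding proofs, so little new computation is needed.

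First, I would extract from the proof of the implication {\rm(ii)}$\Rightarrow${\rm(i)} in Theorem~\ref{thm:positivedef}, applied to the triple $(Z_\phi,c_\phi,\xi_\phi)$, the morphisms
$$
T_s=d_s^{1/2}(\iota_{Z_\phi}\otimes\bar R_s^*)(c_{\phi,s}\otimes\iota_{\bar s})(\iota_s\otimes\xi_\phi\otimes\iota_{\bar s})\colon U_s\otimes\bar U_s\to Z_\phi.
$$
In the existence part of the proof of Proposition~\ref{prop:Zphi-universal}, applying~\eqref{eq:intertbr} to $T=\Id_{Z_\phi}$ identified this very morphism $T_s$ with the canonical map $U_s\otimes\bar U_s\to Z_\phi$ coming from the construction of $Z_\phi=A^\phi\text{-}\oplus_s U_s\otimes\bar U_s$. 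Combining these two facts immediately gives
$$
\pi_\phi([U_s])\xi_\phi=d_s^{-1/2}\,T_s\bar R_s\in\overline{\pi_\phi(\C[\Irr(\CC)])\xi_\phi}.
$$

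Next, I would argue that the linear span of $\{T_s\bar R_s\}_{s\in\Irr(\CC)}$ is dense in $\Mor_\indC(\un,Z_\phi)$. By the description of morphisms between ind-objects (a topological colimit, as in the remark of Section~\ref{subsec:ind-obj-C*-cat}), morphisms $\un\to Z_\phi$ that factor through one of the finite-level objects $Y_F=A^\phi_F\text{-}\oplus_{s\in F}U_s\otimes\bar U_s$ are dense. For each such factoring morphism $\iota_F\mu\colon\un\to Y_F\to Z_\phi$, surjectivity of $v_F\colon\oplus_{s\in F}U_s\otimes\bar U_s\to Y_F$ (together with Lemma~\ref{lem:indiso} restricted to the orthogonal complement of $\ker v_F$) allows us to lift $\mu$ to a morphism $\tilde\mu\colon\un\to\oplus_{s\in F}U_s\otimes\bar U_s$. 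Since $\Mor_\CC(\un,U_s\otimes\bar U_s)=\C\bar R_s$ by Frobenius reciprocity, we can write $\tilde\mu=\sum_{s\in F}\lambda_s\bar R_s$, and then
$$
\iota_F\mu=\iota_Fv_F\tilde\mu=\sum_{s\in F}\lambda_sT_s\bar R_s,
$$
since $\iota_Fv_F$ restricts to $T_s$ on each summand. This exhibits every dense vector as a finite linear combination of the $T_s\bar R_s$.

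Putting these together, $\overline{\pi_\phi(\C[\Irr(\CC)])\xi_\phi}$ contains all $T_s\bar R_s$, whose span is dense in $\Mor_\indC(\un,Z_\phi)$, so cyclicity follows. I expect no real obstacle here: the key identification $\pi_\phi([U_s])\xi_\phi\propto T_s\bar R_s$ is essentially a book-keeping consequence of the two preceding proofs, and the density step is a routine use of the ind-object formalism. The only point requiring a moment of care is the lifting argument, which is handled by Lemma~\ref{lem:indiso}.
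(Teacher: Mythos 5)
Your proposal is correct and follows essentially the same route as the paper: identify $\pi_\phi([U_s])\xi_\phi$ with the canonical vectors $d_s^{-1/2}T_s\bar R_s$ (i.e.\ the $\xi_{\phi,s}$) via equation~\eqref{eq:intertbr} applied to the identity morphism on $Z_\phi$, and then use that these vectors span a dense subspace of $\Mor_\indC(\un,Z_\phi)$ by the construction of $Z_\phi$. The only difference is that you spell out the density step (finite-level factorization plus the lift through $v_F$ and $\dim\CC(\un,U_s\otimes\bar U_s)=1$), which the paper leaves implicit.
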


\bp Let $\xi_{\phi,s}$ be the morphism $\un\to Z_\phi$ obtained by composing $d_s^{-1/2}\bar R_s\colon \un\to U_s\otimes\bar U_s$ with the canonical morphism $U_s\otimes\bar U_s\to Z_\phi$. From the construction of $Z_\phi$ one can see that the vectors $\xi_{\phi,s}$, $s\in\Irr(\CC)$, span a dense subspace of $\Mor_{\indC}(\un,Z_\phi)$. On the other hand, from equality~\eqref{eq:intertbr} for the identity morphism $T$ on~$Z_\phi$ we have
$$
\xi_{\phi,s}=d_s^{-1/2}T_s\bar R_s=(\iota_Z \otimes \bar{R}_s^*) ( c_s \otimes \iota_{\bar s}) (\iota_s \otimes \xi_\phi \otimes \iota_{\bar s})\bar R_s=\pi_\phi([U_s])\xi_\phi.
$$
Hence the vector $\xi_\phi$ is indeed cyclic.
\ep

By decomposing representations of $C^*(\CC)$ into direct sums of cyclic representations, we now obtain the following result.

\begin{corollary} \label{cor:tworepclasses}
Any representation of $C^*(\CC)$ is unitarily equivalent to the representation $\pi_Z$ defined by an object $(Z,c)\in\ZC$.
\end{corollary}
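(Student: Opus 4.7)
The plan is to reduce to the cyclic case and then invoke the construction of $(Z_\phi,c_\phi)$ from the proof of Theorem~\ref{thm:positivedef}, patching everything together via a direct sum in $\ZC$. Concretely, given a (non-degenerate) $*$-representation $\pi\colon C^*(\CC)\to B(H)$, a standard Zorn's lemma argument decomposes $H=\bigoplus_{i\in I}H_i$ into mutually orthogonal $\pi$-invariant subspaces $H_i$, each cyclic with a distinguished unit vector $\xi_i\in H_i$. Let $\pi_i$ be the corresponding cyclic subrepresentations.

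For each $i$, set $\phi_i(s)=d_s^{-1}(\pi_i([U_s])\xi_i,\xi_i)$. By Theorem~\ref{thm:positivedef}, each $\phi_i$ is positive definite and the proof of that theorem produces an object $(Z_{\phi_i},c_{\phi_i})\in\ZC$ together with a canonical vector $\xi_{\phi_i}\in\Mor_{\indC}(\un,Z_{\phi_i})$ realising~$\phi_i$; by Corollary~\ref{cor:cyclic}, $\xi_{\phi_i}$ is cyclic for the corresponding representation $\pi_{\phi_i}$.

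Next I would show that two cyclic $*$-representations of $\C[\Irr(\CC)]$ with the same positive definite function are unitarily equivalent. This is immediate from the computation
\[
(\pi([U_s])\xi,\pi([U_t])\xi)=(\pi([\bar U_t]\cdot[U_s])\xi,\xi)=\sum_{r}\dim\CC(U_r,\bar U_t\otimes U_s)\,d_r\phi(r),
\]
which depends only on $\phi$. Hence the densely defined map $\pi_i([X])\xi_i\mapsto \pi_{\phi_i}([X])\xi_{\phi_i}$ extends to a unitary intertwiner $\pi_i\cong\pi_{\phi_i}$. (Boundedness, hence well-definedness, of $\pi_i$ on $\C[\Irr(\CC)]$ is automatic since $\pi_i$ comes from a $C^*(\CC)$-representation, and similarly for $\pi_{\phi_i}$.)

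Finally, form $(Z,c)=\bigoplus_{i\in I}(Z_{\phi_i},c_{\phi_i})$ in $\ZC$, taking the direct sum of the underlying ind-objects (which exists in $\indC$) together with the componentwise half-braiding; one checks directly that the half-braiding axiom is preserved under this direct sum. Since $\un$ is simple, $\Mor_\indC(\un,Z)$ is naturally the Hilbert space direct sum of the $\Mor_\indC(\un,Z_{\phi_i})$, and the representation $\pi_Z$ decomposes accordingly as $\bigoplus_i\pi_{\phi_i}\cong\bigoplus_i\pi_i=\pi$. The only mild subtlety is verifying that arbitrary direct sums of half-braidings yield a genuine unitary half-braiding on the direct sum ind-object; but this follows at once from naturality and the fact that each $c_{\phi_i,X}$ is defined componentwise, so no essential obstacle arises.
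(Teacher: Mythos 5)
Your proof is correct and follows essentially the same route as the paper, whose entire argument is the remark that one decomposes a representation into cyclic pieces and applies Theorem~\ref{thm:positivedef} together with Corollary~\ref{cor:cyclic}. The extra verifications you supply (the GNS-type uniqueness of a cyclic representation with a given positive definite function, and the fact that componentwise direct sums of unitary half-braidings give an object of $\ZC$ whose representation is the direct sum) are precisely the details the paper leaves implicit.
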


This result can also be formulated as follows. Let us say that an object $(Z,c)\in\ZC$ is \emph{spherical} if for any $T\in\End_\ZC((Z,c))$ such that $T\xi=0$ for all $\xi\in\Mor_\indC(\un,Z)$ we have $T=0$. Such objects form a full C$^*$-subcategory $\ZsC$ of $\ZC$ closed under direct sums and subobjects. Note that in general this is not a tensor category.

\begin{proposition}
The category $\ZsC$ is unitarily equivalent to the representation category of $C^*(\CC)$.
\end{proposition}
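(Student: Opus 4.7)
The plan is to construct the equivalence via the functor $F\colon \ZsC\to\Rep(C^*(\CC))$ sending $(Z,c)$ to the representation $(\Mor_\indC(\un,Z),\pi_Z)$ and a morphism $T\colon (Z,c)\to(Z',c')$ to the bounded linear map $F(T)\colon \xi\mapsto T\xi$. Naturality of the formula defining $\pi_Z([X])$ and the intertwining of half-braidings by $T$ imply that $F(T)$ intertwines $\pi_Z$ and $\pi_{Z'}$; the identity $(\xi,\eta)\iota=\eta^*\xi$ then gives $F(T^*)=F(T)^*$. Faithfulness is immediate from the definition of sphericity: if $F(T)=0$ then $T^*T\in\End_\ZC((Z,c))$ annihilates $\Mor_\indC(\un,Z)$, so $T^*T=0$ and hence $T=0$.

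For essential surjectivity, given a representation $\pi$ of $C^*(\CC)$, Corollary~\ref{cor:tworepclasses} yields $(Z,c)\in\ZC$ with $\pi_Z\cong\pi$. I would then replace $(Z,c)$ by the smallest $\ZC$-subobject $(Z_s,c_s)$ through which every $\xi\in\Mor_\indC(\un,Z)$ factors, which exists as the supremum of the ranges of the isometric embeddings supplied by Proposition~\ref{prop:Zphi-universal}. Every such $\xi$ factors through $(Z_s,c_s)$, so $\Mor_\indC(\un,Z_s)=\Mor_\indC(\un,Z)$ and $\pi_{Z_s}\cong\pi_Z\cong\pi$. To see that $(Z_s,c_s)$ is spherical, take a nonzero positive $T\in\End_\ZC((Z_s,c_s))$ with $T\xi=0$ for every $\xi$; the support projection $e$ of $T$ then gives a proper $\ZC$-subobject $(1-e)Z_s$ through which each $\xi$ still factors, contradicting minimality of $Z_s$.

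The main content is fullness. Given an intertwiner $S\colon\Mor_\indC(\un,Z)\to\Mor_\indC(\un,Z')$ of $\pi_Z$ and $\pi_{Z'}$, form $(Z\oplus Z',c\oplus c')\in\ZC$ and let $(W,c_W)$ be the smallest $\ZC$-subobject through which each morphism $(\xi,S\xi)\colon\un\to Z\oplus Z'$ factors. Because $S$ intertwines, the graph $G_S\subset \Mor_\indC(\un,Z\oplus Z')$ is $\pi_{Z\oplus Z'}$-invariant and closed, and a direct cyclicity argument using Corollary~\ref{cor:cyclic} identifies $\Mor_\indC(\un,W)$ with $G_S$. The same support-projection argument as above shows that $(W,c_W)$ is spherical. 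The projections $Z\oplus Z'\to Z$ and $Z\oplus Z'\to Z'$ restrict to morphisms $A\colon(W,c_W)\to(Z,c)$ and $B\colon(W,c_W)\to(Z',c')$ with $F(A)(\xi,S\xi)=\xi$ unitary and $F(B)(\xi,S\xi)=S\xi$; applying faithfulness to $A^*A-\iota_W\in\End_\ZC((W,c_W))$ and $AA^*-\iota_Z\in\End_\ZC((Z,c))$ shows that $A$ is unitary in $\ZC$, so $T:=BA^*$ satisfies $F(T)=S$.

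The main obstacle is fullness, since \emph{a priori} an abstract Hilbert-space intertwiner need not be implemented by a morphism in $\ZC$. The resolution is to encode $S$ as a $\ZC$-subobject of $(Z\oplus Z',c\oplus c')$ via its graph, which works precisely because the intertwining property of $S$ makes $G_S$ closed under the action of all $\pi_{Z\oplus Z'}([X])$, and sphericity of both source and target then forces the projection $A$ to become invertible in the categorical sense.
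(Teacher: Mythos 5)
Most of your outline is sound: the functor, faithfulness, and the reduction of essential surjectivity to Corollary~\ref{cor:tworepclasses} all work (although the existence of your ``smallest subobject'' $Z_s$, i.e.\ of the supremum of the range projections inside $\End_\ZC((Z,c))$, is not free: it needs this algebra to be monotone/weakly closed, which the paper never establishes, and your identification $\Mor_\indC(\un,W)=G_S$ likewise silently uses Corollary~\ref{cor:cyclic} together with normality of the action of $\End_\ZC$ on $\Mor_\indC(\un,\cdot)$; the paper bypasses all of this because every representation is already a direct sum of the spherical objects $(Z_\phi,c_\phi)$). The genuine gap is the last step of your fullness argument. The operator $F(A)\colon G_S\to\Mor_\indC(\un,Z)$, $(\xi,S\xi)\mapsto\xi$, is bounded, bijective with bounded inverse, but it is \emph{not} unitary: $\norm{F(A)(\xi,S\xi)}=\norm{\xi}<\norm{(\xi,S\xi)}$ whenever $S\xi\neq0$, and in fact $F(A)^*F(A)$ corresponds to $(1+S^*S)^{-1}$ under the graph parametrization. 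Hence $F(A^*A-\iota_W)\neq0$, so faithfulness cannot be ``applied to $A^*A-\iota_W$''---if anything it shows $A^*A\neq\iota_W$---and $T:=BA^*$ gives $F(T)=S(1+S^*S)^{-1}\neq S$, so your exhibited morphism does not implement $S$.

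The argument can be repaired along your lines: sphericity of $(W,c_W)$ and $(Z,c)$ makes $F$ restrict to injective unital $*$-homomorphisms on $\End_\ZC((W,c_W))$ and $\End_\ZC((Z,c))$, and such homomorphisms preserve spectra; since $F(A)^*F(A)$ and $F(A)F(A)^*$ are invertible, so are $A^*A$ and $AA^*$, hence $A$ is invertible in $\ZC$ and $T:=BA^{-1}$ satisfies $F(T)=S$ (one can then also make $A$ unitary via $A(A^*A)^{-1/2}$ if desired). Note that this graph construction is genuinely different from the paper's proof, which decomposes any spherical object into a direct sum of the objects $(Z_\phi,c_\phi)$, lifts an \emph{isometric} intertwiner out of $Z_\phi$ directly by the universal property of Proposition~\ref{prop:Zphi-universal}, and treats a general contraction by dilating it to an isometry $H\to K\oplus H$; that route avoids both the unitarity issue and the supremum-of-projections question above, so if you keep your approach you must supply those missing justifications explicitly.
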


\bp
Consider the unitary functor $F\colon\ZsC\to\Rep C^*(\CC)$ mapping an object $(Z,c)$ into the corresponding representation $\pi_{(Z,c)}$ of $C^*(\CC)$. By the definition of spherical objects this functor is faithful. Since the objects $(Z_\phi,c_\phi)$ are spherical by Proposition~\ref{prop:Zphi-universal}, this functor is also essentially surjective. It remains to show that it is full. By Proposition~\ref{prop:Zphi-universal}, any spherical object decomposes into a direct sum of objects $(Z_\phi,c_\phi)$. Therefore it suffices to show that any bounded operator $T\colon\Mor_\indC(\un,Z_\phi)\to \Mor_\indC(\un,Z)$ intertwining the representations $\pi_\phi$ and $\pi_{(Z,c)}$ is defined by a morphism $(Z_\phi,c_\phi)\to (Z,c)$ in $\ZC$. If~$T$~is isometric, this is true, again by Proposition~\ref{prop:Zphi-universal}. The general case follows from this, since any contraction $T\colon H\to K$ between Hilbert spaces can be dilated to an isometry $H\to K\oplus H$, $\xi\mapsto (T\xi,(1-T^*T)^{1/2}\xi)$.
\ep

\begin{example}
Consider the regular representation of $C^*(\CC)$ on $\ell^2(\Irr(\CC))$. Denote by $W^*(\CC)$ the von Neumann algebra generated by $C^*(\CC)$ in this representation. Since the vector $\delta_e$ is a cyclic trace vector for $W^*(\CC)$, the commutant $W^*(\CC)'$ is anti-isomorphic to $W^*(\CC)$. On the other hand, as we essentially observed in  Example~\ref{ex:regrep}, the regular representation corresponds to $\Zreg(\CC)$ under the equivalence $\Rep C^*(\CC)\cong\ZsC$. Hence $\End_{\ZC}(\Zreg(\CC))\cong W^*(\CC)^\opos$.
\end{example}

\subsection{Property (T)}
\label{sec:property-t}

With the algebra $C^*(\CC)$ at our disposal, the following definition is very natural.

\begin{definition}
\label{defn:cat-prop-T}
We say that \emph{$\CC$ has property (T)} if any representation of $C^*(\CC)$ which weakly contains the trivial representation $\pi_\un$, contains $\pi_\un$ as a subrepresentation.
\end{definition}

Given a representation $\pi$ of $C^*(\CC)$ on a Hilbert space $H$, let us say that a vector $\xi\in H$ is \emph{invariant} if $\pi([X])\xi=d(X)\xi$ for all $X\in\CC$. Any nonzero invariant vector gives an embedding of $\pi_\un$ into~$\pi$. More generally, let us say that unit vectors $\xi_i\in H$, indexed by a directed set $I$, are \emph{almost invariant} if $\lim_i(\pi([X])\xi_i,\xi_i)=d(X)$ for all $X\in\CC$.
Since $\|\pi([X])\|\le d(X)$, in this terminology the above definition of property (T) means that if a representation $\pi\colon C^*(\CC)\to B(H)$ has almost invariant vectors, then it has nonzero invariant vectors.

Almost invariance can be phrased in different ways.

\begin{lemma}\label{lem:aa}
Let $\pi=\pi_{(Z,c)}$ be the representation defined by an object $(Z,c)\in\ZC$ and $\{\xi_i\}_i$ be a net of unit vectors in $\Mor_\indC(\un,Z)$. Then the following conditions are equivalent:
\begin{itemize}
\item[(i)] the vectors $\xi_i$ are almost invariant;
\item[(ii)] we have $\lim_i\|\pi([X])\xi_i-d(X)\xi_i\|=0$ for all objects $X\in\CC$;
\item[(iii)] we have $\lim_i\norm{c_X (\iota_X \otimes {\xi}_i) - \xi_i \otimes \iota_X} = 0$ for all objects $X\in\CC$.
\end{itemize}
\end{lemma}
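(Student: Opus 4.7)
The plan is to first establish (i)$\Leftrightarrow$(ii) by Cauchy--Schwarz together with a direct expansion using the decomposition of $[\bar X][X]$ into simples, and then to show (i)$\Leftrightarrow$(iii) by computing the C$^*$-norm of $(S_i - T_i)^*(S_i - T_i)$ inside the finite-dimensional algebra $\End_\CC(X)$, where $S_i = c_X(\iota_X \otimes \xi_i)$ and $T_i = \xi_i \otimes \iota_X$.

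For (ii)$\Rightarrow$(i) I would just apply Cauchy--Schwarz to the identity $(\pi([X])\xi_i,\xi_i) - d(X) = (\pi([X])\xi_i - d(X)\xi_i, \xi_i)$. For (i)$\Rightarrow$(ii) I would expand
\[
\norm{\pi([X])\xi_i - d(X)\xi_i}^2 = (\pi([\bar X][X])\xi_i,\xi_i) - 2d(X)\Re(\pi([X])\xi_i,\xi_i) + d(X)^2,
\]
using $\pi([X])^* = \pi([\bar X])$, and then decompose $[\bar X][X] = \sum_s n_s[U_s]$ with $n_s = \dim\CC(U_s, \bar X \otimes X)$. This is a \emph{finite} sum satisfying $\sum_s n_s d_s = d(\bar X \otimes X) = d(X)^2$, so condition (i) applied to each simple $U_s$ in the decomposition sends the first summand to $d(X)^2$ and the middle summand to $2d(X)^2$, and the total tends to $0$.

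For (i)$\Leftrightarrow$(iii) I would observe that $S_i^* S_i = T_i^* T_i = \iota_X$ (using $\xi_i^*\xi_i = \iota_\un$ and unitarity of $c_X$), and then identify
\[
T_i^* S_i = (\xi_i^* \otimes \iota_X) c_X (\iota_X \otimes \xi_i) = M^{\phi_i}_X,
\]
where $\phi_i(s) = d_s^{-1}(\pi_Z([U_s])\xi_i,\xi_i)$; this is exactly the content of Lemma~\ref{lem:Mphi} applied to $\xi_i$. Taking adjoints, $S_i^* T_i = M^{\overline{\phi_i}}_X$, since $M^\phi_X$ acts as the scalar $\phi(s)$ on the $U_s$-isotypic summand of $X$. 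Hence
\[
(S_i - T_i)^*(S_i - T_i) = 2\iota_X - 2 M^{\Re\phi_i}_X \in \End_\CC(X),
\]
whose C$^*$-norm equals $2\max_s |1 - \Re\phi_i(s)|$, where $s$ ranges over the finite set of simple summands of $X$. Thus (iii) is equivalent to $\Re\phi_i(s) \to 1$ for every $s \in \Irr(\CC)$ (taking $X = U_s$). Since $|\phi_i(s)| = d_s^{-1}|(\pi_Z([U_s])\xi_i,\xi_i)| \le d_s^{-1}\norm{\pi([U_s])} \le 1$, real-part convergence forces $\phi_i(s) \to 1$, which is (i) restricted to simple objects; additivity of $\pi_Z$ then gives (i) for all $X$.

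The main obstacle I anticipate is the identification $T_i^* S_i = M^{\phi_i}_X$, but this is precisely the calculation already carried out in the proof of Lemma~\ref{lem:Mphi}, so there is nothing essentially new to verify. Everything else is a routine finite-dimensional C$^*$-algebra manipulation exploiting that $M^{\Re\phi_i}_X$ is a scalar on each isotypic block of $X$.
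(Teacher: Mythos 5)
Your proof is correct, and it differs from the paper's in two mild but genuine ways. For (i)$\Leftrightarrow$(ii) the paper simply notes that everything follows from $\norm{\pi([X])}\le d(X)$ (expand $\|\pi([X])\xi_i-d(X)\xi_i\|^2$ and bound $\|\pi([X])\xi_i\|^2$ by $d(X)^2$), whereas you instead use $\pi([X])^*\pi([X])=\pi([\bar X][X])$ and the finite fusion decomposition $[\bar X][X]=\sum_s n_s[U_s]$ with $\sum_s n_sd_s=d(X)^2$; both work, yours trading the norm bound for multiplicativity plus almost invariance on the simple constituents (and note the middle term tends to $-2d(X)^2$, not $+2d(X)^2$ — clearly just a slip of wording). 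For (i)$\Leftrightarrow$(iii) the paper applies the categorical trace: by faithfulness of $\Tr_X$ on the finite-dimensional algebra $\CC(X)$, condition (iii) is equivalent to $\Tr_X\bigl((S_i-T_i)^*(S_i-T_i)\bigr)\to0$, and the identity $\Tr_X\bigl((\xi_i^*\otimes\iota_X)c_X(\iota_X\otimes\xi_i)\bigr)=(\pi([X])\xi_i,\xi_i)$ from the proof of Lemma~\ref{lem:char-alg-act-compat-adj} gives $\Tr_X\bigl((S_i-T_i)^*(S_i-T_i)\bigr)=2d(X)-2\Re(\pi([X])\xi_i,\xi_i)$. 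You instead compute the element itself, $(S_i-T_i)^*(S_i-T_i)=2\bigl(\iota_X-M^{\Re\phi_i}_X\bigr)$, via Lemma~\ref{lem:Mphi} (which precedes this lemma in the paper, so there is no circularity), and read off the operator norm blockwise on the isotypic components. This buys you the exact formula $\norm{c_X(\iota_X\otimes\xi_i)-\xi_i\otimes\iota_X}^2=2\max_s\bigl(1-\Re\phi_i(s)\bigr)$ over the simple summands of $X$, which is sharper than the trace identity and bypasses the trace-versus-norm comparison; the paper's route is marginally shorter and needs only the single trace identity rather than the full statement of Lemma~\ref{lem:Mphi}. All the auxiliary steps you use ($S_i^*S_i=T_i^*T_i=\iota_X$, $(M^{\phi}_X)^*=M^{\bar\phi}_X$, the bound $|\phi_i(s)|\le1$ forcing $\phi_i(s)\to1$ from $\Re\phi_i(s)\to1$, finiteness of the isotypic decomposition) are sound.
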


\bp The equivalence of (i) and (ii) is immediate from $\norm{\pi([X])}\le d(X)$. By faithfulness of $\Tr_X$ on $\CC(X)$, condition (iii) is equivalent~to
$$
\Tr_X((c_X (\iota_X \otimes {\xi}_i) - \xi_i \otimes \iota_X)^* (c_X (\iota_X \otimes {\xi}_i) - \xi_i \otimes \iota_X)) \to 0.
$$
Expanding the product inside $\Tr_X$ and using that $(\pi([X])\xi_i,\xi_i)=\Tr_X((\xi^*_i \otimes \iota_X) c_X (\iota_X \otimes \xi_i))$ by the proof of Lemma~\ref{lem:char-alg-act-compat-adj}, we have
$$
\Tr_X((c_X (\iota_X \otimes {\xi}_i) - \xi_i \otimes \iota_X)^* (c_X (\iota_X \otimes {\xi}_i) - \xi_i \otimes \iota_X)) = 2 d(X) - 2 \Re (\pi([X]) \xi_i, \xi_i).
$$
Using once again that $\norm{\pi([X])}\le d(X)$ we conclude that (iii) holds if and only if $(\pi([X]) \xi_i, \xi_i)\to d(X)$, so
(iii) is equivalent to (i).
\ep

As in the group case, there are many equivalent ways of formulating property (T). Let us list some of them.

\begin{proposition} \label{prop:propertyT}
The following conditions are equivalent:
\begin{itemize}
\item[(i)] the category $\CC$ has property (T);
\item[(ii)] there exist a finite set $F\subset \Irr(\CC)$ and $\eps>0$ such that if $\pi\colon C^*(\CC)\to B(H)$ is a representation and $\xi\in H$ is a unit vector such that $|(\pi([U_s])\xi,\xi)- d_s|<\eps$ for all $s\in F$, then there exists an invariant unit vector in $H$;
\item[(iii)] there exists a nonzero projection $p\in C^*(\CC)$ such that $[X]p=d(X)p$ for all $X\in\CC$.
\end{itemize}
\end{proposition}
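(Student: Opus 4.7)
The plan is to establish the circle (i) $\Leftrightarrow$ (ii), (ii) $\Rightarrow$ (iii), (iii) $\Rightarrow$ (i). The equivalence (i) $\Leftrightarrow$ (ii) is the standard weak containment argument. For (ii) $\Rightarrow$ (i): if $\pi$ weakly contains $\pi_\un$ then the matrix coefficient $s\mapsto d_s$ is a weak-$*$ limit of $(\pi([U_s])\xi,\xi)$, so for any finite $F$ and $\eps>0$ some unit vector $\xi$ satisfies the hypothesis of~(ii), producing an invariant vector. For (i) $\Rightarrow$ (ii) I argue by contrapositive: if (ii) fails, for every pair $(F,\eps)$ pick a representation $\pi_{F,\eps}$ admitting an approximately invariant unit vector $\xi_{F,\eps}$ but no invariant vector; the direct sum $\bigoplus_{F,\eps}\pi_{F,\eps}$ then weakly contains $\pi_\un$ (the $\xi_{F,\eps}$ form a net of vector states converging to the trivial character) yet has no invariant vector, contradicting~(i).

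For (ii) $\Rightarrow$ (iii) the idea is to build a Kazhdan-type projection from a spectral gap. Consider the self-adjoint element
$$
h = \sum_{s\in F}\Bigl(d_s\cdot [\un] - \tfrac{1}{2}([U_s]+[\bar U_s])\Bigr)\in C^*(\CC),
$$
which is positive (each summand is, since $\|\pi([U_s])\|\le d_s$) and satisfies $\pi_\un(h)=0$. The claim is that there exists $\delta>0$ such that $\pi(h)\ge\delta\cdot\un$ for every representation $\pi$ without invariant vectors. Indeed, if $z_s=(\pi([U_s])\xi,\xi)$ for a unit vector $\xi$, then $|z_s|\le d_s$ forces $(\operatorname{Im} z_s)^2\le 2d_s(d_s-\operatorname{Re} z_s)$, hence $|z_s-d_s|^2\le 4d_s(d_s-\operatorname{Re} z_s)$; were $(\pi(h)\xi,\xi)$ arbitrarily small, each $d_s-\operatorname{Re} z_s$ would tend to $0$, forcing $|z_s-d_s|\to 0$ and, by~(ii), the existence of an invariant vector. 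Working in the faithful universal representation $\tilde\pi$ and splitting $H_{\tilde\pi}=H^{\tilde\pi}\oplus (H^{\tilde\pi})^\perp$ into invariant vectors and their complement (both $*$-invariant), I get $\tilde\pi(h)=0$ on $H^{\tilde\pi}$ and $\tilde\pi(h)\ge\delta$ on $(H^{\tilde\pi})^\perp$, so $\sigma(h)\subset\{0\}\cup[\delta,\infty)$. A continuous $f$ with $f(0)=1$ and $f|_{[\delta,\infty)}=0$ then produces the projection $p=f(h)\in C^*(\CC)$; it is nonzero since $0\in\sigma(h)$, and $\tilde\pi(p)$ is precisely the orthogonal projection onto $H^{\tilde\pi}$. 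Faithfulness of $\tilde\pi$ combined with $\tilde\pi([X]-d(X)[\un])|_{H^{\tilde\pi}}=0$ yields $[X]p=d(X)p$ in $C^*(\CC)$.

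For (iii) $\Rightarrow$ (i) I use the projection $p$ directly. For any representation $\pi$ the image $\pi(p)H_\pi$ consists of invariant vectors, since $\pi([X])\pi(p)\zeta=\pi([X]p)\zeta=d(X)\pi(p)\zeta$. Moreover, from $p^2=p$, $[X]p=d(X)p$, density of $\C[\Irr(\CC)]$ in $C^*(\CC)$ and the decomposition $C^*(\CC)=\C\cdot[\un]\oplus\ker\pi_\un$, one checks that $\ker\pi_\un\cdot p=0$ (each generator $[U_s]-d_s[\un]$ annihilates $p$), and hence the identity $p^2=p$ forces $\pi_\un(p)=1$. So if $\pi$ weakly contains $\pi_\un$, the trivial state $\omega_\un$ is a weak-$*$ limit of vector states $(\pi(\cdot)\xi_i,\xi_i)$, giving $\|\pi(p)\xi_i\|^2=(\pi(p)\xi_i,\xi_i)\to\omega_\un(p)=1$; the vectors $\pi(p)\xi_i$ are nonzero invariant vectors for large $i$, whence $\pi_\un\subset\pi$.

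The main obstacle is the spectral gap step in (ii) $\Rightarrow$ (iii): converting the two-sided approximate invariance condition $|z_s-d_s|<\eps$ of~(ii) into a uniform lower bound on the positive self-adjoint element $h$, and then verifying that the spectral projection at $\{0\}$ lies inside $C^*(\CC)$ and satisfies the required centrality identity $[X]p=d(X)p$. The other implications are essentially bookkeeping once this is in place.
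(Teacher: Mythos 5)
Your proof is correct, but it takes a genuinely different route from the paper's. The paper proves (i)$\Rightarrow$(iii)$\Rightarrow$(ii)$\Rightarrow$(i): the projection is produced directly from property (T) via Voiculescu's theorem (a huge direct sum $\sigma$ of countably amplified representations without invariant vectors cannot be faithful, since otherwise its restriction to any separable subalgebra would be essential and hence weakly contain $\pi_\un$; then $\ker\sigma$ is a nonzero ideal meeting $\ker\pi_\un$ trivially, so $\ker\sigma=\C p$), and (iii)$\Rightarrow$(ii) is obtained by norm-approximating $p$ by a finitely supported element $x=\sum_{s\in F}\alpha_s[U_s]$. You instead prove (i)$\Rightarrow$(ii) by the elementary contrapositive (direct sum of counterexamples, whose vector states converge weak-$*$ to the trivial character), and then get (ii)$\Rightarrow$(iii) by the classical spectral-gap construction: your inequalities $|z_s-d_s|^2\le 4d_s(d_s-\Re z_s)$ do yield a uniform bound $\pi(h)\ge \eps^2/(4\max_{s\in F}d_s)$ on every representation without invariant vectors, the splitting of the universal representation into invariant vectors and their complement is reducing because the algebra is a $*$-algebra, and since $C^*(\CC)$ is unital and $\pi_\un(h)=0$ puts $0$ in $\sigma(h)$, the functional calculus $p=f(h)$ lands in $C^*(\CC)$ and is nonzero; faithfulness then gives $[X]p=d(X)p$. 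Your (iii)$\Rightarrow$(i) ($ap=\pi_\un(a)p$, hence $\pi_\un(p)=1$, hence $\|\pi(p)\xi_i\|^2\to1$ along almost invariant vectors) is essentially the paper's (iii)$\Rightarrow$(ii) computation, pushed one step further; it could be shortened by noting that weak containment means $\ker\pi\subseteq\ker\pi_\un$, so $\pi(p)\neq0$. What each approach buys: yours avoids Voiculescu's theorem entirely and produces an explicit Kazhdan-type element $h$ supported on $F\cup\bar F\cup\{e\}$ whose spectral gap encodes the constant in (ii), while the paper's argument is shorter given Voiculescu and additionally identifies the projection as spanning the kernel ideal of the sum of all representations without invariant vectors.
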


\bp
(i)$\Rightarrow$(iii) This can be proved in the same way as the existence of Kazhdan projections, see e.g.~\cite{MR2391387}*{Section~17.2}. Consider any family of representations $\{\pi_\lambda\}_\lambda$ of $C^*(\CC)$ without nonzero invariant vectors such that the representation $\pi_\un\oplus(\oplus_\lambda\pi_\lambda)$ is faithful. For every $\lambda$ take countably many copies of $\pi_\lambda$ and denote by~$\sigma$ the direct sum of all these representations for all $\lambda$. We claim that $\sigma$ is not faithful. Assume this is not the case. Then for any separable C$^*$-subalgebra $A\subset C^*(\CC)$ the representation $\sigma|_A$ is faithful and essential, so by Voiculescu's theorem it weakly contains any other representation. Since this is true for any~$A$, we conclude that $\pi_\un$ is weakly contained in $\sigma$. But then $\sigma$ must have nonzero invariant vectors, which is a contradiction. Therefore $J=\ker\sigma\ne0$. Since $\pi_\un\oplus\sigma$ is faithful, it follows that $J=\C p$ for a projection $p$. Clearly, $p$ has the required property.

\smallskip

(iii)$\Rightarrow$(ii) Since $[X]p=d(X)p$ for all $X\in\CC$, we have $ap=\pi_\un(a)p$ for all $a\in C^*(\CC)$, and letting $a=p$ we get $\pi_\un(p)=1$. Now choose an element $x=\sum_{s\in F}\alpha_s[U_s]$, with $F\subset\Irr(\CC)$ finite, such that $\|p-x\|<1/2$. Then $|1-\sum_{s\in F}\alpha_s d_s|<1/2$. We can find $\eps>0$ such that whenever $|\beta_s-d_s|<\eps$ for all $s\in F$, we still have $|1-\sum_{s\in F}\alpha_s \beta_s|<1/2$. Then, assuming that $|(\pi([U_s])\xi,\xi)- d_s|<\eps$ for all $s\in F$, we get a nonzero invariant vector $\pi(p)\xi$. Thus the pair $(F,\eps)$ has the required property.

\smallskip

The implication (ii)$\Rightarrow$(i) is obvious.
\ep

Property (T) for rigid  C$^*$-tensor categories has been also introduced by Popa and Vaes~\cite{MR3406647}.

\begin{corollary} \label{cor:twodefT}
Definition \ref{defn:cat-prop-T} of property (T) is equivalent to the definition of Popa and Vaes.
\end{corollary}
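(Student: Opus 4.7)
The plan is to translate between Definition~\ref{defn:cat-prop-T}, which is phrased in terms of representations of $C^*(\CC)$, and the Popa--Vaes formulation, which is phrased in terms of cp-multipliers on $\Irr(\CC)$. The bridge is Theorem~\ref{thm:positivedef} together with Proposition~\ref{prop:Zphi-universal} and Corollary~\ref{cor:tworepclasses}: normalized cp-multipliers $\phi$ (with $\phi(e) = 1$) are in bijection with pairs $((Z,c), \xi)$ consisting of a spherical object of $\ZC$ and a unit cyclic vector $\xi \in \Mor_\indC(\un, Z)$, via $\phi(s) = d_s^{-1}(\pi_{(Z,c)}([U_s])\xi, \xi)$. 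Under this dictionary the constant multiplier $\phi \equiv 1$ corresponds to the trivial representation $\pi_\un$, and the existence of an invariant unit vector in a representation $\pi$ corresponds exactly to the constant $1$ being realizable as a matrix coefficient of $\pi$.

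Next, I would invoke Proposition~\ref{prop:propertyT}(ii) to reformulate Definition~\ref{defn:cat-prop-T} as the existence of a Kazhdan pair $(F, \eps)$: every unit vector $\xi$ in any representation $\pi$ satisfying $|(\pi([U_s])\xi, \xi) - d_s| < \eps$ for all $s \in F$ produces an invariant unit vector. Translating via the dictionary above, this says that every normalized cp-multiplier $\phi$ with $|\phi(s) - 1| < \eps/d_s$ for $s \in F$ arises, through the GNS-type construction of Proposition~\ref{prop:Zphi-universal}, from a representation containing $\pi_\un$ as a subrepresentation. This is the cp-multiplier formulation of the Popa--Vaes property~(T).

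The hard part is to pin down the precise quantitative form of the Popa--Vaes definition and to verify that it matches our Kazhdan pair condition under the above correspondence. By Lemma~\ref{lem:aa}, almost-invariance of $\xi$ in a representation $\pi_{(Z,c)}$ is equivalent to pointwise convergence to $1$ of the associated cp-multipliers on arbitrary finite subsets of $\Irr(\CC)$, so the equivalence of the quantitative formulations is essentially automatic. Given this, the corollary follows at once from Proposition~\ref{prop:propertyT} combined with the bijective correspondence between normalized cp-multipliers and cyclic matrix coefficients of representations of $C^*(\CC)$ established in this section.
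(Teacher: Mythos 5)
Your overall strategy is the same as the paper's: identify cp-multipliers with matrix coefficients of representations of $C^*(\CC)$ (Theorem~\ref{thm:positivedef}, Corollary~\ref{cor:twodef}) and use the Kazhdan-pair reformulation in Proposition~\ref{prop:propertyT}. However, there is a genuine gap exactly at the point you flag as ``the hard part'' and then dismiss: you never actually state the Popa--Vaes definition, and the claim that the quantitative formulations match ``essentially automatically'' by Lemma~\ref{lem:aa} is not justified. Popa and Vaes do not define property~(T) by the condition you arrive at (``every cp-multiplier close to $1$ on a finite set comes from a representation containing $\pi_\un$''); their definition is phrased in terms of cp-multipliers (a net of cp-multipliers converging to $1$ pointwise must converge uniformly) and/or in terms of their C$^*$-algebra $C_u(\CC)$, and passing from that to a Kazhdan-pair/invariant-vector statement is a nontrivial step. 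Lemma~\ref{lem:aa} only translates almost-invariance of vectors into pointwise convergence of the associated multipliers; it says nothing about producing genuinely invariant vectors, nor about uniform convergence, so it cannot close this gap.

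The paper deals with precisely this missing step by citing Proposition~5.5 of Popa--Vaes \cite{MR3406647}, which reformulates their definition in terms of representations of $C_u(\CC)$; combined with Corollary~\ref{cor:twodef} (so that $C_u(\CC)=C^*(\CC)$) and Proposition~\ref{prop:propertyT}, the equivalence is then immediate. To repair your argument you must either quote the Popa--Vaes definition and their Proposition~5.5 explicitly, or reprove the analogue of that proposition (the equivalence between isolation of the trivial representation and the multiplier formulation) yourself; as written, the corollary is asserted rather than proved at the decisive point.
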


\bp
This follows from Corollary~\ref{cor:twodef}, Proposition~\ref{prop:propertyT} and~\cite{MR3406647}*{Proposition~5.5}.
\ep

\begin{remarks}\mbox{\ }

\noindent
(i) If $G$ is a compact quantum group, then by Remark~\ref{rmk:q-grp-char-corr} the category $\Rep G$ has property (T) if and only if the dual discrete quantum group $\hat G$ has central property (T) in the sense of~\cite{arXiv:1410.6238}. Together with Corollary~\ref{cor:twodefT} this gives an alternative proof of~\cite{MR3406647}*{Proposition~6.3}.

\smallskip\noindent
(ii) Recall that by Corollary~\ref{cor:tworepclasses} any representation of $C^*(\CC)$ is equivalent to a representation of the form $\pi_Z$. If we did not know this, we would have the dilemma of defining property (T) using either all representations or only representations $\pi_Z$. It is, however, not difficult to see that these two approaches are equivalent independently of the results of Section~\ref{sec:positivedefinite}. The key point is that the implication (i)$\Rightarrow$(iii) in Proposition~\ref{prop:propertyT} remains true if we define property (T) using only representations~$\pi_Z$. In order to see this, we have to use representations $\pi_Z$ in the proof, and for this we have to be able to split any representation~$\pi_Z$ into a direct sum of copies of $\pi_\un$ and a representation of the same form without invariant vectors. For this, in turn, we have to show that if $(Z, c)\in\ZC$ and $\xi$ is an invariant unit vector in $\Mor_\indC(\un, Z)$, then $\xi$ is a morphism in $\ZC$, that is, $c_X(\iota\otimes\xi_X)=\xi\otimes\iota_X$ for all $X$. But this is true by Lemma~\ref{lem:aa}. This was our initial approach before the appearance of~\cite{MR3406647}.
\end{remarks}

\bigskip

\section{Categories of Hilbert bimodules}

In this section we give an interpretation of our results and constructions in terms of Hilbert bimodules. Throughout the whole section $M$ denotes a fixed II$_1$-factor.

\subsection{Duality for Hilbert bimodules} \label{sec:Hilbdual}

Let us briefly review a few basic facts from the theory of Hilbert modules, see e.g.~\citelist{\cite{MR1245354}\cite{MR1642584}} for more details.

Denote by $\tau$ the unique tracial state on $M$. Let $X$ be a Hilbert $M$-bimodule, that is, a Hilbert space together with two commuting normal unital representations of $M$ and $M^\opos$ on $X$, where $M^\opos$ is the factor~$M$ with the opposite product. Denote by $\dim(X_M)$ the Murray--von Neumann dimension of $X$ considered as a right $M$-module, so if $X_M\cong pL^2(M)^n$ for a projection $p\in\Mat_n(M)$, then $\dim(X_M)=(\Tr\otimes\tau)(p)$, and if no such $p$ and $n\in\mathbb N$ exist, then $\dim(X_M)=\infty$ . We can similarly define $\dim({}_MX)$.

For a Hilbert $M$-bimodule $X$, consider the subspace $X^0\subset X$ of left bounded vectors, that is, vectors $\xi \in X$ satisfying $(\xi x, \xi) \le c \tau(x)$ for some $c \ge 0$ and all $x\in M_+$. Then $X^0$ is a sub-bimodule, and it admits a unique $M$-valued inner product $\langle\xi, \eta\rangle_M$ (antilinear in $\xi$, linear in $\eta$) satisfying $\langle\xi, \eta x\rangle_M = \langle\xi, \eta\rangle_M x$ and $\tau(\langle\xi, \eta\rangle_M)=(\eta,\xi)$. Then, given another Hilbert $M$-bimodule $Y$, the tensor product $X\otimes_MY$ is defined as the tensor product $X^0\otimes_M Y$ in the sense of Hilbert C$^*$-modules. This way the category $\Hilb_M$ of Hilbert $M$-bimodules becomes a C$^*$-tensor category.

Let us now describe the duality in $\Hilb_M$. Assume $X$ is a Hilbert $M$-bimodule such that both $\dim(X_M)$ and $\dim({_M}X)$ are finite. In this case the spaces of left and right bounded vectors in $X$ coincide and therefore~$X^0$ carries also the structure of a left Hilbert C$^*$-module over $M$, so it is equipped with an $M$-valued inner product $_M\langle\xi, \eta\rangle$ which  is linear in $\xi$ and antilinear in $\eta$. The complex conjugate space $\bar{X}$ is also an $M$-bimodule such that $x \bar{\xi} y = \overline{y^* \xi x^*}$, and the inner products are related by $\langle\bar{\xi}, \bar{\eta}\rangle_M = {}_M\langle\xi, \eta\rangle$. We can choose a basis $\{\rho_i\}^n_{i=1} \subset X^0$ of $X$ as a right $M$-module, in the sense that $\xi = \sum_i \rho_i \langle\rho_i, \xi\rangle_M$ for all $\xi \in X^0$. The formula
$$
\Tr^r_X(T)=\sum_i\tau(\langle \rho_i,T\rho_i\rangle_M)=\sum_i(T\rho_i,\rho_i)
$$
defines a trace on $\End(X_M^0)$, and we have $\Tr^r_X(1)=\dim(X_M)$. Similarly, we have a trace $\Tr^l_X$ on $\End({}_MX^0)$ such that
$$
\Tr^l_X(T)=\sum_j(T\lambda_j,\lambda_j)
$$
for any basis $\{\lambda_j\}^m_{j=1}$ of the left $M$-module $X$.

There exists a unique Hilbert $M$-bimodule map $\bar R_X\colon L^2(M) \to X \otimes_M \bar{X}$ such that $\bar R_X(1)=\sum_i \rho_i \otimes \bar{\rho}_i $. The map $\bar R_X$ does not depend on the choice of $\{\rho_i\}^n_{i=1}$ and the adjoint map is given by $\bar R_X^*(\xi\otimes\bar\eta)={}_M\langle \xi,\eta\rangle$. It follows that
$$
\|\bar R_X\|^2=\bar R^*_X\bar R_X(1)=\sum_i(\rho_i,\rho_i)=\dim(X_M).
$$
Consider now $R_X=\bar R_{\bar X}\colon L^2(M)\to \bar X\otimes_M X$, so $R_X(1)=\sum_j \bar\lambda_j\otimes\lambda_j$. We have
$$
R^*_X(\bar\xi\otimes\eta)=\langle\xi,\eta\rangle_M\ \ \text{and}\ \ \|R_X\|^2=\dim(\bar X_M)=\dim({}_MX).
$$
From this we see that $(\iota\otimes R^*_X)(\bar R_X\otimes\iota)=\iota$ and conclude that $(R_X,\bar R_X)$ is a solution of the conjugate equations for $X$. Thus $\bar X$ is dual to $X$ and
$$
d(X)\le \sqrt{\dim({}_M X) \dim(X_M)}.
$$

In general this is a strict inequality and the solution $(R_X,\bar R_X)$ is not standard. The general criterion of standardness $\bar R^*_X(T\otimes\iota)\bar R_X=R_X^*(\iota\otimes T)R_X$ becomes
\begin{equation} \label{eq:eqtrace}
\Tr^r_X=\Tr^l_X\ \ \text{on}\ \ \End_{M\mhyph M}(X),
\end{equation}
in which case we also have $d(X)=\dim({}_M X)=\dim(X_M)$.

\subsection{Drinfeld center and Longo--Rehren construction}\label{subsec:LR}

Let $\CC$ be a rigid full C$^*$-tensor subcategory of~$\Hilb_M$ such that condition~\eqref{eq:eqtrace} holds for all $X\in\CC$. As usual, we also assume that $\CC$ is closed under finite direct sums and subobjects. We will use the solutions $(R_X,\bar R_X)$ of conjugate equations defined in Section~\ref{sec:Hilbdual}. Note that since these solutions, as well as the left and right bases of Hilbert modules~\cite{MR1642584}*{Proposition~9.62}, behave well with respect to direct sums and tensor products, it suffices to check~\eqref{eq:eqtrace} on a set of bimodules generating the C$^*$-tensor category $\CC$ by taking direct sums, tensor products and subobjects.

\begin{example} \label{ex:subfactorcat}
Assume $N\subset M$ is an extremal finite index subfactor. Consider the corresponding Jones tower $N\subset M\subset M_1\subset\dots$. Let $\CC=\CC_{N\subset M}$ be the full C$^*$-tensor subcategory of $\Hilb_M$ generated by the module~$L^2(M_1)$. Then~$\CC$ satisfies the above assumptions. Indeed, since the Hilbert $M$-bimodule $L^2(M_1)$ is self-dual, the category $\CC$ is rigid. Next, we have a canonical isomorphism $\End({}_ML^2(M_1)_M)\cong M'\cap M_{2}$. Under this isomorphism the traces $\Tr^r$ and $\Tr^l$ on $\End({}_ML^2(M_1)_M)$ introduced in Section~\ref{sec:Hilbdual} coincide, up to the factor $[M:N]$, with the restriction of the tracial states on $M_{2}$ and $M'\subset B(L^2(M_1))$, respectively, to $M'\cap M_{2}$. The extremality assumption implies that these traces are equal, so the condition~\eqref{eq:eqtrace} holds for all $X\in\CC$.

In fact, any finitely generated rigid C$^*$-tensor category $\CC$ (satisfying our standard assumptions) is unitarily monoidally equivalent to a category of the form $\CC_{N\subset M}$ for some $M$ and $N$. More precisely, taking an object $X\in\CC$ such that $X\otimes\bar X$ is a generating object, we have a standard $\lambda$-lattice consisting of the algebras $\CC(X \otimes \bar{X} \otimes \cdots)$ ($n$ factors) and $\CC(\bar{X} \otimes X \otimes \cdots)$ ($n-1$ factors), with the Jones projections given by copies of $\frac{1}{d(X)} R_X R_X^*$ and  $\frac{1}{d(X)} \bar{R}_X \bar{R}_X^*$. Then a result of Popa~\cite{MR1334479} gives an extremal finite index subfactor $N \subset M$ such that $\CC_{N\subset M}$ is unitarily monoidally equivalent to $\CC$, and such that under this equivalence the module $L^2(M_1)$ corresponds to $X \otimes \bar{X}$.
\end{example}

With every category $\CC$ as above one can associate an inclusion $A\subset \tB$ of II$_1$-factors, called the Longo-Rehren inclusion~\citelist{\cite{MR1332979} \cite{MR1742858}}. Namely,
put $A=M\bar\otimes M^\opos$. Recall that there is a functorial construction of a Hilbert $M^\opos$-bimodule $X^\natural$ from a Hilbert $M$-bimodule $M$:  as a linear space we put $X^\natural = \bar{X}$, and then define the bimodule structure by $x\bar\xi=\overline{x^*\xi}$ and $\bar\xi x=\overline{\xi x^*}$. With this definition we have a natural identification of $(X \otimes_M Y)^\natural$ with $ X^\natural \otimes_{M^\opos} Y^\natural$. Thus, $X \mapsto X^\natural$ is a monoidal functor which is antilinear on morphisms.
Now, choose a complete system of representatives of isomorphism classes of simple modules  $\{X_s\}_{s \in I}$ in $\CC$. Then $\tB$ is generated by the spaces $X_s^0 \otimes X_s^{0\natural}$, with the product
\begin{equation}\label{eq:prod-in-tB}
(\xi_1 \otimes \bar \xi_2) \cdot (\eta_1 \otimes \bar \eta_2) = \sum_{\alpha} \left(\frac{d_s d_t}{d_{r_\alpha}}\right)^{\hlf} w^\alpha_{st}(\xi_1 \otimes \eta_1) \otimes \overline{w^{\alpha}_{st}(\xi_2 \otimes \eta_2)}
\end{equation}
for $\xi_1 \otimes \bar\xi_2 \in X_s^0 \otimes X_s^{0\natural}$ and $\eta_1 \otimes \bar\eta_2 \in X_t^0 \otimes X_t^{0\natural}$,
where $\{w^\alpha_{st}\}_\alpha$ is any family of coisometries $X_s\otimes_MX_t\to X_{r_\alpha}$ defining a decomposition of $X_s\otimes_M X_t$ into simple bimodules. The $*$-structure is defined by
$$
(\xi_1 \otimes \bar\xi_2)^*=\bar\xi_1\otimes \xi_2,
$$
where we identify $\bar X_s^0\otimes\bar X_s^{0\natural}$ with a subspace of $\tB$ using the map $\bar\xi_1\otimes \xi_2\mapsto J_s\bar\xi_1\otimes \overline{J_s\bar \xi_2}$, where $J_s$ is any unitary isomorphism of Hilbert $M$-bimodules $\bar X_s\to X_{\bar s}$. If $e\in I$ corresponds to $L^2(M)$, then $A$ is identified with the subalgebra formed by the bounded vectors in $X_e\otimes X_e^\natural$. The projection onto this subalgebra composed with the trace on $A$ defines a tracial state on $\tB$; it is worth noting that this is the point where condition~\eqref{eq:eqtrace} is used. By construction $L^2(\tB)$ decomposes into the direct sum of the simple Hilbert $A$-bimodules $X_s\otimes X_s^\natural$. In particular, $A'\cap \tB=\C1$ and so $\tB$ is a II$_1$-factor.

\begin{remark}
For $\CC=\CC_{N\subset M}$ as in Example~\ref{ex:subfactorcat}, Masuda~\cite{MR1742858}  proved that $\tB$ is isomorphic to the symmetric enveloping algebra $M \boxtimes_{e_N} M^\opos$ of Popa~\cite{MR1302385}.
\end{remark}

Our goal is to prove the following result.

\begin{theorem} \label{thm:mon-eqv-D-center-B-bimod}
Let $\CC\subset\Hilb_M$ be a C$^*$-tensor category as above and $A\subset\tB$ be the associated Longo--Rehren inclusion. Then $\ZC$ is unitarily monoidally equivalent to the full subcategory $\ZHC$ of $\Hilb_\tB$ consisting of the Hilbert $\tB$-bimodules $X$ such that as a Hilbert $A$-bimodule $X$ decomposes into a direct sum of copies of~$X_s\otimes X_t^\natural$.
\end{theorem}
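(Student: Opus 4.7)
The plan is to construct a pair of unitary tensor functors $F\colon\ZC\to\ZHC$ and $G\colon\ZHC\to\ZC$ that are mutually quasi-inverse, extending Izumi's analysis~\cite{MR1782145} beyond the finite depth setting. The guiding principle is that a unitary half-braiding on an ind-object is precisely the extra data required to extend a natural $A=M\bar\otimes M^\opos$-action (built just from the ind-object) to a full $\tB$-action.

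For $F$, given $(Z,c)\in\ZC$ I would produce the underlying Hilbert space $\mathcal H(Z,c)$ as a Hilbert $A$-bimodule decomposing into summands $X_s\otimes X_t^\natural$, with multiplicities organized by suitable morphism spaces against $Z$ twisted by $c$ so that the two commuting $A$-actions are well-defined. The key step is then to extend the $A$-action to a $\tB$-action: the right action of a generator $\xi_1\otimes\bar\xi_2\in X_r^0\otimes X_r^{0\natural}\subset\tB$ on $\mathcal H(Z,c)$ should be defined by using the unitary $c_{X_r}\colon X_r\otimes Z\to Z\otimes X_r$ to move $X_r$ past the $Z$-part, combined with pairings against $\xi_1$ and $\xi_2$ via the Hilbert $M$-bimodule structure of $X_r$. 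The multiplicativity axiom~\eqref{eq:halfbr} for half-braidings should translate directly into the associativity built into the twisted product~\eqref{eq:prod-in-tB}, while naturality and unitarity of $c$ should give $*$-compatibility and boundedness of the action.

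For $G$, given $Y\in\ZHC$ I would extract from the $A$-bimodule decomposition $\oplus_{s,t}(X_s\otimes X_t^\natural)\otimes K_{s,t}$ an ind-object $Z_Y\in\indC$ assembled from the multiplicity spaces $K_{s,t}$. The action of the extra generators of $\tB$ on $Y$ then supplies, via Frobenius reciprocity applied to right $M$-bases $\{\rho_i\}$ of each $X_r$ (as in Section~\ref{sec:Hilbdual}), candidate unitary morphisms $c_{X_r,Y}\colon X_r\otimes Z_Y\to Z_Y\otimes X_r$; the algebra relations of $\tB$ encoded by~\eqref{eq:prod-in-tB} should then force these to satisfy the half-braiding axiom, and the $\tB$-bimodule inner product should give unitarity.

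Granting the constructions, the verifications that $F\circ G\cong\Id$ and $G\circ F\cong\Id$ should be largely formal from tracking the canonical identifications, as should monoidality of $F$: one identifies $\mathcal H(Z\otimes Z',c\otimes c')\cong\mathcal H(Z,c)\otimes_\tB\mathcal H(Z',c')$ using the intermediate half-braiding to produce precisely the $\tB$-balancing of the relative tensor product. A useful sanity check is that the regular half-braiding $\Zreg(\CC)$ from Section~\ref{sec:regul-half-braid} must correspond under $F$ to $L^2(\tB)$, since both decompose as $\oplus_s X_s\otimes X_s^\natural$ over $A$, in line with the observation in Example~\ref{ex:regrep} that $\pi_{\Zreg}$ is the regular representation of the fusion algebra. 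The main technical obstacle I anticipate is the detailed bookkeeping required to verify that the $\tB$-action induced from the half-braiding matches~\eqref{eq:prod-in-tB} on the nose: this involves unwinding the dimensional factors $(d_sd_t/d_r)^{\hlf}$ and the coisometries $w^\alpha_{st}$ against the standard solutions $(R_X,\bar R_X)$ and tracing through the Frobenius duality identifications for Hilbert bimodules from Section~\ref{sec:Hilbdual}.
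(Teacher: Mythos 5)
Your architecture is the same as the paper's: a unitary half-braiding on an ind-object is exactly the data needed to extend the evident $A$-bimodule structure to a $\tB$-bimodule structure, with the multiplicativity axiom~\eqref{eq:halfbr} playing against the associativity of the product~\eqref{eq:prod-in-tB} (this is the identity~\eqref{eq:Izu} in the paper), and the inverse functor obtained by passing to $M^\opos$-central vectors and comparing the two ways of multiplying by $X_s^0\otimes X_s^{0\natural}$. But your one concrete prediction — the ``sanity check'' that $\Zreg(\CC)$ corresponds to $L^2(\tB)$ — is false, and the error is not incidental. The $A$-central vectors of $L^2(\tB)$ are just the scalars (since $A'\cap\tB=\C1$), so $L^2(\tB)$ carries the \emph{trivial} representation and corresponds to the unit object $(\un,\text{trivial half-braiding})$; it is $L^2(\tB)\otimes_AL^2(\tB)$ that corresponds to $\Zreg(\CC)$, as recorded in the example of Section~\ref{sec:rep-char-alg-on-bimod}. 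The mistake comes from conflating the internal tensor products $U_s\otimes\bar U_s=X_s\otimes_M\bar X_s$ appearing in $\Zreg(\CC)$ with the external $A$-bimodules $X_s\otimes X_s^\natural$: the image of $(Z,c)$ has underlying $A$-bimodule (a model of) $L^2(\tB)\otimes_A(Z\otimes L^2(M^\opos))$, whose $X_s\otimes X_t^\natural$-multiplicities are governed by $\Hom_{M\mhyph M}(X_s,X_t\otimes_M Z)$; for $Z=\Zreg(\CC)$ these are all nonzero and typically large, so the image is far bigger than $L^2(\tB)$. Since your monoidality and $FG\cong\Id$, $GF\cong\Id$ arguments are said to be ``formal from tracking the canonical identifications,'' getting the identification of objects wrong here is a genuine problem, not a cosmetic one.

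Beyond that, every non-formal step is deferred by a ``should'': the boundedness/isometricity of moving $X_s$ through an arbitrary bimodule (Proposition~\ref{prop:move-Xi-thru-bimod}, proved via the quasi-basis relations of Lemma~\ref{lem:prodrelations}), which is exactly what makes your candidate morphisms $c_{X_r,Y}$ for $G$ unitary (Lemma~\ref{lem:lr-iso}) — note that the map $L^2(\tB)\otimes_AH\to H$ is in general unbounded, so this is not automatic; the commutation of the two $\tB$-actions in the construction of $F$ (Lemma~\ref{lem:half-br-and-prod-of-tB}); and, for $GF\cong\Id$, the fact that the isomorphisms $GF(X,c)\cong X$ respect the half-braidings, which the paper does not treat as formal but deduces from the observation (via~\eqref{eq:unitaryU1}) that the $\tB$-bimodule structure already determines the half-braiding. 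As a plan the route is the right one, but as a proof it is incomplete, and the part you did commit to is incorrect.
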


Note that it is not immediately obvious, but will become clear from the proof, that $\ZHC$ is a tensor category. Let us also remark that the objects of $\ZHC$ can equivalently be characterized as Hilbert bimodules that are generated, as $\tB$-bimodules, by $A$-sub-bimodules isomorphic to $X_s\otimes X_t^\natural$. In order to see this, it suffices to show that given a Hilbert $\tB$-bimodule $H$ and a copy of $X_s\otimes X_t^\natural$ in ${}_AH_A$, the $\tB$-bimodule structure defines bounded maps $L^2(\tB)\otimes_A(X_s\otimes X_t^\natural)\to H$ and
$(X_s\otimes X_t^\natural)\otimes_AL^2(\tB)\to H$. This, in turn, follows from the following general result (compare with \cite{MR3406647}*{Lemma~2.8}).

\begin{lemma}\label{lem:modstructure}
Assume that $P\subset Q$ is an irreducible inclusion of II$_1$-factors, $H$ is a Hilbert $Q$-$P$-module, and $X\subset H$ is a Hilbert $P$-sub-bimodule such that $\dim(X_P)$ and $\dim({}_PX)$ are finite. Then the map $Q\otimes X\ni a\otimes\xi\mapsto a\xi$ extends to a bounded map $L^2(Q)\otimes_PX\to H$.
\end{lemma}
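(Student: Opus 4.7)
The plan is to invoke Frobenius reciprocity with respect to the dual bimodule~$\bar X$: a bounded $Q\mhyph P$-bimodule map $\Psi\colon L^2(Q)\to H\otimes_P\bar X$ gives rise to a bounded map
$$
\Phi=(\iota_H\otimes R_X^*)\circ(\Psi\otimes\iota_X)\colon L^2(Q)\otimes_P X\to H\otimes_P\bar X\otimes_P X\to H\otimes_P L^2(P)\cong H,
$$
since $R_X^*$ is bounded. If $\Psi(1)=\bar R_X(1)=\sum_i\rho_i\otimes\bar\rho_i$ for a right $P$-basis $\{\rho_i\}$ of~$X$, then using $R_X^*(\bar\rho_i\otimes\xi)=\langle\rho_i,\xi\rangle_P$ together with the expansion $\xi=\sum_i\rho_i\langle\rho_i,\xi\rangle_P$, the composition collapses to $\Phi(a\otimes\xi)=a\xi$. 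The task therefore reduces to producing such a bounded $\Psi$.

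Accordingly, I set $\eta_0=\bar R_X(1)=\sum_i\rho_i\otimes\bar\rho_i\in X\otimes_P\bar X$, and view it inside $H\otimes_P\bar X$ via the inclusion $X\hookrightarrow H$. The $P$-bilinearity of $\bar R_X$ makes $\eta_0$ a $P$-central vector, i.e., $p\eta_0=\eta_0 p$ for all $p\in P$, and this property is preserved by the inclusion. Tentatively define $\Psi$ on $Q\subset L^2(Q)$ by $\Psi(a)=a\eta_0$; the essential step is then to show that $\|a\eta_0\|^2_{H\otimes_P\bar X}$ is bounded by a constant multiple of $\tau_Q(a^*a)$.

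This is where the irreducibility of $P\subset Q$ enters. Consider the positive normal functional $\omega(b)=(b\eta_0,\eta_0)_{H\otimes_P\bar X}$ on~$Q$. Using the $P$-centrality of~$\eta_0$, the commutativity of the left $Q$- and right $P$-actions on $H\otimes_P\bar X$, and the identity $(u,v\cdot p)=(u\cdot p^*,v)$ for the right $P$-action, a direct computation gives $\omega(pb)=\omega(bp)$ for all $p\in P$, $b\in Q$. The Radon-Nikodym density $h\in L^1(Q)_+$ with $\omega=\tau_Q(h\cdot)$ then commutes with~$P$ in the $L^1$-sense, and the irreducibility hypothesis $P'\cap Q=\C$ forces $h$ to be a positive scalar. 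Hence $\omega=\|\eta_0\|^2\tau_Q$, giving the bound $\|a\eta_0\|^2=\|\eta_0\|^2\tau_Q(a^*a)$, so $\Psi$ extends boundedly to $L^2(Q)\to H\otimes_P\bar X$. Feeding this~$\Psi$ into the Frobenius composition above yields the desired bounded~$\Phi$. The main obstacle is this $P$-centrality/irreducibility step; the remaining manipulations with tensor products over~$P$ are formal.
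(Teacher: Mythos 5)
Your proposal is correct, and its crucial step is the same as the paper's, only in disguise: expanding the inner product of $H\otimes_P\bar X$ with $\eta_0=\sum_i\rho_i\otimes\bar\rho_i$ shows that your $\omega(b)=(b\eta_0,\eta_0)$ is exactly the paper's functional $\psi(b)=\sum_i(b\rho_i,\rho_i)$, and in both arguments irreducibility is used identically, namely to show that a normal positive functional on $Q$ whose centralizer contains $P$ is proportional to the trace (you get $P$ in the centralizer from $P$-centrality of $\eta_0$, the paper from basis-independence of $\psi$; both then need the small standard step that the density, equivalently Connes' cocycle $[D\psi:D\tau]_t$, lies in $P'\cap Q=\C 1$, e.g.\ via spectral projections of the affiliated operator). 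Where you genuinely diverge is the second half. The paper stays hands-on: from $\psi=\dim(X_P)\tau$ it gets bounded maps $T_i\colon L^2(Q)\to H$, $a\mapsto a\rho_i$, and then uses a left basis $\{\lambda_j\}$ and the maps $L_j\zeta=\zeta\otimes\lambda_j$ to write $a\xi=\sum_{i,j}T_iL_j^*(a\otimes\xi)\langle\rho_i,\lambda_j\rangle_P$, which exhibits boundedness explicitly. You instead package the same data into the single intertwiner $\Psi\colon L^2(Q)\to H\otimes_P\bar X$, $a\mapsto a\eta_0$, and compose $(\iota_H\otimes R_X^*)(\Psi\otimes\iota_X)$. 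This is valid, but be aware that the steps you dismiss as formal are precisely where the paper's explicit formula does its work: you need that a bounded right $P$-linear map tensors over $P$ with $\iota_X$ to a bounded map, and that the bounded $P$-bimodular map $R_X^*$ (bounded since $\dim({}_PX)<\infty$) induces a bounded $\iota_H\otimes R_X^*$ on the relative tensor product; both follow from the usual positive-matrix argument over $P$, using that $X$ has a dense set of vectors bounded on either side, and then $R_X^*(\bar\rho_i\otimes\xi)=\langle\rho_i,\xi\rangle_P$ and $\xi=\sum_i\rho_i\langle\rho_i,\xi\rangle_P$ give $\Phi(a\otimes\xi)=a\xi$ on $Q\otimes X^0$, which suffices by density. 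So: same key mechanism, different bookkeeping; the paper's route avoids any general facts about fusion of intertwiners, while yours gives a basis-free formulation closer to the categorical language used elsewhere in the paper.
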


\bp Choose a basis $\{\rho_i\}^n_{i=1}$ of $X_P$ and a basis $\{\lambda_j\}^m_{j=1}$ of ${}_PX$. Define a normal positive linear functional~$\psi$ on~$Q$ by
$$
\psi(a)=\sum_i(a\rho_i,\rho_i).
$$
The standard argument shows that $\psi$ is independent of the choice of $\{\rho_i\}_i$: if $\{\rho'_k\}_k$ is another basis, then
$$
\sum_k(a\rho'_k,\rho'_k)=\sum_{i,k}(a\rho_i\langle\rho_i,\rho'_k\rangle_P,\rho'_k)
=\sum_{i,k}(a\rho_i,\rho'_k\langle\rho'_k,\rho_i\rangle_P)=\sum_i(a\rho_i,\rho_i).
$$
In particular, since for any unitary $u\in P$ we can take the basis $\{u\rho_i\}^n_{i=1}$, we see that $u$ is contained in the centralizer of $\psi$. Since $P'\cap Q=\C1$, we conclude that $\psi$ coincides, up to a scalar factor, with the tracial state~$\tau$ on~$Q$ (e.g.~ because Connes' Radon--Nikodym cocycle $[D\psi : D\tau]_t$ lies in $P'\cap Q$). Thus $\psi=\dim(X_P)\tau$. It follows that for every $i=1,\dots,n$ the map $Q\ni a\mapsto a\rho_i$ extends to a bounded map $T_i\colon L^2(Q)\to H$ satisfying $\norm{T_i} \le \sqrt{\dim(X_P)}$.

Next, for every $j=1,\dots,m$ consider the map $L_j\colon L^2(Q)\to L^2(Q)\otimes_P X$ defined by $L_j\zeta=\zeta\otimes\lambda_j$. This map is bounded, and the adjoint map is given by
$$
L_j^*(\zeta\otimes\xi)=\zeta{}_P\langle\xi,\lambda_j\rangle\ \ \text{for}\ \ \zeta\in L^2(Q),\ \xi\in X^0.
$$
For $a\in Q$ and $\xi\in X^0$ we then have
$$
a\xi=\sum_{i,j}T_iL_j^*(a\otimes\xi)\langle\rho_i,\lambda_j\rangle_P,
$$
which shows that the map $a\otimes\xi\mapsto a\xi$ is bounded on $L^2(Q)\otimes_PX$.
\ep

We will often use the following identities, which are immediate from the definition of $\tB$.

\begin{lemma} \label{lem:prodrelations}
If $\bar R_s(1)=\sum_\alpha\rho_{s\alpha}\otimes\bar\rho_{s\alpha}$ and $R_s(1)=\sum_\beta\bar\lambda_{s\beta}\otimes\lambda_{s\beta}$, then for any $\xi,\eta\in X_s^0$ we have the following identities in $\tB$:
$$
d_s^{-1}\sum_\alpha (\rho_{s\alpha}\otimes\bar\xi)(\bar\rho_{s\alpha}\otimes\eta)=1\otimes{}_M\langle\eta,\xi\rangle,\ \
d_s^{-1}\sum_\beta (\bar\lambda_{s\beta}\otimes\xi)(\lambda_{s\beta}\otimes\bar\eta)=1\otimes\langle\eta,\xi\rangle_M,
$$
$$
d_s^{-1}\sum_\alpha (\xi\otimes\bar\rho_{s\alpha})(\bar\eta\otimes\rho_{s\alpha})={}_M\langle\xi,\eta\rangle\otimes1,\ \
d_s^{-1}\sum_\beta (\bar\xi\otimes\lambda_{s\beta})(\eta\otimes\bar\lambda_{s\beta})=\langle\xi,\eta\rangle_M\otimes1.
$$
\end{lemma}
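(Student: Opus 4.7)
My plan is to prove each of the four identities by direct application of the product formula~\eqref{eq:prod-in-tB} and to exploit the fact that the right-hand sides all sit inside the $L^2(M)$-isotypic component of the relevant tensor product. For the first identity, one multiplies across the boundary of $X_s^0 \otimes X_s^{0\natural}$ and (via the identification $\bar X_s \cong X_{\bar s}$) $X_{\bar s}^0 \otimes X_{\bar s}^{0\natural}$, so the formula decomposes $X_s \otimes_M X_{\bar s}$ into its simple summands $X_{r_\gamma}$, indexed by a family of coisometries $w^\gamma_{s\bar s}$. The trivial summand $X_e = L^2(M)$ occurs with multiplicity one, and the corresponding coisometry is $d_s^{-1/2} \bar R_s^*$, with $\bar R_s^*(\xi \otimes \bar\eta) = {}_M\langle\xi,\eta\rangle$ from Section~\ref{sec:Hilbdual}.

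For the first claim, the contribution of a component with $r_\gamma \neq e$ vanishes after summing over $\alpha$, because
$$
\sum_\alpha w^\gamma_{s\bar s}(\rho_{s\alpha} \otimes \bar\rho_{s\alpha}) = w^\gamma_{s\bar s}\bar R_s(1) = 0,
$$
since the image of $\bar R_s$ lies in the trivial sub-bimodule and $w^\gamma_{s\bar s}$ is orthogonal to it. The contribution from the trivial component, after inserting all normalization factors, collapses to
$$
d_s^{-1}\sum_\alpha {}_M\langle\rho_{s\alpha},\rho_{s\alpha}\rangle\otimes\overline{{}_M\langle\xi,\eta\rangle},
$$
and the scalar $\sum_\alpha {}_M\langle\rho_{s\alpha},\rho_{s\alpha}\rangle = \bar R_s^*\bar R_s(1) = d_s\cdot 1_M$ reduces this to $1\otimes\overline{{}_M\langle\xi,\eta\rangle}$. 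Under the canonical identification $X_e \otimes X_e^\natural \cong A = M\bar\otimes M^\opos$, the factor $\overline{{}_M\langle\xi,\eta\rangle}$ is just ${}_M\langle\eta,\xi\rangle$ viewed in $M^\opos$, yielding the desired identity.

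The remaining three identities follow by the same template, each obtained by swapping the role of $R_s$ versus $\bar R_s$ (and correspondingly the right-basis $\{\rho_{s\alpha}\}$ versus the left-basis $\{\lambda_{s\beta}\}$) and by mirroring the product across the $\otimes$-sign. For the second identity one multiplies in $X_{\bar s}^0 \otimes X_{\bar s}^{0\natural}$ followed by $X_s^0 \otimes X_s^{0\natural}$, using the coisometry $d_s^{-1/2} R_s^*$ onto the trivial component of $X_{\bar s}\otimes_M X_s$ together with $R_s^*(\bar\xi\otimes\eta) = \langle\xi,\eta\rangle_M$ and $R_s^* R_s(1) = d_s$; the third and fourth are the analogues with the roles of the left and right $A$-actions on $\tB$ interchanged, computed via the corresponding conjugate tensor products.

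The only real obstacle is the bookkeeping: one must carefully track which tensor product $X_s \otimes_M X_t$ is being decomposed, which coisometry projects onto its $L^2(M)$-isotypic component, and how the antilinear identifications $J_s$ (used in defining the $*$-structure of $\tB$) interact with the conjugation bar in the notation. Once the conventions are pinned down these reduce to the computations above, and no genuinely new ingredient is required beyond the identities $\bar R_s^* \bar R_s = R_s^* R_s = d_s\cdot 1$ and the orthogonality of the coisometries~$w^\gamma$.
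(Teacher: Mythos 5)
Your computation is correct and is exactly the argument the paper has in mind: the lemma is stated as ``immediate from the definition of $\tB$'', and your direct expansion of the product~\eqref{eq:prod-in-tB}, keeping only the $L^2(M)$-component via the coisometry $d_s^{-1/2}\bar R_s^*$ (resp.\ $d_s^{-1/2}R_s^*$) and using $\bar R_s^*\bar R_s=R_s^*R_s=d_s$, is precisely that verification. The only care needed is the $J_s$-identification and the passage $\overline{{}_M\langle\xi,\eta\rangle}\leftrightarrow{}_M\langle\eta,\xi\rangle$ in $M^\opos$, which you flag and handle correctly.
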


Turning to the proof of the theorem, we start by constructing a half-braiding from a Hilbert $\tB$-bimodule.
The following observation will play a crucial role.

\begin{proposition}\label{prop:move-Xi-thru-bimod}
For any Hilbert $\tB$-bimodule $H$ and any $s \in I$, there are unitary isomorphisms of Hilbert $A$-bimodules
 $$
 (X_s \otimes L^2(M^\opos)) \otimes_A H \cong (L^2(M) \otimes \bar{X}_s^\natural) \otimes_A H
 \ \ \text{and}\ \ H \otimes_A (X_s \otimes L^2(M^\opos)) \cong H \otimes_A (L^2(M) \otimes \bar{X}_s^\natural).
 $$
\end{proposition}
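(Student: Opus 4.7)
The plan is to construct the isomorphism explicitly using multiplication by elements of $X_s^0\otimes X_s^{0\natural}\subset\tB$. First, since $L^2(M^\opos)$ is an identity for the relative tensor product over $M^\opos$ and similarly for $L^2(M)$, the two bimodules in question simplify to
$$(X_s\otimes L^2(M^\opos))\otimes_A H\cong X_s\otimes_M H \quad\text{and}\quad (L^2(M)\otimes\bar X_s^\natural)\otimes_A H\cong\bar X_s^\natural\otimes_{M^\opos}H,$$
where on the first (resp.\ second) side $H$ is regarded as an $M$-bimodule (resp.\ $M^\opos$-bimodule) through the first (resp.\ second) tensor factor of $A=M\bar\otimes M^\opos$. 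The first isomorphism of the proposition therefore reduces to constructing a unitary $A$-bimodule isomorphism $X_s\otimes_M H\cong\bar X_s^\natural\otimes_{M^\opos}H$, and the second isomorphism is handled symmetrically using the right $\tB$-module structure of $H$.

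Fixing a right $M$-basis $\{\rho_{s\alpha}\}$ of $X_s^0$, I define on a dense subspace
$$\Phi(\xi\otimes_M h)=\sum_\alpha [\rho_{s\alpha}]\otimes_{M^\opos}(\xi\otimes\bar\rho_{s\alpha})\cdot h,$$
where $[\rho_{s\alpha}]\in\bar X_s^\natural$ denotes the vector corresponding to $\rho_{s\alpha}\in X_s$ under the canonical Hilbert space identification $X_s\cong\bar X_s^\natural$, and $\xi\otimes\bar\rho_{s\alpha}\in X_s^0\otimes X_s^{0\natural}\subset\tB$ acts on $h$ via the left $\tB$-module structure. Basis-independence follows from the fact that $\sum_\alpha\rho_{s\alpha}\otimes\bar\rho_{s\alpha}=\bar R_s(1)\in X_s\otimes_M\bar X_s$ is basis-free; respect of the $M$-middle relation follows because the inclusion $M\hookrightarrow A\subset\tB$ identifies the left/right $M$-actions on $X_s$ with the corresponding $\tB$-actions on $X_s^0\otimes X_s^{0\natural}$. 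Isometry and $A$-equivariance are then verified by direct computation, telescoping products of the form $(\xi\otimes\bar\rho_{s\alpha})^*(\xi\otimes\bar\rho_{s\beta})$ inside $\tB$ via the identities of Lemma~\ref{lem:prodrelations}. The inverse is given by the symmetric formula using a left $M$-basis $\{\lambda_{s\beta}\}$ of $X_s^0$, and boundedness extends from the algebraic to the Hilbert level because $\xi\otimes\bar\rho_{s\alpha}\in\tB$ acts boundedly on $H$.

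The main obstacle is $A$-equivariance for the second tensor factor $M^\opos$ of $A$: the left $M^\opos$-action on $\bar X_s^\natural$ corresponds under the identification $\bar X_s^\natural\cong X_s$ to the right $M$-action on $X_s$, and matching this against the $M^\opos$-middle tensor relation of $\bar X_s^\natural\otimes_{M^\opos}H$ requires careful translation through the product structure of $\tB$ and the formulas for the $M^\opos$-bimodule structure of $X_s^\natural$. A useful consistency check is provided by the case $H=L^2(\tB)$, where using $L^2(\tB)\cong\bigoplus_t X_t\otimes X_t^\natural$ as $A$-bimodules together with the identity $Y^\natural\otimes_{M^\opos}Z^\natural\cong(Y\otimes_M Z)^\natural$, both sides decompose into direct sums whose multiplicities match term-by-term via the Frobenius reciprocity $N_{st}^r=N_{\bar s,r}^t$ of the fusion rules of $\CC$.
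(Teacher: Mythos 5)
Your overall strategy is the same as the paper's (reduce to multiplication by elements of $X_s^0\otimes X_s^{0\natural}\subset\tB$, check isometry by telescoping products via Lemma~\ref{lem:prodrelations}, and build the inverse with the other basis), but the key formula is wrong in a way that matters, since the whole content of this proposition is getting that formula right. In $(L^2(M)\otimes\bar X_s^\natural)\otimes_AH\cong\bar X_s^\natural\otimes_{M^\opos}H$ the balancing is over the \emph{right} $M^\opos$-structure of $\bar X_s^\natural$, which under the identification $\bar X_s^\natural\cong X_s$ is the \emph{left} $M$-structure of $X_s$. The canonical vector that must occupy the two slots of your formula is therefore $R_s^\natural(\hat 1)=\sum_\beta[\lambda_{s\beta}]\otimes\bar\lambda_{s\beta}$, built from a \emph{left} basis, not $\sum_\alpha[\rho_{s\alpha}]\otimes\bar\rho_{s\alpha}$. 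Your basis-independence argument does not apply: $\bar R_s(1)=\sum_\alpha\rho_{s\alpha}\otimes\bar\rho_{s\alpha}$ is basis-free as an element of $X_s\otimes_M\bar X_s$, i.e.\ with the legs joined over the right action of the first factor and the left action of the second; your formula joins the legs over the opposite pair of actions, and the corresponding element $\sum_\alpha\bar\rho_{s\alpha}\otimes\rho_{s\alpha}\in\bar X_s\otimes_MX_s$ is basis-dependent and differs from $R_s(1)$ in general. Concretely, checking equivariance for the left $M^\opos$-action (equivalently, making the isometry computation telescope) forces the identity $\sum_\alpha{}_M\langle\eta,\rho_{s\alpha}\rangle\rho_{s\alpha}=\eta$, which is the reconstruction property of a \emph{left} basis and fails for a right basis: for $X=L^2(M_1)$ with the right basis $\{[M:N]^{1/2}m_je_N\}_j$ coming from a Pimsner--Popa basis $\{m_j\}$, this sum produces the factor $\sum_jm_j^*m_j\neq1$. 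Correspondingly, the identities of Lemma~\ref{lem:prodrelations} that telescope for this direction are exactly the ones involving the $\lambda$'s, not the $\rho$'s; the $\rho$'s belong to the inverse map, so you have the two bases interchanged relative to what the conventions force (compare \eqref{eq:isomodule1} and \eqref{eq:isomodule2}).

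Two further points. Even after switching to the left basis, your map is missing the normalization $d_s^{-1/2}$: the correct formula is $\xi\otimes h\mapsto d_s^{-1/2}\sum_\beta[\lambda_{s\beta}]\otimes(\xi\otimes\bar\lambda_{s\beta})h$, and without the factor one only gets $d_s^{1/2}$ times an isometry, as the $d_s^{-1}$ in Lemma~\ref{lem:prodrelations} shows. Finally, the consistency check you propose (matching multiplicities for $H=L^2(\tB)$ via Frobenius reciprocity) only tests the \emph{statement} of the proposition, not your map: it is blind to both the wrong choice of basis and the missing normalization, so it cannot substitute for the equivariance and isometry computations, which are precisely where the proposal breaks down.
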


\bp
Using the isomorphism $L^2(M^\opos)\cong L^2(M)^\natural$ and the isometry
 $d_s^{-1/2}R_s^\natural\colon L^2(M)^\natural \to \bar{X}_s^\natural \otimes_{M^\opos} X_s^\natural$ , we get a map
$$
(X_s \otimes L^2(M^\opos)) \otimes_A H \to (X_s \otimes (\bar{X}_s^\natural \otimes_{M^\opos} X_s^\natural)) \otimes_A H
\cong (L^2(M)\otimes \bar X_s^\natural)\otimes_A(X_s\otimes X_s^\natural)\otimes_A H.
$$
Now, the $\tB$-module structure gives us a map $(X^0_s \otimes X_s^{0\natural})\otimes H \to H$, so in combination with the above we get a map
\begin{equation}\label{eq:isomodule1}
(X_s \otimes L^2(M^\opos)) \otimes_A H\to (L^2(M) \otimes \bar{X}_s^\natural) \otimes_A H,\quad
(\xi\otimes1)\otimes\zeta\mapsto d_s^{-1/2}\sum_\beta(1\otimes\lambda_{s\beta})\otimes (\xi\otimes\bar\lambda_{s\beta})\zeta.
\end{equation}
Since generally the $\tB$-module structure does not define a bounded map $L^2(B)\otimes_A H\to H$, we still have to check that the above map is well-defined and isometric. For $\xi,\xi'\in X_s^0$ and $\zeta,\zeta'\in H$ we compute:

\smallskip
$\displaystyle
d_s^{-1}\sum_{\beta,\beta'}\big((1\otimes\lambda_{s\beta})\otimes (\xi\otimes\bar\lambda_{s\beta})\zeta,(1\otimes\lambda_{s\beta'})\otimes (\xi'\otimes\bar\lambda_{s\beta'})\zeta'\big)
$
\begin{align*}
&=d_s^{-1}\sum_{\beta,\beta'}\big((1\otimes{}_M\langle\lambda_{s\beta},\lambda_{s\beta'}\rangle)(\xi\otimes\bar\lambda_{s\beta})\zeta,(\xi'\otimes\bar\lambda_{s\beta'})\zeta'\big)\\
&=d_s^{-1}\sum_{\beta}\big((\xi\otimes\bar\lambda_{s\beta'})\zeta,(\xi'\otimes\bar\lambda_{s\beta'})\zeta'\big)\\
&=d_s^{-1}\sum_{\beta}\big((\bar\xi'\otimes\lambda_{s\beta'})(\xi\otimes\bar\lambda_{s\beta'})\zeta,\zeta'\big)\\
&=\big((\langle\xi',\xi\rangle_M\otimes1)\zeta,\zeta'\big),
\end{align*}
where in the last step we used Lemma~\ref{lem:prodrelations}. Thus the map \eqref{eq:isomodule1} is indeed isometric.

Similarly we get a map
$$
(L^2(M) \otimes \bar{X}_s^\natural) \otimes_A H\to (X_s \otimes L^2(M^\opos)) \otimes_A H
$$
such that
\begin{equation}\label{eq:isomodule2}
(1\otimes\xi)\otimes\zeta\mapsto d_s^{-1/2}\sum_\alpha(\rho_{s\alpha}\otimes 1)\otimes (\bar\rho_{s\alpha}\otimes\xi)\zeta.
\end{equation}
Using again Lemma~\ref{lem:prodrelations} it is easy to check that the maps \eqref{eq:isomodule1} and \eqref{eq:isomodule2} are inverse to each other. This proves the first isomorphism in the formulation of the lemma.

The second isomorphism is proved similarly. Namely, the map
$$
H \otimes_A (X_s \otimes L^2(M^\opos)) \to H \otimes_A (L^2(M) \otimes \bar{X}_s^\natural)
$$
is defined by
\begin{equation*}\label{eq:isomodule3}
\zeta\otimes(\xi\otimes1)\mapsto d_s^{-1/2}\sum_\alpha\zeta(\xi\otimes\bar\rho_{s\alpha})\otimes (1\otimes\rho_{s\alpha}),
\end{equation*}
and the inverse map is given by
\begin{equation*}\label{eq:isomodule4}
\zeta\otimes(1\otimes\xi)\mapsto d_s^{-1/2}\sum_\beta\zeta(\bar\lambda_{s\beta}\otimes\xi)\otimes (\lambda_{s\beta}\otimes1).
\end{equation*}
This proves the assertion.
\ep

Let $H$ be a Hilbert $\tB$-bimodule. Denote by $X_{H,s}$ the space of $M^\opos$-bimodule homomorphisms from $X_s^\natural$ to $H$. It has a natural inner product $(T, S) = S^* T \in \Hom_{M^\opos\text{-}M^\opos}(X_s^\natural, X_s^\natural) = \C$ and inherits the structure of a $M$-bimodule from $H$. We also put $X_H = X_{H,e}$ and identify $X_H$ with the space of $M^\opos$-central vectors in $H$. Define maps
$$
l_s\colon X^0_s\otimes X_H\to X_{H,s}\ \ \text{and}\ \ r_s\colon X_H\otimes X_s^0\to X_{H,s}
$$
by
$$
l_s(\xi\otimes\zeta)\bar\eta=(\xi\otimes\bar\eta)\zeta\ \ \text{and}\ \ r_s(\zeta\otimes\xi)\bar\eta=\zeta(\xi\otimes\bar\eta)\ \ \text{for}\ \ \eta\in X^0_s.
$$

\begin{lemma} \label{lem:lr-iso}
The maps $l_s$ and $r_s$ define unitary isomorphisms of Hilbert $M$-bimodules
$$
X_s\otimes_M X_H\cong X_{H,s}\ \ \text{and}\ \  X_H\otimes_M X_s\cong X_{H,s},
$$
which we denote by the same symbols $l_s$ and $r_s$.
\end{lemma}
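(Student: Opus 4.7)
First I would verify that $l_s(\xi\otimes\zeta)$ actually lies in $X_{H,s}$, i.e., is an $M^\opos$-$M^\opos$-bimodule map. Using the product formula~\eqref{eq:prod-in-tB} together with the simple tensor decompositions $X_e\otimes_M X_s=X_s=X_s\otimes_M X_e$ (so $d_e=1$ and the isometries $w$ are just multiplication by $M$), one checks
$$
(1\otimes\overline{m^*})(\xi\otimes\bar\eta)=\xi\otimes\overline{m^*\eta},\qquad (\xi\otimes\bar\eta)(1\otimes\overline{m^*})=\xi\otimes\overline{\eta m^*}.
$$
Since the element $1\otimes\overline{m^*}\in A\subset\tB$ realizes $m^\opos\in M^\opos$, left $M^\opos$-equivariance of $l_s(\xi\otimes\zeta)$ follows immediately from the first identity; the right $M^\opos$-equivariance requires commuting $1\otimes\overline{m^*}$ past $\zeta$, which is exactly the $M^\opos$-centrality of $\zeta\in X_H$. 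The argument for $r_s$ is analogous, and in both cases the centrality of $\zeta$ is used exactly once.

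Next I would verify $M$-balancing: the identity $(\xi\otimes\bar\eta)(m\otimes\bar 1)=\xi m\otimes\bar\eta$ (another application of \eqref{eq:prod-in-tB}) yields $l_s(\xi m\otimes\zeta)=l_s(\xi\otimes m\zeta)$, so $l_s$ descends to a well-defined linear map $X_s\otimes_M X_H\to X_{H,s}$; the left/right $M$-bimodule structures on $X_{H,s}$ inherited from $H$ via $M\otimes\bar 1\subset A$ correspond under $l_s$ to the left $M$-action on $X_s$ and the right $M$-action on $X_H$. Similarly for $r_s$.

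For the unitary iso property I would avoid a direct computation of $l_s^*l_s$ and instead construct an abstract iso $\phi\colon X_s\otimes_M X_H\to X_{H,s}$ via Proposition~\ref{prop:move-Xi-thru-bimod} and Frobenius reciprocity, then check that $\phi=l_s$. Frobenius reciprocity in the $M^\opos$-bimodule category, using the standard solution $(R_s^\natural,\bar R_s^\natural)$ for the duality between $X_s^\natural$ and $\bar X_s^\natural$, gives a unitary identification
$$
X_{H,s}=\Hom_{M^\opos\text{-}M^\opos}(X_s^\natural,H)\cong(\bar X_s^\natural\otimes_{M^\opos}H)^{M^\opos\text{-central}}
$$
via $T\mapsto(\iota\otimes T)(d_s^{-\hlf}R_s^\natural(\hat 1))$. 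Proposition~\ref{prop:move-Xi-thru-bimod} gives a unitary $A$-bimodule iso $\bar X_s^\natural\otimes_{M^\opos}H\cong X_s\otimes_M H$, which preserves the $M^\opos$-central subspace; on the target $M^\opos$ acts only on the $H$-factor, so this central subspace is exactly $X_s\otimes_M X_H$. Tracing through formula~\eqref{eq:isomodule2} one sees that the composite iso $\phi$ agrees with $l_s$, and therefore $l_s$ is unitary. The argument for $r_s$ is analogous, using the other isomorphism of Proposition~\ref{prop:move-Xi-thru-bimod} and formula~\eqref{eq:isomodule1}.

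The main obstacle is the bookkeeping needed to verify $\phi=l_s$: one must carefully track the $J_s$-identifications between $\bar X_s$ and $X_{\bar s}$, the $\natural$-functor conventions (in particular the correspondence between $\bar X_s^\natural$ and $X_s$ as vector spaces), and the normalization factors $d_s^{\pm\hlf}$ appearing in Proposition~\ref{prop:move-Xi-thru-bimod}. Once these match, unitarity of $l_s$ and $r_s$ is immediate from the unitarity of the building-block isomorphisms.
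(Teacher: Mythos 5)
Your proposal is correct and follows essentially the same route as the paper: the paper also obtains the unitary $X_{H,s}\cong X_s\otimes_M X_H$ by composing the normalized Frobenius isomorphism $T\mapsto d_s^{-\hlf}(\iota\otimes T)R_s^\natural$ with the isomorphism of Proposition~\ref{prop:move-Xi-thru-bimod}, identifies the $M^\opos$-central vectors of $X_s\otimes_M H$ with $X_s\otimes_M X_H$, and then uses the explicit formula coming from~\eqref{eq:isomodule2} together with Lemma~\ref{lem:prodrelations} to check that $l_s$ inverts this map (and similarly for $r_s$ via~\eqref{eq:ri-inverse}). Your additional verifications that $l_s$, $r_s$ land in $X_{H,s}$ and are $M$-balanced are fine but are exactly the bookkeeping the paper leaves implicit.
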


\bp
We have unitary isomorphisms
$$
X_{H,s}\cong\Hom_{M^\opos\text{-}M^\opos}(L^2(M)^\natural, \bar X_s^\natural\otimes_{M^\opos}H)\cong \Hom_{M^\opos\text{-}M^\opos}(L^2(M)^\natural, X_s\otimes_MH),
$$
where the first isomorphism is the normalized Frobenius isomorphism $T\mapsto d_s^{-1/2}(\iota\otimes T)R^\natural_s$ and the second comes from Proposition~\ref{prop:move-Xi-thru-bimod}. Note that the $M^\opos$-bimodule structure on $X_s\otimes_MH$ is defined by that on $H$. It follows that the space of $M^\opos$-central vectors in $X_s\otimes_MH$ coincides with $X_s\otimes_M X_H$. We thus get a unitary isomorphism $X_{H,s}\cong X_s\otimes_M X_H$. Explicitly, using the formula for $R_s$ and \eqref{eq:isomodule2}, this isomorphism is given by
\begin{equation*}\label{eq:li-inverse}
T\mapsto d_s^{-1}\sum_{\alpha,\beta}\rho_{s\alpha}\otimes (\bar\rho_{s\alpha}\otimes\lambda_{s\beta})T\bar\lambda_{s\beta}.
\end{equation*}
Using Lemma~\ref{lem:prodrelations} it is straightforward to check that $l_s$ defines a right inverse of this map.

Similarly it is proved that $r_s$ defines a unitary isomorphism $X_H\otimes_M X_s\cong X_{H,s}$, with the inverse given by
\begin{equation}\label{eq:ri-inverse}
T\mapsto d_s^{-1}\sum_{\alpha,\beta}(T\bar\rho_{s\alpha})(\bar\lambda_{s\beta}\otimes\rho_{s\alpha})\otimes\lambda_{s\beta}.
\end{equation}
This proves the assertion.
\ep

By the previous lemma we obtain a unitary isomorphism of Hilbert $M$-bimodules $c^H_s=r_s^*l_s\colon X_s \otimes_M X_H \to X_H \otimes_M X_s$. Explicitly, using formula~\eqref{eq:ri-inverse} for $r_s^*$, we have
\begin{equation} \label{eq:halfbr-bimod}
c^H_s(\xi\otimes\zeta)=d_s^{-1}\sum_{\alpha,\beta}(\xi\otimes\bar\rho_{s\alpha})\zeta(\bar\lambda_{s\beta} \otimes\rho_{s\alpha})\otimes\lambda_{s\beta}.
\end{equation}
In a more invariant form we can say that $c^H_s$ is characterized by the identity
\begin{equation*} \label{eq:halfbr-bimod1}
(\xi\otimes\bar\eta)\zeta=c^H_s(\xi\otimes\zeta)_1(c^H_s(\xi\otimes\zeta)_2\otimes\bar\eta)\ \ \text{for}\ \ \xi,\eta\in X^0_s,\ \zeta\in X_H,
\end{equation*}
where we use Sweedler's sumless notation $c^H_s(\xi\otimes\zeta)_1\otimes c^H_s(\xi\otimes\zeta)_2$ for $c^H_s(\xi\otimes\zeta)$. Equivalently, $c^{H*}_s$ is characterized by
\begin{equation} \label{eq:halfbr-bimod2}
\zeta(\xi\otimes\bar\eta)=(c^{H*}_s(\zeta\otimes\xi)_1\otimes\bar\eta)c^{H*}_s(\zeta\otimes\xi)_2\ \ \text{for}\ \ \xi,\eta\in X^0_s,\ \zeta\in X_H.
\end{equation}

The family of isomorphisms $\{c^H_s\}_{s\in I}$ defines a natural family of unitary isomorphisms $(c^H_X\colon X \otimes X_H \to X_H \otimes X)_{X \in \CC}$ (where we write $\otimes$ for the tensor product $\otimes_M$ in $\Hilb_M$).

\begin{lemma}\label{lem:XH-tensor-central}
The unitaries $c^H_X$ satisfy the half-braiding condition $c^H_{X\otimes Y}=(c^H_X\otimes\iota_Y)(\iota_X\otimes c^H_Y)$.
\end{lemma}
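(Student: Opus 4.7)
The plan is to reduce to simple objects by naturality and then verify the identity on elementary tensors using two independent computations, both of which rely on associativity of the $\tB$-action on $H$.

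By semisimplicity of $\CC$ and the naturality of the assignment $X \mapsto c^H_X$ (built into its construction via the direct sum decomposition of every object of $\CC$ into simple summands), both sides of the claimed identity are additive in $X$ and in $Y$, so it suffices to treat $X=X_s$, $Y=X_t$ for $s,t\in I$. Note that $X_s\otimes_M X_t$ need not be simple; choose a family of coisometries $\{w^\alpha_{st}\colon X_s\otimes_M X_t\to X_{r_\alpha}\}_\alpha$ realizing its decomposition into simples, as in~\eqref{eq:prod-in-tB}.

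The heart of the proof is to compute, in two different ways, the action on a vector $\zeta\in X_H$ of the product $(\xi_1\otimes\bar\eta_1)(\xi_2\otimes\bar\eta_2)\in\tB$ with $\xi_1,\eta_1\in X_s^0$ and $\xi_2,\eta_2\in X_t^0$. In the first method, associativity of the $\tB$-action gives $[(\xi_1\otimes\bar\eta_1)(\xi_2\otimes\bar\eta_2)]\zeta=(\xi_1\otimes\bar\eta_1)\cdot[(\xi_2\otimes\bar\eta_2)\zeta]$; the defining relation $r_s\circ c^H_s=l_s$ (i.e., $(\xi\otimes\bar\eta)\zeta=c^H_s(\xi\otimes\zeta)_1(c^H_s(\xi\otimes\zeta)_2\otimes\bar\eta)$) applied to $c^H_t$, followed by another associativity move of the rightmost factor and the analogous application of $c^H_s$, rewrites this as $\widetilde\Phi_1(\widetilde\Phi_2\otimes\bar\eta_1)(\widetilde\Phi_3\otimes\bar\eta_2)$ with $\widetilde\Phi:=(c^H_s\otimes\iota_t)(\iota_s\otimes c^H_t)(\xi_1\otimes\xi_2\otimes\zeta)$ in Sweedler notation. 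In the second method, one first expands the product via~\eqref{eq:prod-in-tB}, applies the defining relation for $c^H_{r_\alpha}$ on each simple summand, and uses naturality of $c^H$ along $w^\alpha_{st}$ to rewrite $c^H_{r_\alpha}(w^\alpha_{st}(\xi_1\otimes\xi_2)\otimes\zeta)=(\iota_{X_H}\otimes w^\alpha_{st})c^H_{X_s\otimes_M X_t}(\xi_1\otimes\xi_2\otimes\zeta)$; re-assembling the sum via~\eqref{eq:prod-in-tB} in reverse yields $\Phi_1(\Phi_2\otimes\bar\eta_1)(\Phi_3\otimes\bar\eta_2)$ with $\Phi:=c^H_{X_s\otimes_M X_t}(\xi_1\otimes\xi_2\otimes\zeta)$.

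Equality of the two expressions for every $\eta_1,\eta_2$ says $r_{X_s\otimes_M X_t}(\Phi)=r_{X_s\otimes_M X_t}(\widetilde\Phi)$, where $r_{X_s\otimes_M X_t}$ is the unitary of Lemma~\ref{lem:lr-iso} extended to $X_s\otimes_M X_t$ through the decomposition; injectivity yields $\Phi=\widetilde\Phi$, which is the desired identity on $\xi_1\otimes\xi_2\otimes\zeta$, and density of $X_s^0\otimes_M X_t^0\otimes X_H$ in $(X_s\otimes_M X_t)\otimes X_H$ together with boundedness concludes the proof. The main technical obstacle is in the second computation: one must verify cleanly that the product formula~\eqref{eq:prod-in-tB} meshes with naturality of $c^H$ with respect to the decomposition of $X_s\otimes_M X_t$ into simples, so that the contributions from the different simple summands $X_{r_\alpha}$ re-glue exactly to a single morphism $c^H_{X_s\otimes_M X_t}$ applied to the elementary tensor.
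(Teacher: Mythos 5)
Your proof is correct, but it takes a genuinely different route from the paper's one-line argument. After the same reduction to $X=X_s$, $Y=X_t$, the paper verifies the identity directly from the explicit formula \eqref{eq:halfbr-bimod}, exploiting that left and right bases of $X_s\otimes_M X_t$ can be chosen either as tensor products of bases of $X_s$ and $X_t$ (which computes $(c^H_s\otimes\iota)(\iota\otimes c^H_t)$) or as bases adapted to a decomposition into simple summands (which computes $c^H_{X_s\otimes_M X_t}$ as defined by naturality). You instead work with the basis-free characterization $(\xi\otimes\bar\eta)\zeta=c^H_s(\xi\otimes\zeta)_1(c^H_s(\xi\otimes\zeta)_2\otimes\bar\eta)$ and deduce the half-braiding identity from multiplicativity of the left $\tB$-action, its commutation with the right action, the product formula \eqref{eq:prod-in-tB}, and naturality along the coisometries $w^\alpha_{st}$; this is exactly the mechanism the paper runs in the opposite direction in Lemma~\ref{lem:half-br-and-prod-of-tB} (identity \eqref{eq:Izu}), so your route makes transparent that the half-braiding condition is the compatibility of $c^H$ with the fusion encoded in the multiplication of $\tB$, at the cost of being longer than the basis computation. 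Two points deserve a slightly more careful statement, though neither is a gap: (1) the map you invert at the end is not literally the unitary of Lemma~\ref{lem:lr-iso} extended to $X_s\otimes_M X_t$ --- unwinding \eqref{eq:prod-in-tB}, sending $\Psi\in X_H\otimes_M X_s\otimes_M X_t$ to the function $\overline{\eta_1\otimes\eta_2}\mapsto\Psi_1(\Psi_2\otimes\bar\eta_1)(\Psi_3\otimes\bar\eta_2)$ gives $\sum_\alpha(d_sd_t/d_{r_\alpha})^{1/2}\,r_{r_\alpha}\bigl((\iota\otimes w^\alpha_{st})\Psi\bigr)\,\overline{w^\alpha_{st}(\eta_1\otimes\eta_2)}$, a weighted direct sum of the unitaries $r_{r_\alpha}$; injectivity, which is all you need, follows by evaluating at $\overline{w^{\alpha*}_{st}\nu}$ with $\nu\in X^0_{r_\alpha}$ (legitimate since $(X_s\otimes_M X_t)^0=X^0_s\otimes_M X^0_t$), which isolates each summand; (2) as in the paper's proof of Lemma~\ref{lem:half-br-and-prod-of-tB}, you should note that the Sweedler legs of $c^H_t(\xi_2\otimes\zeta)$ and of $\Phi$, $\widetilde\Phi$ can be chosen in $X^0_s$ and $X^0_t$ (finite left bases suffice, since $X_s$ and $X_t$ have finite left and right Murray--von Neumann dimensions), so that every product in $\tB$ appearing in your two computations is meaningful.
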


\bp
It suffices to consider $X=X_s$ and $Y=X_t$. But in this case the result follows immediately from the explicit formula~\eqref{eq:halfbr-bimod} using that left and right bases in $X_s\otimes_M X_t$ can be obtained either by taking tensor products of bases in $X_s$ and $X_t$, or by decomposing $X_s\otimes_MX_t$ into direct sums of the simple modules $X_k$ and choosing bases in $X_k$.
\ep

So far we have used only that $H\in\Hilb_{\tB}$, in which case we cannot say much about $X_H\in\Hilb_M$. But if we assume that $H\in\ZHC$, then $X_H$ decomposes into a direct sum of copies of $X_s$, $s\in I$. Consider the full subcategory of $\Hilb_M$ consisting of bimodules allowing such a decomposition. We have an obvious functor from $\indC$ into this category, which is a unitary monoidal equivalence. In order to not introduce yet another notation, in the remaining part of the proof we do not distinguish between these two equivalent categories. Thus, if $H\in\ZHC$, then $X_H\in\indC$ and therefore $(X_H,c^H)\in\ZC$.

Observe also that if $H\in\ZHC$, then we can reconstruct $H$ from $(X_H,c^H)$. Indeed, first of all we have a unitary isomorphism $\oplus_s X_{H,s}\otimes X_s^\natural\cong H$ of Hilbert $A$-bimodules, mapping $T\otimes\bar\eta$ into $T\bar\eta$. So by Lemma~\ref{lem:lr-iso} we get a unitary isomorphism of Hilbert $A$-bimodules
$$
\bigoplus_s(X_s\otimes X_s^\natural)\otimes_A(X_H\otimes L^2(M^\opos))\cong H,\ \ (\xi\otimes\bar\eta)\otimes(\zeta\otimes1)\mapsto(\xi\otimes\bar\eta)\zeta.
$$
Identifying the $A$-bimodule on the left with $L^2(\tB)\otimes_A (X_H\otimes L^2(M^\opos))$, we see that we actually get an isomorphism of $\tB$-$A$-modules. Note in passing that this isomorphism easily implies that $\ZHC$ is closed under tensor products.
Next, the unitaries $c^H_s$ define a unitary isomorphism $U$ of Hilbert $A$-bimodules
$$
\bigoplus_s(X_s\otimes X_s^\natural)\otimes_A(X_H\otimes L^2(M^\opos))\cong \bigoplus_s(X_H\otimes L^2(M^\opos))\otimes_A(X_s\otimes X_s^\natural),
$$
\begin{equation} \label{eq:unitaryU}
U\big((\xi\otimes\bar\eta)\otimes(\zeta\otimes1)\big)=(c^H(\xi\otimes\zeta)_1\otimes1)\otimes(c^H(\xi\otimes\zeta)_2\otimes\bar\eta).
\end{equation}
Composing $U^*$ with the isomorphism $L^2(\tB)\otimes_A (X_H\otimes L^2(M^\opos))\cong H$, we get an isomorphism
\begin{equation*}\label{eq:isoright}
(X_H\otimes L^2(M^\opos))\otimes_A L^2(\tB)\cong H
\end{equation*}
of Hilbert $A$-bimodules. From \eqref{eq:halfbr-bimod2} we see that this isomorphism maps $(\zeta\otimes1)\otimes(\xi\otimes\bar\eta)$ into $\zeta(\xi\otimes\bar\eta)$ and hence respects the right actions of $\tB$. Thus up to an isomorphism the Hilbert $\tB$-bimodule $H$ can be reconstructed from the Hilbert $M$-module $X_H$ and the unitary $U$ defined by the half-braiding $c^H$.

\smallskip

Now take an object $(X,c)\in\ZC$. We want to construct a Hilbert $\tB$-bimodule $H=H_{(X,c)}\in\ZHC$ out of~$(X,c)$. For this we basically have to repeat the procedure we used above to reconstruct a Hilbert $\tB$-module from the corresponding object of $\ZC$. Thus, we put $H=L^2(\tB)\otimes_A (X\otimes L^2(M^\opos))$ and consider $H$ as a Hilbert $\tB$-$A$-module. Next, using the braiding $c$ instead of $c^H$ define a unitary $U$ by~\eqref{eq:unitaryU}. We can use the right $\tB$-module structure on $(X\otimes L^2(M^\opos))\otimes_A L^2(\tB)$ to define such a structure on~$H$, so for $\xi\in H$ and $b\in\tB$ we let $\xi b=U^*((U\xi)b)$.

\begin{lemma}\label{lem:half-br-and-prod-of-tB}
The left and right actions of $\tB$ on $H$ commute, so $H$ is a Hilbert $\tB$-module.
\end{lemma}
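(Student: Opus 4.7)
The plan is to reduce commutativity of the left and right $\tB$-actions on $H$ to a morphism identity in $\indC$ that follows from naturality and the half-braiding axiom~\eqref{eq:halfbr}. Since $U$ is an $A$-bimodule isomorphism by construction, the right $\tB$-action restricted to $A$ coincides with the natural right $A$-multiplication on the $X\otimes L^2(M^\opos)$-factor and hence commutes with the left $\tB$-action; similarly the left $A$-action commutes with the right $\tB$-action. As $\tB$ is generated as a C$^*$-algebra by $A$ together with the subspaces $X_s^0\otimes X_s^{0\natural}$, and both actions are bounded, by linearity and induction on products of generators it suffices to verify $b_1(\omega b_2)=(b_1\omega)b_2$ for $b_i=\xi_i\otimes\bar\eta_i\in X_{s_i}^0\otimes X_{s_i}^{0\natural}$ and $\omega$ in a dense subspace. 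I would carry this out on vectors $\omega=1\otimes(\zeta\otimes 1)$ with $\zeta\in X$.

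Since the half-braiding at $X_e=L^2(M)$ is the canonical identification, \eqref{eq:unitaryU} gives $U(\omega)=(\zeta\otimes 1)\otimes 1_{\tB}$. The formula for $U^{-1}$ extracted from~\eqref{eq:unitaryU} reads
\[
U^{-1}\bigl((\zeta'\otimes 1)\otimes(\xi\otimes\bar\eta)\bigr)=\bigl(c_s^{-1}(\zeta'\otimes\xi)_1\otimes\bar\eta\bigr)\otimes\bigl(c_s^{-1}(\zeta'\otimes\xi)_2\otimes 1\bigr)
\]
for $\xi,\eta\in X_s^0$. Writing $c_{s_2}^{-1}(\zeta\otimes\xi_2)=p\otimes q\in X_{s_2}\otimes X$ and $c_{s_1}(\xi_1\otimes\zeta)=u\otimes v\in X\otimes X_{s_1}$ in Sweedler notation, the product formula~\eqref{eq:prod-in-tB} then yields
\begin{align*}
b_1(\omega b_2) &= \sum_{\alpha,r}\frac{(d_{s_1}d_{s_2})^{1/2}}{d_r^{1/2}}\bigl(w^\alpha_{s_1,s_2}(\xi_1\otimes p)\otimes\overline{w^\alpha_{s_1,s_2}(\eta_1\otimes\eta_2)}\bigr)\otimes(q\otimes 1),\\
(b_1\omega)b_2 &= \sum_{\alpha,r}\frac{(d_{s_1}d_{s_2})^{1/2}}{d_r^{1/2}}\bigl(c_r^{-1}(u\otimes w^\alpha_{s_1,s_2}(v\otimes\xi_2))_1\otimes\overline{w^\alpha_{s_1,s_2}(\eta_1\otimes\eta_2)}\bigr)\\
&\qquad\qquad\qquad\otimes\bigl(c_r^{-1}(u\otimes w^\alpha_{s_1,s_2}(v\otimes\xi_2))_2\otimes 1\bigr).
\end{align*}

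Matching summands, the required equality reduces to
\[
(w^\alpha_{s_1,s_2}\otimes\iota_X)(\iota_{s_1}\otimes c_{s_2}^{-1})=c_r^{-1}(\iota_X\otimes w^\alpha_{s_1,s_2})(c_{s_1}\otimes\iota_{s_2})
\]
as morphisms $X_{s_1}\otimes X\otimes X_{s_2}\to X_r\otimes X$, equivalently $c_r(w^\alpha_{s_1,s_2}\otimes\iota_X)=(\iota_X\otimes w^\alpha_{s_1,s_2})(c_{s_1}\otimes\iota_{s_2})(\iota_{s_1}\otimes c_{s_2})$. By the half-braiding axiom~\eqref{eq:halfbr} the right-hand side equals $(\iota_X\otimes w^\alpha_{s_1,s_2})\,c_{X_{s_1}\otimes X_{s_2}}$, which in turn equals $c_r(w^\alpha_{s_1,s_2}\otimes\iota_X)$ by naturality of $c$ applied to $w^\alpha_{s_1,s_2}\colon X_{s_1}\otimes X_{s_2}\to X_r$ in $\CC$.

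The main obstacle is bookkeeping: tracking the Sweedler notation for the half-braiding through the product formula~\eqref{eq:prod-in-tB} and the various tensor factors so that the two resulting expressions can be compared term by term. The algebraic content is nothing more than naturality combined with the half-braiding axiom, but arriving at the clean morphism identity requires some additional care in the first step, where one must use boundedness and the (anti)multiplicative structure of the actions to pass from generators to arbitrary elements of $\tB$.
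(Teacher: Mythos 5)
Your proof is correct and essentially the paper's own argument: the identity $(b_1\omega)b_2=b_1(\omega b_2)$ at $\omega=1\otimes(\zeta\otimes 1)$ that you verify is precisely the paper's identity \eqref{eq:unitaryU1} (there deduced from the intertwining relation \eqref{eq:Izu}), and both verifications reduce, via the product formula \eqref{eq:prod-in-tB}, to naturality of $c$ together with the half-braiding axiom \eqref{eq:halfbr} applied to the coisometries $w^\alpha_{s_1,s_2}$. The only point to phrase more carefully is the reduction step: the vectors $1\otimes(\zeta\otimes 1)$ are not themselves dense in $H$, so one concludes, as the paper implicitly does, by using that their translates under the left action of the linear span of the subspaces $X_s^0\otimes X_s^{0\natural}$ are dense and that this left action is multiplicative.
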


\bp For convenience write $\hat X$ for $X\otimes L^2(M^\opos)$. Let us also denote by  $m$ the product on $\tB$. We claim that
\begin{equation} \label{eq:Izu}
U (m \otimes \iota_{\hat{X}}) = (\iota_{\hat{X}} \otimes m) (U \otimes \iota_\tB) (\iota_\tB \otimes U)
\end{equation}
on a dense subspace of $L^2(\tB)\otimes_AL^2(\tB)\otimes_A\hat X$, which is an analogue of~\cite{MR1782145}*{(4.1)}. More precisely, we claim that the above identity holds on the $M\otimes_{\mathrm{alg}}M^\opos$-sub-bimodule spanned by vectors of the form $(\xi_s\otimes\bar\eta_s)\otimes(\zeta\otimes 1)\otimes(\xi_t\otimes\bar\eta_t)$, with $\xi_s,\eta_s\in X^0_s$, $\xi_t,\eta_t\in X^0_t$ and $\zeta\in X$. On this sub-bimodule both sides of~\eqref{eq:Izu} make perfect sense, since the spaces $X_s\otimes_M X$ and $X\otimes_M X_s$ coincide with the algebraic tensor products $X_s^0\otimes_M X$ and $X\otimes_M X_s^0$ as both $(X_s)_M$ and ${}_MX_s$ have finite Murray--von Neumann dimensions.

Choose coisometries $w_\alpha\colon X_s\otimes_M X_t\to X_{k_\alpha}$ defining a decomposition of $X_s\otimes_M X_t$ into simple modules, and put $\xi_\alpha=w_\alpha(\xi_s\otimes \xi_t)$ and $\eta_\alpha=w_\alpha(\eta_s\otimes\eta_t)$, so that in $\tB$ we have
$$
(\xi_s\otimes\bar\eta_s)(\xi_t\otimes\bar\eta_t)=\sum_\alpha\left(\frac{d_sd_t}{d_{r_\alpha}}\right)^\hlf\xi_\alpha\otimes\bar\eta_\alpha.
$$
Applying both sides of \eqref{eq:Izu} to $(\xi_s\otimes\bar\eta_s)\otimes(\zeta\otimes 1)\otimes(\xi_t\otimes\bar\eta_t)$ we have to check that
\begin{multline} \label{eq:Izu2}
\sum_\alpha\left(\frac{d_sd_t}{d_{r_\alpha}}\right)^\hlf c_{k_\alpha}(\xi_\alpha\otimes\zeta)\otimes\bar\eta_\alpha
\\=c_s\big(\xi_s\otimes c_t(\xi_t\otimes\zeta)_1\big)_1\otimes\big(c_s\big(\xi_s\otimes c_t(\xi_t\otimes\zeta)_1\big)_2\otimes\bar\eta_s\big)
\big(c_t(\xi_t\otimes\zeta)_2\otimes\bar\eta_t\big).
\end{multline}
By the half-braiding condition we have
$$
(c_s\otimes\iota)(\iota\otimes c_t)(\xi_s\otimes\xi_t\otimes\zeta)
=\sum_\alpha(\iota\otimes w_\alpha^*)c_{k_\alpha}(\xi_\alpha\otimes\zeta).
$$
This implies that the right hand side of \eqref{eq:Izu2} equals
$$
\sum_\alpha c_{k_\alpha}(\xi_\alpha\otimes\zeta)_1\otimes \big((w^*_\alpha c_{k_\alpha}(\xi_\alpha\otimes\zeta)_2)_1
\otimes\bar\eta_s\big)\big((w^*_\alpha c_{k_\alpha}(\xi_\alpha\otimes\zeta)_2)_2\otimes\bar\eta_t\big).
$$
We remark that the above expression is still meaningful, since the algebraic tensor product $X^0_s\otimes_M X^0_t$ coincides with $(X_s\otimes_M X_t)^0$ and hence the vector $(\iota\otimes w_\alpha^*)c_{k_\alpha}(\xi_\alpha\otimes\zeta)$ lies in the algebraic tensor product $X\otimes_M X^0_s\otimes_M X^0_t$. By definition of the product in $\tB$ we then see that the above expression equals
$$
\sum_{\alpha,\beta}\left(\frac{d_sd_t}{d_{r_\beta}}\right)^\hlf c_{k_\alpha}(\xi_\alpha\otimes\zeta)_1\otimes w_\beta w^*_\alpha c_{k_\alpha}(\xi_\alpha\otimes\zeta)_2\otimes \bar\eta_\beta,
$$
which is the left hand side \eqref{eq:Izu2} as the coisometries $w_\alpha$ have mutually orthogonal domains. Thus \eqref{eq:Izu} is proved.

Now, for $v=\xi\otimes1\in\hat X$ and $b,c\in\tB$ lying in the linear span of $X^0_s\otimes X^{0\natural}_s$, we get
\begin{equation}\label{eq:unitaryU1}
(b \otimes v) c = U^* (\iota \otimes m) (U \otimes \iota)(b \otimes v \otimes c) = (m \otimes \iota) (\iota \otimes U^*) (b \otimes v \otimes c)
=(b\otimes1)U^*(v\otimes c),
\end{equation}
from which it becomes obvious that the left and right actions of $\tB$ on $L^2(\tB)\otimes_A\hat X$ commute.
\ep

\begin{proof}[Proof of Theorem~\ref{thm:mon-eqv-D-center-B-bimod}]
The construction of the object $(X_H,c^H)$ out of $H\in\ZHC$ defines a unitary functor $G\colon\ZHC\to\ZC$, and the construction of the module $H_{(X,c)}$ out of $(X,c)\in\ZC$ defines a unitary functor $F\colon\ZC\to\ZHC$. The discussion following Lemma~\ref{lem:XH-tensor-central} implies that $FG$ is naturally unitarily isomorphic to the identity functor. In the opposite direction, we also have obvious natural isomorphisms $GF(X,c)\cong X$ in $\indC$. It is slightly less obvious that these isomorphisms respect the half-braidings. In fact, this is automatically the case. Indeed, since $FGF\cong F$, it suffices to show that if two half-braidings~$c$ and~$c'$ for some $X\in\indC$ define the same right $\tB$-module structure on $L^2(\tB)\otimes_A(X\otimes L^2(M^\opos))$, then $c=c'$. But this follows from formula~\eqref{eq:unitaryU1}, which shows that the unitary $U$ is completely determined by the $\tB$-bimodule structure. Thus $GF$ is naturally unitarily isomorphic to the identity functor on $\ZC$.

Therefore $F$ and $G$ are unitary equivalences between the categories $\ZC$ and $\ZHC$. In order to get a unitary monoidal equivalence, it remains to turn either of these two functors into a unitary tensor functor. Let us do this for the functor $G\colon\ZHC\to\ZC$. Given $H,K\in\ZC$, define $G_2\colon X_H\otimes_M X_K\to X_{H\otimes_\tB K}$ by $\xi\otimes\zeta\mapsto\xi\otimes\zeta$. This is easily seen to be a well-defined unitary isomorphism of Hilbert $M$-bimodules, since the embeddings $X_H\hookrightarrow H$ and $X_K\hookrightarrow K$ extend to unitary isomorphisms $(X_H\otimes L^2(M^\opos))\otimes_AL^2(\tB)\cong H$ and $L^2(\tB)\otimes_A (X_K\otimes L^2(M^\opos))\cong K$ of Hilbert $A$-$\tB$- and $\tB$-$A$-modules, respectively. It remains to check that $G_2$ respects the half-braidings. By \eqref{eq:halfbr-bimod}, for $\xi\in X_s^0$, $\zeta\in X_H$ and $\eta\in X_K$, we have
$$
(\iota\otimes c^H_s)(c^K_s\otimes\iota)(\xi\otimes\zeta\otimes\eta) = d_s^{-2}\sum_{\alpha,\beta,\alpha',\beta'}(\xi\otimes\bar\rho_{s\alpha})\zeta(\bar\lambda_{s\beta}\otimes\rho_{s\alpha}) \otimes
(\lambda_{s\beta}\otimes\bar\rho_{s\alpha'})\eta(\bar\lambda_{s\beta'}\otimes\rho_{s\alpha'})\otimes\lambda_{s\beta'}.
$$
Using that $d_s^{-1}\sum_\beta(\bar\lambda_{s\beta}\otimes\rho_{s\alpha}) (\lambda_{s\beta}\otimes\bar\rho_{s\alpha'}) =1\otimes\langle\rho_{s\alpha'},\rho_{s\alpha}\rangle_M$ by Lemma~\ref{lem:prodrelations}, then the $M^\opos$-centrality of~$\zeta$, and finally that $\sum_\alpha\rho_{s\alpha}\langle\rho_{s\alpha},\rho_{s\alpha'}\rangle_M=\rho_{s\alpha'}$, we see that the above expression equals
$$
d_s^{-1}\sum_{\alpha',\beta'}(\xi\otimes\bar\rho_{s\alpha'})\zeta\otimes
\eta(\bar\lambda_{s\beta'}\otimes\rho_{s\alpha'})\otimes\lambda_{s\beta'}.
$$
This is $c^{H\otimes_\tB K}_s(\xi\otimes\zeta\otimes\eta)$, as we need.
\end{proof}

We finish this section by noting that it is very plausible that the results we have obtained remain true without assumption~\eqref{eq:eqtrace} on $\CC$. In this case, however, the factor $B$ is no longer of type II$_1$ (see \cite{MR3406647}*{Remark~2.7}) and more care is needed in dealing with Hilbert bimodules.

\subsection{Pointed bimodules and representations of the character algebra}
\label{sec:rep-char-alg-on-bimod}

We continue to consider a category $\CC\subset\Hilb_M$ as in Section~\ref{subsec:LR}, with the associated Longo--Rehren inclusion $A\subset\tB$. For categories of the form $\CC_{N\subset M}$ as in Example~\ref{ex:subfactorcat}, the results that follow have been obtained by Popa and Vaes~\cite{MR3406647} by different methods.

Given a Hilbert $\tB$-module $H\in\ZHC$, we have the corresponding object $(X_H,c^H)\in\ZC$, and hence a representation $\pi_H$ of $C^*(\CC)$ on $\Mor_{\indC}(\un,X_H)=\Hom_{M\text{-}M}(L^2(M),X_H)$. Recall that $X_H\subset H$ is the subspace of $M^\opos$-central vectors. Then $\Hom_{M\text{-}M}(L^2(M),X_H)$ can be identified with the subspace of $X_H$ of $M$-central vectors, that is, with the subspace $H_0\subset H$ of $A$-central vectors. Recalling formula~\eqref{eq:halfbr-bimod} for the half-braiding~$c^H$ and how the representation associated with a half-braiding is defined, we get the following formula for~$\pi_H$:
$$
\pi_H([X_s])\xi=d_s^{-1}\sum_{\alpha,\alpha'}(\rho_{s\alpha}\otimes\bar\rho_{s\alpha'})\xi(\bar\rho_{s\alpha}\otimes\rho_{s\alpha'})\ \ \text{for}\ \ \xi\in H_0,
$$
where $\{\rho_{s\alpha}\}_\alpha$ is a basis of the right Hilbert $M$-module $X_s$.

Every $A$-central unit vector $\xi\in H_0$ defines an $A$-bimodular normal ucp map $\Phi_\xi\colon\tB\to \tB$, see e.g.~\cite{MR2215135}*{Section~1.1}. Namely, since $A'\cap\tB=\C1$, we have $(x\xi,\xi)=(\xi x,\xi)=\tau(x)$ for all $x\in\tB$. Therefore the map $\tB\ni x\mapsto \xi x$ extends to an isometry $T_\xi\colon L^2(\tB)\to H$. Viewing then elements of $\tB$ as operators acting on the left on $L^2(\tB)$ and $H$, define $\Phi_\xi(x) = T_\xi^*x T_\xi$. In other words, $\Phi_\xi(x)$ is characterized by the identity
\begin{equation} \label{eq:Phi}
\tau(\Phi_\xi(x)b)=(x\xi b,\xi)\ \ \text{for}\ \ b\in\tB.
\end{equation}
Conversely, given an $A$-bimodular normal ucp map $\Phi\colon \tB\to \tB$, one can construct a Hilbert $\tB$-bimodule $H$ together with a distinguished $A$-central unit vector $\xi\in H$: we obtain $H$ from $\tB$ using the pre-inner product $(x,y)=\tau(\Phi(x)\Phi(y)^*)$ and then let $\xi$ be the image of $1\in \tB$ in $H$. This gives a one-to-one correspondence between $A$-bimodular normal ucp maps $\Phi\colon \tB\to \tB$ and isomorphism classes of pointed Hilbert bimodules over $A\subset\tB$, by which one means pairs $(H,\xi)$ consisting of a Hilbert $\tB$-bimodule $H$ and an $A$-central unit vector $\xi\in H$ such that $\tB\xi\tB$ is dense in $H$. Note that if $(H,\xi)$ is a pointed Hilbert bimodule, then $H\in\ZHC$, since $H$ is generated as a $\tB$-bimodule by a copy of $L^2(A)$.

The ucp map $\Phi_\xi$ extends to a contraction on $L^2(\tB)$. Since it is $A$-bimodular, it follows that on $X_s\otimes X^\natural_s$ this extension acts as a scalar $\alpha_{\xi,s}$. Taking $x=\rho_{s\alpha}\otimes\bar\rho_{s\alpha'}$ and $b=\bar\rho_{s\alpha}\otimes\rho_{s\alpha'}=x^*$ in~\eqref{eq:Phi} and then summing up over $\alpha,\alpha'$, we get $\dim({}_A(X_s\otimes X^\natural_s))=d_s^2\alpha_{\xi,s}$ on the left and $d_s(\pi_H([X_s])\xi,\xi)$ on the right, that~is,
\begin{equation} \label{eq:bimodchar}
d_s\alpha_{\xi,s}=(\pi_H([X_s])\xi,\xi).
\end{equation}
Therefore $(\alpha_{\xi,s})_s$ is exactly the positive definite function on $I=\Irr(\CC)$ associated with $(X_H,c^H)\in\ZC$ and $\xi\in \Mor_{\indC}(\un,X_H)$. By Theorems~\ref{thm:positivedef} and~\ref{thm:mon-eqv-D-center-B-bimod} every positive definite function $\phi$ with $\phi(e)=1$ arises this way: the corresponding pointed Hilbert bimodule is $(F(Z_\phi),F(\xi_\phi))$. Note that the fact that~$F(Z_\phi)$ is generated by $F(\xi_\phi)$ as a $\tB$-module follows from the universality property of $(Z_\phi,\xi_\phi)$ established in Proposition~\ref{prop:Zphi-universal}.

To summarize the above discussion, we have proved the following result.

\begin{theorem}
We have a one-to-one correspondence between $A$-bimodular normal ucp maps $\Phi\colon \tB\to \tB$ and positive definite functions $\phi$ on $I=\Irr(\CC)$ such that $\phi(e)=1$. Namely, the map $\Phi$ corresponding to $\phi$ is defined by $\Phi(x)=\phi(s)x$ for $x\in X^0_s\otimes X^{0\natural}_s\subset\tB$ and $s\in I$.
\end{theorem}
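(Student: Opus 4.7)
The plan is to combine the one-to-one correspondence between pointed Hilbert bimodules and $A$-bimodular normal ucp maps (described in the paragraph preceding the theorem) with the equivalence $\ZHC \simeq \ZC$ from Theorem~\ref{thm:mon-eqv-D-center-B-bimod} and the characterization of positive definite functions from Theorem~\ref{thm:positivedef}.

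First, starting from an $A$-bimodular normal ucp map $\Phi\colon \tB \to \tB$, I would use the recalled GNS-type construction to produce a pointed bimodule $(H,\xi)$ with $\Phi = \Phi_\xi$. Since $H$ is generated as a $\tB$-bimodule by a copy of $L^2(A) = \un \otimes \un^\natural$, it lies in $\ZHC$. Applying the functor $G\colon \ZHC \to \ZC$ gives $(X_H, c^H) \in \ZC$, and $\xi$ corresponds to a vector in $\Mor_\indC(\un, X_H)$. By Theorem~\ref{thm:positivedef}, the function $\phi(s) = d_s^{-1}(\pi_H([X_s])\xi,\xi)$ is positive definite with $\phi(e)=1$, and identity~\eqref{eq:bimodchar} identifies $\phi(s)$ with the scalar $\alpha_{\xi,s}$ by which $\Phi$ (extended as a contraction on $L^2(\tB)$) acts on $X_s^0 \otimes X_s^{0\natural}$. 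This gives the formula $\Phi(x)=\phi(s)x$ for $x\in X_s^0 \otimes X_s^{0\natural}$.

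Conversely, given a positive definite function $\phi$ on $I$ with $\phi(e)=1$, Theorem~\ref{thm:positivedef} produces $(Z_\phi, c_\phi) \in \ZC$ together with the distinguished vector $\xi_\phi \in \Mor_\indC(\un, Z_\phi)$ satisfying $\phi(s) = d_s^{-1}(\pi_\phi([U_s])\xi_\phi,\xi_\phi)$, and Corollary~\ref{cor:cyclic} ensures $\xi_\phi$ is cyclic. Applying the monoidal equivalence $F\colon \ZC \to \ZHC$ yields a Hilbert $\tB$-bimodule $H_\phi = F(Z_\phi)$ together with an $A$-central unit vector $F(\xi_\phi)$; cyclicity of $\xi_\phi$ translates to the denseness of $\tB \cdot F(\xi_\phi) \cdot \tB$ in $H_\phi$, so $(H_\phi, F(\xi_\phi))$ is pointed and defines a canonical $A$-bimodular normal ucp map $\Phi$. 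The first step applied to this $\Phi$ recovers exactly $\phi$, and by \eqref{eq:bimodchar} gives $\Phi(x) = \phi(s)x$ on $X_s^0 \otimes X_s^{0\natural}$.

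Finally, to confirm bijectivity, I would invoke the fact that the pointed-bimodule/ucp-map correspondence is already a bijection up to isomorphism, and that both $F$ and the construction $\phi \mapsto (Z_\phi, \xi_\phi)$ are determined up to canonical isomorphism by the universal property in Proposition~\ref{prop:Zphi-universal}. The main point to watch is ensuring that the explicit formula $\Phi(x)=\phi(s)x$ does extend to a well-defined normal ucp map on all of $\tB$; this is essentially equivalent to complete positivity of $\phi$ and is what the positive definiteness hypothesis buys us via the construction of $(Z_\phi,c_\phi)$. The only mild subtlety is verifying that $\Phi_\xi$ is determined by its scalar values $\alpha_{\xi,s}$ on the subspaces $X_s^0 \otimes X_s^{0\natural}$, which follows from $A$-bimodularity together with the fact that these subspaces span a weak$^*$-dense subset of $\tB$ (since $L^2(\tB) = \bigoplus_s X_s \otimes X_s^\natural$ as Hilbert $A$-bimodules).
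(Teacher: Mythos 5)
Your overall strategy is the same as the paper's: the theorem there is stated as a summary of exactly the chain you describe, namely the bijection between $A$-bimodular normal ucp maps and pointed Hilbert bimodules over $A\subset\tB$, the identity \eqref{eq:bimodchar} identifying the scalars $\alpha_{\xi,s}$ with the positive definite function of $(X_H,c^H)$ and $\xi$, and surjectivity via the pointed bimodule $(F(Z_\phi),F(\xi_\phi))$ coming from Theorems~\ref{thm:positivedef} and~\ref{thm:mon-eqv-D-center-B-bimod}. Your remark about $\Phi_\xi$ being determined by the scalars $\alpha_{\xi,s}$ is also fine, since $L^2(\tB)=\bigoplus_s X_s\otimes X_s^\natural$.

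The one step that does not work as written is the claim that cyclicity of $\xi_\phi$ (Corollary~\ref{cor:cyclic}) ``translates to'' denseness of $\tB\, F(\xi_\phi)\,\tB$ in $H_\phi=F(Z_\phi)$. Cyclicity is a statement about the action of the fusion algebra on the Hilbert space $\Mor_\indC(\un,Z_\phi)$ of $A$-central vectors: since $\pi_\phi([X_s])\xi_\phi$ is visibly contained in the $\tB$-bimodule generated by $\xi_\phi$, it only gives that the closure of $\tB\, F(\xi_\phi)\,\tB$ contains all $A$-central vectors of $H_\phi$. It does not follow formally that this closure is all of $H_\phi$: an object of $\ZHC$ need not be generated by its $A$-central vectors (its $A$-isotypic decomposition may contain no copy of $X_e\otimes X_e^\natural$ at all), so deducing density in $H_\phi$ from density in the $A$-central part is essentially assuming what has to be proved for $Z_\phi$. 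The paper closes this point differently, by invoking the universal property of $(Z_\phi,\xi_\phi)$ from Proposition~\ref{prop:Zphi-universal}: the closure $K$ of $\tB\, F(\xi_\phi)\,\tB$ is again an object of $\ZHC$, the corresponding subobject of $(Z_\phi,c_\phi)$ in $\ZC$ has the same positive definite function $\phi$, and the existence and uniqueness in Proposition~\ref{prop:Zphi-universal} force the inclusion $K\subset F(Z_\phi)$ to be onto. (Alternatively, one can argue directly: formula \eqref{eq:intertbr} for $T=\iota_{Z_\phi}$, combined with the bimodule expression \eqref{eq:halfbr-bimod} for the half-braiding, shows that the canonical morphisms $U_s\otimes\bar U_s\to Z_\phi$ take values in the span of $\tB\,\xi_\phi\,\tB$, which yields the generation.) With this step repaired, your argument coincides with the paper's proof.
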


\begin{example}
The function $\phi\equiv1$ defines the identity map. The corresponding pointed bimodule is $(L^2(\tB),1)$. The function $\phi=\delta_e$ defines the conditional expectation $\tB\to A$. The corresponding pointed bimodule is $(L^2(\tB)\otimes_A L^2(\tB),1\otimes1)$. In particular, the bimodule $L^2(\tB)\otimes_A L^2(\tB)$ corresponds to $\Zreg(\CC)$ under the equivalence between $\ZHC$ and $\ZC$.
\end{example}

Consider now the following rigidity condition. One says~\citelist{\cite{popa-corr-preprint}*{Chapter~4}\cite{MR914742}} that $\tB$ has property~(T) relative to $A$, or that $A$ is corigid in $\tB$, if there are a finite set $F \subset \tB$ and $\eps > 0$ such that if $(H, \xi)$ is a pointed Hilbert bimodule over $A \subset \tB$ with $\norm{x \xi - \xi x} < \eps$ for $x \in F$, then there is a $\tB$-central unit vector in~$H$. (This is not to be confused with rigidity of the inclusion $A \subset \tB$~\cite{MR2215135}.) For some other equivalent conditions see~\cite{popa-corr-preprint}*{4.1.5} and~\cite{MR1729488}*{9.1}.

\begin{proposition}
\label{prop:cat-T-vs-subfactor-T}
The C$^*$-tensor category $\CC$ has property (T) if and only if $\tB$ has property (T) relative to~$A$.
\end{proposition}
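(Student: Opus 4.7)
The plan is to use the equivalence $G\colon\ZHC\to\ZC$ from Theorem~\ref{thm:mon-eqv-D-center-B-bimod} together with Corollary~\ref{cor:tworepclasses}, by which any representation of $C^*(\CC)$ is unitarily equivalent to $\pi_H$ for some $H\in\ZHC$ with representation space the subspace $H_0\subset H$ of $A$-central vectors. The key qualitative observation is that a unit vector $\eta\in H_0$ is a $\pi_H$-invariant vector in the sense of Section~\ref{sec:property-t} if and only if $\eta$ is $\tB$-central. Indeed, by Lemma~\ref{lem:aa}, invariance is equivalent to $c^H_s(\xi_s\otimes\eta)=\eta\otimes\xi_s$ for all $s\in\Irr(\CC)$ and $\xi_s\in X_s^0$; combined with formula~\eqref{eq:halfbr-bimod2} this rewrites as $\eta(\xi\otimes\bar\eta')=(\xi\otimes\bar\eta')\eta$ for all $\xi,\eta'\in X_s^0$, which is $\tB$-centrality of $\eta$ since $\tB$ is generated by such elements.

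The main technical step is a quantitative refinement of this correspondence. For $x\in X_s^0\otimes X_s^{0\natural}\subset\tB$ and a unit vector $\xi_0\in H_0$, I plan to establish the identity
\[
\|x\xi_0 - \xi_0 x\|^2 = 2\bigl(1 - d_s^{-1}\Re(\pi_H([X_s])\xi_0,\xi_0)\bigr)\tau(x^*x).
\]
The computation proceeds by expanding the squared norm. The terms $\|x\xi_0\|^2$ and $\|\xi_0 x\|^2$ both reduce to $\tau(x^*x)$: using~\eqref{eq:Phi} together with the fact that $\Phi_{\xi_0}$ restricts to the identity on $A$ and that $\tau$ vanishes on each $X_r^0\otimes X_r^{0\natural}$ for $r\ne e$, one gets $\|x\xi_0\|^2 = \tau(\Phi_{\xi_0}(x^*x)) = \tau(x^*x)$ and similarly for $\|\xi_0 x\|^2$. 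The cross term $(x\xi_0,\xi_0 x)$ expands, again via~\eqref{eq:Phi}, to $\alpha_{\xi_0,s}\tau(x^*x)$, where $\alpha_{\xi_0,s} = d_s^{-1}(\pi_H([X_s])\xi_0,\xi_0)$ by~\eqref{eq:bimodchar}.

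For the direct implication, I will take $(F_0,\eps_0)$ from Proposition~\ref{prop:propertyT}(ii), set $F = \{\rho_{s\alpha}\otimes\bar\rho_{s\alpha'} : s\in F_0,\ \alpha,\alpha'\}\subset\tB$, and pick $\eps>0$ small enough that $\|x\xi_0 - \xi_0 x\| < \eps$ for all $x\in F$ forces $|(\pi_H([X_s])\xi_0,\xi_0) - d_s| < \eps_0$ for $s\in F_0$ via the displayed identity (combined with $|\alpha_{\xi_0,s}|\le 1$ to upgrade the real-part bound to a modulus bound). Property (T) of $\CC$ then produces a $\pi_H$-invariant vector $\eta\in H_0$, which by the qualitative step is the desired $\tB$-central vector. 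For the converse, given $(F',\eps')$ witnessing relative property (T), by density and linearity I may assume $F'\subset\bigcup_{s\in F_0} X_s^0\otimes X_s^{0\natural}$ for some finite $F_0\subset\Irr(\CC)$. Given a representation $\pi$ of $C^*(\CC)$ with almost-invariant unit vectors, Corollary~\ref{cor:tworepclasses} and Theorem~\ref{thm:mon-eqv-D-center-B-bimod} realize $\pi$ as $\pi_H$ with almost-invariant $\xi_0\in H_0$; passing to the pointed sub-bimodule $(\overline{\tB\xi_0\tB},\xi_0)$, the displayed identity translates almost-invariance into $\|x\xi_0 - \xi_0 x\| < \eps'$ for $x\in F'$, and relative property (T) produces a nonzero $\tB$-central vector in $\overline{\tB\xi_0\tB}$, which is the desired invariant vector for $\pi$. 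The hard part throughout is establishing the quantitative identity; once it is in place, both implications reduce to bookkeeping.
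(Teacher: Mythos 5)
Your proposal is correct and follows essentially the same route as the paper: the displayed identity $\|x\xi_0-\xi_0x\|^2=2(1-\Re\alpha_{\xi_0,s})\tau(x^*x)$ for $x\in X_s^0\otimes X_s^{0\natural}$, derived from \eqref{eq:Phi} and \eqref{eq:bimodchar}, is exactly the computation in the paper's proof, and both arguments then reduce the statement to condition (ii) of Proposition~\ref{prop:propertyT} via the equivalence $\ZC\cong\ZHC$. Your extra details (the half-braiding argument for the central/invariant equivalence and the $\|\cdot\|_2$-approximation of the finite set $F'$) are just explicit versions of steps the paper treats implicitly.
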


\bp Let $H\in\ZHC$ be a Hilbert $\tB$-module and $\xi\in H$ be an $A$-central unit vector. Then by~\eqref{eq:Phi}, for any $x\in\tB$ we have
$$
\|x\xi-\xi x\|^2=2\tau(x^*x)-\tau(\Phi_\xi(x)x^*)-\tau(\Phi_\xi(x^*)x).
$$
Hence for any $x\in X^0_s\otimes X^{0\natural}_s$ we have
$$
\|x\xi-\xi x\|^2=2(1-\Re \alpha_{\xi,s})\|x\|^2_2.
$$
As $d_s\alpha_{\xi,s}=(\pi_H([X_s])\xi,\xi)$ by \eqref{eq:bimodchar}, we see that an $A$-central vector is $\tB$-central if and only if it is invariant with respect to the representation $\pi_H\colon C^*(\CC)\to B(H_0)$. It also follows that $\tB$ has property~(T) relative to~$A$ if and only if there are a finite set $F \subset I$ and $\eps > 0$ such that if $H\in\ZHC$ and $\xi$ is an $A$-central unit vector satisfying $|(\pi_H([X_s])\xi,\xi)-d_s| < \eps$ for all $s \in F$, then there is an invariant unit vector in $H_0$. But in view of the equivalence between the categories $\ZC$ and $\ZHC$, this is exactly condition (ii) in Proposition~\ref{prop:propertyT}.
\ep

\bigskip

\section{Weakly Morita equivalent categories}
\label{sec:induct-thro-weak-Morita}

In this section we study Drinfeld centers and representation theory for weakly monoidally Morita equivalent C$^*$-tensor categories. As in Sections~\ref{sec:half-braidings} and~\ref{sec:character-algebra}, let $\CC$ be an essentially small strict rigid C$^*$-tensor category satisfying our standard assumptions.

\subsection{\texorpdfstring{$Q$}{Q}-systems}
\label{sec:prelim-q-system}

A \emph{$Q$-system}~\cite{MR1257245} in $\CC$ is a triple $(Q, v, w)$, where $Q$ is an object in $\CC$, $v$ is an isometry in $\CC(\un, Q)$, and $w$ is a scalar multiple of an isometry in $\CC(Q, Q \otimes Q)$, satisfying
\begin{itemize}
\item[]\emph{unit constraint}: $(v^* \otimes \iota) w = \iota = (\iota \otimes v^*) w$,
\item[]\emph{associativity}: $(w \otimes \iota) w = (\iota \otimes w) w$,
\item[]\emph{Frobenius condition}: $(w^* \otimes \iota) (\iota \otimes w) = w w^* = (\iota \otimes w^*) (w \otimes \iota)$.
\end{itemize}
(Of course, other normalizations than $v^*v=\iota$ are possible and used in the literature, and the last equality is redundant; in fact, it can be shown that the entire Frobenius condition is redundant~\cite{MR1444286}.)
In other words, a $Q$-system is a Frobenius object in $\CC$ such that the algebra and coalgebra structures on it are obtained from each other by taking adjoints, and the coproduct is a scalar multiple of an isometry. Depending on the context, we will sometimes write $m_Q$ for the product $w^*$.

The object~$Q$ is self-dual, with $(w v, w v)$ being a solution of the conjugate equations.
In the following we also assume that
$$
\text{the}\ Q\text{-system} \ Q \ \text{is \emph{standard} and \emph{simple}},
$$
see~\cite{MR3308880}. The first assumption means that $w^*w=d(Q)\iota$, so that $(w v, w v)$ is a standard solution of the conjugate equations. The second assumption means that $Q$ is simple as a $Q$-bimodule. In the last section instead of simplicity we will require $Q$ to be \emph{irreducible}, meaning that $Q$ is simple as a left and/or right $Q$-module.

A left $Q$-module is an object $M \in \CC$ together with a morphism $m_M = m^l_M \in \CC(Q \otimes M, M)$ satisfying the associativity condition $m_M (m_Q \otimes \iota) = m_M (\iota \otimes m_M)$, the unit constraint $m_M (v \otimes \iota) = \iota$, and the $*$-compatibility $m_M^* = (\iota \otimes m_M) (w v \otimes \iota)$. By the Frobenius condition, $Q$ itself is a left $Q$-module in this sense. Furthermore, for general $M$ the map $m_M$ has the following properties similar to $m_Q=w^*$.

\begin{lemma}
\label{lem:comput-m-m-star}
  For any left $Q$-module $M$, we have
$$
m_M m_M^{*} = d(Q) \iota\ \ \text{and}\ \  m^*_Mm_M=(\iota_Q\otimes m_M)(w\otimes\iota_M).
$$
\end{lemma}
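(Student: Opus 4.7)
The plan is to verify both identities by a direct unfolding of the $\ast$-compatibility $m_M^*=(\iota_Q\otimes m_M)(wv\otimes\iota_M)$, and then combining it with the three structure axioms of the $Q$-system (associativity, unit constraint, Frobenius) plus the standardness identity $w^*w=d(Q)\iota_Q$.

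For the first identity, I would start from
\[
m_M m_M^* = m_M(\iota_Q\otimes m_M)(wv\otimes\iota_M),
\]
rewrite $m_M(\iota_Q\otimes m_M)$ via associativity as $m_M(m_Q\otimes\iota_M)=m_M(w^*\otimes\iota_M)$, and then collapse $(w^*\otimes\iota_M)(wv\otimes\iota_M)=(w^*wv)\otimes\iota_M=d(Q)(v\otimes\iota_M)$ using standardness. The unit constraint $m_M(v\otimes\iota_M)=\iota_M$ finishes the computation.

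For the second identity, I would again expand with $\ast$-compatibility, but this time on the right factor:
\[
m_M^* m_M = (\iota_Q\otimes m_M)(wv\otimes\iota_M)\,m_M.
\]
By the interchange law, $(wv\otimes\iota_M)m_M=(\iota_Q\otimes\iota_Q\otimes m_M)(wv\otimes\iota_Q\otimes\iota_M)$. Applying associativity to $(\iota_Q\otimes m_M)(\iota_Q\otimes\iota_Q\otimes m_M)=(\iota_Q\otimes m_M)(\iota_Q\otimes m_Q\otimes\iota_M)$ replaces a copy of $m_M$ by $m_Q=w^*$, leaving me with
\[
m_M^* m_M = (\iota_Q\otimes m_M)\bigl[(\iota_Q\otimes w^*)(w\otimes\iota_Q)(v\otimes\iota_Q)\otimes\iota_M\bigr].
\]
The first Frobenius identity $(\iota_Q\otimes w^*)(w\otimes\iota_Q)=ww^*$ and then the unit constraint $w^*(v\otimes\iota_Q)=\iota_Q$ reduce the bracketed morphism to $w$, yielding the desired equality.

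I expect no real obstacle beyond careful bookkeeping of the tensor factors; the only subtle ingredient is recognizing which of the two symmetric forms of the Frobenius identity to use in the second computation and ensuring that the interchange law is applied in the direction that exposes a $m_Q$ in the middle, which is what enables standardness and the unit constraint to kick in.
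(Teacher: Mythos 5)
Your argument for the first identity is exactly the paper's: expand $m_M^*$ by $*$-compatibility, use associativity to replace $m_M(\iota\otimes m_M)$ by $m_M(w^*\otimes\iota)$, and conclude with $w^*w=d(Q)\iota$ and the unit constraint. For the second identity your route is correct but different in mechanism. The paper does not compute $m_M^*m_M$ directly; it tensors the desired identity on the left by $\iota_Q$ and composes with $(wv)^*\otimes\iota_M$, using the conjugate equations for $(wv,wv)$ to see that this passage is reversible, after which both sides collapse to $m_M(\iota_Q\otimes m_M)$ and $m_M(m_Q\otimes\iota_M)$, i.e.\ to the associativity axiom. You instead expand $m_M^*$ via $*$-compatibility, slide $m_M$ past $wv\otimes\iota$ by the interchange law, invoke associativity to produce $m_Q=w^*$ in the middle, and finish with the Frobenius identity $(\iota\otimes w^*)(w\otimes\iota)=ww^*$ and the unit constraint $w^*(v\otimes\iota)=\iota$; I checked the chain and the tensor-leg bookkeeping, and it closes correctly. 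Your version is a self-contained direct verification that avoids the paper's Frobenius-reciprocity ``equivalent identity'' step (whose justification needs injectivity of $T\mapsto((wv)^*\otimes\iota)(\iota_Q\otimes T)$), at the cost of explicitly using the Frobenius condition of the $Q$-system; the paper's version makes it transparent that the second identity is nothing but module associativity in disguise. One small inaccuracy in your closing remark: standardness $w^*w=d(Q)\iota$ enters only the first identity, not the second, which needs only the Frobenius condition and the unit constraint.
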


\bp
We have
$$
m_M m_M^{*} = m_M (\iota \otimes m_M) (w v \otimes \iota)=m_M(w^* \otimes\iota)(w v \otimes \iota) = d(Q) \iota,
$$
since $w^*w=d(Q)\iota$ by our assumptions.

For the second identity, we get an equivalent one if we tensor it on the left by $\iota_Q$ and then multiply by $R_Q\otimes\iota_M=(wv)^*\otimes\iota_M$. Then the left hand side gives
$$
((wv)^*\otimes\iota_M)(\iota_Q\otimes\iota_Q \otimes m_M) (\iota_Q\otimes w v \otimes \iota_M)(\iota_Q\otimes m_M)=m_M(\iota_Q\otimes m_M),
$$
while the right hand side gives
$$
((wv)^*\otimes\iota_M)(\iota_Q\otimes\iota_Q\otimes m_M)(\iota_Q\otimes w\otimes\iota_M)=m_M(m_Q\otimes\iota_M),
$$
since $((wv)^*\otimes\iota)(\iota\otimes w)=w^*=m_Q$. Thus the lemma is proved.
\ep

A $Q$-morphism between two left $Q$-modules $M$ and $N$ is a morphism $T \in \CC(M, N)$ satisfying $T m_M = m_N (\iota \otimes T)$.

\begin{remark}
Let us call a left $Q$-module without the $*$-compatibility an algebraic left $Q$-module. Any algebraic left $Q$-module $M$ is isomorphic to a left $Q$-module as follows. Put $A = m_M m^*_M$. This is a positive morphism majorizing $\iota = m_M (vv^*\otimes\iota) m_M^*$, so in particular $A$ is invertible. Then it can be checked that $A^{-\hlf} m_M (\iota \otimes A^{\hlf})$ defines the structure of a left $Q$-module on $M$, and $A^{\hlf}$ gives an isomorphism of this module with $(M, m_M)$.
\end{remark}

We denote the category of left $Q$-modules by $\Qmod_\CC$ or, if it is clear from the context, just by $\Qmod$.
By the $*$-compatibility condition, $\Qmod$ is closed under the involution $T \mapsto T^*$, so it is a C$^*$-category. Since $\CC$ is closed under subobjects and has finite dimensional morphism spaces, it follows that $\Qmod$ is semisimple. We also note that $\Qmod$ is a right $\CC$-module category.

For any $X\in\CC$ and any left $Q$-module $M$ we have
an isomorphism
$$
\CC(X,M)\cong\Mor_{\Qmod}(Q\otimes X,M), \ \ T\mapsto m_M(\iota\otimes T),
$$
with the inverse $T'\mapsto T'(v\otimes\iota)$. In particular, $\CC(\un,Q)\cong\End_{\Qmod}(Q)$. Therefore irreducibility of $Q$ is equivalent to $\dim\CC(\un,Q)=1$. We also remark that the above isomorphism implies that either $\Irr(\CC)$ and $\Irr(\Qmod)$ are finite or they are both infinite of the same cardinality.

The notions of right $Q$-modules and $Q$-bimodules are defined similarly, and the corresponding categories are denoted by $\modQ$ and $\QmodQ$.

\begin{example}
  Let $X$ be a simple object in $\CC$. Then $Q_X = \bar{X} \otimes {X}$ has the canonical structure of an irreducible standard $Q$-system, with $v = d(X)^{-\hlf} R_X$ and $w = d(X)^\hlf \iota_{\bar{X}} \otimes \bar{R}_X \otimes \iota_X$. If $(M, m_M \colon Q_X \otimes M \to M)$ is a left $Q_X$-module, then $p = d(X)^{-3/2} (\iota_X \otimes m_M)(\bar{R}_X \otimes \iota_X\otimes\iota_M) \in \CC(X \otimes M)$ is a projection. Let $N$ be the subobject of $X \otimes M$ specified by $p$. Then $\bar X\otimes N$ has a $Q_X$-module structure given by $d(X)^{1/2}\iota \otimes \bar{R}_X^* \otimes \iota$, and it is not difficult to show that the map $d(X)^{1/2}(\iota_{\bar X}\otimes p)(R_X\otimes\iota_M)=d(X)^{-1}m^*_M$ is an isometric $Q_X$-module isomorphism $M\cong \bar{X} \otimes N$. This way we obtain an equivalence $Q_X\mhyph\operatorname{mod} \cong \CC$.
\end{example}

\begin{example}
Let $\CC=\CC_{N\subset M}$ be the category of Hilbert $M$-bimodules defined by an extremal finite index subfactor as in Example~\ref{ex:subfactorcat}. Then $Q = L^2(M_1)$ has the structure of a standard simple $Q$-system in~$\CC$, where $v\colon L^2(M) \to L^2(M_1)$ is the inclusion map and $w\colon L^2(M_1) \to L^2(M_1)\otimes_M L^2(M_1)=L^2(M_2)$ is the inclusion map multiplied by $[M : N]^{\hlf}$. This $Q$-system is irreducible if and only if $N\subset M$ is irreducible.

Up to monoidal equivalence, any standard simple $Q$-system $(Q, v, w)$ such that~$Q$ is a generating object is obtained this way.
Indeed, we get a $\lambda$-lattice by taking the algebras $\CC(Q^{\otimes k})$, $\End_{\Qmod}(Q^{\otimes k})$, $\End_{\modQ}(Q^{\otimes k})$, and $\End_{\QmodQ}(Q^{\otimes k})$ with the natural inclusions maps and the normalized categorical traces. In order to verify the axioms of $\lambda$-lattices it suffices to show that for any $Q$-bimodule $M$ we get a commuting square
$$
\begin{matrix}
 \End_{\modQ}(M) &\subset & \CC(M)\\
 \cup& &\cup\\
 \End_{\QmodQ}(M) &\subset & \End_{\Qmod}(M).
\end{matrix}
$$
We claim that the trace-preserving conditional expectation $E^l_M\colon \CC(M)\to\End_{\Qmod}(M)$ is given by $$E^l_M(T)=d(Q)^{-1}m_M^l(\iota_Q\otimes T)m_M^{l*}.$$ It follows easily from Lemma~\ref{lem:comput-m-m-star} that this formula defines a conditional expectation onto $\End_{\Qmod}(M)$, so we only need to show that the trace is preserved. This is equivalent to showing that $(\tr_Q\otimes\iota)(m^{l*}_Mm^l_M)=\iota_M.$
But this follows from the second identity in Lemma~\ref{lem:comput-m-m-star}, since
$$
(\tr_Q\otimes\iota)(w)=d(Q)^{-1}((wv)^*\otimes\iota)(\iota\otimes w)wv=d(Q)^{-1}w^*wv=v.
$$
The explicit formula shows that the conditional expectation $E^l_M$ maps $ \End_{\modQ}(M)$ into $ \End_{\QmodQ}(M)$, so we indeed get a commuting square.
\end{example}

The semisimplicity implies that the finite colimits exist in $\CC$, as can be seen by taking isotypic decompositions and reducing the statement to the category of finite dimensional Hilbert spaces.\footnote{However, infinite colimits do not make sense in general even in $\indC$, because we require uniform boundedness of morphisms.} In particular, we have a natural relative tensor product operation
$$
\modQ \times \Qmod \to \CC, \quad (M, N) \mapsto M \otimes_Q N = \text{coequalizer of } M \otimes Q \otimes N \rightrightarrows M \otimes N.
$$
We denote by $P_{M,N}$ the structure morphism $M\otimes N\to M\otimes_QN$ .

\begin{remark}
Although we will never need this, the tensor products over $Q$ can be explicitly constructed as follows. Consider the morphism
$$
p=d(Q)^{-1}(\iota\otimes v^*w^*\otimes\iota)(m^{r*}_M\otimes m^{l*}_N)\in\CC(M\otimes N).
$$
Then $p$ is a projection, so there exists an object $X$ and an isometry $u\colon X\to M\otimes N$ such that $uu^*=p$, and as $P_{M,N}\colon M\otimes N\to M\otimes_Q N$ we can take $u^*\colon M\otimes N\to X$.
\end{remark}

By taking the polar decomposition of the adjoint of the structure morphism $P_{M,N}\colon M\otimes N\to M\otimes_Q N$ we may first assume that~$P_{M,N}$ is a coisometry, and then in view of Lemma~\ref{lem:comput-m-m-star} we rescale it so that $$P_{M,N}P^*_{M,N}=d(Q)\iota.$$Therefore for a left $Q$-module $M$ as a model of $Q\otimes_Q M$ we take $M$ and $P_{Q,M}=m_M$, and similarly for the right $Q$-modules. The common normalization of the structure maps ensures that the natural map $\Mor_{\modQ}(M, M') \times \Mor_{\Qmod}(N, N') \to \CC(M \otimes_Q N, M' \otimes_Q N')$ is compatible with the involution. The category $\QmodQ$ then becomes a C$^*$-tensor category. This category is not strict on the nose: the associativity morphisms $\Phi^{Q}_{X,Y,Z}\colon (X\otimes_QY)\otimes_Q Z\to X\otimes_Q(Y\otimes_Q Z)$ are characterized by the identities $$\Phi^{Q}_{X,Y,Z}P_{X\otimes_Q Y,Z}(P_{X,Y}\otimes\iota_Z)=P_{X, Y\otimes_Q Z}(\iota_X\otimes P_{Y,Z}).$$
Note that our normalization of the structure maps $P$ implies that $\Phi^Q$ is unitary, as needed in C$^*$-tensor categories. As is common, from now on we will ignore the associativity morphisms and work with the category $\QmodQ$ as if it was strict. Since by assumption $Q$ is simple as a $Q$-bimodule, the tensor unit in $\QmodQ$ is simple.

Following M\"{u}ger~\cite{MR1966524}, we say that $\CC$ and $\QmodQ$ are \emph{weakly monoidally Morita equivalent}.

The \emph{dual $Q$-system} is given by the object $\hat{Q} = Q \otimes Q$ in $\QmodQ$, together with the morphisms $$
\hat{v} = d(Q)^{-\hlf} w\ \ \text{and}\ \ \hat{w} = d(Q)^{\hlf} \iota \otimes v \otimes \iota
$$
under the identification  of $\hat{Q} \otimes_Q \hat{Q}$ with $Q^{\otimes 3}$. Using that the unit of $\CC$ is simple it is easy to check that this $Q$-system is simple (even without the simplicity assumption on $Q$; see also Proposition~\ref{prop:hat-Q-bimod-eqv-C} below). As we will see shortly, the dimension of $\hat Q=Q\otimes Q$ in $\QmodQ$ equals the dimension of $Q$ in $\CC$. Since $\hat w^*\hat w=d(Q)\iota$, it follows that the $Q$-system $\hat Q$ is standard.

If $Q$ is irreducible, then $\hat Q$ is also irreducible, as follows from the isomorphism
$$
\End_{\Qmod}(Q)\cong\Mor_{\QmodQ}(Q,Q\otimes Q), \ \ T\mapsto (T\otimes \iota)w.
$$

\subsection{Duality for \texorpdfstring{$Q$}{Q}-modules}
\label{sec:mod-cat-Frob-recip}

We continue to assume that $Q$ is a standard simple $Q$-system. Given any left $Q$-module $M$, its conjugate $\bar{M}$ has the natural structure of a right $Q$-module defined by
$m_{\bar M}=m_M^{\vee*}\in \CC(\bar M\otimes Q,\bar M)$. Here, as usual, we fix a standard solution $(R_M,\bar R_M)$ of the conjugate equations for $M$ and then use it, together with the solution $(wv,wv)$ for $Q$, to define solutions of the conjugate equations for tensor products. A direct computation shows that $w^\vee=w^*$ and $v^\vee=v^*$, which then immediately implies by applying $\vee$ to the identities involving $m_M$ that $m_{\bar M}$ indeed defines the structure of a right $Q$-module on~$\bar M$.
Note that by definition we have
$$
(m^\vee_M\otimes\iota_{\bar M}) R_M=(\iota_{\bar M}\otimes\iota_Q\otimes m_M)(\iota_{\bar M}\otimes wv\otimes\iota_M)R_M=(\iota_{\bar M}\otimes m_M^*)R_M.
$$
Therefore $m_{\bar M}$ is characterized by the identity
\begin{equation} \label{eq:dualmod}
R_M^*(m_{\bar M}\otimes\iota)=R_M^*(\iota\otimes m_M)\ \ \text{on}\ \ \bar M\otimes Q\otimes M.
\end{equation}

If $M$ is a right $Q$-module, we can define the structure of a left $Q$-module on $\bar{M}$ in the same way by $m_{\bar{M}}^l = (m_M^r)^{\vee *}$. If $M$ is a $Q$-bimodule, then $\bar M$ becomes a $Q$-bimodule.

\smallskip

Now, assuming again that $M$ is a left $Q$-module, $M \otimes \bar{M}$ has the natural structure of a $Q$-bimodule. Since $m_M\colon Q\otimes M\to M$ is a left $Q$-module map, the morphism
$$
\bar{S}_M = (m_M \otimes \iota) (\iota \otimes \bar{R}_M)\in \CC(Q, M \otimes \bar{M})
$$
is a left $Q$-module map. Note that by \eqref{eq:dualmod} we have $m_M=(\iota\otimes R^*_M)(\iota\otimes m_{\bar M}\otimes\iota)(\bar R_M\otimes\iota\otimes\iota)$, which implies that $\bar S_M=(\iota \otimes m_{\bar{M}}) (\bar{R}_M \otimes \iota)$. Hence $\bar S_M$ is also a morphism of right $Q$-modules.

Next, by \eqref{eq:dualmod} the morphism $R_M^*$ descends to $\bar{M} \otimes_Q M$. Let us denote the induced morphism $\bar{M} \otimes_Q M \to \un$ by $[R_M^*]$.
We then have the following duality between left and right $Q$-modules.

\begin{lemma}
\label{lem:Q-mod-duality-1}
We have $(\iota \otimes [R_M^*]) (\bar{S}_M \otimes_Q \iota) = \iota_M$ and  $([R_M^*] \otimes \iota) (\iota \otimes_Q \bar{S}_M) = \iota_{\bar{M}}$.
\end{lemma}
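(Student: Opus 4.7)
The plan is to reduce both equations from $\QmodQ$ down to computations in $\CC$ by pre-composing with the structure morphisms $m_M = P_{Q,M}\colon Q \otimes M \to M$ and $m_{\bar M} = P_{\bar M, Q}\colon \bar M \otimes Q \to \bar M$. These implement the canonical isomorphisms $Q \otimes_Q M \cong M$ and $\bar M \otimes_Q Q \cong \bar M$ used implicitly in the statement, and by the normalization $PP^* = d(Q)\iota$ they are coisometries (up to scaling), hence split epimorphisms, so this reduction is harmless. After this reduction, the defining property of the induced morphism $\bar S_M \otimes_Q \iota_M$ turns $(\bar S_M \otimes_Q \iota_M) \circ m_M$ into $P_{M \otimes \bar M, M} \circ (\bar S_M \otimes \iota_M)$, while the defining property of $[R_M^*]$, namely $[R_M^*]\circ P_{\bar M, M} = R_M^*$, together with the unitary associator $(M \otimes \bar M) \otimes_Q M \cong M \otimes (\bar M \otimes_Q M)$, turns $(\iota_M \otimes [R_M^*]) \circ P_{M \otimes \bar M, M}$ into $\iota_M \otimes R_M^*$.

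Hence the first identity is equivalent to the equality $(\iota_M \otimes R_M^*)(\bar S_M \otimes \iota_M) = m_M$ in $\CC$. Substituting the defining formula $\bar S_M = (m_M \otimes \iota_{\bar M})(\iota_Q \otimes \bar R_M)$ and factoring $m_M$ out, this reduces at once to the conjugate equation $(\iota_M \otimes R_M^*)(\bar R_M \otimes \iota_M) = \iota_M$.

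For the second identity, I will apply the same reduction strategy, now pre-composing with $m_{\bar M}$, and use the alternative expression $\bar S_M = (\iota_M \otimes m_{\bar M})(\bar R_M \otimes \iota_Q)$ noted in the excerpt. The problem becomes $(R_M^* \otimes \iota_{\bar M})(\iota_{\bar M} \otimes \bar S_M) = m_{\bar M}$, which on substitution reduces to the other conjugate equation in the form $(R_M^* \otimes \iota_{\bar M})(\iota_{\bar M} \otimes \bar R_M) = \iota_{\bar M}$ (the adjoint of $(\iota_{\bar M} \otimes \bar R_M^*)(R_M \otimes \iota_{\bar M}) = \iota_{\bar M}$). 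There is no genuine obstacle; the only point requiring some care is the bookkeeping of the unitary associator $\Phi^Q$, but since it is natural this is purely formal.
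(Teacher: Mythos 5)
Your proposal is correct and follows essentially the same route as the paper: represent the morphisms at the level of $\CC$ via the structure maps, observe that $(\iota_M\otimes R_M^*)(\bar S_M\otimes\iota_M)=m_M$ (resp.\ $(R_M^*\otimes\iota_{\bar M})(\iota_{\bar M}\otimes\bar S_M)=m_{\bar M}$) using the two expressions for $\bar S_M$ and the conjugate equations, so that the morphism descends to the identity. The only difference is that the paper proves just the first identity and invokes symmetry, whereas you write out both; the bookkeeping with $P$, $[R_M^*]$ and the associator that you make explicit is exactly what the paper leaves implicit.
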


\bp
Since the situation is symmetric, we just give a proof of the first identity. The left hand side of this identity is the morphism $Q\otimes_Q M\to M$ defined by the morphism
$$
(\iota_M \otimes R_M^*) (\bar{S}_M \otimes\iota_M)\colon Q\otimes M\to M.
$$
As $\bar{S}_M = (m_M \otimes \iota) (\iota \otimes \bar{R}_M)$, the latter morphism is simply $m_M$, so it descends to the identity morphism $M=Q\otimes_Q M\to M$.
\ep

This allows us to prove two versions of Frobenius reciprocity:
$$
\CC(L \otimes_Q M, N) \cong \Mor_{\modQ}(L, N \otimes \bar{M})\ \ \text{and}\ \ \CC(\bar{M} \otimes_Q L, N) \cong \Mor_{\Qmod}(L, M \otimes N),
$$
where $L$ is a right $Q$-module in the first case and a left $Q$-module in the second. Namely, the first isomorphism is given by the map
$T \mapsto (T \otimes \iota) (\iota \otimes_Q \bar{S}_M)$, and its inverse is given by
$T' \mapsto (\iota \otimes R_M^*) (T' \otimes_Q \iota)$.
The second isomorphism is defined similarly.

\smallskip

Now, suppose that $M$ is a $Q$-bimodule in $\CC$, so that $\bar{M}$ is also a $Q$-bimodule.

\begin{lemma} \label{lem:Sdescent}
The morphism $\bar S_M^*\colon M\otimes\bar M\to Q$ descends to a morphism $[\bar S^*_M]\colon M\otimes_Q\bar M\to Q$ of $Q$-bimodules.
\end{lemma}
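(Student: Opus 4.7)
The plan is to show the descent condition $\bar S_M^*(m^r_M \otimes \iota_{\bar M}) = \bar S_M^*(\iota_M \otimes m^l_{\bar M})$ on $M \otimes Q \otimes \bar M$, and then verify that the induced morphism $[\bar S_M^*]\colon M \otimes_Q \bar M \to Q$ is a $Q$-bimodule map with respect to the external actions $m^l_M$ on $M$ and $m^r_{\bar M}$ on $\bar M$ (the ones not absorbed into $\otimes_Q$).

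The first and main step is to establish the analogue of \eqref{eq:dualmod} relating the right action on $M$ to the left action $m^l_{\bar M} = (m^r_M)^{\vee *}$ on $\bar M$:
$$\bar R_M^*(m^r_M \otimes \iota_{\bar M}) = \bar R_M^*(\iota_M \otimes m^l_{\bar M}).$$
This follows by applying the second characterization $(T \otimes \iota)\bar R_X = (\iota \otimes T^\vee) \bar R_Y$ of $\vee$-duality with $T = m^r_M$, expanding $\bar R_{M \otimes Q} = (\iota_M \otimes wv \otimes \iota_{\bar M})\bar R_M$, and recognizing $(m^r_M \otimes \iota_Q)(\iota_M \otimes wv) = m^{r*}_M$ by the $*$-compatibility of the right $Q$-module structure on $M$. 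Taking adjoints yields the displayed identity.

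For the descent itself, I will compute $(m^{r*}_M \otimes \iota_{\bar M})\bar S_M$ starting from $\bar S_M = (m^l_M \otimes \iota_{\bar M})(\iota_Q \otimes \bar R_M)$. The bimodule commutativity $m^r_M(m^l_M \otimes \iota_Q) = m^l_M(\iota_Q \otimes m^r_M)$, combined with the $*$-compatibility of $m^r_M$, gives the intertwining identity $m^{r*}_M m^l_M = (m^l_M \otimes \iota_Q)(\iota_Q \otimes m^{r*}_M)$. Plugging this into $(m^{r*}_M \otimes \iota_{\bar M})\bar S_M$, then inserting the key identity from the previous step to replace $(m^{r*}_M \otimes \iota_{\bar M})\bar R_M$ by $(\iota_M \otimes m^{l*}_{\bar M})\bar R_M$, one obtains $(\iota_M \otimes m^{l*}_{\bar M})\bar S_M$. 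Taking adjoints completes the proof that $\bar S_M^*$ descends.

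Finally, for the $Q$-bimodule property of $[\bar S_M^*]$, the discussion preceding Lemma~\ref{lem:Q-mod-duality-1} already identifies $\bar S_M$ as a morphism of both left and right $Q$-modules: the expression $\bar S_M = (m^l_M \otimes \iota)(\iota \otimes \bar R_M)$ exhibits left-linearity with respect to $m^l_M$, while the equivalent expression $\bar S_M = (\iota \otimes m^r_{\bar M})(\bar R_M \otimes \iota)$ (valid also in the bimodule case) exhibits right-linearity with respect to $m^r_{\bar M}$. Since these are precisely the external actions inherited by $M \otimes_Q \bar M$, passing to adjoints and descending preserves bimodularity, so $[\bar S_M^*]$ is a $Q$-bimodule map. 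The main obstacle is the first step: keeping track of the four different actions (two on $M$ and two on $\bar M$ defined via $\vee$-duality) and invoking the correct $*$-compatibility at each point.
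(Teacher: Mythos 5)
Your proof is correct and follows essentially the same route as the paper: both arguments hinge on the duality relation $\bar R_M^*(m^r_M\otimes\iota)=\bar R_M^*(\iota\otimes m^l_{\bar M})$ together with the fact (from bimodule commutativity plus $*$-compatibility) that the adjoint of one action map intertwines the other action; you merely work with $(m^{r*}_M\otimes\iota)\bar S_M$ and take adjoints, whereas the paper computes $\bar S_M^*(\iota\otimes m^l_{\bar M})$ directly and reduces to $m^{l*}_M$ being right $Q$-modular. Your explicit verification of the characterization identity and of the bimodularity of $[\bar S_M^*]$ just fills in steps the paper leaves implicit.
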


\bp We have to check that $\bar S^*_M(m^r_M\otimes\iota_{\bar M})=\bar S^*_M(\iota_M\otimes m^l_{\bar M})$. By the characterization
$\bar R^*_M(m^r_M\otimes\iota)=\bar R^*_M(\iota\otimes m^l_{\bar M})$, we compute:
\begin{multline*}
\bar S^*_M(\iota_M\otimes m^l_{\bar M})= (\iota_Q \otimes \bar{R}_M^*)(m^{l*}_M \otimes \iota_{\bar M})(\iota_M\otimes m^l_{\bar M})\\
=(\iota_Q \otimes \bar{R}_M^*)(\iota_Q\otimes\iota_M\otimes m^l_{\bar M})(m^{l*}_M\otimes\iota_Q \otimes \iota_{\bar M})
=(\iota_Q \otimes \bar{R}_M^*)(\iota_Q\otimes m^r_M\otimes\iota_{\bar M})(m^{l*}_M\otimes\iota_Q \otimes \iota_{\bar M}).
\end{multline*}
Therefore, as $\bar S^*_M(m^r_M\otimes\iota_{\bar M})=(\iota_Q \otimes \bar{R}_M^*)(m^{l*}_M\otimes\iota_{\bar M})(m^r_M\otimes\iota_{\bar M})$, we have to show that
\begin{equation*}\label{eq:mrml}
m^{l*}_Mm^r_M=(\iota\otimes m^r_M)(m^{l*}_M\otimes\iota).
\end{equation*}
But this identity means simply that $m_M^{l*}$ is a morphism of right $Q$-modules, so the lemma is proved.
\ep

We can now define duality morphisms for $Q$-bimodules. First, similarly to $\bar S_M$, define
$$
S_M=(m^l_{\bar M}\otimes\iota)(\iota\otimes R_{M})=(\iota\otimes m^r_M)(R_M\otimes\iota) \colon Q\to \bar M\otimes M.
$$
Then put
$$
R^Q_M=d(Q)^{-1}P_{\bar M,M}S_M\ \ \text{and}\ \ \bar R^Q_M=d(Q)^{-1}P_{M,\bar M}\bar S_M.
$$
By Lemma~\ref{lem:Sdescent} we can write $\bar S_M^*=[\bar S^*_M]P_{M,\bar M}$, and $(P_{M,\bar M}\bar S_M)^*=[\bar S^*_M]P_{M,\bar M}P^*_{M,\bar M}=d(Q)[\bar S^*_M]$. Similar identities hold for $S_M$, so we get
\begin{equation}\label{eq:RQstar}
\bar R^{Q*}_MP_{M,\bar M}=\bar S^*_M\ \ \text{and}\ \ R^{Q*}_MP_{\bar M,M}=S^*_M.
\end{equation}

\begin{lemma}
\label{lem:S-M-conjug-sol}
The pair $(R^Q_M,\bar R^Q_M)$ solves the conjugate equations for $M$ in $\QmodQ$.
\end{lemma}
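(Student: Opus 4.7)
By the evident symmetry of the construction---swapping $M$ with $\bar M$ and interchanging $(R_M,\bar R_M)$ exchanges $S_M$ with $\bar S_M$ and hence $R^Q_M$ with $\bar R^Q_M$---it suffices to verify only one of the two equations, say
\[
(\iota_{\bar M}\otimes_Q\bar R^{Q*}_M)(R^Q_M\otimes_Q\iota_{\bar M})=\iota_{\bar M}\quad\text{in }\QmodQ.
\]
My strategy is to transport this identity down to a computation in $\CC$, where the conjugate equation $(\iota_{\bar M}\otimes\bar R^*_M)(R_M\otimes\iota_{\bar M})=\iota_{\bar M}$ holds by hypothesis on $(R_M,\bar R_M)$.

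The first step is to lift the identity from $\QmodQ$ to $\CC$ by composing on the right with the structural morphism $m^l_{\bar M}=P_{Q,\bar M}\colon Q\otimes\bar M\to\bar M$ (which realizes the unit isomorphism $Q\otimes_Q\bar M\cong\bar M$) and using the universal property $(f\otimes_Q g)P_{-,-}=P_{-,-}(f\otimes g)$ together with the characterizing identity $\Phi^Q P_{X\otimes_Q Y,Z}(P_{X,Y}\otimes\iota)=P_{X,Y\otimes_Q Z}(\iota\otimes P_{Y,Z})$ of the associator. Next, substitute $R^Q_M=d(Q)^{-1}P_{\bar M,M}S_M$ and $\bar R^{Q*}_M=d(Q)^{-1}\bar S^*_MP^*_{M,\bar M}$ (the latter obtained from~\eqref{eq:RQstar} together with $P_{M,\bar M}P^*_{M,\bar M}=d(Q)\iota$). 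The interior $PP^*$ contractions then collapse, via the same coisometry identity, to scalar factors $d(Q)$ that absorb all the $d(Q)^{-1}$ prefactors, leaving an expression in $\CC$ built from $S_M$, $\bar S^*_M$, and the $Q$-bimodule actions on $M$ and $\bar M$.

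In the final step, using the explicit formulas $\bar S_M=(m^l_M\otimes\iota)(\iota\otimes\bar R_M)$ and $S^*_M=(R^*_M\otimes\iota)(\iota\otimes m^{r*}_M)$, together with the second identity of Lemma~\ref{lem:comput-m-m-star} and the unit axiom $m^l_M(v\otimes\iota)=\iota_M$, the left and right module morphisms commute past each other (as $M$ is a $Q$-bimodule) and collapse. The remaining composition reduces to $(\iota_{\bar M}\otimes\bar R^*_M)(R_M\otimes\iota_{\bar M})\circ m^l_{\bar M}=m^l_{\bar M}$ by the conjugate equation for $M$ in $\CC$; since $m^l_{\bar M}$ is a coisometry up to the scalar $\sqrt{d(Q)}$ by Lemma~\ref{lem:comput-m-m-star}, it is right-invertible, forcing the original $\QmodQ$-endomorphism to equal $\iota_{\bar M}$.

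The main technical obstacle is the careful bookkeeping of the numerous scalar factors $d(Q)$---arising from $PP^*=d(Q)\iota$, from $w^*w=d(Q)\iota$, and from the explicit $d(Q)^{-1}$ prefactors in the definitions of $R^Q_M$ and $\bar R^Q_M$---together with the systematic handling of the non-strict associator $\Phi^Q$ of $\QmodQ$ via its characterizing identity on the $P$-morphisms.
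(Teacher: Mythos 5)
Your proposal is correct and follows essentially the same route as the paper's proof: the paper likewise verifies just one conjugate equation (the one on $M$, composed with the epimorphic structure map $P_{M,Q}=m^r_M$, where you use $P_{Q,\bar M}=m^l_{\bar M}$), uses associativity of the structure maps $P$ and \eqref{eq:RQstar} to reduce to an expression like $(\bar S^*_M\otimes\iota)(\iota\otimes S_M)$, and collapses it via the conjugate equations in $\CC$ and Lemma~\ref{lem:comput-m-m-star}. Two cosmetic slips only: the contraction that actually appears next to $\bar S^*_M$ is $P^*_{M,\bar M}P_{M,\bar M}$ (not $P P^*$), which is absorbed precisely by \eqref{eq:RQstar} (equivalently Lemma~\ref{lem:Sdescent}) rather than by the coisometry identity alone, and the ingredient needed from Lemma~\ref{lem:comput-m-m-star} is its first identity $m_Mm^*_M=d(Q)\iota$ (neither its second identity nor the unit axiom is required).
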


\bp
We will only show that $(\bar R^{Q*}_M\otimes_Q\iota)(\iota\otimes_Q R^Q_M)=\iota$. We have
\begin{align*}
(\bar R^{Q*}_M\otimes_Q\iota)(\iota\otimes_Q R^Q_M)P_{M,Q} &=d(Q)^{-1}(\bar R^{Q*}_M\otimes_Q\iota)
P_{M,\bar M\otimes_Q M}(\iota\otimes P_{\bar M,M}S_M)\\
&=d(Q)^{-1}P_{Q,M}(\bar R^{Q*}_MP_{M,\bar M}\otimes\iota)(\iota\otimes S_M)\\
&=d(Q)^{-1}P_{Q,M}(\bar S_M^*\otimes\iota)(\iota\otimes S_M),
\end{align*}
where we used the associativity $P_{M, \bar{M}\otimes_Q M} (\iota_M \otimes P_{\bar{M},M}) = P_{M \otimes_Q \bar{M}, M} (P_{M,\bar{M}} \otimes \iota)$ of the tensor product and then that $\bar R^{Q*}_MP_{M,\bar M}=\bar S_M^*$
by~\eqref{eq:RQstar}. We have
\begin{align*}
(\bar{S}_M^* \otimes \iota) (\iota \otimes S_M)&=
(\iota_Q\otimes \bar R_{M}^*\otimes\iota_M)(m^{l*}_M\otimes\iota_{\bar M}\otimes\iota_M)
(\iota_M\otimes\iota_{\bar M}\otimes m^r_M)(\iota_M\otimes R_M\otimes\iota_Q)\\
&=(\iota_Q\otimes m^r_M)(m^{l*}_M\otimes\iota_Q)=m^{l*}_Mm^r_M.
\end{align*}
Therefore, using that $P_{Q,M}=m^l_M$, $P_{M,Q}=m^r_M$ and $m^l_Mm^{l*}_M=d(Q)\iota$ by Lemma~\ref{lem:comput-m-m-star}, we get
$$
(\bar R^{Q*}_M\otimes_Q\iota)(\iota\otimes_Q R^Q_M)P_{M,Q}
=d(Q)^{-1}P_{Q,M}m^{l*}_Mm^r_M=m^r_M=P_{M,Q}.
$$
Hence $(\bar R^{Q*}_M\otimes_Q\iota)(\iota\otimes_Q R^Q_M)=\iota_M$.
\ep

We thus see that $\QmodQ$ is a rigid C$^*$-tensor category. Let us decorate the constructions related to $\QmodQ$, such as the categorical trace, the dimension function, and so on, with superindex $Q$.

\begin{proposition}
\label{prop:S-M-resc-std-sol}
The solution $(R^Q_M,\bar R^Q_M)$  of  the conjugate equations for $M$ in $\QmodQ$ is standard and $d^Q(M)=d(Q)^{-1}d(M)$. Furthermore,
for any $T\in\End_{\QmodQ}(M)$ we have $\Tr^Q_M(T)=d(Q)^{-1}\Tr_M(T)$.
\end{proposition}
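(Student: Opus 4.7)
The plan is to verify both the standardness criterion in $\QmodQ$ and the trace formula in one stroke by computing $\bar R^{Q*}_M(T \otimes_Q \iota_{\bar M})\bar R^Q_M$ and $R^{Q*}_M(\iota_{\bar M} \otimes_Q T)R^Q_M$ explicitly for arbitrary $T \in \End_{\QmodQ}(M)$. Unpacking the definitions of $R^Q_M$ and $\bar R^Q_M$ via the relations \eqref{eq:RQstar}, together with the defining identity $(T \otimes_Q \iota_{\bar M}) P_{M,\bar M} = P_{M,\bar M}(T \otimes \iota_{\bar M})$ on the coequalizer, I first reduce the problem to establishing the $\CC$-morphism identities
$$\bar S^*_M(T \otimes \iota_{\bar M})\bar S_M = \Tr_M(T)\iota_Q = S^*_M(\iota_{\bar M} \otimes T) S_M,$$
after which the conclusion follows on multiplying by $d(Q)^{-1}$.

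For the first of these identities, I want to use the fact that $\bar S_M$ is a morphism of $Q$-bimodules (as witnessed by the two expressions for $\bar S_M$ recorded above in Section~\ref{sec:mod-cat-Frob-recip}). Since $T$ is a $Q$-bimodule endomorphism of $M$, the operator $T \otimes \iota_{\bar M}$ is a $Q$-bimodule endomorphism of $M \otimes \bar M$, so the composite $\bar S^*_M(T \otimes \iota_{\bar M})\bar S_M$ is a $Q$-bimodule endomorphism of $Q$. By simplicity of the $Q$-system this equals $\lambda_T\iota_Q$ for a unique scalar $\lambda_T \in \C$. To pin down $\lambda_T$, I pre- and post-compose with the unit $v$ and its adjoint. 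The key simplification is then the collapse
$$\bar S_M v = (m^l_M \otimes \iota_{\bar M})(v \otimes \bar R_M) = (m^l_M(v \otimes \iota_M) \otimes \iota_{\bar M})\bar R_M = \bar R_M,$$
which is a one-line consequence of the unit constraint $m^l_M(v \otimes \iota_M) = \iota_M$ on the left $Q$-module $M$. Combined with $v^*v = \iota_\un$, this yields $\lambda_T = v^*\bar S^*_M(T \otimes \iota_{\bar M})\bar S_M v = \bar R^*_M(T \otimes \iota_{\bar M})\bar R_M = \Tr_M(T)$, as required.

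The parallel computation for $S_M$ runs identically, using the left $Q$-module structure on $\bar M$ coming from the right $Q$-module structure on $M$: one obtains $S_M v = R_M$ and therefore $S^*_M(\iota_{\bar M} \otimes T) S_M = R^*_M(\iota_{\bar M} \otimes T) R_M \cdot \iota_Q = \Tr_M(T)\iota_Q$, where the last equality is the trace-characterization of the standard solution $(R_M, \bar R_M)$ in $\CC$. This shows that $\bar R^{Q*}_M(T \otimes_Q \iota_{\bar M})\bar R^Q_M$ and $R^{Q*}_M(\iota_{\bar M} \otimes_Q T) R^Q_M$ are both equal to $d(Q)^{-1}\Tr_M(T)\iota_Q$, establishing standardness of $(R^Q_M, \bar R^Q_M)$ and simultaneously reading off $\Tr^Q_M(T) = d(Q)^{-1}\Tr_M(T)$; setting $T = \iota_M$ gives $d^Q(M) = d(Q)^{-1}d(M)$. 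There is no real obstacle here beyond bookkeeping—the whole argument amounts to noticing that $v$ intertwines $\bar S_M$ with $\bar R_M$ (and $S_M$ with $R_M$), converting the $\QmodQ$-trace identity into the corresponding $\CC$-trace identity.
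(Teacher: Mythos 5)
Your proposal is correct and follows essentially the same route as the paper: reduce via \eqref{eq:RQstar} and naturality of the structure maps to $d(Q)^{-1}\bar S^*_M(T\otimes\iota)\bar S_M$ (resp.\ $d(Q)^{-1}S^*_M(\iota\otimes T)S_M$), use simplicity of the $Q$-system to see this is a scalar multiple of $\iota_Q$, and detect the scalar with $v$ via $\bar S_Mv=\bar R_M$ (resp.\ $S_Mv=R_M$). The only difference is that you spell out the unit-constraint computation and the second (mirror) argument, which the paper states without proof or dismisses as "similar".
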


\bp
For any $T\in\End_{\QmodQ}(M)$ we have
$$
\bar R^{Q*}_M(T\otimes_Q\iota)\bar R^{Q}_M=d(Q)^{-1}\bar R^{Q*}_M(T\otimes_Q\iota)P_{M,\bar M}\bar S_M
=d(Q)^{-1}\bar R^{Q*}_MP_{M,\bar M}(T\otimes\iota)\bar S_M=d(Q)^{-1}\bar S^*_M(T\otimes\iota)\bar S_M,
$$
where in the last step we used \eqref{eq:RQstar}. This expression is  a scalar endomorphism of $Q$, so in order to detect this scalar we can compose it with $v^*$ on the right and $v$ on the left. But $\bar S_M v=\bar R_M$, hence
$$
\bar R^{Q*}_M(T\otimes_Q\iota)\bar R^{Q}_M=d(Q)^{-1}\bar R^*_M(T\otimes\iota) R_M=d(Q)^{-1}\Tr_M(T).
$$
By a similar argument $R^{Q*}_M(\iota\otimes_Q T) R^{Q}_M$ gives the same result. Hence the solution $(R^Q_M,\bar R^Q_M)$ is standard and $\Tr^Q_M(T)=d(Q)^{-1}\Tr_M(T)$.
\ep

For $Q$-bimodules, the contravariant functor $T \to T^\vee$ can be a priori defined in two ways, using standard solutions either in $\CC$ or in $\QmodQ$. However, these two definitions give the same result. In fact, the following slightly more general statement is true.

\begin{lemma}
\label{lem:vee-functor-same}
 Let $T \colon M \to N$ be a morphism of left $Q$-modules. If $T^\vee \in \CC(\bar{N}, \bar{M})$ satisfies $(T \otimes \iota) \bar{R}_M = (\iota \otimes T^\vee) \bar{R}_N$, then $T^\vee$ is a morphism of right $Q$-modules and $(T \otimes_Q \iota) P_{M,\bar{M}} \bar{S}_M = (\iota \otimes_Q T^\vee) P_{N,\bar{N}} \bar{S}_N$.
\end{lemma}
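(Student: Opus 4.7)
The plan is to establish both claims in turn, leaning on the characterization of $T^\vee$ via either form of the conjugate-equation relation together with the defining property~\eqref{eq:dualmod} of the induced right $Q$-action on $\bar X$.

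The key auxiliary identity, from which the first part drops out immediately, is
\begin{equation*}
R_M^*(T^\vee\otimes\iota_M) = R_N^*(\iota_{\bar N}\otimes T)\quad\text{in}\ \CC(\bar N\otimes M,\un).
\end{equation*}
To obtain it, I would transport both sides through the standard Frobenius isomorphism $\CC(\bar N\otimes M,\un)\cong\CC(M,N)$ given by $\phi\mapsto(\iota_N\otimes\phi)(\bar R_N\otimes\iota_M)$. The right-hand side plainly corresponds to $T$ by a conjugate equation. For the left-hand side, push $(\iota_N\otimes T^\vee)\bar R_N$ through to $(T\otimes\iota_{\bar M})\bar R_M$ via the defining relation for $T^\vee$, after which another conjugate equation (for $M$) again yields $T$.

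To show $T^\vee m^r_{\bar N} = m^r_{\bar M}(T^\vee\otimes\iota_Q)$ as morphisms $\bar N\otimes Q\to\bar M$, I would apply the Frobenius adjunction $\CC(\bar N\otimes Q,\bar M)\cong\CC(\bar N\otimes Q\otimes M,\un)$, $f\mapsto R_M^*(f\otimes\iota_M)$. Substituting the auxiliary identity, applying~\eqref{eq:dualmod} to both $m^r_{\bar M}$ and $m^r_{\bar N}$, and invoking the left $Q$-module relation $T m_M = m_N(\iota_Q\otimes T)$, both sides collapse to $R_M^*(T^\vee\otimes m_M)$ in $\CC(\bar N\otimes Q\otimes M,\un)$, which settles the equality.

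The second claim follows from an equality
\begin{equation*}
(T\otimes\iota_{\bar M})\bar S_M = (\iota_N\otimes T^\vee)\bar S_N \quad\text{in}\ \CC(Q,N\otimes\bar M).
\end{equation*}
Starting from $\bar S_M = (m_M\otimes\iota_{\bar M})(\iota_Q\otimes\bar R_M)$, applying $T\otimes\iota_{\bar M}$ on the left and using the left $Q$-module property of $T$ rewrites the composition as $(m_N\otimes\iota_{\bar M})(\iota_Q\otimes(T\otimes\iota_{\bar M})\bar R_M)$; the defining relation for $T^\vee$ then replaces the inner piece with $(\iota_N\otimes T^\vee)\bar R_N$, which after regrouping is $(\iota_N\otimes T^\vee)\bar S_N$. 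Composing both sides with $P_{N,\bar M}$ on the left and using the standard naturalities $(T\otimes_Q\iota)P_{M,\bar M} = P_{N,\bar M}(T\otimes\iota_{\bar M})$ and $(\iota\otimes_Q T^\vee)P_{N,\bar N} = P_{N,\bar M}(\iota_N\otimes T^\vee)$ (the latter well-defined thanks to the first part of the lemma) yields the stated identity. The only real work is in pinning down the auxiliary duality identity; from there, everything reduces to bookkeeping with the defining relations.
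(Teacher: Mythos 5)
Your proposal is correct and takes essentially the same route as the paper: the paper's proof declares the right-$Q$-module property of $T^\vee$ to be immediate (which you instead verify honestly via the auxiliary identity $R_M^*(T^\vee\otimes\iota_M)=R_N^*(\iota_{\bar N}\otimes T)$ together with~\eqref{eq:dualmod} and the module property of $T$), and then reduces the second claim to the identity $(T\otimes\iota_{\bar M})\bar S_M=(\iota_N\otimes T^\vee)\bar S_N$, proved by exactly the computation you give, with the descent along the structure maps $P$ left implicit. So the only difference is that you spell out details the paper omits; no gaps.
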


\bp
It is immediate that $T^\vee$ is a morphism of right $Q$-modules.
In order to show the second claim it is enough to establish the identity $(T \otimes \iota) \bar{S}_M = (\iota \otimes T^\vee) \bar{S}_N$. We have
\begin{multline*}
(\iota_N\otimes T^\vee)\bar S_N=(\iota_N\otimes T^\vee)(m_N\otimes\iota_{\bar N})(\iota_Q\otimes\bar R_N)
=(m_N\otimes\iota_{\bar M})(\iota_Q\otimes T\otimes\iota_{\bar M})(\iota_Q\otimes\bar R_M)\\
=(T\otimes\iota_{\bar M})(m_M\otimes\iota_{\bar M})(\iota_Q\otimes\bar R_M)
=(T\otimes\iota_{\bar M})\bar S_M,
\end{multline*}
and the lemma is proved.
\ep

\subsection{Schauenburg's induction}
\label{sec:schauen-induc}

If $\Irr(\CC)$ is finite and $Q$ is generating, in which case $Q$ corresponds to a finite depth subfactor, Ocneanu observed that $\Dcen(\CC)$ and $\Dcen(\QmodQ)$ have the same fusion rules through a characterization of the fusion rules in terms of the associated TQFT~\cite{MR1642584}*{p.~641 and Theorem~12.29}. In fact, by a result of Schauenburg~\cite{MR1822847} this can be strengthened to an equivalence of tensor categories $\Dcen(\CC) \cong \Dcen(\QmodQ)$ without the finiteness assumption on $\CC$ and the generating assumption on $Q$. Although this result is stated in the algebraic context without involution, it can be further extended to our framework of ind-C$^*$-tensor categories in a straightforward way.

Before we explain this, let us remark that we can identify the categories $\indcat{\Qmod_\CC}$ and $\Qmod_{\indC}$. Indeed, for any object $M$ in $\Qmod_{\indC}$ the morphism $d(Q)^{-1/2}m_M^*$ gives an isometric $Q$-modular embedding of $M$ into $Q\otimes M$. Since $Q\otimes M$ lies in $\indcat{\Qmod_\CC}$ and this category is closed under subobjects, it follows that~$M$ is isomorphic to an object in $\indcat{\Qmod_\CC}$. The same remark applies to $Q$-bimodules.

Turning to the equivalence between $\ZC$ and $\Dcen(\indcat\QmodQ)$, the starting point is that, as observed by Schauenburg, if $c$ is a unitary half-braiding on an ind-object $X$ of $\CC$, then $X \otimes Q$ has the structure of a $Q$-bimodule in $\indC$, hence $X\otimes Q$ can be considered as an ind-object of $\QmodQ$.

\begin{lemma}
The pair $(m^r, m^l) = (\iota \otimes m_Q, (\iota \otimes m_Q) (c_Q \otimes \iota))$ defines the structure of a $Q$-bimodule on $X \otimes Q$.
\end{lemma}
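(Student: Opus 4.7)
The strategy is to verify in turn the three conditions demanded of $m^r$ and $m^l$: each must be an associative unital action compatible with the $*$-structure, and the two actions must commute. The right action $m^r = \iota_X \otimes m_Q$ requires no work at all: associativity, the unit constraint $m^r(\iota \otimes v) = \iota$, and the $*$-compatibility $m^{r*} = (\iota \otimes m^r)(\iota \otimes wv)$ all follow by tensoring the corresponding $Q$-system identities for $m_Q = w^*$ on $Q$ with $\iota_X$ on the left.

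For the left action $m^l = (\iota_X \otimes m_Q)(c_Q \otimes \iota_Q)$, the unit constraint reduces to the naturality of $c$ applied to $v\colon \un \to Q$: this gives $c_Q(v \otimes \iota_X) = \iota_X \otimes v$ (since $c_\un$ is the identity), and then the unit axiom for $Q$ finishes the job. Associativity is slightly more involved. I would compute $m^l(m_Q \otimes \iota_{X\otimes Q})$ using naturality of $c$ at $m_Q\colon Q \otimes Q \to Q$ together with the half-braiding identity $c_{Q \otimes Q} = (c_Q \otimes \iota_Q)(\iota_Q \otimes c_Q)$ to move $m_Q$ past the braiding, obtaining
$$
m^l(m_Q \otimes \iota_{X\otimes Q}) = \bigl(\iota_X \otimes m_Q(m_Q \otimes \iota_Q)\bigr)(c_Q \otimes \iota_Q \otimes \iota_Q)(\iota_Q \otimes c_Q \otimes \iota_Q).
$$
On the other hand, a direct expansion of $m^l(\iota_Q \otimes m^l)$ produces the same expression with $m_Q(m_Q\otimes\iota_Q)$ replaced by $m_Q(\iota_Q \otimes m_Q)$, and these coincide by associativity of $m_Q$.

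The step that is going to require real care is the $*$-compatibility $m^{l*} = (\iota_Q \otimes m^l)(wv \otimes \iota_{X\otimes Q})$. Expanding the left-hand side gives $(c_Q^* \otimes \iota_Q)(\iota_X \otimes w)$, whereas the right-hand side reads $(\iota_Q \otimes \iota_X \otimes w^*)(\iota_Q \otimes c_Q \otimes \iota_Q)(wv \otimes \iota_X \otimes \iota_Q)$. Here I would first use the Frobenius identity, which gives $w = (\iota_Q \otimes w^*)(wv \otimes \iota_Q)$, to insert $wv$ into the left-hand side. The critical input is then Lemma~\ref{lem:braidinv} applied with the standard solution $R_Q = wv$ of the conjugate equations for $Q$, which yields
$$
(c_Q^* \otimes \iota_Q)(\iota_X \otimes wv) = (\iota_Q \otimes c_Q)(wv \otimes \iota_X).
$$
Tensoring with $\iota_Q$ on the right and then contracting with $\iota_Q \otimes \iota_X \otimes w^*$ gives exactly the right-hand side. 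This is the main obstacle, since it is the only place where the $*$-structure of the half-braiding really enters and Lemma~\ref{lem:braidinv} must be invoked in a nontrivial way.

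Finally, the bimodule compatibility $m^r(m^l \otimes \iota_Q) = m^l(\iota_Q \otimes m^r)$ reduces, after pushing $c_Q \otimes \iota_Q \otimes \iota_Q$ to the far right in both expressions, to the associativity of $m_Q$ applied to the last two $Q$-factors; no further use of the half-braiding condition is needed here because $m^r$ acts on a $Q$-factor disjoint from the one through which $c_Q$ moves $X$. Assembling these four verifications establishes the $Q$-bimodule structure on $X \otimes Q$.
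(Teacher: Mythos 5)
Your proof is correct and takes essentially the same route as the paper: the paper verifies only the $*$-compatibility of $m^l$ (delegating the purely algebraic module axioms to Schauenburg's result), and its argument via $(c_Q^*\otimes\iota)c_{Q\otimes Q}=\iota\otimes c_Q$ together with naturality at $wv$ is exactly the content of Lemma~\ref{lem:braidinv} that you invoke, just run from the right-hand side toward the left instead of inserting the Frobenius identity on the left. Your explicit checks of the unit, associativity and left/right commutation axioms (and of the right action) are a harmless expansion of what the paper leaves to the cited reference.
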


\bp
Let us concentrate on the $*$-compatibility of $m^l$, since this is the only new property compared to~\cite{MR1822847}. We have $m^{l*}=(c_Q^* \otimes \iota_Q) (\iota_X \otimes w)$ and we want to show that this is equal to $(\iota_Q \otimes m^l)(wv\otimes\iota_X\otimes\iota_Q)$. Since $\iota\otimes c_Q=(c_Q^*\otimes\iota)c_{Q\otimes Q}$,  we have
$$
(\iota_Q \otimes m^l)(wv\otimes\iota_X\otimes\iota_Q) = (c_Q^* \otimes m_Q) (c_{Q \otimes Q} \otimes \iota_Q) (w v \otimes \iota_X\otimes\iota_Q).
$$
Using the naturality of the half-braiding, this is equal to $(c_Q^* \otimes m_Q) (\iota_X \otimes w v \otimes \iota_Q)$, which is indeed equal to $(c_Q^* \otimes \iota_Q) (\iota_X \otimes w)$ by the $*$-compatibility of $m_Q$.
\ep

Next, we define a unitary half-braiding $\tilde c$ on $X\otimes Q\in\indcat{\QmodQ}$. Let $Y$ be a $Q$-bimodule. Then $Y \otimes X$ and $X \otimes Y$ are models of $Y \otimes_Q (Q \otimes X)$ and $(X \otimes Q) \otimes_Q Y$, with the structure morphisms of the tensor product given by $m_Y^r \otimes \iota_X$ and $\iota_X \otimes m_Y^l$. Since $c_Q$ is by definition a morphism of left $Q$-modules, it induces a unitary morphism $Y \otimes_Q (Q \otimes X) \to Y \otimes_Q (X \otimes Q)$, so we get an isomorphism $Y \otimes_Q (X \otimes Q)\cong Y\otimes X$. Then, up to these isomorphisms, we define $\tilde{c}_Y\colon Y \otimes_Q (X \otimes Q) \to (X \otimes Q) \otimes_Q Y$ simply as the morphism $c_Y\colon Y\otimes X\to X\otimes Y$.

\begin{lemma} \label{lem:schaun-braiding}
The unitaries $\tilde c_Y$ form a half-braiding on $X\otimes Q\in\indcat{\QmodQ}$.
\end{lemma}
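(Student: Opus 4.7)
The plan is to reduce everything to statements about the original half-braiding $c$ in $\indC$ via explicit models for the relative tensor products. First I would record the fundamental observation that $c_Q\colon Q\otimes X\to X\otimes Q$ is a left $Q$-module isomorphism from $(Q\otimes X, m_Q\otimes\iota)$ to $(X\otimes Q, (\iota\otimes m_Q)(c_Q\otimes\iota))$: indeed, naturality of $c$ applied to $m_Q$ gives $c_Q(m_Q\otimes\iota)=(\iota\otimes m_Q)c_{Q\otimes Q}$, and the half-braiding axiom $c_{Q\otimes Q}=(c_Q\otimes\iota)(\iota\otimes c_Q)$ converts the right side into the desired form. Using this, the canonical identification $Y\otimes_Q(Q\otimes X)\cong Y\otimes X$ (structure map $m_Y^r\otimes\iota_X$) transports to an identification $Y\otimes_Q(X\otimes Q)\cong Y\otimes X$; dually, $(X\otimes Q)\otimes_Q Y\cong X\otimes Y$ (structure map $\iota_X\otimes m_Y^l$). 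By construction $\tilde c_Y$ is simply $c_Y\colon Y\otimes X\to X\otimes Y$ under these identifications.

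Next I would check that each $\tilde c_Y$ is a morphism in $\indcat{\QmodQ}$, i.e.\ a $Q$-bimodule map. The right $Q$-action on $(X\otimes Q)\otimes_Q Y$ comes from $Y$, so compatibility of $\tilde c_Y=c_Y$ with right $Q$-actions amounts to naturality of $c$ applied to $m_Y^r\colon Y\otimes Q\to Y$ (together with the fact that, on the source, the right $Q$-action on $Y\otimes_Q(X\otimes Q)$ transported to $Y\otimes X$ is likewise implemented through $Y$ after passing across $c_Q$). The compatibility with left $Q$-actions is the main substantive point: the left $Q$-action on $(X\otimes Q)\otimes_Q Y$ involves $c_Q$ by definition, while the one on $Y\otimes_Q(X\otimes Q)$ is just $m_Y^l$, and matching them is exactly the half-braiding identity $c_{Q\otimes Y}=(c_Q\otimes\iota_Y)(\iota_Q\otimes c_Y)$.

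Finally, naturality of $\tilde c_Y$ in $Y\in\QmodQ$ is inherited from naturality of $c$ in $\indC$, since any $Q$-bimodule map $Y\to Y'$ is in particular a morphism in $\indC$ and the identifications are natural in $Y$. For the half-braiding axiom $\tilde c_{Y\otimes_Q Y'}=(\tilde c_Y\otimes_Q\iota)(\iota\otimes_Q\tilde c_{Y'})$, I would model $Y\otimes_Q Y'\otimes_Q(X\otimes Q)$, $Y\otimes_Q(X\otimes Q)\otimes_Q Y'$ and $(X\otimes Q)\otimes_Q Y\otimes_Q Y'$ on $Y\otimes Y'\otimes X$, $Y\otimes X\otimes Y'$ and $X\otimes Y\otimes Y'$ respectively, so that the whole identity reduces to $c_{Y\otimes Y'}=(c_Y\otimes\iota)(\iota\otimes c_{Y'})$, which holds by assumption on $c$.

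The main obstacle is bookkeeping: carefully verifying that the structural coisometries $P_{-,-}$ and the twist by $c_Q$ hidden in the left $Q$-action on $X\otimes Q$ cancel cleanly so that the bimodule equivariance, naturality and the multiplicativity axiom all reduce to statements already known for $c$ in $\CC$. No new categorical input is needed beyond Schauenburg's argument~\cite{MR1822847}; one only has to confirm that unitarity is preserved, which is automatic because $c_Q$ is unitary and hence so are all induced identifications.
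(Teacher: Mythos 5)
Your proposal is correct and follows essentially the same route as the paper: transport the $Q$-bimodule structures to the models $Y\otimes X$ and $X\otimes Y$, check that $c_Y$ is $Q$-bimodular using naturality of $c$ together with the identities $c_{Q\otimes Y}=(c_Q\otimes\iota)(\iota\otimes c_Y)$ and $c_{Y\otimes Q}=(c_Y\otimes\iota)(\iota\otimes c_Q)$, and reduce the multiplicativity of $\tilde c$ to that of $c$ by descending through the structure morphisms. The only cosmetic difference is that the paper performs the multiplicativity step via an explicit lift $\sigma_Y\colon Y\otimes X\otimes Q\to X\otimes Q\otimes Y$ and then descends using naturality of $c$ with respect to $P_{Z,Y}$ — the descent through these coisometries is exactly the bookkeeping point you gesture at, and it does cancel cleanly as you expect.
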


\bp
Since the proof of the corresponding statement in~\cite{MR1822847} is omitted, let us briefly indicate the argument. The $Q$-bimodule structure on $Y\otimes X\cong Y\otimes_Q(X\otimes Q)$ is given by
$$
m^l_{Y\otimes X}=m^l_Y\otimes\iota_X, \ \ m^r_{Y\otimes X}=(m^r_Y\otimes\iota_X)(\iota_Y\otimes c_Q^*),
$$
and similarly the  $Q$-bimodule structure on $X\otimes Y\cong (X\otimes Q)\otimes_Q Y$ is given by
$$
m^l_{X\otimes Y}=(\iota_X\otimes m^l_Y)(c_Q\otimes\iota_Y),\ \ m^r_{X\otimes Y}=\iota_X\otimes m^r_Y.
$$
Using this it is easy to check that the morphism $c_Y\colon Y\otimes X\to X\otimes Y$ is $Q$-bimodular.

Next, the morphism $\tilde c_Y\colon Y\otimes_Q(X\otimes Q)\to (X\otimes Q)\otimes_Q Y$ is defined by the morphism
$$
\sigma_Y=(\iota_X\otimes v\otimes\iota_Y)c_Y(m^r_Y\otimes\iota_X)(\iota_Y\otimes c^*_Q)\colon Y\otimes X\otimes Q\to X\otimes Q\otimes Y.
$$
As $c_Y(m^r_Y\otimes\iota)=(\iota\otimes m^r_Y)(c_Y\otimes\iota)(\iota\otimes c_Q)$, we have
$$
\sigma_Y=(\iota_X\otimes v\otimes\iota_Y)(\iota_X\otimes m^r_Y)(c_Y\otimes\iota_Q).
$$
From this we deduce that $(\sigma_Z\otimes\iota_Y)(\iota_Z\otimes\sigma_Y)=\sigma_{Z\otimes Y}$. By the naturality of $c$, we have $(\iota_{X\otimes Q}\otimes P_{Z,Y})\sigma_{Z\otimes Y}=\sigma_{Z\otimes_QY}(P_{Z,Y}\otimes\iota_{X\otimes Q})$. Hence $\tilde c$ satisfies $(\tilde c_Z\otimes_Q\iota_Y)(\iota_Z\otimes_Q\tilde c_Y)=\tilde c_{Z\otimes_Q Y}$.
\ep

Thus, putting $F(X, c) = (X \otimes Q, \tilde{c})$, we obtain a C$^*$-tensor functor $F\colon\ZC \to \Dcen(\indcat\QmodQ)$.

\smallskip

We will show that an inverse functor can be obtained by exactly the same construction using the dual $Q$-system $\hat Q$, modulo an equivalence of the categories $\hatQbimod$ and $\CC$ that we are now going to explain.

Let $L \colon \CC \to \Qmod$ be the free module functor $U \mapsto Q \otimes U$, and $O \colon \Qmod \to \CC$ be the forgetful functor. As was already observed in Section~\ref{sec:prelim-q-system}, we have the adjunction
$$
\CC(O (M), U) \cong \Mor_{\Qmod}(M, L (U)),
$$
induced by the natural transformations
\begin{align*}
\eta_M &= d(Q)^{-\hlf} m^{l*}_M\colon M \to L O (M) = Q \otimes M,&
\epsilon_U &= d(Q)^{\hlf} v^* \otimes \iota_U\colon OL(U)\to U.
\end{align*}
The composition $T = L O$ has the structure of a \emph{monad}~\cite{MR1712872}*{Chapter~VI}. With the above normalization of~$\eta$ and~$\epsilon$, the multiplication $\mu\colon T^2\to T$ is given by $\hat w^*\otimes_Q\iota_M\colon \hat Q\otimes_Q\hat Q\otimes_Q M=Q\otimes Q\otimes M\to \hat Q\otimes_Q M=Q\otimes M$. Then a $T$-algebra structure $\hat{m}_M \colon T( M)=\hat Q\otimes_Q M  = Q \otimes M\to M$ on $M \in \Qmod$ is precisely an algebraic left $\hat{Q}$-module structure on $M$. We consider only $T$-algebras satisfying the $*$-compatibility condition, which can be written as $\hat{m} = \eta^* \mu T(\hat{m}^*)$. Then the category $(\Qmod)^T$ of $T$-algebras is the category $\hat{Q}\mhyph\mathrm{mod}_{\Qmod}$ of left $\hat Q$-modules in $\Qmod$.

As observed before, the categories $\CC$ and $\Qmod$ both have coequalizers, and the Frobenius reciprocity implies that a coequalizer in $\Qmod$ is also a one in $\CC$. This means that the functor $L$ creates coequalizers, and Beck's theorem~\cite{MR1712872}*{Section~VI.7} implies that $(\Qmod)^T$ is equivalent
to $\CC$. More precisely, the comparison functor $P\colon (\Qmod)^T \to \CC$ is characterized as the coequalizer diagram $Q \otimes W \rightrightarrows W \to P (W)$ for the morphisms $d(Q)^{\hlf} v^* \otimes \iota_W$ and $\hat{m}_W$ for $W \in (\Qmod)^T$ and the inverse functor $\CC\to (\Qmod)^T=\hat{Q}\mhyph\mathrm{mod}_{\Qmod}$ is given by $U\mapsto Q\otimes U$.

The above considerations can be carried out for the right $\hat{Q}$-modules in $\modQ$ and the right and/or left $\hat{Q}$-modules in $\QmodQ$, which leads to the equivalences of C$^*$-categories
\begin{align*}
\mathrm{mod}_{\modQ}\mhyph\hat{Q} &\cong \CC,&
{\hatQmod_{\QmodQ}} &\cong \modQ,&
\hat Q\mhyph\mathrm{mod}_{\QmodQ}\mhyph\hat Q &\cong \CC.
\end{align*}
We of course also have similar equivalences for the ind-categories.

\begin{proposition}\label{prop:hat-Q-bimod-eqv-C}
The equivalence $\hatQbimod \cong \CC$ of C$^*$-categories can be extended to an equivalence of C$^*$-tensor categories.
\end{proposition}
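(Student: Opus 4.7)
The plan is to upgrade the inverse $G\colon\CC \to \hatQbimod$ of the equivalence of C$^*$-categories---which, by the analogue for bimodules of the adjunction argument used above, is $G(U) = Q\otimes U\otimes Q$ with its canonical $Q$-bimodule structure (multiplication on the outer $Q$'s) together with the $\hat Q$-bimodule structure coming from the $T$-algebra structure $L\epsilon_U$ produced by Beck's theorem---into a unitary tensor functor. Since $G(\un_\CC) = Q\otimes\un\otimes Q = \hat Q$ coincides with the unit of $\hatQbimod$ equipped with its regular $\hat Q$-bimodule structure, the unit datum $G_0$ can be taken to be the identity.

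The heart of the argument is the construction of $G_2(U,V)\colon G(U)\otimes_{\hat Q} G(V) \to G(U\otimes V)$. Computed in $\QmodQ$, the structure maps $P_{M,N}$ identify $G(U)\otimes_Q G(V)$ with $Q\otimes U\otimes Q\otimes V\otimes Q$. The two parallel maps $G(U)\otimes_Q\hat Q\otimes_Q G(V) \rightrightarrows G(U)\otimes_Q G(V)$ whose coequalizer defines $G(U)\otimes_{\hat Q}G(V)$ act on the intermediate six-factor object $Q\otimes U\otimes Q\otimes Q\otimes V\otimes Q$ by killing different adjacent $Q$-factors via $v^*$, because the $\hat Q$-multiplication is $\hat w^* = d(Q)^{\hlf}\iota\otimes v^*\otimes\iota$. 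Their coequalizer is therefore identified with the image of the projection $\iota\otimes\iota\otimes vv^*\otimes\iota\otimes\iota$ applied to the middle $Q$ of $Q\otimes U\otimes Q\otimes V\otimes Q$. Since $vv^*$ is the projection of $Q$ onto its $\un$-isotypic component, this image equals $Q\otimes U\otimes V\otimes Q = G(U\otimes V)$, and I take $G_2$ to be the resulting canonical identification, with a compensating factor of $d(Q)^{\hlf}$ built in so that $G_2$ is unitary.

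Third, I would verify naturality, unitarity of $G_2$, and the pentagon and triangle axioms for $(G, G_0, G_2)$. Both coherence axioms reduce to the $Q$-system relations of Section~\ref{sec:prelim-q-system}---specifically the unit constraints $(v^*\otimes\iota)w = \iota = (\iota\otimes v^*)w$ and associativity $(w\otimes\iota)w=(\iota\otimes w)w$---applied to the inserted factors of $v$ (in $G_2$) and of $v^*$ (coming from the $\hat Q$-action). Unitarity of $G_2$ follows from the normalization $P_{M,N}P^*_{M,N}=d(Q)\iota$ of the structure maps together with $v^*v=\iota_\un$. That $G$ is a $*$-functor is immediate from the construction, since the defining morphism of $G_2$ is a composition of coisometric structure maps with the self-adjoint idempotent $vv^*$.

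The main obstacle I expect is the careful bookkeeping of the $d(Q)^{\pm\hlf}$ normalization factors that pervade both the structure maps $P_{M,N}$ of relative tensor products and the $Q$-system data $\hat v = d(Q)^{-\hlf}w$, $\hat w = d(Q)^{\hlf}\iota\otimes v\otimes\iota$ of $\hat Q$. These must line up so that the coequalizer identification constructed above is genuinely unitary, not merely invertible. Once the normalizations are aligned the remaining verifications are routine applications of the $Q$-system identities.
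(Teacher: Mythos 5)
Your proposal is correct and follows essentially the same route as the paper: both reduce everything to identifying $(Q\otimes U\otimes Q)\otimes_{\hat Q}(Q\otimes V\otimes Q)$ with $Q\otimes U\otimes V\otimes Q$ via the defining coequalizer, whose two parallel maps apply $v^*$ to the two middle copies of $Q$; the paper phrases the collapse of the remaining middle $Q$ by noting (using exactness of $Z\mapsto V\otimes Z\otimes W$) that $v^*$ coequalizes $v^*\otimes\iota$ and $\iota\otimes v^*$, which is exactly your projection-$vv^*$ picture, and it likewise leaves the coherence and unitarity checks as routine. One small inaccuracy: $vv^*$ is the full $\un$-isotypic projection of $Q$ only when $Q$ is irreducible, which is not assumed here (only simplicity), but your argument needs only that its image is a copy of $\un$, so nothing breaks.
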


\bp
The equivalence $\CC \to \hatQbimod$ is given by $X \mapsto Q \otimes X \otimes Q$ at the level of objects. We have to show that $Q \otimes X \otimes Y \otimes Q$ becomes a model of $(Q \otimes X \otimes Q) \otimes_{\hat{Q}} (Q \otimes Y \otimes Q)$ in a natural way.

The right $\hat{Q}$-module structure on $Q \otimes X \otimes Q$ is given by $d(Q)^\hlf\iota_{Q \otimes X} \otimes v^* \otimes \iota_Q$, where we as usual use $Q \otimes X \otimes Q \otimes Q$ as a model of $(Q \otimes X \otimes Q) \otimes_Q \hat{Q}$ with the structure morphism $P_{Q \otimes X \otimes Q, \hat{Q}} = \iota_{Q \otimes X} \otimes m_Q \otimes \iota_Q$. The left $\hat{Q}$-module structure on $Q \otimes Y \otimes Q$ can be described in a similar way, and we also have
$$
(Q \otimes X \otimes Q) \otimes_Q \hat{Q} \otimes_Q (Q \otimes Y \otimes Q) = Q \otimes X \otimes Q \otimes Q \otimes Y \otimes Q.
$$
Thus, $(Q \otimes X \otimes Q) \otimes_{\hat{Q}} (Q \otimes Y \otimes Q)$ is a coequalizer of $\iota_{Q \otimes X} \otimes v^* \otimes \iota_{Q \otimes Y \otimes Q}$ and $\iota_{Q \otimes X \otimes Q} \otimes v^* \otimes \iota_{Y \otimes Q}$. Since the endofunctors on $\CC$ of the form $Z \mapsto V \otimes Z \otimes W$ for $V, W \in \CC$ are exact, we thus see that it is enough to show that $v^*\colon Q\to\un$ is a coequalizer of the morphisms $v^* \otimes \iota_Q$ and $\iota_Q \otimes v^*$. But this is obvious as $v^*v=\iota$.
\ep

Now, the same construction as for $F$ using the dual $Q$-system $\hat{Q}$ provides a C$^*$-tensor functor $$\tilde{F}\colon \Dcen(\indcat{\QmodQ}) \to \Dcen(\indcat{\hatQbimod}).$$
Take $(X,c)\in\ZC$. Then $F(X,c)=(X\otimes Q,\tilde c)$ and $\tilde F F(X,c)=((X\otimes Q)\otimes_Q\hat Q,\tilde{\tilde c})$. We can identify $(X\otimes Q)\otimes_Q\hat Q$ with $X\otimes\hat Q=X\otimes Q\otimes Q$. Under this identification the right $Q$- and $\hat Q$-module structures are obvious, but this is less so for the left module structures.

\begin{lemma}
The morphism $c_Q\otimes\iota_Q\colon Q\otimes X\otimes Q\to X\otimes Q\otimes Q=(X\otimes Q)\otimes_Q\hat Q$ is an isomorphism of $Q$- and $\hat Q$-bimodules.
\end{lemma}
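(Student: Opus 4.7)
The plan is to verify, by direct calculation in explicit models, that $c_Q \otimes \iota_Q$ intertwines each of the four actions (left/right, over $Q$ and over $\hat Q$). I would first unwind these actions. On the source $Q \otimes X \otimes Q$, the image of $X \in \CC$ under the equivalence $\CC \to \hatQbimod$ of Proposition~\ref{prop:hat-Q-bimod-eqv-C}, the $Q$-bimodule structure is the free one with $m_Q$ acting on the outer $Q$-factors, while the $\hat Q$-bimodule structure is given---in the standard models $Q \otimes Q \otimes X \otimes Q$ and $Q \otimes X \otimes Q \otimes Q$---by $d(Q)^{1/2}$ times $v^*$ applied to the inner $Q$ coming from $\hat Q$, exactly as in the proof of that proposition. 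On the target $(X \otimes Q) \otimes_Q \hat Q$, modeled as $X \otimes Q \otimes Q$, the right $Q$- and right $\hat Q$-actions are given by the same recipes on the rightmost factors; the left $Q$-action is $((\iota_X \otimes m_Q)(c_Q \otimes \iota_Q)) \otimes \iota_Q$, inherited from the $F(X,c)$ structure; and the left $\hat Q$-action is the composition $(\iota_{X \otimes Q} \otimes_Q m_{\hat Q})(\tilde c_{\hat Q} \otimes_Q \iota_{\hat Q})$. For the latter I would compute $\tilde c_{\hat Q}$ in the chosen model via Lemma~\ref{lem:schaun-braiding}: since $c_{\hat Q} = c_{Q \otimes Q} = (c_Q \otimes \iota_Q)(\iota_Q \otimes c_Q)$ and the isomorphism $\hat Q \otimes_Q (Q \otimes X) \cong \hat Q \otimes_Q (X \otimes Q)$ built into the definition is $\iota_{\hat Q} \otimes_Q c_Q$, one copy of $c_Q$ cancels and $\tilde c_{\hat Q}$ becomes simply $c_Q \otimes \iota_Q$ in the model.

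With these descriptions in hand, each of the four intertwinings is a one-line calculation. Compatibility with the right $Q$- and right $\hat Q$-actions is immediate, because both reduce to applying $m_Q$, respectively $v^*$, at positions untouched by $c_Q \otimes \iota_Q$. Compatibility with the left $Q$-action reduces to $c_Q(m_Q \otimes \iota_X) = (\iota_X \otimes m_Q) c_{Q \otimes Q}$, which is naturality of $c$ applied to $m_Q : Q \otimes Q \to Q$ together with the half-braiding axiom $c_{Q \otimes Q} = (c_Q \otimes \iota_Q)(\iota_Q \otimes c_Q)$. Compatibility with the left $\hat Q$-action reduces, after cancelling the outer $c_Q \otimes \iota_Q$ and trivial tensor factors, to the identity $v^* \otimes \iota_X = (\iota_X \otimes v^*) c_Q$, which in turn is naturality of $c$ applied to $v^* : Q \to \un$ combined with $c_{\un} = \iota_X$.

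The principal difficulty is bookkeeping rather than insight: one must choose compatible ambient models for all the $\otimes_Q$-products on both sides and carefully track where the various copies of $c_Q$ come from, especially when unwinding $\tilde c_{\hat Q}$ through the chain of model identifications that enters the definition in Lemma~\ref{lem:schaun-braiding}. Once this is done, every compatibility check collapses to naturality of the half-braiding applied to one of the structure morphisms $m_Q$ or $v^*$ of the $Q$-system~$Q$.
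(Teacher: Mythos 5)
Your proposal is correct, and every reduction you make does hold: the right actions live on tensor factors untouched by $c_Q\otimes\iota_Q$; the left $Q$-compatibility is exactly $c_Q(m_Q\otimes\iota_X)=(\iota_X\otimes m_Q)c_{Q\otimes Q}$; and your key claim that $\tilde c_{\hat Q}$ becomes $c_Q\otimes\iota_Q$ once $\hat Q\otimes_Q(X\otimes Q)$ is modeled on $Q\otimes X\otimes Q$ via $\iota_Q\otimes m^l_{X\otimes Q}$ is true, after which the left $\hat Q$-compatibility indeed collapses to $(\iota_X\otimes v^*)c_Q=v^*\otimes\iota_X$. The paper organizes the same content differently: it notes that only the left structures need checking, factors the map as $c_Q\otimes\iota_Q=c_{\hat Q}\,(\iota_Q\otimes c_Q^*)$ (observing $c^*_{\hat Q}(c_Q\otimes\iota_Q)=\iota_Q\otimes c_Q^*$, which is precisely the ``one copy of $c_Q$ cancels'' step in your model computation), quotes from the proof of Lemma~\ref{lem:schaun-braiding} that $c_{\hat Q}\colon\hat Q\otimes X\to X\otimes\hat Q=(X\otimes Q)\otimes_Q\hat Q$ is $Q$-bimodular, and disposes of the left $\hat Q$-action by remarking that the identification $\hat Q\otimes_Q(X\otimes Q)\cong\hat Q\otimes X$ does not affect the first factor of $\hat Q$, so that $c_{\hat Q}$ is automatically left $\hat Q$-modular and the remaining factor $\iota_Q\otimes c_Q^*$ is trivially compatible. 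The paper's factorization buys brevity and avoids unwinding the left $\hat Q$-action on the target; your direct verification buys transparency, making explicit that everything comes from naturality of $c$ at $m_Q$ and $v^*$ together with multiplicativity, at the price of the model bookkeeping you acknowledge --- in particular, the identification of your $Q\otimes X\otimes Q$ model with the $\hat Q\otimes X$ model is itself the small naturality computation hidden behind ``one copy of $c_Q$ cancels,'' so you should spell that transition map ($\iota_Q\otimes c_Q^*$) out when writing this up.
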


\bp We only have to consider the left module structures. As was already stated in the proof of Lemma~\ref{lem:schaun-braiding}, the morphism $c_{\hat Q}\colon \hat Q\otimes X\to X\otimes\hat Q= (X\otimes Q)\otimes_Q\hat Q$ is a morphism of $Q$-bimodules. Furthermore, since the isomorphism
$\hat Q\otimes_Q(X\otimes Q)\cong \hat Q\otimes X$ defined by the isomorphisms $c^*_Q\colon X\otimes Q\to Q\otimes X$ and $\hat Q\otimes_Q Q\cong \hat Q$ does not affect the first factor of $\hat Q=Q\otimes Q$, by definition of the left $\hat Q$-module structure on $(X\otimes Q)\otimes_Q\hat Q$ we also see that $c_{\hat Q}\colon \hat Q\otimes X\to X\otimes\hat Q= (X\otimes Q)\otimes_Q\hat Q$ is a morphism of left $\hat Q$-modules. Therefore in order to prove the lemma it suffices to check that $c^*_{\hat Q}(c_Q\otimes\iota_Q)\colon Q\otimes X\otimes Q\to\hat Q\otimes X$ is a morphism of left $Q$- and $\hat Q$-modules. But this is obvious as $c^*_{\hat Q}(c_Q\otimes\iota_Q)=\iota_Q\otimes c_Q^*$.
\ep

The equivalence $\indC\cong \indcat{\hatQbimod}$, $X\mapsto Q\otimes X\otimes Q$, defines an equivalence of the corresponding Drinfeld centers $\ZC\cong\Dcen(\indcat{\hatQbimod})$.

\begin{lemma}
The functor $\tilde F F$ is naturally unitarily isomorphic to the equivalence functor $\ZC\to\Dcen(\indcat{\hatQbimod})$.
\end{lemma}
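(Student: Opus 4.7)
The plan is to verify that the $\hat{Q}$-bimodule isomorphism
$$\varphi_{(X,c)} := c_Q\otimes\iota_Q\colon Q\otimes X\otimes Q \longrightarrow X\otimes Q\otimes Q = \tilde{F}F(X,c)$$
produced by the previous lemma intertwines the half-braidings, thereby giving the required natural unitary isomorphism between $\tilde{F}F$ and the equivalence functor $G\colon (X,c)\mapsto (Q\otimes X\otimes Q, c'')$, where $c''$ is the half-braiding transported from $c$ along the monoidal equivalence of Proposition~\ref{prop:hat-Q-bimod-eqv-C}.

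First, naturality of $\varphi$ in $(X,c)$ is immediate. For a morphism $T\colon (X,c)\to (X',c')$ in $\ZC$, the identity $(c'_Q\otimes\iota_Q)(\iota_Q\otimes T\otimes\iota_Q) = (T\otimes\iota_Q\otimes\iota_Q)(c_Q\otimes\iota_Q)$ follows from the intertwining condition $(T\otimes\iota_Q)c_Q = c'_Q(\iota_Q\otimes T)$ by tensoring on the right with $\iota_Q$.

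The substantive step is to verify that $\varphi_{(X,c)}$ is a morphism in $\Dcen(\indcat{\hatQbimod})$. By Remark~\ref{rhb} it is enough to check the intertwining condition for half-braiding values at objects $Z\in\hatQbimod$, and Proposition~\ref{prop:hat-Q-bimod-eqv-C} lets us restrict to $Z = Q\otimes Y\otimes Q$ for $Y\in\CC$. Under the canonical identifications $(Q\otimes X\otimes Q)\otimes_{\hat Q}(Q\otimes Y\otimes Q) \cong Q\otimes X\otimes Y\otimes Q$ and $(Q\otimes Y\otimes Q)\otimes_{\hat Q}(Q\otimes X\otimes Q)\cong Q\otimes Y\otimes X\otimes Q$, the transported half-braiding $c''_Z$ becomes $\iota_Q\otimes c_Y\otimes\iota_Q$. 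On the $\tilde{F}F$-side, the half-braiding $\tilde{\tilde{c}}_Z$ is obtained by iterating the Schauenburg construction of Lemma~\ref{lem:schaun-braiding}; unwinding the explicit formula for $\sigma$ in that proof, one sees that $\tilde{\tilde{c}}_Z$ is ultimately built from $c_Z = c_{Q\otimes Y\otimes Q}$, which by the half-braiding axiom decomposes as
$$c_{Q\otimes Y\otimes Q} = (c_Q\otimes\iota_Y\otimes\iota_Q)(\iota_Q\otimes c_Y\otimes\iota_Q)(\iota_Q\otimes\iota_Y\otimes c_Q).$$
The two outer factors involving $c_Q$ are exactly what conjugation by $\varphi$ on the two sides of $\otimes_{\hat Q}$ absorbs, so the intertwining condition reduces to this hexagon-type identity.

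The main obstacle is the bookkeeping. On the $\tilde{F}F$-side one tensors first over $Q$ inside $\indcat{\QmodQ}$ and then over $\hat{Q}$ in $\indcat{\hatQbimod}$, while on the $G$-side the tensor product is taken directly over $\hat{Q}$; tracking these identifications along with the placements of the factors $c_Q$ produced by the two applications of Schauenburg's $\sigma$ is the delicate part. Once all the tensor products are made explicit using the $\hat{v}$ and $\hat{w}$ of the dual $Q$-system, however, the verification collapses to the half-braiding axiom above and naturality of $c$ in $Y$.
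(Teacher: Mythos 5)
Your proposal follows essentially the same route as the paper: transport $\tilde{\tilde c}$ along the bimodule isomorphism $c_Q\otimes\iota_Q$ from the preceding lemma, test the intertwining condition only on objects of the form $Q\otimes Y\otimes Q$, and reduce the comparison with $\iota_Q\otimes c_Y\otimes\iota_Q$ to the multiplicativity of $c$ together with its naturality (in particular under $v^*w^*$). The only difference is that the paper actually performs the bookkeeping you defer, writing $\tilde{\tilde c}_{Q\otimes Y\otimes Q}$ explicitly as induced by $d(Q)^{1/2}(\iota_{X\otimes Q\otimes Y}\otimes v^*w^*\otimes\iota_Q)(c_{Q\otimes Y\otimes Q}\otimes\iota_{\hat Q})$ and checking that conjugation by $c_Q\otimes\iota_Q$ turns this into $(\iota_Q\otimes c_Y\otimes\iota_Q)$ composed with the structure map of the tensor product over $\hat Q$ — exactly the collapse you predict.
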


\bp We have to find out what happens with the half-braiding $\tilde{\tilde c}$ under the isomorphism $(X\otimes Q)\otimes_Q\hat Q=X\otimes\hat Q\cong Q\otimes X\otimes Q$ from the previous lemma.

Take an object $Y\in\CC$. Consider the morphism
$$
\tilde {\tilde c}_{Q\otimes Y\otimes Q}\colon (Q\otimes Y\otimes Q)\otimes_{\hat Q}((X\otimes Q)\otimes_Q\hat Q)\to
((X\otimes Q)\otimes_Q\hat Q)\otimes_{\hat Q}(Q\otimes Y\otimes Q).
$$
If we identify the right hand side with $(X\otimes Q)\otimes_Q(Q\otimes Y\otimes Q)$, then by the proof of Lemma~\ref{lem:schaun-braiding} this morphism is implemented by the morphism
$$
\tilde c_{Q\otimes Y\otimes Q}\colon (Q\otimes Y\otimes Q)\otimes_{Q}(X\otimes Q)\to
(X\otimes Q)\otimes_Q(Q\otimes Y\otimes Q)
$$
together with the multiplication map $(Q\otimes Y\otimes Q)\otimes_Q\hat Q\to Q\otimes Y\otimes Q$ defining the right $\hat Q$-module structure on $Q\otimes Y\otimes Q$.  Similarly, if we identify $(X\otimes Q)\otimes_Q(Q\otimes Y\otimes Q)$ with $X\otimes Q\otimes Y\otimes Q$, then $\tilde c_{Q\otimes Y\otimes Q}$ is implemented by the morphism $c_{Q\otimes Y\otimes Q}$ together with the multiplication map $Q\otimes Y\otimes Q\otimes Q\to Q\otimes Y\otimes Q$. To summarize, if we identify $((X\otimes Q)\otimes_Q\hat Q)\otimes_{\hat Q}(Q\otimes Y\otimes Q)$ with $X\otimes Q\otimes Y\otimes Q$, then $\tilde {\tilde c}_{Q\otimes Y\otimes Q}$ is induced by the morphism
$$
d(Q)^{1/2}(\iota_{X\otimes Q\otimes Y}\otimes v^*w^{(2)*}\otimes\iota_Q)(c_{Q\otimes Y\otimes Q}\otimes\iota_{Q\otimes\hat Q})\colon Q\otimes Y\otimes Q\otimes X\otimes Q\otimes\hat Q\to X\otimes Q\otimes Y\otimes Q,
$$
where $w^{(2)}=(w\otimes\iota)w=(\iota\otimes w)w$. Identifying $(X\otimes Q)\otimes_Q\hat Q$ with $X\otimes\hat Q$, we can equivalently write that $\tilde {\tilde c}_{Q\otimes Y\otimes Q}$ is induced by
$$
d(Q)^{1/2}(\iota_{X\otimes Q\otimes Y}\otimes v^*w^*\otimes\iota_Q)(c_{Q\otimes Y\otimes Q}\otimes\iota_{\hat Q})\colon Q\otimes Y\otimes Q\otimes X\otimes\hat Q\to X\otimes Q\otimes Y\otimes Q,
$$
The half-braiding $\tilde{\tilde c}$ on $(X\otimes Q)\otimes_Q\hat Q=X\otimes\hat Q$ defines a half-braiding $c'$ on $Q\otimes X\otimes Q\cong X\otimes \hat Q$. Then we conclude that $c'_{Q\otimes Y\otimes Q}$ is implemented by the morphism $Q\otimes Y\otimes Q\otimes Q\otimes X\otimes Q\to Q\otimes X\otimes Y\otimes Q$ given by
\begin{multline*}
d(Q)^{1/2}(c_Q^*\otimes\iota_{Y\otimes Q})(\iota_{X\otimes Q\otimes Y}\otimes v^*w^*\otimes\iota_Q)(c_{Q\otimes Y\otimes Q}\otimes\iota_{\hat Q})(\iota_{Q\otimes Y\otimes Q}\otimes c_Q\otimes\iota_Q)\\
=d(Q)^{1/2}(\iota_Q\otimes c_Y\otimes\iota_Q)(\iota_{Q\otimes Y}\otimes v^*w^*\otimes\iota_{X\otimes Q}).
\end{multline*}
Since $d(Q)^{1/2}(\iota_{Q\otimes Y}\otimes v^*w^*\otimes\iota_{X\otimes Q})$ is precisely the structure morphism
$Q\otimes Y\otimes Q\otimes Q\otimes X\otimes Q\to (Q\otimes Y\otimes Q)\otimes_{\hat Q}(Q\otimes X\otimes Q)=Q\otimes Y\otimes X\otimes Q$, we thus see that $c'$ coincides with the half-braiding on $Q\otimes X\otimes Q$ defined by the half-braiding $c$ on $X$ using the equivalence functor $\indC\to \indcat{\hatQbimod}$.
\ep

It follows that $F$ defines an equivalence between the category $\ZC$ and a subcategory of the category $\Dcen(\indQmodQ)$. For the same reason $\tilde F$ defines an equivalence between $\Dcen(\indQmodQ)$ and a subcategory of $\Dcen(\indcat{\hatQbimod})$. We then conclude that $F$ is an equivalence of the categories $\ZC$ and $\Dcen(\indQmodQ)$ and, modulo the equivalence $\ZC \cong \Dcen(\indcat{\hatQbimod})$, $\tilde F$ is an inverse functor. We thus have the following version of the result of Schauenburg~\cite{MR1822847}.

\begin{theorem}
The functor $F\colon\ZC\to\Dcen(\indQmodQ)$, $(X,c)\mapsto (X\otimes Q,\tilde c)$, is a unitary monoidal equivalence of categories.
\end{theorem}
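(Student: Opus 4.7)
The strategy is to upgrade $F$ to a unitary tensor functor and then combine the preceding lemma with its analogue for the dual $Q$-system $\hat Q$ in $\QmodQ$ by a standard two-out-of-three argument.

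First, on morphisms I would set $F(T) = T \otimes \iota_Q$, which is $Q$-bimodular and intertwines $\tilde c$ by naturality of $c$ used in the construction of $\tilde c$ in Lemma~\ref{lem:schaun-braiding}. For the monoidal constraint $F_2$, both $(X \otimes Q) \otimes_Q (Y \otimes Q)$ and $(X \otimes Y) \otimes Q$ admit canonical unitary $Q$-bimodule identifications with $X \otimes Y \otimes Q$ (the first via a suitable rescaling of $\iota_X \otimes m_Q \otimes \iota_Q$), providing a unitary $F_2$. The nontrivial check is that $F_2$ intertwines the half-braiding on the tensor product $F(X,c) \otimes F(Y,d)$ in $\Dcen(\indQmodQ)$ with the half-braiding on $F((X,c) \otimes (Y,d))$; this reduces, via the explicit formula for $\tilde c$ and naturality of $c$, $d$, to the half-braiding identity $c_{Z\otimes W} = (c_Z \otimes \iota)(\iota \otimes c_W)$ applied on the nose. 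Taking $F_0 = \iota_Q$ under $F(\un) = Q = \un_{\QmodQ}$ completes the monoidal data. This coherence verification is the main technical content of the proof.

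By the lemma immediately preceding the theorem, $\tilde F \circ F$ is naturally unitarily isomorphic to the equivalence $\ZC \to \Dcen(\indcat{\hatQbimod})$ induced by the tensor equivalence $\indC \simeq \indcat{\hatQbimod}$ of Proposition~\ref{prop:hat-Q-bimod-eqv-C}, hence is itself a unitary tensor equivalence. Running the same construction with the standard simple $Q$-system $\hat Q$ in $\QmodQ$ produces a further functor $\tilde{\tilde F}\colon \Dcen(\indcat{\hatQbimod}) \to \Dcen(\indcat{\hat{\hat Q}\mhyph\mathrm{mod}\mhyph\hat{\hat Q}})$ with $\tilde{\tilde F} \circ \tilde F$ analogously isomorphic to an equivalence. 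From these two compositions we immediately read off: $F$ and $\tilde F$ are faithful, while $\tilde F$ and $\tilde{\tilde F}$ are essentially surjective.

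To conclude, I would invoke the standard categorical fact that if $GH$ is fully faithful and $G$ is faithful then $H$ is fully faithful: given $f\colon HA \to HB$, fullness of $GH$ yields $Gf = GHg$ for some $g$, and faithfulness of $G$ gives $f = Hg$. Applied to the pairs $(G,H) = (\tilde F, F)$ and $(\tilde{\tilde F}, \tilde F)$, this shows both $F$ and $\tilde F$ are fully faithful. Combined with essential surjectivity, $\tilde F$ is then a unitary tensor equivalence, and since $\tilde F \circ F$ is also one, $F$ is the desired unitary monoidal equivalence.
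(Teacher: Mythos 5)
Your route is the same as the paper's: the proof given there consists of the preceding lemmas together with the observation that $\tilde F F$ is naturally unitarily isomorphic to the equivalence $\ZC\to\Dcen(\indcat{\hatQbimod})$, that the same holds one level up for the dual $Q$-system, and that $F$ is therefore an equivalence with (essentially) $\tilde F$ as inverse. Your extra attention to the tensor structure of $F$ is reasonable, since the paper compresses this into ``we obtain a C$^*$-tensor functor''; just note that the canonical identification of $(X\otimes Q)\otimes_Q(Y\otimes Q)$ with $X\otimes Y\otimes Q$ is implemented by $\iota_X\otimes m^l_{Y\otimes Q}$, and the left $Q$-action on $Y\otimes Q$ involves the half-braiding of $Y$, so the structure map is not simply a rescaling of $\iota_X\otimes m_Q\otimes\iota_Q$; the coherence check then reduces to multiplicativity of the half-braidings, as you say.

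The one genuine slip is in your final bookkeeping: applying the fact ``$GH$ fully faithful and $G$ faithful imply $H$ fully faithful'' to the pair $(G,H)=(\tilde{\tilde F},\tilde F)$ requires $\tilde{\tilde F}$ to be faithful, and this is not among the facts you extracted from the two compositions (you only get $F$, $\tilde F$ faithful and $\tilde F$, $\tilde{\tilde F}$ essentially surjective). Hence fullness of $\tilde F$, and with it essential surjectivity of $F$, is not justified as written. Two easy repairs: either iterate the construction once more (the lemma applied to $\hat{\hat Q}$ gives that $\tilde{\tilde{\tilde F}}\tilde{\tilde F}$ is an equivalence, so $\tilde{\tilde F}$ is faithful), or avoid the regress altogether by noting that if $\tilde F F\cong E_1$ and $\tilde{\tilde F}\tilde F\cong E_2$ with $E_1$, $E_2$ equivalences, then $\tilde F$ has a right quasi-inverse $F E_1^{-1}$ and a left quasi-inverse $E_2^{-1}\tilde{\tilde F}$, hence is an equivalence, and then $F\cong \tilde F^{-1}E_1$ is an equivalence as well --- which is exactly your intended conclusion and uses only the two compositions you have. (The paper's own two-sentence conclusion is terse on the same point, so this is a patch, not a change of strategy.)
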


We remark that Schauenburg's result does not give an equivalence of the categories $\ZsC$ and $\Dcen_s(\indQmodQ)$ of spherical objects, or in other words, an equivalence of the representation categories of~$C^*(\CC)$ and $C^*(\QmodQ)$. These categories are not equivalent in general. But the result does give rise to functors $\tilde F\colon\Rep C^*(\CC)\to\Rep C^*(\QmodQ)$ and $\tilde G\colon\Rep C^*(\QmodQ)\to \Rep C^*(\CC)$ such that $\tilde F\tilde G$ and $\tilde G\tilde F$ map every representation to a subrepresentation.

\subsection{Comparison of almost invariant vectors}

If $(X,c) \in \ZC$, the invariant vectors for the fusion algebra of $\CC$ are the vectors in $\Mor_\ZC(\un, X)$. Since Schauenburg's induction is a C$^*$-tensor functor, they correspond to the vectors in $\Mor_{\Dcen(\indQmodQ)}(Q, X \otimes Q)$, or the invariant vectors for the fusion algebra of $\QmodQ$. It is less obvious what happens with almost invariant vectors, since this is a notion that does not make sense within the Drinfeld center itself.

\begin{theorem}
\label{thm:almost-inv-vec-mor-eqv}
For any $(X,c) \in \ZC$, the representation of $C^*(\CC)$ defined by $(X,c)$ weakly contains the trivial representation if and only if the representation of $C^*(\QmodQ)$ defined by the image $F(X,c)=(X\otimes Q,\tilde c)$ of $(X,c)$ under Schauenburg's induction weakly contains the trivial representation.
\end{theorem}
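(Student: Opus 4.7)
The plan is to apply Lemma~\ref{lem:aa} on each side, reducing weak containment of the trivial representation to an explicit commutation condition with the half-braiding. On the $\CC$-side this gives unit vectors $\xi_i\in\Mor_\indC(\un,X)$ with $\|c_Y(\iota_Y\otimes\xi_i)-\xi_i\otimes\iota_Y\|\to0$ for all $Y\in\CC$; on the $\QmodQ$-side it gives unit vectors $\eta_j\in\Mor_\indQmodQ(Q,X\otimes Q)$ with $\|\tilde c_M(\iota_M\otimes_Q\eta_j)-\eta_j\otimes_Q\iota_M\|\to0$ for all $M\in\QmodQ$. Since every $M\in\QmodQ$ embeds into some $Q\otimes Y\otimes Q$ with $Y\in\CC$, it suffices to verify the latter condition on objects of this form.

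For the direction from $\CC$ to $\QmodQ$, I would take $\eta_i^0:=\xi_i\otimes\iota_Q\in\Mor_\indC(Q,X\otimes Q)$, which is automatically right $Q$-modular with unit norm. A direct computation using the naturality of $c$ shows that its defect of left $Q$-modularity equals $(\iota_X\otimes m_Q)\bigl((c_Q(\iota_Q\otimes\xi_i)-\xi_i\otimes\iota_Q)\otimes\iota_Q\bigr)$, whose norm tends to zero by almost invariance of $\xi_i$ at $Q$. Then I would apply the left-averaging map $E(T):=d(Q)^{-1}m^l_{X\otimes Q}(\iota_Q\otimes T)m^{l*}_Q$: by Lemma~\ref{lem:comput-m-m-star} together with the Frobenius identity $m_Q^*m_Q=(\iota_Q\otimes m_Q)(m_Q^*\otimes\iota_Q)$, $E(T)$ is always left $Q$-modular and inherits right $Q$-modularity from $T$, yielding $\eta_i:=E(\eta_i^0)\in\Mor_\indQmodQ(Q,X\otimes Q)$. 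Since substituting $\xi_i\otimes\iota_Q$ for $c_Q(\iota_Q\otimes\xi_i)$ in the explicit formula for $E(\eta_i^0)$ recovers $\eta_i^0$ (using $m_Qm_Q^*=d(Q)\iota_Q$), one has $\|\eta_i-\eta_i^0\|\to0$ and the renormalized $\eta_i$ are asymptotically unit vectors. Almost invariance of $\eta_i$ at $M=Q\otimes Y\otimes Q$ would then be verified using the explicit formula $\sigma_M=(\iota_X\otimes v\otimes\iota_M)(\iota_X\otimes m^r_M)(c_M\otimes\iota_Q)$ from the proof of Lemma~\ref{lem:schaun-braiding} together with the half-braiding identity $c_{Q\otimes Y\otimes Q}=(c_Q\otimes\iota_Y\otimes\iota_Q)(\iota_Q\otimes c_Y\otimes\iota_Q)(\iota_Q\otimes\iota_Y\otimes c_Q)$, which decomposes the $\QmodQ$-defect into contributions each controlled by a $\CC$-defect of the form $\|c_Z(\iota\otimes\xi_i)-\xi_i\otimes\iota\|$ for $Z\in\{Q,Y\}$.

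For the converse, given almost invariant $\eta_j\in\Mor_\indQmodQ(Q,X\otimes Q)$, I would set $\zeta_j:=\eta_j v\in\Mor_\indC(\un,X\otimes Q)$; since $Q$ is simple as a $Q$-bimodule one has $\eta_j^*\eta_j=\iota_Q$, so $\|\zeta_j\|=1$. Using the orthogonal decomposition $Q=\un\oplus Q^\perp$ induced by $v$, I would write $\zeta_j=(\iota_X\otimes v)\xi_j+\zeta_j^\perp$ with $\xi_j:=(\iota_X\otimes v^*)\zeta_j\in\Mor_\indC(\un,X)$ and $\zeta_j^\perp\in(\iota_X\otimes(\iota_Q-vv^*))(X\otimes Q)$. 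The almost invariance of $\eta_j$ at the objects $M=Q\otimes U_s\otimes Q$, combined with the same formula for $\tilde c_M$, would force $\|\zeta_j^\perp\|\to0$ and hence $\|\xi_j\|\to1$; a symmetric unwinding of the defect identity then shows that the renormalized $\xi_j$ form almost invariant vectors in $\Mor_\indC(\un,X)$. The main technical obstacle lies in this converse direction, specifically in the quantitative control of $\zeta_j^\perp$: one must carefully use the identifications $M\otimes_Q(X\otimes Q)\cong M\otimes X$ and $(X\otimes Q)\otimes_Q M\cong X\otimes M$ under which $\tilde c_M$ corresponds to $c_M$, together with the generating family $\{Q\otimes U_s\otimes Q\}_{s\in\Irr(\CC)}$, to extract enough information from the almost invariance condition on the $\QmodQ$-side to recover the $\CC$-side condition.
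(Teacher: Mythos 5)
Your forward direction is sound and is essentially the paper's argument in a different dress: instead of invoking the characterization of $\Mor_{\indQmodQ}(Q,X\otimes Q)$ (the paper's Lemma~\ref{lem:char-vec-sp-F-X}, which produces $\tilde\xi_i=d(Q)^{-1}T^{\xi_iv^*}$), you project $\xi_i\otimes\iota_Q$ onto the bimodular morphisms by averaging over the left $Q$-action; both constructions yield bimodular vectors norm-close to $\xi_i\otimes\iota_Q$, and your averaging map $E$ does have the properties you claim (left modularity follows from the Frobenius identity, right modularity is inherited because the two actions commute). Note also that your reduction to objects $Q\otimes Y\otimes Q$ is unnecessary there: the paper checks almost invariance directly for an arbitrary bimodule $Y$, using that in the models $Y\otimes X$ and $X\otimes Y$ both $\tilde c_Y(\iota_Y\otimes_Q\tilde\xi_i)$ and $\tilde\xi_i\otimes_Q\iota_Y$ are close to $\xi_i\otimes\iota_Y$, which is simpler than decomposing $c_{Q\otimes Y\otimes Q}$.

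The genuine gap is the converse. You set up the right quantities ($\zeta_j=\eta_jv$, its component $\xi_j=(\iota_X\otimes v^*)\zeta_j$ and the remainder $\zeta_j^\perp$), but you explicitly leave unproved the two statements that carry all the weight: that almost invariance forces $\|\zeta_j^\perp\|\to0$, and that the $\CC$-defects of the renormalized $\xi_j$ are then controlled. As written this is a sketch of a strategy, not a proof of the ``if'' direction. The paper avoids this computation entirely: since Schauenburg's induction with respect to the dual $Q$-system $\hat Q$ is constructed in exactly the same way, and $\tilde F F$ has been identified with the canonical equivalence $\ZC\cong\Dcen(\indcat{\hatQbimod})$, the converse is literally the forward direction applied to $\hat Q$. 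If you insist on the direct route, it can be completed, and more easily than you suggest: compressing the defect at the single free bimodule $\hat Q=Q\otimes Q$ on the right by the isometry $v\otimes v$ and using naturality of $c$ in the $v$-legs gives $(\iota_X\otimes v\otimes\iota_Q)\zeta_j^\perp-\zeta_j^\perp\otimes v$ up to cancellation of the $\xi_j$-parts; these two terms are orthogonal because $(\iota_X\otimes v^*)\zeta_j^\perp=0$, so $2\|\zeta_j^\perp\|^2$ is bounded by the square of the defect norm. Compressing the defect at $Q\otimes Y\otimes Q$ by $v\otimes\iota_Y\otimes v$ and substituting $\zeta_j\approx(\iota_X\otimes v)\xi_j$ then yields $(\iota_X\otimes v\otimes\iota_Y\otimes v)\bigl(c_Y(\iota_Y\otimes\xi_j)-\xi_j\otimes\iota_Y\bigr)$ up to errors of order $\|\zeta_j^\perp\|$, which gives the required almost invariance. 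Either supply such an estimate or use the paper's duality argument; without one of the two, the theorem is only half proved.
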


For the proof we need to understand the morphisms $Q\to X\otimes Q$ in $\indQmodQ$.

\begin{lemma} \label{lem:char-vec-sp-F-X}
For any $(X, c) \in \ZC$, the morphisms in $\Mor_{\indQmodQ}(Q, X \otimes Q)$ are of the form
$$
T^S=(\iota_X\otimes v^*w^*\otimes\iota_Q)(c_Q\otimes\iota_Q\otimes\iota_Q)(\iota_Q\otimes S\otimes\iota_Q\otimes\iota_Q)w^{(3)},
$$
where $w^{(n)}=(w\otimes\iota)w^{(n-1)}$ ($w^{(1)}=w$) and $S\in\Mor_\indC(Q,X)$.
\end{lemma}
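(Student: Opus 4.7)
\medskip

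The plan is to construct a bijection $\Mor_\indC(Q,X)\to\Mor_{\indQmodQ}(Q,X\otimes Q)$ and verify that it coincides with $S\mapsto T^S$. The key preliminary observation is that any $Q$-bimodule map $T\colon Q\to X\otimes Q$ is uniquely determined by $Tv\colon\un\to X\otimes Q$. Indeed, left $Q$-linearity together with the unit relation $m_Q(\iota_Q\otimes v)=\iota_Q$ yields the reconstruction formula
$$
T=(\iota_X\otimes m_Q)(c_Q\otimes\iota_Q)(\iota_Q\otimes Tv).
$$
Since $Q$ is self-dual with the standard solution $(R_Q,\bar R_Q)=(wv,wv)$, we have by Frobenius reciprocity a bijection $\Mor_\indC(\un,X\otimes Q)\cong\Mor_\indC(Q,X)$, where $\xi\in\Mor_\indC(\un,X\otimes Q)$ corresponds to $S=(\iota_X\otimes v^*w^*)(\xi\otimes\iota_Q)$, with inverse $S\mapsto(S\otimes\iota_Q)wv$. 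So, morally, bimodule maps $Q\to X\otimes Q$ are parametrized by $\Mor_\indC(Q,X)$.

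I would first check that for every $S\in\Mor_\indC(Q,X)$ the morphism $T^S$ defined in the statement really lies in $\Mor_{\indQmodQ}(Q,X\otimes Q)$. Right $Q$-linearity reduces, after cancelling the leftmost factor, to the coassociativity $(w\otimes\iota)w=(\iota\otimes w)w$ applied to the rightmost $Q$ of $w^{(3)}$. Left $Q$-linearity is the delicate part: after pulling the extra left $Q$ through, one has to match $(\iota_X\otimes m_Q)(c_Q\otimes\iota_Q)(\iota_Q\otimes T^S)$ with $T^S m_Q$. This follows by applying naturality of the half-braiding to the multiplication $w^*\colon Q\otimes Q\to Q$, which gives $c_Q(w^*\otimes\iota_X)=(\iota_X\otimes w^*)c_{Q\otimes Q}$, combined with the half-braiding axiom $c_{Q\otimes Q}=(c_Q\otimes\iota_Q)(\iota_Q\otimes c_Q)$ and the associativity/Frobenius relations for $w$.

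Next I would establish the bijection. Given $T\in\Mor_{\indQmodQ}(Q,X\otimes Q)$, set $\xi_T=Tv$ and $S_T=(\iota_X\otimes v^*w^*)(\xi_T\otimes\iota_Q)$. Conversely, starting from $S$ and the candidate $\xi_S=(S\otimes\iota_Q)wv$, the reconstruction formula above produces a bimodule map $\tilde T_S=(\iota_X\otimes w^*)(c_Q\otimes\iota_Q)(\iota_Q\otimes S\otimes\iota_Q)(\iota_Q\otimes wv)$. The remaining task is to identify $\tilde T_S$ with $T^S$ (up to the normalization $d(Q)$ coming from $w^*w=d(Q)\iota$). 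To do so, rewrite $T^S$ using $w^{(3)}=(w\otimes\iota\otimes\iota)(w\otimes\iota)w$ and, using the zigzag identity from Lemma~\ref{lem:braidinv} in the form
$$
(\iota_X\otimes v^*w^*)(c_Q\otimes\iota_Q)=(v^*w^*\otimes\iota_X)(\iota_Q\otimes c_Q^*),
$$
reorganize the composition so that one $w$ of $w^{(3)}$ pairs with the final $Q$ via $R_Q^*=v^*w^*$. The Frobenius condition then collapses two factors by $(\iota\otimes w^*)(w\otimes\iota)=ww^*$ and the unit constraint $v^*w^*(\iota\otimes v)=v^*$, eventually matching $T^S$ and $\tilde T_S$ up to the scalar $d(Q)$. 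Finally, $\Psi\colon T\mapsto d(Q)^{-1}S_T$ provides a two-sided inverse: $\Psi(T^S)=S$ is a short calculation using $(v^*\otimes\iota)w=\iota$ and $w^*w=d(Q)\iota$, while $T=T^{\Psi(T)}$ is precisely the content of the reconstruction formula together with the identification just carried out.

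The main obstacle is the last identification: verifying that the explicit Frobenius--Schauenburg formula for $T^S$ agrees with the intrinsic reconstruction $\tilde T_S$ from $Tv$. Everything else is essentially formal once one has the reconstruction identity and the duality between $\Mor_\indC(\un,X\otimes Q)$ and $\Mor_\indC(Q,X)$, but the matching of $T^S$ and $\tilde T_S$ requires careful bookkeeping of the Frobenius structure of $Q$ and a judicious use of the half-braiding zigzag identity to rearrange the positions of $X$ and $Q$.
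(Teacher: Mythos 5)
Your first step (verifying that each $T^S$ is indeed a morphism of $Q$-bimodules) is sound and is essentially the computation the paper performs, but the surjectivity half of your argument breaks down precisely at the step you yourself flag as the main obstacle: the claimed identification of $\tilde T_S$ with $T^S$ up to a power of $d(Q)$ is false in general. Your reconstruction formula uses only left $Q$-linearity and the unit relation; applied to an arbitrary $\xi\in\Mor_\indC(\un,X\otimes Q)$ it produces $m^l_{X\otimes Q}(\iota_Q\otimes\xi)$, which is always a morphism of \emph{left} $Q$-modules (this is the adjunction $\CC(\un,M)\cong\Mor_{\Qmod}(Q,M)$ recalled in Section~\ref{sec:prelim-q-system}), but in general not of right $Q$-modules. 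Since $\tilde T_S v=(S\otimes\iota_Q)wv$, the assignment $S\mapsto\tilde T_S$ is a linear bijection of $\Mor_\indC(Q,X)$ onto the space of all left $Q$-modular morphisms $Q\to X\otimes Q$, whereas $S\mapsto T^S$ takes values in the smaller space $\Mor_{\indQmodQ}(Q,X\otimes Q)$ and need not be injective; the two maps therefore cannot agree up to a scalar. Concretely, take $X=\un$ with the trivial half-braiding and $Q$ standard and simple but \emph{not} irreducible (e.g. $Q=\bar Y\otimes Y$ for a non-simple $Y$, or the $Q$-system of a non-irreducible extremal subfactor; recall the lemma is used with $Q$ only assumed standard and simple): then the left module maps $Q\to Q$ form $\End_{\Qmod}(Q)\cong\CC(\un,Q)$, of dimension $>1$, while $\End_{\QmodQ}(Q)=\C\iota$ by simplicity, so for some $S$ the morphism $\tilde T_S$ is not even a bimodule map and hence is not proportional to any $T^{S'}$.

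Consequently, knowing that every bimodule morphism $T$ equals $\tilde T_{S_T}$ with $S_T$ obtained from $Tv$ by Frobenius reciprocity does not show that $T$ is of the form $T^S$: the right $Q$-linearity of $T$ must enter the argument, and in your proposal it never does. The paper's proof uses it from the start: every right $Q$-modular morphism is written as $(T'\otimes\iota_Q)w$ with $T'\in\Mor_\indC(Q,X)$, left $Q$-linearity is then encoded as the identity $(T'\otimes\iota_Q)ww^*=(\iota_X\otimes w^*)(c_Q\otimes\iota_Q)(\iota_Q\otimes T'\otimes\iota_Q)(\iota_Q\otimes w)$, and composing this identity with $\iota\otimes v^*$ on the left and $w$ on the right exhibits $T'$ in the required form with $S=d(Q)^{-1}T'$; the converse direction is the bimodularity check you already outline (your zigzag identity deduced from Lemma~\ref{lem:braidinv} is correct but does not repair the gap). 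If you prefer your left-module parametrization, you would have to characterize which vectors $\xi\in\Mor_\indC(\un,X\otimes Q)$ yield right-linear maps (the ``$Q$-central'' ones) and prove that these are exactly the vectors $T^Sv$ --- which is the paper's computation in disguise, not a formal consequence of the reconstruction formula.
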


\bp
The right $Q$-modular morphisms $Q\to X\otimes Q$ have the form $(T\otimes\iota)w$. By definition, such a morphism respects the left actions of $Q$ if and only if
\begin{equation}\label{eq:bimod-hom-char-in-orig}
(T\otimes\iota_Q)w w^*=(\iota_X \otimes w^*)(c_Q \otimes \iota_Q) (\iota_Q \otimes T \otimes \iota_Q) (\iota_Q \otimes w).
\end{equation}
Therefore in order to prove the lemma it suffices to show that a morphism $T$ satisfies \eqref{eq:bimod-hom-char-in-orig} if and only if it has the form
$$
T=(\iota_X\otimes v^*w^*)(c_Q\otimes\iota_Q)(\iota_Q\otimes S\otimes\iota_Q)w^{(2)}
$$
for some $S$. If $T$ satisfies \eqref{eq:bimod-hom-char-in-orig}, then multiplying \eqref{eq:bimod-hom-char-in-orig} by $\iota\otimes v^*$ on the left and by $w$ on the right we see that~$T$ indeed has the above form, with $S=d(Q)^{-1}T$. Conversely, let $T$ be as above. Then
\begin{align*}
(T\otimes\iota_Q)w w^*&=(\iota_X\otimes v^*w^*\otimes\iota_Q)(c_Q\otimes\iota_Q\otimes\iota_Q)(\iota_Q\otimes S\otimes\iota_Q\otimes\iota_Q)w^{(3)}w^*\\
&=(\iota_X\otimes v^*w^*\otimes\iota_Q)(\iota_X\otimes\iota_Q\otimes w)(c_Q\otimes\iota_Q)(\iota_Q\otimes S\otimes\iota_Q)w^{(2)}w^*\\
&=(\iota_X\otimes w^*)(c_Q\otimes\iota_Q)(\iota_Q\otimes S\otimes\iota_Q)w^{(2)}w^*.
\end{align*}
On the other hand,

\smallskip
$\displaystyle
(\iota_X \otimes w^*)(c_Q \otimes \iota_Q) (\iota_Q \otimes T \otimes \iota_Q) (\iota_Q \otimes w)$
\begin{align*}
&=(\iota_X \otimes w^*)(c_Q \otimes \iota_Q) (\iota_Q\otimes\iota_X\otimes v^*w^*\otimes\iota_Q)(\iota_Q\otimes c_Q\otimes\iota_Q\otimes\iota_Q)(\iota_Q\otimes\iota_Q\otimes S\otimes\iota_Q\otimes\iota_Q)(\iota_Q \otimes w^{(3)})\\
&=(\iota_X \otimes w^*)(c_Q \otimes \iota_Q) (\iota_Q\otimes\iota_X\otimes w^*)(\iota_Q\otimes c_Q\otimes\iota_Q)(\iota_Q\otimes\iota_Q\otimes S\otimes\iota_Q)(\iota_Q \otimes w^{(2)})\\
&=(\iota_X \otimes w^*)(\iota_X\otimes\iota_Q\otimes w^*)(c_{Q\otimes Q}\otimes\iota_Q)(\iota_Q\otimes\iota_Q\otimes S\otimes\iota_Q)(\iota_Q \otimes w^{(2)})\\
&=(\iota_X \otimes w^*)(\iota_X\otimes w^*\otimes\iota_Q)(c_{Q\otimes Q}\otimes\iota_Q)(\iota_Q\otimes\iota_Q\otimes S\otimes\iota_Q)(\iota_Q \otimes w^{(2)})\\
&=(\iota_X \otimes w^*)(c_Q\otimes\iota_Q)(\iota_Q\otimes S\otimes\iota_Q)(w^*\otimes\iota_Q\otimes\iota_Q)(\iota_Q \otimes w^{(2)}).
\end{align*}
Since by applying the Frobenius condition twice we obtain
$$
w^{(2)}w^*=(w\otimes\iota)(w^*\otimes\iota)(\iota\otimes w)=(w^*\otimes\iota\otimes\iota)(\iota\otimes w^{(2)}),
$$
this proves the lemma.
\ep

\begin{proof}[Proof of Theorem~\ref{thm:almost-inv-vec-mor-eqv}]
Let $\{\xi_i\}_i$ be a net of almost invariant unit vectors for $C^*(\CC)$ in $\Mor_\indC(\un,X)$. Recall that by Lemma~\ref{lem:aa} this means that $c_Y(\iota_Y\otimes\xi_i)-\xi_i\otimes \iota_Y\to0$ for all $Y\in\CC$. Consider the vectors
$$
\tilde\xi_i=d(Q)^{-1}T^{\xi_iv^*}\in\Mor_{\indQmodQ}(Q,X\otimes Q).
$$
We claim that they are almost unit and almost invariant for $C^*(\QmodQ)$.

By definition, as $i$ grows the vector $\tilde\xi_i$ becomes close (in $\Mor_\indC(Q,X\otimes Q)$) to
$$
d(Q)^{-1}(\iota_X\otimes v^*w^*\otimes\iota_Q)(\xi_iv^*\otimes\iota_Q\otimes\iota_Q\otimes\iota_Q)w^{(3)}
=d(Q)^{-1}(\xi_i\otimes\iota_Q)w^*w=\xi_i\otimes\iota_Q,
$$
hence $\|\tilde\xi_i\|\to1$.

Now, take a $Q$-bimodule $Y$. We want to check that $\tilde c_Y(\iota_Y\otimes_Q\tilde\xi_i)$ becomes close $\tilde\xi_i\otimes_Q\iota_Y$ for large $i$. Recall that as a model of $Y\otimes_Q(X\otimes Q)$ we take $Y\otimes X$ with the structure morphism $P_{Y,X\otimes Q}=(m^r_Y\otimes\iota_X)(\iota_Y\otimes c_Q^*)$, as a model of $(X\otimes Q)\otimes_Q Y$ we take $X\otimes Y$ with $P_{X\otimes Q,Y}=\iota_X\otimes m^l_Y$, and then $\tilde c_Y=c_Y$. Since $\iota_Y\otimes v$ is a right inverse of $P_{Y,Q}=m^r_Y$ and $v\otimes\iota_Y$ is a right inverse of $P_{Q,Y}=m^l_Y$, it follows that
\begin{align}
\tilde c_Y(\iota_Y\otimes_Q\tilde\xi_i)&=c_Y(m^r_Y\otimes\iota_X)(\iota_Y\otimes c_Q^*)(\iota_Y\otimes\tilde\xi_i)(\iota_Y\otimes v)=(\iota_X\otimes m^r_Y)(c_Y\otimes\iota_Q)(\iota_Y\otimes\tilde\xi_i)(\iota_Y\otimes v),\label{eq:aa-comparison1}\\
\tilde\xi_i\otimes_Q\iota_Y&=(\iota_X\otimes m^l_Y)(\tilde\xi_i\otimes\iota_Y)(v\otimes\iota_Y).\label{eq:aa-comparison2}
\end{align}
Since, as we have already shown, $\tilde\xi_i\approx\xi_i\otimes\iota_Q$, and $c_Y(\iota_Y\otimes\xi_i)\approx\xi_i\otimes\iota_Y$, both \eqref{eq:aa-comparison1} and \eqref{eq:aa-comparison2} become close to $\xi_i\otimes\iota_Y$ for large $i$. Thus, rescaling $\tilde{\xi}_i$ to a unit vector we obtain almost invariant unit vectors for $C^*(\QmodQ)$.

Finally, the opposite implication also follows from the above, since an inverse of $F$ is defined in the same way as $F$ using the dual $Q$-system $\hat{Q}$.
\end{proof}

\begin{corollary}
Property (T) is invariant under weak monoidal Morita equivalence of C$^*$-tensor categories.
\end{corollary}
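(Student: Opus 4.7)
The plan is to combine three ingredients developed earlier: Corollary \ref{cor:tworepclasses} (every representation of $C^*(\CC)$ arises from an object of $\ZC$, and similarly for $\QmodQ$), Schauenburg's unitary monoidal equivalence $F\colon \ZC \to \Dcen(\indQmodQ)$ constructed above, and the transfer of weak containment of the trivial representation recorded in Theorem \ref{thm:almost-inv-vec-mor-eqv}. By the symmetry of weak Morita equivalence, via the dual $Q$-system $\hat Q$ in $\QmodQ$, it suffices to prove one implication: that property (T) for $\CC$ implies property (T) for $\QmodQ$.

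Assume $\CC$ has property (T) and let $\sigma$ be a representation of $C^*(\QmodQ)$ weakly containing the trivial representation; the goal is to exhibit a nonzero $\sigma$-invariant vector. First, I would apply Corollary \ref{cor:tworepclasses} to $\QmodQ$ to assume $\sigma = \pi_{(W,d)}$ for some $(W,d) \in \Dcen(\indQmodQ)$. Using essential surjectivity of $F$, pick $(Z,c) \in \ZC$ with $F(Z,c) \cong (W,d)$. Then Theorem \ref{thm:almost-inv-vec-mor-eqv} yields that $\pi_{(Z,c)}$ weakly contains the trivial representation of $C^*(\CC)$, so property (T) for $\CC$ produces a nonzero element of $\Mor_\ZC(\un, (Z,c))$.

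To conclude, I would transport this invariant vector back via $F$. Since $F$ is a unitary tensor functor sending $\un \in \ZC$ to $Q \in \Dcen(\indQmodQ)$ (the tensor unit there), it induces a linear bijection
$$
\Mor_\ZC(\un, (Z,c)) \xrightarrow{\ \cong\ } \Mor_{\Dcen(\indQmodQ)}(Q, F(Z,c)) = \Mor_{\Dcen(\indQmodQ)}(Q, (W,d)).
$$
The right-hand side is therefore nonzero, which supplies a nonzero $\sigma$-invariant vector and embeds the trivial representation of $C^*(\QmodQ)$ into $\sigma$.

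The entire technical difficulty has been absorbed into Theorem \ref{thm:almost-inv-vec-mor-eqv}, whose explicit lifting $\xi \mapsto T^{\xi v^*}$ of almost invariant vectors along Schauenburg's induction is the genuinely nontrivial step. Once that theorem is in hand, the argument above is essentially formal, using only that a unitary monoidal equivalence preserves tensor units and hence bijects morphisms out of the units of the two Drinfeld centers.
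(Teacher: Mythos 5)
Your argument is correct and is essentially the paper's intended deduction: the corollary is stated there without separate proof because it follows from Theorem \ref{thm:almost-inv-vec-mor-eqv} combined with Corollary \ref{cor:tworepclasses}, the essential surjectivity and full faithfulness of Schauenburg's equivalence $F$, and the identification (made at the start of Section \ref{sec:induct-thro-weak-Morita}.4) of invariant vectors with morphisms out of the tensor unit in the Drinfeld center, which is exactly the chain you assemble. Your symmetry reduction via the dual $Q$-system matches how the paper handles the converse direction as well.
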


\begin{remark}
 If $Q$ generates $\CC$, so that $\CC$, $\Qmod$, $\modQ$ and $\QmodQ$ can be regarded as bimodule categories defined by an extremal finite index subfactor, the above corollary has been already obtained by Popa~\cite{MR1729488}*{Proposition~9.8} using the universality of the symmetric enveloping algebra and permanence of relative property (T) for finite index intermediate algebras.
\end{remark}

\subsection{Comparison of regular half-braidings}
\label{sec:compar-reg-hbr}

In this last section we assume that $Q$ is a standard \emph{irreducible} $Q$-system.
Let $(M_k)_k$ be representatives of the isomorphism classes of simple objects in $\modQ$. Since $Q\in\modQ$ is simple by assumption, we may assume that $M_e = Q$ for some index $e$. We fix standard solutions $(R_k, \bar{R}_k)$ for $M_k$ once and for all, and denote by $(S_k, \bar{S}_k)$ the corresponding morphisms defined as in the Section~\ref{sec:mod-cat-Frob-recip}. Our goal is to show that the ind-object $\Zreg(\modQ) = \oplus_k M_k \otimes_Q \bar{M}_k$ admits a half-braiding which corresponds to $\Zreg(\QmodQ)$ under Schauenburg's induction.

First, let us construct a half-braiding on $\Zreg(\modQ)$. This goes completely analogously to the construction of $\Zreg(\CC)$. We fix $X \in \CC$ and a standard solution $(R_X, \bar{R}_X)$. For each $k$ and $l$, choose an orthonormal basis $(u^\alpha_{k l})_\alpha$ in $\Mor_{\modQ}(M_l,X \otimes M_k)$, so that $\sum_\alpha u_{k l}^\alpha u_{k l}^{\alpha *}$ is the projection of $X \otimes M_k$ onto the isotypic component corresponding to $M_l$. We then define $c_{X, l k}\colon X \otimes M_k \otimes_Q \bar{M}_k \to M_l \otimes_Q \bar{M}_l \otimes X$ by
$$
c_{X, l k} = \left(\frac{d_k}{d_l}\right)^{\hlf} \sum_\alpha (u^{\alpha *}_{k l} \otimes_Q u^{\alpha \vee}_{k l} \otimes \iota_X) (\iota_{ X \otimes M_k \otimes_Q \bar{M}_k } \otimes R_X),
$$
where $d_k=d(M_k)$ is the dimension of $M_k$ in $\CC$ and $u^{\alpha \vee}_{k l}$ is defined using $\bar{R}_l$ and $\bar{R}_{X \otimes M_k} = (\iota \otimes \bar{R}_k \otimes \iota) \bar{R}_X$.

\begin{lemma}
The morphism $c_X \colon X \otimes \Zreg(\modQ) \to \Zreg(\modQ) \otimes X$ defined by the morphisms $(c_{X, l k})_{l, k}$ is unitary.
\end{lemma}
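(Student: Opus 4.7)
The proof follows the structure of Lemma~\ref{lem:reg-half-br-candid-unitary} essentially verbatim, with $\modQ$ and its simple objects $M_k$ replacing $\CC$ and the $U_s$. First, to establish the isometry property $\sum_l c_{X,lr}^* c_{X,ls}=\delta_{rs}\iota$, I would expand the product using the definition of $c_{X,lk}$ and, exactly as in the original argument, reduce it to the orthogonality relation
\[
\frac{(d_rd_s)^{\hlf}}{d_l}(\iota_{\bar M_r}\otimes\Tr_{\bar X})(u^{\alpha\vee*}_{rl}u^{\beta\vee}_{sl})=\delta_{rs}\delta_{\alpha\beta}\iota_{\bar M_s}
\]
whenever $M_l\prec X\otimes M_s$ in $\modQ$. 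By Lemma~\ref{lem:vee-functor-same} (applied to right $Q$-modules), each $u^{\alpha\vee}_{kl}\colon\bar M_k\otimes\bar X\to\bar M_l$ is a morphism of left $Q$-modules, so the left-hand side is a morphism $\bar M_s\to\bar M_r$ in $\Qmod$. Since $Q$ is irreducible, the conjugates $\bar M_k$ form pairwise non-isomorphic simple objects of $\Qmod$; hence this morphism vanishes for $r\neq s$ and is a scalar multiple of $\iota_{\bar M_s}$ for $r=s$.

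To identify the scalar when $r=s$, I would take categorical traces in $\CC$ of both sides. The computation is formally identical to the trace computation in Lemma~\ref{lem:reg-half-br-candid-unitary}: the right-hand side gives $\delta_{\alpha\beta}d_s$, and by cyclicity of $\Tr$ together with the anti-multiplicativity of~$\vee$ the left-hand side collapses to $\frac{d_s}{d_l}\Tr_{\bar M_l}\bigl((u^{\alpha*}_{sl}u^\beta_{sl})^\vee\bigr)$, which equals $\delta_{\alpha\beta}d_s$ by orthonormality of the $u^\alpha_{sl}$ in $\Mor_{\modQ}(M_l,X\otimes M_s)$.

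The coisometry property $\sum_k c_{X,lk}c_{X,\tau k}^*=\delta_{l\tau}\iota$ is then handled by the second half of the original proof verbatim: the case $l\neq\tau$ is immediate, and for $l=\tau$ one reformulates the claim via Frobenius reciprocity in $\Qmod$, showing that the morphisms
\[
w^\alpha_{lk}=\left(\tfrac{d_k}{d_l}\right)^{\hlf}(u^{\alpha\vee}_{kl}\otimes\iota_X)(\iota_{\bar M_k}\otimes R_X)\colon\bar M_k\to\bar M_l\otimes X
\]
constitute an orthonormal basis of $\Mor_{\Qmod}(\bar M_k,\bar M_l\otimes X)$, so that $\sum_\alpha w^\alpha_{lk}w^{\alpha*}_{lk}$ is the projection onto the $\bar M_k$-isotypic component of $\bar M_l\otimes X$ in $\Qmod$; the required orthonormality $w^{\alpha*}_{lk}w^\beta_{lk}=\delta_{\alpha\beta}\iota$ is exactly the identity proved in the previous paragraph.

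The only point requiring care is the transposition of the trace identities from the original argument to the module-categorical setting. Since every trace is taken in the ambient category $\CC$ (where $d_k=d(M_k)$) and the relevant morphisms do respect the $Q$-module structure by Lemma~\ref{lem:vee-functor-same}, no new subtlety arises; the factors of $d(Q)$ relating the $\CC$- and $\modQ$-standard solutions (Proposition~\ref{prop:S-M-resc-std-sol}) cancel uniformly on both sides.
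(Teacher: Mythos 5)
Your argument is correct and matches the paper's proof, which likewise transplants the proof of Lemma~\ref{lem:reg-half-br-candid-unitary}: the only new inputs are that the partially traced morphisms are $Q$-module maps, so simplicity of the $\bar M_k$ in $\Qmod$ kills the off-diagonal terms and makes the diagonal ones scalar (identified by the same trace computation), and that Frobenius reciprocity restricts to module morphisms, so that $\sum_\alpha w^\alpha_{lk}w^{\alpha*}_{lk}$ is the $\bar M_k$-isotypic projection in $\Qmod$. Two cosmetic points: the simplicity and pairwise non-isomorphism of the $\bar M_k$ follow from conjugation being an (anti-)equivalence between $\modQ$ and $\Qmod$ rather than from irreducibility of $Q$, and since all traces, dimensions and duality morphisms in this lemma are those of $\CC$, no factors of $d(Q)$ actually enter the computation.
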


\bp
The proof follows that of Lemma~\ref{lem:reg-half-br-candid-unitary}. First, in order to see that it is isometry, we need to show
$$
\frac{d_k}{d_l} u^{\beta}_{k l} u^{\alpha *}_{k l} \otimes_Q (\iota \otimes \Tr_{\bar X}) (u^{\beta * \vee}_{k l} u^{\alpha \vee}_{k l} ) = \delta_{\alpha, \beta} \iota.
$$
Since $\bar M_k$ is a simple left $Q$-module, the endomorphism $(\iota \otimes \Tr_{\bar X}) (u^{\beta * \vee}_{k l} u^{\alpha \vee}_{k l} ) $ of $\bar M_k$ is scalar, and we find this scalar as in the proof of Lemma~\ref{lem:reg-half-br-candid-unitary} by computing its trace.

Next, to verify that $c_X$ is unitary, we need to show that $\sum_k c_{X, l k} c_{X, l k}^* = \iota_{M_l \otimes_Q \bar{M}_l \otimes X}$, or
$$
\sum_{k, \alpha} \frac{d_k}{d_l} (\iota_l \otimes_Q u_{k l}^{\alpha \vee} \otimes \iota_X) (\iota_{X\otimes M_k\otimes_Q\bar M_k} \otimes R_X R_X^*) (\iota_l \otimes_Q u_{k l}^{\alpha \vee*} \otimes \iota_X) = \iota_{M_l \otimes_Q \bar{M}_l \otimes X}.
$$
This would follow if for each $k$ the morphism
$$
\sum_{\alpha} \frac{d_k}{d_l} (u_{k l}^{\alpha \vee} \otimes \iota_X) (\iota_{\bar k} \otimes R_X R_X^*) (u_{k l}^{\alpha \vee*} \otimes \iota_X)
$$
was the projection of $\bar{M}_l \otimes X$ onto the isotypic component corresponding to $\bar{M}_k$. This is again proved as in Lemma~\ref{lem:reg-half-br-candid-unitary} by observing that the Frobenius reciprocity isomorphism
$\CC(\bar{M}_l, \bar{M}_k \otimes \bar{X})\cong\CC(\bar{M}_l \otimes X, \bar{M}_k)$ defines an isomorphism
$\Mor_{\Qmod}(\bar{M}_l, \bar{M}_k \otimes \bar{X}) \to \Mor_{\Qmod}(\bar{M}_l \otimes X, \bar{M}_k)$.
\ep

\begin{proposition}
The unitaries $c_X \colon X \otimes \Zreg(\modQ) \to \Zreg(\modQ) \otimes X$ form a half-braiding on $\CC$.
\end{proposition}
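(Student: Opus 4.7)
The plan is to imitate the proof of Theorem~\ref{them:reg-half-braiding} line for line, replacing the simple objects $U_s \in \CC$ by the simple right $Q$-modules $M_k$ and replacing the tensor product $U_s \otimes \bar U_s$ in the codomain by the relative tensor product $M_k \otimes_Q \bar M_k$. Fix $X, Y \in \CC$. I would choose maximal orthogonal families of isometries $u^\alpha_{rl}\colon M_l \to X \otimes M_r$ in $\Mor_{\modQ}$ whose ranges exhaust the $M_l$-isotypic component of $X \otimes M_r$, and similarly $v^\beta_{sr}\colon M_r \to Y \otimes M_s$. By semisimplicity and Frobenius reciprocity in $\modQ$, the composites $(\iota_X \otimes v^\beta_{sr}) u^\alpha_{rl}$, indexed by $r, \alpha, \beta$, form a maximal orthogonal family of isometries in $\Mor_{\modQ}(M_l, X \otimes Y \otimes M_s)$ exhausting the $M_l$-isotypic component. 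Substituting these into the definition of $c_{X \otimes Y, ls}$ and performing the same algebraic regrouping as in Theorem~\ref{them:reg-half-braiding} (in particular, using $R_{X \otimes Y} = (\iota_{\bar Y} \otimes R_X \otimes \iota_Y) R_Y$), one obtains
$$
c_{X \otimes Y, ls} = \sum_r (c_{X, lr} \otimes \iota_Y)(\iota_X \otimes c_{Y, rs}),
$$
which is precisely the half-braiding identity $c_{X \otimes Y} = (c_X \otimes \iota_Y)(\iota_X \otimes c_Y)$.

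The one point that genuinely requires the module-theoretic input is the appearance of $\otimes_Q$ (rather than $\otimes$) between the conjugate pairs $M_k$ and $\bar M_k$ in the codomain. This is settled by Lemma~\ref{lem:vee-functor-same}: since each $u^\alpha_{rl}$ is a morphism of right $Q$-modules, its dual $u^{\alpha \vee}_{rl}\colon \bar M_r \otimes \bar X \to \bar M_l$, taken with respect to the $\CC$-level standard solutions $(R_r, \bar R_r)$ and $(R_l, \bar R_l)$, is automatically a morphism of left $Q$-modules. Consequently, the expression $u^{\alpha *}_{rl} \otimes_Q u^{\alpha \vee}_{rl}$ really does descend to a morphism $(X \otimes M_r) \otimes_Q (\bar M_r \otimes \bar X) \to M_l \otimes_Q \bar M_l$, and every manipulation in the proof of Theorem~\ref{them:reg-half-braiding} carries over term by term. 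The same remark applies to the composite isometries $(\iota_X \otimes v^\beta_{sr}) u^\alpha_{rl}$, whose duals split correctly through Lemma~\ref{lem:vee-functor-same} applied twice.

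No new conceptual idea is required; the main obstacle is purely bookkeeping, namely distinguishing $\otimes$ from $\otimes_Q$ throughout the computation and confirming at each step that the structure morphisms for the relative tensor product intertwine with the module maps being composed. Naturality of the resulting family $\{c_X\}_{X \in \CC}$ in $X$ follows as in the lemma preceding Theorem~\ref{them:reg-half-braiding}: each $c_{X, lk}$ is individually natural in $X$ and independent of the choices of isometries, both verifications being formally identical to those in $\CC$ once the $Q$-modularity of the relevant dual maps is in hand.
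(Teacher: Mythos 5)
Your proposal is correct and follows essentially the same route as the paper, whose proof consists precisely of the remark that the argument of Theorem~\ref{them:reg-half-braiding} is repeated using the decomposition of $X\otimes Y\otimes M_k$ obtained from decompositions of $Y\otimes M_k$ and $X\otimes M_l$ into simple right $Q$-modules. Your additional observation that the dual isometries are left $Q$-modular (via the right-module analogue of Lemma~\ref{lem:vee-functor-same}), so that everything descends to the relative tensor products $\otimes_Q$, is exactly the module-theoretic input already implicit in the definition of $c_{X,lk}$.
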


\bp
This is proved in the same way as Theorem~\ref{them:reg-half-braiding}, by considering the decomposition of $X\otimes Y\otimes M_k$ defined by decompositions of $Y\otimes M_k$ and $X\otimes M_l$.
\ep

When we need to be specific, we will write $c^{\modQ}$ for the above half-braiding $(c_X)_X$.  But if there is no danger of confusion, we will omit the half-braiding altogether and simply write $\Zreg(\modQ)$ for the object $(\Zreg(\modQ),c^{\modQ})\in\ZC$.

\begin{theorem}
\label{thm:compare-Zreg-Qmod-and-Zreg-QmodQ}
The image of $\Zreg(\modQ)$ under Schauenburg's induction $\ZC\to\Dcen(\indQmodQ)$ is isomorphic to
$\Zreg(\QmodQ)$.
\end{theorem}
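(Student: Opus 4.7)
The plan is to construct an explicit unitary isomorphism $\Phi\colon F(\Zreg(\modQ))\to\Zreg(\QmodQ)$ in $\Dcen(\indQmodQ)$, paralleling the original construction of the regular half-braiding.

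First, I would make explicit the $Q$-bimodule structure on $F(\Zreg(\modQ))$: as an ind-object of $\QmodQ$ it is $\bigoplus_k (M_k\otimes_Q\bar M_k)\otimes Q$, with right $Q$-action on the external factor $Q$ and left $Q$-action (which mixes the summands indexed by $k$) controlled by the half-braiding $c^{\modQ}_Q$, whose explicit formula involves the isometries $u^\alpha_{kl}\colon M_l\to Q\otimes M_k$. The Schauenburg half-braiding $\tilde c^{\modQ}_Y$ for $Y\in\QmodQ$ is induced by $c^{\modQ}_Y$ (viewed as a morphism in $\indC$), by Lemma~\ref{lem:schaun-braiding}.

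Next, I would carry out a multiplicity comparison: for each simple $Q$-bimodule $N_p$, I expect $\dim\Mor_{\QmodQ}(N_p,F(\Zreg(\modQ)))=d^Q(N_p)$, matching the multiplicity of $N_p$ in $\Zreg(\QmodQ)$. Using Frobenius reciprocities together with Proposition~\ref{prop:S-M-resc-std-sol} (which gives $d^Q(-)=d(Q)^{-1}d(-)$), this should reduce to a direct identity relating the decomposition of $N_p$ as a right $Q$-module into simples $M_k$ to the classical decomposition of tensor products in $\CC$. The isomorphism $\Phi$ would then be assembled from compatible embeddings $N_p\otimes_Q\bar N_p\hookrightarrow F(\Zreg(\modQ))$ for each $p$, using chosen orthonormal bases of $\Mor_{\modQ}(M_l,N_p)$ with normalization factors involving $d_k$ and $d^Q_p$, in parallel with the normalizations appearing in $c^{\modQ}_{X,lk}$ and in the regular half-braiding on $\QmodQ$.

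The main obstacle is the verification that $\Phi$ intertwines the half-braidings $\tilde c^{\modQ}_Y$ and $c^{\QmodQ}_Y$ for every $Y\in\QmodQ$: this is a combinatorial computation generalizing the proof of Theorem~\ref{them:reg-half-braiding}, but now requires reconciling a decomposition of $Y\otimes M_k$ as a right $Q$-module (used in defining $c^{\modQ}_Y$) with a decomposition of $Y\otimes_Q N_p$ as a $Q$-bimodule (used in defining $c^{\QmodQ}_Y$). The Frobenius reciprocities of Section~\ref{sec:mod-cat-Frob-recip} and the duality morphisms $R^Q, \bar R^Q$ should provide the bridge between these two frameworks.
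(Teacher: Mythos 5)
Your plan has a genuine gap, at two levels. First, the multiplicity count you propose is not correct: $\dim\Mor_{\QmodQ}(N_p,F(\Zreg(\modQ)))$ is a (possibly infinite) integer, while $d^Q(N_p)$ is in general an irrational real number, so the two cannot be equal; the actual multiplicity of a simple $Q$-bimodule $N_p$ in $\Zreg(\QmodQ)=\oplus_a N_a\otimes_Q\bar N_a$ is $\sum_a\dim\Mor_{\QmodQ}(N_p\otimes_Q N_a,N_a)$, which has nothing to do with $d^Q(N_p)$. Second, and more seriously, even a correct matching of multiplicities would only give an isomorphism of the underlying ind-objects of $\QmodQ$, which is far from an isomorphism in $\Dcen(\indQmodQ)$: one and the same ind-object carries many inequivalent half-braidings (the objects $Z_\phi$ for different cp-multipliers $\phi$ already illustrate this). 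The step you defer --- checking that your $\Phi$ intertwines $\tilde c$ with the regular half-braiding of $\QmodQ$ --- is precisely the content of the theorem, and your proposal offers no argument for it beyond the expectation that Frobenius reciprocity ``should provide the bridge''. As written this is a plan, not a proof, and its only concrete quantitative ingredient is false.

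For comparison, the paper avoids a componentwise construction altogether. It exhibits the canonical isometric morphism $\xi=d(Q)^{-1/2}(\zeta_e\otimes\iota)w\in\Mor_{\indQmodQ}(Q,\Zreg(\modQ)\otimes Q)$ (Lemma~\ref{lem:characterization-mor-from-ZregmodQ}), takes the positive definite function $\phi$ on $\Irr(\QmodQ)$ it defines, and invokes the universal property of $(Z_\phi,\xi_\phi)$ from Proposition~\ref{prop:Zphi-universal} to obtain an isometry $T\colon Z_\phi\to F(\Zreg(\modQ))$ already living in the Drinfeld center, so no half-braiding intertwining has to be verified by hand. Unitarity of $T$ follows because Schauenburg's induction is an equivalence, so $\iota-TT^*=S\otimes\iota$ for some $S\in\End_\ZC(\Zreg(\modQ))$, and $S=0$ by the injectivity of restriction along $\zeta_e$ (Lemma~\ref{lem:restr-from-ZregmodQ-inj}); finally an explicit computation of the components $T_a$ shows $\phi=\delta_e$, i.e.\ $Z_\phi=\Zreg(\QmodQ)$. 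If you wish to rescue your direct approach, you would have to carry out essentially these same computations on each component, reconciling the bases of $\Mor_{\modQ}(M_l,X\otimes M_k)$ with decompositions in $\QmodQ$, so the universal-property route is both shorter and the place where your missing step is actually supplied.
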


We need some preparation to prove this theorem. Consider the canonical morphism $\zeta_e\colon Q=M_e\otimes_Q\bar M_e\to\Zreg(\modQ)$ in $\indC$.

\begin{lemma}
\label{lem:characterization-mor-from-ZregmodQ}
The morphism $\zeta_e$ satisfies identity~\eqref{eq:bimod-hom-char-in-orig}.
\end{lemma}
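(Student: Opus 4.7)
The plan is to verify identity \eqref{eq:bimod-hom-char-in-orig} for $T=\zeta_e$, $X=\Zreg(\modQ)$, $c=c^{\modQ}$ by direct expansion. First, using the Frobenius condition $ww^*=(w^*\otimes\iota_Q)(\iota_Q\otimes w)$, the left-hand side rewrites as $((\zeta_e w^*)\otimes\iota_Q)(\iota_Q\otimes w)$, where $\zeta_e w^*\colon Q\otimes Q\to\Zreg(\modQ)$ is the canonical map $Q\otimes Q\twoheadrightarrow M_e\otimes_Q\bar M_e\hookrightarrow\Zreg(\modQ)$ obtained from $P_{Q,Q}=m_Q=w^*$.

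For the right-hand side, since $\zeta_e$ factors through the $e$-summand, only the blocks $c^{\modQ}_{Q,le}$ of the regular half-braiding contribute. Expanding via
$$c^{\modQ}_{Q,le}=\bigl(d(Q)/d_l\bigr)^{1/2}\sum_\alpha (u^{\alpha*}_{el}\otimes_Q u^{\alpha\vee}_{el}\otimes\iota_Q)(\iota\otimes R_Q),$$
where the $u^\alpha_{el}\colon M_l\to Q\otimes M_e=Q\otimes Q$ are orthonormal isometries in $\modQ$ decomposing $Q\otimes Q$ into isotypic components, and using Lemma~\ref{lem:vee-functor-same} to compute $u^{\alpha\vee}_{el}$ as in $\CC$, the key simplification is that the outer $\iota\otimes w^*$ contracts the $\bar Q\otimes Q$ factor produced by $R_Q$ back through $w^*$. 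Combined with the defining property $(u^\alpha_{el}\otimes\iota)\bar R_{M_l}=(\iota\otimes u^{\alpha\vee}_{el})\bar R_{Q\otimes M_e}$ and the factorization $\bar R_{Q\otimes M_e}=(\iota\otimes\bar R_e\otimes\iota)\bar R_Q$, the $R_Q$-loop cancels against $w^*$ and the dimension prefactors $d(Q)/d_l$ combine correctly with the normalization of $\bar R_{M_l}$. Summing over $l$ and $\alpha$ and applying the completeness relation $\sum_{l,\alpha}u^\alpha_{el}u^{\alpha*}_{el}=\iota_{Q\otimes Q}$ should telescope the sum to $((\zeta_e w^*)\otimes\iota_Q)(\iota_Q\otimes w)$, matching the rewritten left-hand side.

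The main obstacle will be the careful bookkeeping of relative tensor products, in particular tracking the identification $M_e\otimes_Q\bar M_e\cong Q$ via $w^*$ and ensuring that the $R_Q$-leg and the outer $w^*$ interact consistently across the $\otimes_Q$ appearing in $u^{\alpha*}_{el}\otimes_Q u^{\alpha\vee}_{el}$. Once the conventions are settled, the rest follows from routine manipulations with the $Q$-system identities (unit constraint, Frobenius condition, $w^*w=d(Q)\iota$) together with orthonormality of the $u^\alpha_{el}$.
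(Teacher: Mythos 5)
Your opening moves are fine and agree with the paper: the left-hand side of \eqref{eq:bimod-hom-char-in-orig} rewrites via the Frobenius condition, only the blocks $c^{\modQ}_{Q,le}$ contribute because $\zeta_e$ factors through the $e$-summand, and the $R_Q=wv$ loop cancels against the outer $\iota\otimes w^*$ via $(\iota\otimes w^*)(wv\otimes\iota)=w$. The problem is the final step. After that cancellation the right-hand side is the direct sum over $l$ of the maps
$$
\left(\frac{d_e}{d_l}\right)^{1/2}\sum_\alpha\bigl(u^{\alpha*}_{el}\otimes_Q u^{\alpha\vee}_{el}\otimes\iota_Q\bigr)\bigl(\iota_Q\otimes w^{(2)}\bigr)\colon Q\otimes Q\to M_l\otimes_Q\bar M_l\otimes Q,
$$
whereas the left-hand side $(\zeta_e\otimes\iota_Q)ww^*$ is supported only on the summand $l=e$. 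Terms with different $l$ land in mutually orthogonal direct summands of $\Zreg(\modQ)\otimes Q$, so nothing can ``telescope'': each block with $l\ne e$ must vanish on its own. Moreover the completeness relation $\sum_{l,\alpha}u^\alpha_{el}u^{\alpha*}_{el}=\iota_{Q\otimes Q}$ is not even of the right shape to be invoked, since in each block the basis elements enter as $u^{\alpha*}_{el}\otimes_Q u^{\alpha\vee}_{el}$, acting on different tensor legs of the target, never as the composition $u^\alpha_{el}u^{\alpha*}_{el}$.

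The missing idea is exactly where the standing irreducibility assumption on $Q$ enters, and it is the heart of the paper's proof: in the $l$-block the relevant composition is $u^{\alpha\vee}_{el}w\colon Q\to\bar M_l$, which is a morphism of \emph{left} $Q$-modules; since $Q$ is simple as a one-sided $Q$-module and $\bar M_l\not\cong\bar M_e$ for $l\ne e$, this morphism is zero, killing all blocks with $l\ne e$. For $l=e$ one may choose $d(Q)^{-1/2}w$ as one of the orthonormal isometries in $\Mor_{\modQ}(M_e,Q\otimes Q)$; orthonormality (equivalently, $u^{\alpha\vee}_{ee}w=(w^*u^\alpha_{ee})^\vee$) then leaves only that term, and evaluating it with $w^*w=d(Q)\iota$ gives precisely $(\zeta_e\otimes\iota_Q)ww^*$. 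Without this block-by-block vanishing argument the proof does not close; routine $Q$-system identities alone cannot produce it, because the statement genuinely uses that $\Mor_{\Qmod}(Q,\bar M_l)=0$ for $l\ne e$. Your proposal never uses irreducibility, which is the decisive input here.
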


\bp
Consider the right hand side of~\eqref{eq:bimod-hom-char-in-orig} for $T=\zeta_e$, that is, the expression
$$
(\iota \otimes w^*) (c_Q^{\modQ} \otimes \iota)(\iota\otimes \zeta_e\otimes\iota) (\iota\otimes w).
$$
Composing it on the left with the projection onto $M_l\otimes_Q\bar M_l\otimes Q$ we get
$$
\left(\frac{d_e}{d_l}\right)^{\hlf}\sum_\alpha(\iota_{M_l\otimes_Q\bar M_l}\otimes w^*)
(u_l^{\alpha*} \otimes_Q u_l^{\alpha\vee}\otimes\iota_Q\otimes\iota_Q)(\iota_Q\otimes\iota_Q\otimes wv\otimes\iota_Q)(\iota_Q\otimes w),
$$
where $(u^\alpha_l)_\alpha$ is an orthonormal basis in $\Mor_{\modQ}(M_l,Q \otimes Q)$ and we identify $(Q\otimes Q)\otimes_Q(Q\otimes Q)$ with~$Q^{\otimes 3}$. Using that $(\iota\otimes w^*)(wv\otimes\iota)=w$, we see that the above expression equals
$$
\left(\frac{d_e}{d_l}\right)^{\hlf}\sum_\alpha(\iota_{M_l\otimes_Q\bar M_l}\otimes \iota_Q)
(u_l^{\alpha*} \otimes_Q u_l^{\alpha\vee}\otimes\iota_Q)(\iota_Q\otimes w^{(2)}).
$$
If $l\ne e$, then this expression is zero, since $u_l^{\alpha\vee}w\colon Q\to \bar M_l$ is a morphism of left $Q$-modules. For $l=e$ we may assume that $u_e^{\alpha_0} = d(Q)^{-\hlf} w$ for some $\alpha_0$. Then the above formula picks up only the term corresponding to $\alpha_0$, and using that $u_e^{\alpha_0} w=d(Q)^{-1/2}w^*w=d(Q)^{1/2}\iota$ we see that it gives
$$
(w^*\otimes_Q\iota_Q\otimes\iota_Q)(\iota_Q\otimes w)=ww^*\colon Q\otimes Q\to Q\otimes Q=M_e\otimes_Q\bar M_e\otimes Q.
$$
Thus the right hand side of~\eqref{eq:bimod-hom-char-in-orig} equals  $(\zeta_e\otimes\iota)ww^*$, so $\zeta_e$ satisfies~\eqref{eq:bimod-hom-char-in-orig}.
\ep

\begin{lemma}
\label{lem:restr-from-ZregmodQ-inj}
 For any $(X,c) \in \ZC$, the map $\Mor_\ZC(\Zreg(\modQ), X) \to \Mor_\indC(Q, X)$, $T\mapsto T\zeta_e$, is injective.
\end{lemma}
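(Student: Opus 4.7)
The plan is to imitate the uniqueness argument from Proposition~\ref{prop:Zphi-universal}. Let $T\colon(\Zreg(\modQ),c^{\modQ})\to(X,c)$ be a morphism in $\ZC$, write $\zeta_k\colon M_k\otimes_Q\bar M_k\to\Zreg(\modQ)$ for the canonical inclusion of the $k$-th summand, and set $T_k:=T\zeta_k$. Since $T$ is determined by the family $\{T_k\}_k$ via the direct-sum decomposition of $\Zreg(\modQ)$, it suffices to prove that $T_e=T\zeta_e=0$ forces $T_k=0$ for every $k$.

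The key tool is the intertwining property $c_{M_k}(\iota_{M_k}\otimes T)=(T\otimes\iota_{M_k})c^{\modQ}_{M_k}$. Specialising to $\iota_{M_k}\otimes\zeta_e$ (with $M_e\otimes_Q\bar M_e$ identified with $Q$) and decomposing the right-hand side along the summands of $\Zreg(\modQ)$ yields
$$c_{M_k}(\iota_{M_k}\otimes T_e)=\sum_l(T_l\otimes\iota_{M_k})\,c^{\modQ}_{M_k,le},$$
where $c^{\modQ}_{M_k,le}\colon M_k\otimes Q\to M_l\otimes_Q\bar M_l\otimes M_k$ is the $(l,e)$-matrix element of $c^{\modQ}$. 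To isolate the $l=k$ contribution I would pre-compose with $m^{r*}_{M_k}\colon M_k\to M_k\otimes Q$, which is itself a $\modQ$-morphism. In the defining formula for $c^{\modQ}_{M_k,le}$, each summand contains a factor $u^{\alpha*}_{el}\colon M_k\otimes Q\to M_l$ coming from a basis $u^\alpha_{el}$ of $\Mor_{\modQ}(M_l,M_k\otimes Q)$; the composition $u^{\alpha*}_{el}\,m^{r*}_{M_k}$ is then a $\modQ$-morphism between the simple modules $M_k$ and $M_l$, which vanishes for $l\ne k$. Assuming $T_e=0$ we therefore obtain $(T_k\otimes\iota_{M_k})\bigl(c^{\modQ}_{M_k,ke}\,m^{r*}_{M_k}\bigr)=0$.

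To conclude $T_k=0$ I would establish the non-degeneracy identity
$$(\iota_{M_k\otimes_Q\bar M_k}\otimes\bar R_{M_k}^*)\bigl(c^{\modQ}_{M_k,ke}\,m^{r*}_{M_k}\otimes\iota_{\bar M_k}\bigr)=\lambda\cdot P_{M_k,\bar M_k}$$
with $\lambda\ne 0$. Granting this, tensoring the preceding vanishing with $\iota_{\bar M_k}$ and post-composing with $\iota_X\otimes\bar R_{M_k}^*$ gives $\lambda\,T_k P_{M_k,\bar M_k}=0$, and since $P_{M_k,\bar M_k}$ is surjective we conclude $T_k=0$. This identity is a direct analogue of the elementary application $(\iota\otimes\bar R_s^*)(R_s\otimes\iota)=\iota$ of the conjugate equations used to derive formula~\eqref{eq:intertbr}; the test case $Q=\un$ (where $\modQ=\CC$, $m^{r*}_{M_k}=\iota$, and $c^{\modQ}_{M_k,ke}$ reduces by~\eqref{eq:matrixte} to $d_k^{-1/2}(\iota\otimes R_{M_k})$) recovers it precisely.

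The main obstacle is verifying this non-degeneracy identity in general. It requires unwinding $c^{\modQ}_{M_k,ke}$ in terms of a basis $u^\alpha_{ek}\in\Mor_{\modQ}(M_k,M_k\otimes Q)$ (which by Frobenius reciprocity corresponds to an orthonormal basis of $\End_\CC(M_k)$), carefully bookkeeping the identifications $M_e\otimes_Q\bar M_e\cong Q$ and $Q\otimes_Q\bar Q\cong\bar Q\cong Q$, and tracking the rescaling $d^Q=d(Q)^{-1}d$ between standard solutions of conjugate equations in $\CC$ and in $\modQ$ provided by Proposition~\ref{prop:S-M-resc-std-sol}. Once this is done, injectivity follows cleanly.
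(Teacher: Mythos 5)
Your strategy is sound and is, at bottom, the same mechanism as the paper's proof: exploit the intertwining relation for $T$, kill the cross terms $l\neq k$ by precomposing with $m^{r*}_{M_k}$ (this works exactly as you say, since $u^{\alpha*}_{el}m^{r*}_{M_k}$ is a morphism of right $Q$-modules between the simple objects $M_k$ and $M_l$), and then recover $T_k$ from $T_e$ via a non-degeneracy statement for the $(k,e)$-component of $c^{\modQ}$. The problem is that you stop precisely at the point where the content of the lemma lies: you assert the identity $(\iota_{M_k\otimes_Q\bar M_k}\otimes\bar R^*_{M_k})(c^{\modQ}_{M_k,ke}m^{r*}_{M_k}\otimes\iota_{\bar M_k})=\lambda P_{M_k,\bar M_k}$, $\lambda\neq0$, and explicitly defer its verification as ``the main obstacle''. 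Everything else in your argument is routine bookkeeping, so as written the proof is incomplete at its one essential step. For comparison, the paper's proof consists exactly of establishing (and then applying $T$ to) the explicit formula $\zeta_k=d_e^{-3/2}d_k^{1/2}(\iota\otimes\bar R_k^*)(c^{\modQ}_{k}\otimes\iota_{\bar k})(\iota_k\otimes\zeta_e\otimes\iota_{\bar k})(m_k^{r*}\otimes_Q m_{\bar k}^{l*})$, whose $(k,e)$-component is your identity up to replacing the insertion $m_k^{r*}\otimes_Q m_{\bar k}^{l*}$ by $m_k^{r*}\otimes\iota_{\bar M_k}$ and the target $\iota_{M_k\otimes_Q\bar M_k}$ by $P_{M_k,\bar M_k}$ (after which you invoke surjectivity of $P_{M_k,\bar M_k}$ where the paper lands directly on the identity morphism).

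The missing identity is true, and the verification is short and follows the route you gesture at. Choose the orthonormal basis of $\Mor_{\modQ}(M_k,M_k\otimes Q)$ so that it contains $u^{\alpha_0}_{ek}=d(Q)^{-1/2}m^{r*}_{M_k}$; since $u^{\alpha*}_{ek}m^{r*}_{M_k}\in\End_{\modQ}(M_k)=\C\,\iota_{M_k}$, orthonormality leaves only the $\alpha_0$-term. To feed the descended morphism $u^{\alpha*}_{ek}\otimes_Q u^{\alpha\vee}_{ek}$ (defined on $(M_k\otimes Q)\otimes_Q(\bar Q\otimes\bar M_k)$), write $m^{r*}_{M_k}=(\iota_{M_k}\otimes w^*)(m^{r*}_{M_k}\otimes v)$, using the unit constraint; the morphism $R_{M_k}$ created by the defining formula for $c^{\modQ}_{M_k,ke}$ then cancels against your external $\bar R^*_{M_k}$ by the conjugate equations, and $(m^{r*}_{M_k})^\vee=m^l_{\bar M_k}$ together with $m^l_{\bar M_k}(v\otimes\iota_{\bar M_k})=\iota_{\bar M_k}$ yields $\lambda=(d(Q)/d_k)^{1/2}$, which also matches your test case $Q=\un$. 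With this computation supplied (and the identifications $M_e\otimes_Q\bar M_e\cong\bar Q\cong Q$ fixed once and for all, as you anticipate), your argument is correct and coincides in substance with the paper's.
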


\bp Fix an index $k$. We claim that the canonical morphism $M_k\otimes_Q \bar M_k\to \Zreg(\modQ)$ is given by
$$
\zeta_k=d_e^{-3/2}d_k^{1/2}(\iota \otimes \bar{R}_k^*) (c^{\modQ}_{k} \otimes \iota_{\bar k}) (\iota_{k} \otimes \zeta_e \otimes \iota_{\bar k}) (m_{k}^{r *} \otimes_Q m_{\bar k}^{l *}),
$$
where we identify $(M_k\otimes Q)\otimes_Q(Q\otimes\bar M_k)$ with $M_k\otimes Q\otimes\bar M_k$. Indeed, as in the previous lemma, composing this with the projection onto $M_l\otimes_Q\bar M_l$ we see that the above expression gives
$$
d_e^{-1}d_k^{1/2}d_l^{-1/2}\sum_\alpha
(u_l^{\alpha*} \otimes_Q u_l^{\alpha\vee})(m_{k}^{r *} \otimes_Q m_{\bar k}^{l *}),
$$
where $(u^\alpha_l)_\alpha$ is an orthonormal basis in $\Mor_{\modQ}(M_l,M_k \otimes Q)$. If $l\ne k$, this expression is zero. If $l=k$, then it picks up the term corresponding to the isometry $u_k^{\alpha_0}=d(Q)^{-1/2}m_{k}^{r*}$. Since $m_{\bar k}^{l *}=m^{r\vee}_{k}$ by definition, we then see that the above expression is the identity morphism. Thus our claim is proved.

Now, if $T\in\Mor_\ZC(\Zreg(\modQ), X)$, we get
$$
T\zeta_k=d_e^{-3/2}d_k^{1/2}(\iota_X \otimes \bar{R}_k^*) (c_{k} \otimes \iota_{\bar k}) (\iota_{k} \otimes T \zeta_e \otimes \iota_{\bar k}) (m_{k}^{r *} \otimes_Q m_{\bar k}^{l *}).
$$
Since $T$ is determined by the morphisms $T\zeta_k$, we conclude that it is determined by $T\zeta_e$.
\ep

\bp[Proof of Theorem~\ref{thm:compare-Zreg-Qmod-and-Zreg-QmodQ}]
Consider the image $(\Zreg(\modQ)\otimes Q),\tilde c)$ of $(\Zreg(\modQ),c^{\modQ})$ under Schauenburg's induction.
Since by Lemma~\ref{lem:characterization-mor-from-ZregmodQ} the morphism $\zeta_e$ satisfies identity~\eqref{eq:bimod-hom-char-in-orig}, we have an isometric morphism $\xi=d(Q)^{-1/2}(\zeta_e\otimes\iota)w\colon Q\to \Zreg(\modQ)\otimes Q$ in $\indQmodQ$. Let $\phi$ be the positive definite function on $\Irr(\QmodQ)$ defined by $\xi$. Then by Proposition~\ref{prop:Zphi-universal} there exists a unique isometric morphism $T\colon Z_\phi\to \Zreg(\modQ)\otimes Q$ in $\Dcen(\indQmodQ)$ such that $T\xi_\phi=\xi$.

The morphism $T$ is unitary. Indeed, since Schauenburg's induction is an equivalence of categories, the endomorphism $p=\iota-TT^*$ of $\Zreg(\modQ)\otimes Q$ must be of the form $S\otimes\iota$ for some $S\in\End_\ZC(\Zreg(\modQ))$. Since $pT=0$, we have $p\xi=0$, hence $S\zeta_e=0$. By Lemma~\ref{lem:restr-from-ZregmodQ-inj} it follows that $S=0$, so $TT^*=\iota$.

To finish the proof it remains to show that $\phi=\delta_e$, as then $Z_\phi=\Zreg(\QmodQ)$. Let us fix representatives $(X_a)_a$ of the isomorphism classes of simple objects in $\QmodQ$, so that $Z_\phi=A^\phi\text{-}\oplus_a X_a\otimes_Q\bar X_a$. We continue to denote by $e$ the index corresponding to the unit object and assume that $X_e=Q$. Then by \eqref{eq:intertbr} the composition $T_a$ of $T$ with the canonical morphism $X_a\otimes_Q\bar X_a\to Z_\phi$ is given by
$$
T_a = (d^Q_a)^{\hlf}(\iota \otimes_Q \bar R^{Q*}_a) (\tilde{c}_a \otimes_Q \iota_{\bar{a}}) (\iota_a \otimes_Q \xi \otimes_Q \iota_{\bar{a}})\colon X_a\otimes_Q \bar X_a\to \Zreg(\modQ)\otimes Q.
$$
(Recall that the upper index $Q$ indicates that we are dealing with the category $\QmodQ$.)
Since $\pi_\phi([X_a])\xi=(d^Q_a)^{-1/2}T_a\bar R^Q_a$, in order to check that $\phi=\delta_e$ it suffices to show that $\xi^*T_a=T_e^*T_a=0$ for $a\ne e$.

Let us compute the morphisms $T_a$ more explicitly. As we already used in the proof of Lemma~\ref{lem:schaun-braiding}, the morphism $\tilde c_a$ is implemented by the morphism $X_a\otimes \Zreg(\modQ)\otimes Q\to \Zreg(\modQ)\otimes Q\otimes X_a$ given by $$(\iota\otimes v\otimes\iota_a)(\iota\otimes m^r_a)(c^{\modQ}_a\otimes\iota_Q).$$
Since we also have $\bar R^{Q*}_a P_{\bar a,a}=\bar S^*_a$ by \eqref{eq:RQstar}, and $P_{Q,Q}=w^*$, we conclude that $T_a$ is implemented by the morphism $X_a\otimes Q\otimes \bar X_a\to\Zreg(\modQ)\otimes Q$ given by
$$
(d^Q_a)^{\hlf}(\iota\otimes w^*)(\iota\otimes\iota_Q\otimes \bar S^*_a)(\iota\otimes v\otimes\iota_a\otimes\iota_{\bar a})(\iota\otimes m^r_a\otimes\iota_{\bar a})(c^{\modQ}_a\otimes\iota_Q\otimes\iota_{\bar a}) (\iota_a \otimes \xi \otimes \iota_{\bar{a}}).
$$
Since $w^*(v\otimes\iota)=\iota$ and $\bar S^*_a(m^r_a\otimes\iota)=\bar S^*_a(\iota\otimes m^l_{\bar a})$ by Lemma~\ref{lem:Sdescent}, the above expression equals
$$
(d^Q_a)^{\hlf}(\iota\otimes \bar S^*_a)(\iota\otimes \iota_a\otimes m^l_{\bar a})(c^{\modQ}_a\otimes\iota_Q\otimes\iota_{\bar a}) (\iota_a \otimes \xi \otimes \iota_{\bar{a}}).
$$
Using that $\xi=d_e^{-1/2}(\zeta_e\otimes\iota)w$ and $(\iota\otimes m^l_{\bar a})(w\otimes\iota)=m^{l*}_{\bar a}m^l_{\bar a}$ by Lemma~\ref{lem:comput-m-m-star}, and then that $P_{Q,X_{\bar a}}=m^l_{\bar a}$, we finally get that the morphism $T_a$ is implemented by the morphism
$$
\tilde T_a= (d^Q_a)^{\hlf}d_e^{-1/2}(\iota\otimes \bar S^*_a)(c^{\modQ}_a\otimes\iota_{\bar a}) (\iota_a \otimes \zeta_e\otimes \iota_{\bar{a}})(\iota_a\otimes m^{l*}_{\bar a})\colon X_a\otimes X_{\bar a}\to\Zreg(\modQ)\otimes Q.
$$
Consider the component $\tilde T_{ka}$ of this morphism mapping $X_a\otimes X_{\bar a}$ into $M_k\otimes_Q\bar M_k\otimes Q$. By definition,
$$
\tilde T_{ka}= (d^Q_a)^{1/2}d_k^{-1/2}\sum_\alpha(\iota\otimes \bar S^*_a)
(u^{\alpha *}_{ak} \otimes_Q u^{\alpha \vee}_{ak} \otimes \iota_a\otimes\iota_{\bar a}) (\iota_a\otimes\iota_Q\otimes R_a\otimes\iota_{\bar a})(\iota_a\otimes m^{l*}_{\bar a}),
$$
where $(u^\alpha_{ak})_\alpha$ is an orthonormal basis in $\Mor_{\modQ}(M_k,X_a\otimes Q)$. Since by the definition of the maps $\bar S$ we have $(\iota \otimes \bar{S}_a^*) (R_a \otimes \iota) = m_{\bar{a}}^{r *}$, we get
$$
\tilde T_{ka}= (d^Q_a)^{1/2}d_k^{-1/2}\sum_\alpha
(u^{\alpha *}_{ak} \otimes_Q u^{\alpha \vee}_{ak} \otimes \iota_Q) (\iota_a\otimes\iota_Q\otimes m_{\bar{a}}^{r *})(\iota_a\otimes m^{l*}_{\bar a}).
$$
From this we see that $\tilde T^*_{kb}\tilde T_{ka}=0$ for $b\ne a$, since
$$
m^{l}_{\bar b}(\iota_Q\otimes m_{\bar{b}}^{r})(u^{\beta \vee*}_{bk} \otimes \iota_Q)(u^{\alpha \vee}_{ak} \otimes \iota_Q)(\iota_Q\otimes m_{\bar{a}}^{r *})m^{l*}_{\bar a}
$$
is a morphism $X_{\bar a}\to X_{\bar b}$ of $Q$-bimodules. Hence $T^*_bT_a=0$ for $b\ne a$, and the theorem is proved.
\ep

\bigskip

\raggedright
\begin{bibdiv}
\begin{biblist}

\bib{MR914742}{article}{
      author={Anantharaman-Delaroche, Claire},
       title={On {C}onnes' property {$T$} for von {N}eumann algebras},
        date={1987},
        ISSN={0025-5513},
     journal={Math. Japon.},
      volume={32},
      number={3},
       pages={337\ndash 355},
      review={\MR{914742 (89h:46086)}},
}

\bib{arXiv:1410.6238}{misc}{
      author={Arano, Yuki},
       title={Unitary spherical representations of {D}rinfeld doubles},
         how={preprint},
        date={2014},
      eprint={\href{http://arxiv.org/abs/1410.6238}{{\tt arXiv:1410.6238
  [math.QA]}}},
        note={to appear in J. Reine Angew. Math.},
         doi={10.1515/crelle-2015-0079},
}

\bib{MR3308880}{book}{
      author={Bischoff, Marcel},
      author={Kawahigashi, Yasuyuki},
      author={Longo, Roberto},
      author={Rehren, Karl-Henning},
       title={Tensor categories and endomorphisms of von {N}eumann
  algebras---with applications to quantum field theory},
      series={Springer Briefs in Mathematical Physics},
   publisher={Springer, Cham},
        date={2015},
      volume={3},
        ISBN={978-3-319-14300-2; 978-3-319-14301-9},
         url={http://dx.doi.org/10.1007/978-3-319-14301-9},
         doi={10.1007/978-3-319-14301-9},
      review={\MR{3308880}},
}

\bib{MR2391387}{book}{
      author={Brown, Nathanial~P.},
      author={Ozawa, Narutaka},
       title={{$C^*$}-algebras and finite-dimensional approximations},
      series={Graduate Studies in Mathematics},
   publisher={American Mathematical Society},
     address={Providence, RI},
        date={2008},
      volume={88},
        ISBN={978-0-8218-4381-9; 0-8218-4381-8},
      review={\MR{MR2391387 (2009h:46101)}},
}

\bib{MR2863377}{article}{
      author={Brugui{{\`e}}res, Alain},
      author={Natale, Sonia},
       title={Exact sequences of tensor categories},
        date={2011},
        ISSN={1073-7928},
     journal={Int. Math. Res. Not. IMRN},
      number={24},
       pages={5644\ndash 5705},
      eprint={\href{http://arxiv.org/abs/1006.0569}{{\tt arXiv:1006.0569
  [math.QA]}}},
         url={http://dx.doi.org/10.1093/imrn/rnq294},
         doi={10.1093/imrn/rnq294},
      review={\MR{2863377}},
}

\bib{MR2355605}{article}{
      author={Brugui{{\`e}}res, Alain},
      author={Virelizier, Alexis},
       title={Hopf monads},
        date={2007},
        ISSN={0001-8708},
     journal={Adv. Math.},
      volume={215},
      number={2},
       pages={679\ndash 733},
      eprint={\href{http://arxiv.org/abs/math/0604180}{{\tt arXiv:math/0604180
  [math.QA]}}},
         url={http://dx.doi.org/10.1016/j.aim.2007.04.011},
         doi={10.1016/j.aim.2007.04.011},
      review={\MR{2355605 (2009b:18006)}},
}

\bib{MR3079759}{article}{
      author={Brugui{{\`e}}res, Alain},
      author={Virelizier, Alexis},
       title={On the center of fusion categories},
        date={2013},
        ISSN={0030-8730},
     journal={Pacific J. Math.},
      volume={264},
      number={1},
       pages={1\ndash 30},
      eprint={\href{http://arxiv.org/abs/1203.4180}{{\tt arXiv:1203.4180
  [math.QA]}}},
         url={http://dx.doi.org/10.2140/pjm.2013.264.1},
         doi={10.2140/pjm.2013.264.1},
      review={\MR{3079759}},
}

\bib{MR3238527}{article}{
      author={De~Commer, Kenny},
      author={Freslon, Amaury},
      author={Yamashita, Makoto},
       title={C{CAP} for {U}niversal {D}iscrete {Q}uantum {G}roups},
        date={2014},
        ISSN={0010-3616},
     journal={Comm. Math. Phys.},
      volume={331},
      number={2},
       pages={677\ndash 701},
      eprint={\href{http://arxiv.org/abs/1306.6064}{{\tt arXiv:1306.6064
  [math.OA]}}},
         url={http://dx.doi.org/10.1007/s00220-014-2052-7},
         doi={10.1007/s00220-014-2052-7},
      review={\MR{3238527}},
}

\bib{MR3121622}{article}{
      author={De~Commer, Kenny},
      author={Yamashita, Makoto},
       title={Tannaka-{K}re\u\i n duality for compact quantum homogeneous
  spaces. {I}. {G}eneral theory},
        date={2013},
        ISSN={1201-561X},
     journal={Theory Appl. Categ.},
      volume={28},
       pages={No. 31, 1099\ndash 1138},
      eprint={\href{http://arxiv.org/abs/1211.6552}{{\tt arXiv:1211.6552
  [math.OA]}}},
      review={\MR{3121622}},
}

\bib{MR654325}{article}{
   author={Deligne, Pierre},
   author={Milne, James S.},
   title={Tannakian categories},
   conference={
      title={Hodge cycles, motives, and Shimura varieties},},
   book={
      series={Lecture Notes in Mathematics},
      volume={900},
      publisher={Springer-Verlag, Berlin-New York},},
   date={1982},
   pages={101\ndash 228},
   isbn={3-540-11174-3},
   url={http://www.jmilne.org/math/xnotes/tc.html},
   review={\MR{654325 (84m:14046)}},
}

\bib{MR1316301}{article}{
      author={Evans, David~E.},
      author={Kawahigashi, Yasuyuki},
       title={On {O}cneanu's theory of asymptotic inclusions for subfactors,
  topological quantum field theories and quantum doubles},
        date={1995},
        ISSN={0129-167X},
     journal={Internat. J. Math.},
      volume={6},
      number={2},
       pages={205\ndash 228},
         url={http://dx.doi.org/10.1142/S0129167X95000468},
         doi={10.1142/S0129167X95000468},
      review={\MR{1316301 (96d:46080)}},
}

\bib{MR1642584}{book}{
      author={Evans, David~E.},
      author={Kawahigashi, Yasuyuki},
       title={Quantum symmetries on operator algebras},
      series={Oxford Mathematical Monographs},
   publisher={The Clarendon Press Oxford University Press},
     address={New York},
        date={1998},
        ISBN={0-19-851175-2},
        note={Oxford Science Publications},
      review={\MR{1642584 (99m:46148)}},
}

\bib{MR3447719}{article}{
      author={Ghosh, Shamindra~Kumar},
      author={Jones, Corey},
       title={Annular representation theory for rigid {$C^*$}-tensor
  categories},
        date={2016},
        ISSN={0022-1236},
     journal={J. Funct. Anal.},
      volume={270},
      number={4},
       pages={1537\ndash 1584},
      eprint={\href{http://arxiv.org/abs/1502.06543}{{\tt arXiv:1502.06543
  [math.OA]}}},
         url={http://dx.doi.org/10.1016/j.jfa.2015.08.017},
         doi={10.1016/j.jfa.2015.08.017},
      review={\MR{3447719}},
}

\bib{MR1749868}{article}{
      author={Hayashi, Tomohiro},
      author={Yamagami, Shigeru},
       title={Amenable tensor categories and their realizations as {AFD}
  bimodules},
        date={2000},
        ISSN={0022-1236},
     journal={J. Funct. Anal.},
      volume={172},
      number={1},
       pages={19\ndash 75},
         url={http://dx.doi.org/10.1006/jfan.1999.3521},
         doi={10.1006/jfan.1999.3521},
      review={\MR{1749868 (2001d:46092)}},
}

\bib{MR1644299}{article}{
   author={Hiai, Fumio},
   author={Izumi, Masaki},
   title={Amenability and strong amenability for fusion algebras with
   applications to subfactor theory},
   journal={Internat. J. Math.},
   volume={9},
   date={1998},
   number={6},
   pages={669--722},
   issn={0129-167X},
   review={\MR{1644299 (99h:46116)}},
   doi={10.1142/S0129167X98000300},
}

\bib{MR1782145}{article}{
      author={Izumi, Masaki},
       title={The structure of sectors associated with {L}ongo-{R}ehren
  inclusions. {I}. {G}eneral theory},
        date={2000},
        ISSN={0010-3616},
     journal={Comm. Math. Phys.},
      volume={213},
      number={1},
       pages={127\ndash 179},
         url={http://dx.doi.org/10.1007/s002200000234},
         doi={10.1007/s002200000234},
      review={\MR{1782145 (2002k:46160)}},
}

\bib{MR1321145}{book}{
      author={Kassel, Christian},
       title={Quantum groups},
      series={Graduate Texts in Mathematics},
   publisher={Springer-Verlag},
     address={New York},
        date={1995},
      volume={155},
        ISBN={0-387-94370-6},
      review={\MR{1321145 (96e:17041)}},
}

\bib{MR1257245}{article}{
      author={Longo, Roberto},
       title={A duality for {H}opf algebras and for subfactors. {I}},
        date={1994},
        ISSN={0010-3616},
     journal={Comm. Math. Phys.},
      volume={159},
      number={1},
       pages={133\ndash 150},
         url={http://projecteuclid.org/getRecord?id=euclid.cmp/1104254494},
      review={\MR{1257245 (95h:46097)}},
}

\bib{MR1332979}{article}{
      author={Longo, R.},
      author={Rehren, K.-H.},
       title={Nets of subfactors},
        date={1995},
        ISSN={0129-055X},
     journal={Rev. Math. Phys.},
      volume={7},
      number={4},
       pages={567\ndash 597},
      eprint={\href{http://arxiv.org/abs/hep-th/9411077}{{\tt
  arXiv:hep-th/9411077 [hep-th]}}},
         url={http://dx.doi.org/10.1142/S0129055X95000232},
         doi={10.1142/S0129055X95000232},
        note={Workshop on Algebraic Quantum Field Theory and Jones Theory
  (Berlin, 1994)},
      review={\MR{1332979 (96g:81151)}},
}

\bib{MR1444286}{article}{
      author={Longo, R.},
      author={Roberts, J.~E.},
       title={A theory of dimension},
        date={1997},
        ISSN={0920-3036},
     journal={$K$-Theory},
      volume={11},
      number={2},
       pages={103\ndash 159},
      eprint={\href{http://arxiv.org/abs/funct-an/9604008}{{\tt
  arXiv:funct-an/9604008 [math.FA]}}},
         url={http://dx.doi.org/10.1023/A:1007714415067},
         doi={10.1023/A:1007714415067},
      review={\MR{1444286 (98i:46065)}},
}

\bib{MR1712872}{book}{
      author={Mac~Lane, Saunders},
       title={Categories for the working mathematician},
     edition={Second},
      series={Graduate Texts in Mathematics},
   publisher={Springer-Verlag, New York},
        date={1998},
      volume={5},
        ISBN={0-387-98403-8},
      review={\MR{1712872 (2001j:18001)}},
}

\bib{MR1742858}{article}{
      author={Masuda, Toshihiko},
       title={Generalization of {L}ongo-{R}ehren construction to subfactors of
  infinite depth and amenability of fusion algebras},
        date={2000},
        ISSN={0022-1236},
     journal={J. Funct. Anal.},
      volume={171},
      number={1},
       pages={53\ndash 77},
         url={http://dx.doi.org/10.1006/jfan.1999.3523},
         doi={10.1006/jfan.1999.3523},
      review={\MR{1742858 (2001f:46093)}},
}

\bib{MR1966524}{article}{
      author={M{\"u}ger, Michael},
       title={From subfactors to categories and topology. {I}. {F}robenius
  algebras in and {M}orita equivalence of tensor categories},
        date={2003},
        ISSN={0022-4049},
     journal={J. Pure Appl. Algebra},
      volume={180},
      number={1-2},
       pages={81\ndash 157},
      eprint={\href{http://arxiv.org/abs/math/0111204}{{\tt
  arXiv:math/0111204}}},
         url={http://dx.doi.org/10.1016/S0022-4049(02)00247-5},
         doi={10.1016/S0022-4049(02)00247-5},
      review={\MR{1966524 (2004f:18013)}},
}

\bib{MR1966525}{article}{
      author={M{\"u}ger, Michael},
       title={From subfactors to categories and topology. {II}. {T}he quantum
  double of tensor categories and subfactors},
        date={2003},
        ISSN={0022-4049},
     journal={J. Pure Appl. Algebra},
      volume={180},
      number={1-2},
       pages={159\ndash 219},
      eprint={\href{http://arxiv.org/abs/math/0111205}{{\tt
  arXiv:math/0111205}}},
         url={http://dx.doi.org/10.1016/S0022-4049(02)00248-7},
         doi={10.1016/S0022-4049(02)00248-7},
      review={\MR{1966525 (2004f:18014)}},
}

\bib{MR3204665}{book}{
      author={Neshveyev, Sergey},
      author={Tuset, Lars},
       title={Compact quantum groups and their representation categories},
      series={Cours Sp{\'e}cialis{\'e}s [Specialized Courses]},
   publisher={Soci{\'e}t{\'e} Math{\'e}matique de France, Paris},
        date={2013},
      volume={20},
        ISBN={978-2-85629-777-3},
      review={\MR{3204665}},
}

\bib{arXiv:1405.6572}{misc}{
      author={Neshveyev, Sergey},
      author={Yamashita, Makoto},
       title={Poisson boundaries of monoidal categories},
         how={preprint},
        date={2014},
      eprint={\href{http://arxiv.org/abs/1405.6572}{{\tt arXiv:1405.6572
  [math.OA]}}},
}

\bib{MR996454}{incollection}{
      author={Ocneanu, Adrian},
       title={Quantized groups, string algebras and {G}alois theory for
  algebras},
        date={1988},
   booktitle={Operator algebras and applications, {V}ol.\ 2},
      series={London Math. Soc. Lecture Note Ser.},
      volume={136},
   publisher={Cambridge Univ. Press},
     address={Cambridge},
       pages={119\ndash 172},
      review={\MR{MR996454 (91k:46068)}},
}

\bib{popa-corr-preprint}{misc}{
      author={Popa, Sorin},
       title={Correspondences},
         how={preprint},
        date={1986},
        note={INCREST preprint},
        url={http://www.math.ucla.edu/~popa/popa-correspondences.pdf}
}

\bib{MR1302385}{article}{
      author={Popa, Sorin},
       title={Symmetric enveloping algebras, amenability and {AFD} properties
  for subfactors},
        date={1994},
        ISSN={1073-2780},
     journal={Math. Res. Lett.},
      volume={1},
      number={4},
       pages={409\ndash 425},
      review={\MR{MR1302385 (95i:46095)}},
}

\bib{MR1334479}{article}{
      author={Popa, Sorin},
       title={An axiomatization of the lattice of higher relative commutants of
  a subfactor},
        date={1995},
        ISSN={0020-9910},
     journal={Invent. Math.},
      volume={120},
      number={3},
       pages={427\ndash 445},
         url={http://dx.doi.org/10.1007/BF01241137},
         doi={10.1007/BF01241137},
      review={\MR{1334479 (96g:46051)}},
}

\bib{MR1729488}{article}{
      author={Popa, Sorin},
       title={Some properties of the symmetric enveloping algebra of a
  subfactor, with applications to amenability and property {T}},
        date={1999},
        ISSN={1431-0635},
     journal={Doc. Math.},
      volume={4},
       pages={665\ndash 744 (electronic)},
      review={\MR{MR1729488 (2001c:46116)}},
}

\bib{MR2215135}{article}{
      author={Popa, Sorin},
       title={On a class of type {${\rm II}_1$} factors with {B}etti numbers
  invariants},
        date={2006},
        ISSN={0003-486X},
     journal={Ann. of Math. (2)},
      volume={163},
      number={3},
       pages={809\ndash 899},
      eprint={\href{http://arxiv.org/abs/math/0209130}{{\tt
  arXiv:math/0209130}}},
         url={http://dx.doi.org/10.4007/annals.2006.163.809},
         doi={10.4007/annals.2006.163.809},
      review={\MR{2215135 (2006k:46097)}},
}

\bib{MR3406647}{article}{
      author={Popa, Sorin},
      author={Vaes, Stefaan},
       title={Representation theory for subfactors, {$\lambda$}-lattices and
  {$\rm C^*$}-tensor categories},
        date={2015},
        ISSN={0010-3616},
     journal={Comm. Math. Phys.},
      volume={340},
      number={3},
       pages={1239\ndash 1280},
      eprint={\href{http://arxiv.org/abs/1412.2732}{{\tt arXiv:1412.2732
  [math.OA]}}},
         url={http://dx.doi.org/10.1007/s00220-015-2442-5},
         doi={10.1007/s00220-015-2442-5},
      review={\MR{3406647}},
}

\bib{MR1822847}{article}{
      author={Schauenburg, Peter},
       title={The monoidal center construction and bimodules},
        date={2001},
        ISSN={0022-4049},
     journal={J. Pure Appl. Algebra},
      volume={158},
      number={2-3},
       pages={325\ndash 346},
         url={http://dx.doi.org/10.1016/S0022-4049(00)00040-2},
         doi={10.1016/S0022-4049(00)00040-2},
      review={\MR{1822847 (2002f:18013)}},
}

\bib{MR1245354}{article}{
      author={Yamagami, Shigeru},
       title={A note on {O}cneanu's approach to {J}ones' index theory},
        date={1993},
        ISSN={0129-167X},
     journal={Internat. J. Math.},
      volume={4},
      number={5},
       pages={859\ndash 871},
         url={http://dx.doi.org/10.1142/S0129167X9300039X},
         doi={10.1142/S0129167X9300039X},
      review={\MR{1245354 (95f:46114)}},
}

\end{biblist}
\end{bibdiv}

\bigskip

\end{document}